\newcommand{\kla}[1]{ {\langle #1 \rangle} }
\newcommand{\goedel}[1]{{\prec}#1{\succ}}
\newcommand{\st}{\;|\;}
\newcommand{\dom}{ {\rm dom} }
\newcommand{\ran}{ {\rm ran} }
\newcommand{\otp}{ {\rm otp} }
\newcommand{\crit}{ {\rm crit} }
\newcommand{\unt}[1]{\underline{#1}}
\newcommand{\sub}{\subseteq}
\newfont{\ssi}{cmssi12 at 12pt}
\newcommand{\eins}{ {1{\rm\hspace{-0.5ex}l}} }
\newcommand{\rest}{{\restriction}}
\newcommand{\cf}{ {\rm cf} }
\newcommand{\On}{ {\rm On} }
\newcommand{\verl}{{{}^\frown}}
\newcommand{\leer}{\emptyset}
\newcommand{\ohne}{\setminus}
\newcommand{\id}{ {\rm id} }
\newenvironment{ea*}{\begin{eqnarray*}}{\end{eqnarray*}}
\newcounter{claimnumber}
\newenvironment{NumberedClaim}%
{\begin{enumerate}[label=(\arabic*)]
\refstepcounter{claimnumber}
\setcounter{enumi}{\value{claimnumber}-1}
\item}%
{\end{enumerate}}
\newcommand{\To}{\longrightarrow}
\newcommand{\emb}[1]{\longrightarrow_{#1} }
\newcommand{\power}{{\mathcal{P}}}
\newcommand{\calM}{\mathcal{M}}
\newcommand{\bC}{{\bar{C}}}
\newcommand{\bG}{{\bar{G}}}
\newcommand{\bM}{{\bar{M}}}
\newcommand{\bP}{{\bar{\P}}}
\newcommand{\bS}{{\bar{S}}}
\newcommand{\balpha}{{\bar{\alpha}}}
\newcommand{\bgamma}{{\bar{\gamma}}}
\newcommand{\bdelta}{{\bar{\delta}}}
\newcommand{\bkappa}{{\bar{\kappa}}}
\newcommand{\blambda}{{\bar{\lambda}}}
\newcommand{\btau}{{\bar{\tau}}}
\newcommand{\baeta}{{\bar{\eta}}}
\newcommand{\btheta}{{\bar{\theta}}}
\newcommand{\bpi}{{\bar{\pi}}}
\newcommand{\barf}{{\bar{f}}}
\newcommand{\bN}{{\bar{N}}}
\newcommand{\tdelta}{{\tilde{\delta}}}
\newcommand{\tkappa}{{\tilde{\kappa}}}
\newcommand{\tsigma}{{\tilde{\sigma}}}
\newcommand{\tM}{{\tilde{M}}}
\newcommand{\tN}{{\tilde{N}}}
\newcommand{\vx}{{\vec{x}}}
\newcommand{\vA}{{\vec{A}}}
\newcommand{\vD}{{\vec{D}}}
\newcommand{\vM}{{\vec{M}}}
\newcommand{\vS}{{\vec{S}}}
\newcommand{\seq}[2]{{\langle#1\;|\;}\linebreak[0]{#2\rangle}}
\renewcommand{\phi}{\varphi}
\newcommand{\card}[1]{\overline{\overline{#1}}}
\newcommand{\ZFC}{\ensuremath{\mathsf{ZFC}}\xspace}
\newcommand{\ZFCm}{\ensuremath{{\ZFC}^-}\xspace}
\newcommand{\KP}{\ensuremath{\mathsf{KP}}\xspace}
\newcommand{\SCH}{\ensuremath{\mathsf{SCH}}}
\newcommand{\V}{\ensuremath{\mathrm{V}}\xspace}
\newcommand{\forces}{\Vdash}
\def\<#1>{\langle#1\rangle}
\newcommand{\B}{{\mathord{\mathbb{B}}}}
\renewcommand{\P}{{\mathord{\mathbb P}}}
\newcommand{\Q}{{\mathord{\mathbb Q}}}
\newcommand{\bbT}{\mathord{\mathbb{T}}}
\newcommand{\compose}{\circ}
\newcommand{\SC}{\ensuremath{\mathsf{SC}}\xspace}
\newcommand{\MA}{\ensuremath{\mathsf{MA}}}
\newcommand{\MM}{\ensuremath{\mathsf{MM}}\xspace}
\newcommand{\FA}{\ensuremath{\mathsf{FA}}}
\newcommand{\SCFA}{\ensuremath{\mathsf{SCFA}}\xspace}
\newcommand{\BFA}{\ensuremath{\mathsf{BFA}}}
\newcommand{\BPFA}{\ensuremath{\mathsf{BPFA}}\xspace}
\newcommand{\MP}{\ensuremath{\mathsf{MP}}}
\newcommand{\ColNothing}{\mathrm{Col}}
\newcommand{\Col}[1]{\ColNothing(#1)}
\newcommand{\MPColNothing}[1]{\MP_{\Col{\dot{\kappa}}}}
\newcommand{\isomorphic}{\cong}
\newcommand{\GCH}{\ensuremath{\mathsf{GCH}}\xspace}
\newcommand{\CH}{\ensuremath{\mathsf{CH}}\xspace}
\newcommand{\Hull}{\mathrm{Hull}} 
\newcommand{\Def}{\mathsf{Def}}
\newcommand{\Refl}{\ensuremath{\mathsf{Refl}}}
\newcommand{\FP}[1]{\ensuremath{\mathsf{FP}_{#1}}}
\newcommand{\SFP}[1]{\ensuremath{\mathsf{SFP}_{#1}}}
\newcommand{\OSR}[1]{\ensuremath{\mathsf{OSR}_{#1}}}
\newcommand{\DSR}[1]{\ensuremath{\mathsf{DSR}{(#1)}}}
\newcommand{\SRP}{\ensuremath{\mathsf{SRP}}\xspace}
\newcommand{\wBFA}{\ensuremath{\mathsf{wBFA}}}
\newcommand{\BSCFA}{\ensuremath{\mathsf{BSCFA}}\xspace}
\newcommand{\Todorcevic}{Todor\v{c}evi\'{c}\xspace}
\newtheorem{thm}{Theorem}[section]
\newtheorem*{thm*}{Theorem} 
\newtheorem{cor}[thm]{Corollary}
\newtheorem{lem}[thm]{Lemma}
\newtheorem{obs}[thm]{Observation}
\newtheorem{fact}[thm]{Fact}
\theoremstyle{definition}
\newtheorem{defn}[thm]{Definition}
\newtheorem{question}[thm]{Question}
\newcommand{\thistheoremname}{} 
\newtheorem{genericthm}[thm]{\thistheoremname} 
\newenvironment{namedthm}[1]
  {\renewcommand{\thistheoremname}{#1}%
   \begin{genericthm}}
  {\end{genericthm}}
\theoremstyle{remark}
\newtheorem{remark}[thm]{Remark}
\newtheorem{example}[thm]{Example}
\newcommand{\SSP}{\ensuremath{\mathsf{SSP}}\xspace}
\newcommand{\infSC}{\ensuremath{\mathsf{\infty\text{-}SC}}\xspace}
\newcommand{\Proper}{\ensuremath{\mathsf{Proper}}\xspace}
\newcommand{\Semiproper}{\ensuremath{\mathsf{Semiproper}}\xspace}
\newcommand{\lifting}{\mathsf{lifting}}
\renewcommand{\DSR}{\ensuremath{\mathsf{DSR}}}
\newcommand{\uDSR}{\ensuremath{\mathsf{uDSR}}}
\newcommand{\sDSR}{\ensuremath{\mathsf{sDSR}}}
\newcommand{\refl}{\ensuremath{\mathsf{Refl}}}
\newcommand{\projectdown}{\mathbin{\downarrow}}
\newcommand{\projectup}{\mathbin{\uparrow}}
\renewcommand{\card}[1]{|#1|}
\DeclareMathOperator{\Tr}{\mathsf{Tr}}
\DeclareMathOperator{\eTr}{\mathsf{eTr}}
\newcommand{\BV}[1]{\llbracket#1\rrbracket}
\newcommand{\eps}{\varepsilon}
\begin{document}

\title{Canonical fragments of the strong reflection principle}
\author{Gunter Fuchs}
\address{The College of Staten Island (CUNY)\\2800 Victory Blvd.~\\Staten Island, NY 10314}
\address{The Graduate Center (CUNY)\\365 5th Avenue, New York, NY 10016}
\email{gunter.fuchs@csi.cuny.edu}
\urladdr{www.math.csi.cuny.edu/~fuchs}
\thanks{Support for this project was provided by PSC-CUNY Award \# 63516-00 51, jointly funded by The Professional Staff Congress and The City University of New York.}
\subjclass[2020]{03E35 03E40 03E50 03E55 03E57 03E65}
\keywords{Strong reflection principle, forcing axioms, Martin's Maximum, stationary reflection, subcomplete forcing}
\date{\today}

\begin{abstract}
For an arbitrary forcing class $\Gamma$, the $\Gamma$-fragment of \Todorcevic's strong reflection principle \SRP  is isolated in such a way that (1) the forcing axiom for $\Gamma$ implies the $\Gamma$-fragment of \SRP, (2) the stationary set preserving fragment of \SRP is the full principle \SRP, and (3) the subcomplete fragment of \SRP implies the major consequences of the subcomplete forcing axiom. Along the way, some hitherto unknown effects of (the subcomplete fragment of) \SRP on mutual stationarity are explored, and some limitations to the extent to which fragments of \SRP may capture the effects of their corresponding forcing axioms are established.
\end{abstract}

\maketitle

\tableofcontents

\section{Introduction}

The strong reflection principle, \SRP, introduced by \Todorcevic (see \cite[P.~57]{Bekkali:TopicsInST}), follows from Martin's Maximum, and encompasses many of the major consequences of Martin's Maximum: the singular cardinal hypothesis, that $2^\omega=\omega_2$, that the nonstationary ideal on $\omega_1$ is $\omega_2$-saturated, and many others; see \cite[Chapter 37]{ST3} for an overview. In \cite{Fuchs:HierarchiesOfForcingAxioms}, I began a detailed study of the consequences of \SCFA, the forcing axiom for subcomplete forcing, with an eye to its relationship to Martin's Maximum. Subcomplete forcing was introduced by Jensen \cite{Jensen:SPSCF}, \cite{Jensen2014:SubcompleteAndLForcingSingapore}, and shown to be iterable with revised countable support. Since subcomplete forcing notions cannot add reals, \SCFA is compatible with \CH, which sets it apart from Martin's Maximum. In fact, Jensen \cite{Jensen:FAandCH} showed that \SCFA is even compatible with $\diamondsuit$, and hence does not imply that the nonstationary ideal on $\omega_1$ is $\omega_2$-saturated. On the other hand, \SCFA does have many of the major consequences of Martin's Maximum, such as the singular cardinal hypothesis, as mentioned above.

While my quest to deduce consequences of Martin's Maximum from \SCFA (or some related forcing principles for subcomplete forcing) has been fairly successful in many respects, such as the failure of (weak) square principles and the reflection of stationary sets of ordinals \cite{Fuchs:DiagonalReflection}, and even the existence of well-orders of $\power(\omega_1)$ \cite{Fuchs:SCprinciplesAndDefWO}, it remained unclear until recently how to find an analog of \SRP that relates to \SCFA like \SRP relates to Martin's Maximum. Thus, I was looking for a version of \SRP that follows from \SCFA and that, in turn, implies the major consequences of \SCFA.

The objective of the present article is to provide such a principle. In fact, I canonically assign to any forcing class $\Gamma$ its fragment of the strong reflection principle, which I call $\Gamma$-\SRP, in such a way that
\begin{enumerate}[label=(\arabic*)]
	\item
	\label{item:FollowsFromFA}
	the forcing axiom for $\Gamma$, $\FA(\Gamma)$, implies $\Gamma$-\SRP,
	\item
	\label{item:GeneralizesSRP}
	letting $\SSP$ be the class of all stationary set preserving forcing notions, $\SRP$ is equivalent to $\SSP$-$\SRP$,
	\item
	\label{item:ImpliesMajorConsequences}
	letting $\SC$ be the class of all subcomplete forcing notions, $\SC$-$\SRP$ captures many of the major consequences of $\SCFA$.
\end{enumerate}
While points \ref{item:FollowsFromFA} and \ref{item:GeneralizesSRP} are beyond dispute, point \ref{item:ImpliesMajorConsequences} is a little vague, and I will give some more details on which consequences of \SCFA the principle $\SC$-\SRP captures and which it cannot. The situation will turn out to be similar to the \SRP vs.~\MM comparison.



For the most part, I will be working with a technical simplification of the notion of subcompleteness, called $\infty$-subcompleteness and introduced in Fuchs-Switzer \cite{FuchsSwitzer:IterationTheorems}. This leads to a simplification of the adaptation of projective stationarity to the context of this version of subcompleteness. Working with the original notion of subcompleteness would add some technicalities, but would not change much otherwise.

The paper is organized as follows. In Section \ref{sec:Gamma-ProjectiveStationarity}, I give some background on the strong reflection principle and on generalized stationarity. Given a forcing class $\Gamma$, I introduce the notion of $\Gamma$-projective stationarity and the $\Gamma$-fragment of \SRP, and I show that the forcing axiom for $\Gamma$ implies the $\Gamma$-fragment of \SRP, as planned. I then review the definitions of subcomplete and $\infty$-subcomplete forcing and characterize ($\infty$-)subcomplete projective stationarity combinatorially (as being ``(fully) spread out'').

Section \ref{sec:SRPandConsequences} is concerned with consequences of \SRP, and mainly of the subcomplete fragment of \SRP. The main task here is to show that certain stationary sets are not only projective stationary, but even spread out. To this end, I give some background on Barwise theory and prove a technical lemma in Subsection \ref{subsec:BarwiseTheoAndTechnicalLemma}. Subsection \ref{subsec:FPandSCH} then uses this in order to establish that some of the major consequences of \SRP already follow from the subcomplete fragment of \SRP: Friedman's problem, the failure of square and the singular cardinal hypothesis. In Subsection \ref{subsec:MutualStationarity}, I derive some consequences of \SRP, and its subcomplete fragment, on mutual stationarity (which as far as I know are new even as consequences of the full \SRP).

Section \ref{sec:SRPLimitations} deals with limitations to the effects of the subcomplete fragment of \SRP on certain diagonal reflection principles for stationary sets of ordinals, and with the task of separating it from \SCFA. In Subsection \ref{subsec:GeneralSetting}, I show that the $\infty$-subcomplete fragment of \SRP is consistent with a failure of reflection at $\omega_2$, assuming the consistency of an indestructible version of \SRP. I also observe that, assuming the consistency of an indestructible version of \SRP (that follows from \MM), \infSC-\SRP does not imply \SCFA. This follows rather directly from prior results. However these results don't separate the subcomplete fragment of \SRP, together with $\CH$, from \SCFA. The last two subsections contain partial results in this direction. Subsection \ref{subsec:CHsetting-LessCanonical} shows that, assuming the consistency of an indestructible version of the fragment of \SRP used to deduce the consequences in Section \ref{sec:SRPandConsequences}, together with \CH, that fragment with \CH  fails to imply a rather weak diagonal reflection principle at $\omega_2$ which does follow from $\SCFA+\CH$, thus separating this fragment of \SRP in the presence of \CH from \SCFA. Finally, in Section \ref{subsec:CHsetting:CanonicalButOnlyUpToOmega2}, I show that if the subcomplete forcing axiom up to $\omega_2$ (I denote this $\BSCFA({\le}\omega_2)$) is consistent, then it is consistent that \CH and the subcomplete fragment of $\SRP(\omega_2)$ hold, but $\BSCFA({\le}\omega_2)$ fails. In this last result, it matters in the proof that I deal with subcompleteness, not $\infty$-subcompleteness. Along the way, I show that subcomplete forcing that preserves uncountable cofinalities is iterable with countable support.

Finally, Section \ref{sec:Questions} lists some questions and open problems.

\section{$\Gamma$-projective stationarity and the $\Gamma$-fragment of \SRP}
\label{sec:Gamma-ProjectiveStationarity}

In this section, I will give a brief introduction to the strong reflection principle, motivate how I arrive, for a forcing class $\Gamma$, at the $\Gamma$-fragment of the strong reflection principle, consider a couple of examples, and then focus on the subcomplete fragment of \SRP.

\subsection{Some background and motivation for \SRP}

Recall Friedman's problem from \cite{HFriedman:OnClosedSetsOfOrdinals}:

\begin{defn}
\label{def:FPkappa}
If $\gamma<\kappa$, then I write $S^\kappa_\gamma$ for the set of ordinals $\xi<\kappa$ with $\cf(\xi)=\gamma$. Now let $\kappa\ge\omega_2$ be an uncountable regular cardinal. Then \emph{Friedman's Problem} at $\kappa$, denoted $\FP{\kappa}$, says that whenever $S\sub S^\kappa_\omega$ is stationary, then there is a normal (that is, increasing and continuous) function $f:\omega_1\To S$. In other words, $S$ contains a closed set of order type $\omega_1$.
\end{defn}

The strong reflection principle \SRP, introduced by \Todorcevic (see \cite{Bekkali:TopicsInST} or \cite{Todorcevic:NotesOnForcingAxioms} for its original formulation), can be viewed as a version of Friedman's problem, but adapted to the generalization of stationarity due to Jech, see \cite{Jech:StationarySetsHST} for an overview article. The role of a closed set of order type $\omega_1$ is taken over by the obvious analog for the context of generalized stationarity: a continuous $\in$-chain of length $\omega_1$. I deviate slightly from the common way of presenting this.

\begin{defn}
\label{def:Chains}
Let $\kappa$ be a regular uncountable cardinal, and let $S\sub[H_\kappa]^\omega$ be stationary. A \emph{continuous $\in$-chain through $S$ of length $\lambda$} is a sequence $\seq{X_i}{i<\lambda}$ of members of $S$, increasing with respect to $\in$, such that for every limit $j<\lambda$, $X_j=\bigcup_{i<j}X_i$.
\end{defn}

Feng \& Jech \cite{FengJech:ProjectiveStationarityAndSRP} found an equivalent way to express \Todorcevic's original principle that is more amenable to generalization than its original formulation. They introduced the following concept.

\begin{defn}
\label{def:ProjectiveStationary}
Let $\kappa$ be an uncountable regular cardinal. $S\sub[H_\kappa]^\omega$ is \emph{projective stationary} if for every stationary set $T\sub\omega_1$, the set $\{X\in S\st X\cap\omega_1\in T\}$ is stationary.
\end{defn}

While I'm at it, let me introduce some terminology around generalized stationarity.

\begin{defn}
\label{defn:ProjectionsAndLiftings}
Let $\kappa$ be a regular cardinal, and let $A\sub\kappa$ be unbounded. Let $\kappa\sub X$. Then
\[\lifting(A,[X]^\omega)=\{x\in[X]^\omega\st\sup(x\cap\kappa)\in A\}\]
is the \emph{lifting} of $A$ to $[X]^\omega$.
Now let $S\sub[X]^\omega$ be stationary. If $W\subseteq X\subseteq Y$, then we define the projections of $S$ to $[Y]^\omega$ and $[W]^\omega$ by
\[S\projectup[Y]^\omega=\{y\in[Y]^\omega\st y\cap X\in S\}\]
and
\[S\projectdown[W]^\omega=\{x\cap W\st x\in S\}.\]
\end{defn}

Thus, using this notation, and letting $\kappa$ be an uncountable regular cardinal, a set $S\sub[H_\kappa]^\omega$ is projective stationary iff for every stationary $T\sub\omega_1$, $S\cap(T\projectup[H_\kappa]^\omega)$ is stationary. It is well-known that in the notation of the previous definition, $S\projectup[Y]^\omega$ and $S\projectdown[W]^\omega$ are stationary.

Following is the characterization of \SRP, due to Feng and Jech, which I take as the official definition.

\begin{defn}
\label{def:SRP}
Let $\kappa\ge\omega_2$ be regular. Then the \emph{strong reflection principle at $\kappa$,} denoted $\SRP(\kappa)$, states that whenever $S\sub[H_\kappa]^\omega$ is projective stationary, then there is a continuous 
$\in$-chain of length $\omega_1$ through $S$. The \emph{strong reflection principle} \SRP states that $\SRP(\kappa)$ holds for every regular $\kappa\ge\omega_2$.
\end{defn}

Usually, the strong reflection principle is formulated so as to assert the existence of an elementary chain of length $\omega_1$ through $S$. We'll briefly convince ourselves that this version of the principle, made precise, follows from the one stated.

\begin{obs}
\label{obs:ProjectiveStationarityPreservedUnderIntersectionWithClubs}
Projective stationarity is preserved by intersections with clubs:
if $S\sub[H_\kappa]^\omega$ is projective stationary, where $\kappa$ is regular and uncountable, then for any club $C\sub[H_\kappa]^\omega$, $S\cap C$ is also projective stationary.
\end{obs}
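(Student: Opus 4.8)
The plan is to unwind the definition of projective stationarity and reduce the claim to the elementary fact that, in the generalized sense of Jech, the intersection of a stationary set with a club is again stationary. So fix a regular uncountable $\kappa$, a projective stationary set $S\sub[H_\kappa]^\omega$, and a club $C\sub[H_\kappa]^\omega$. To see that $S\cap C$ is projective stationary, let $T\sub\omega_1$ be an arbitrary stationary set; I must show that $\{X\in S\cap C\st X\cap\omega_1\in T\}$ is stationary in $[H_\kappa]^\omega$.

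First I would apply the projective stationarity of $S$ to the stationary set $T$, which gives that $S_T:=\{X\in S\st X\cap\omega_1\in T\}$ is stationary in $[H_\kappa]^\omega$. Next, since $C$ is club in $[H_\kappa]^\omega$, the set $S_T\cap C$ is stationary: this is the standard closure property of the club filter on $[H_\kappa]^\omega$ (a stationary set meets every club, clubs are closed under intersection, hence the intersection of a stationary set with a club meets every club). Finally I would note the trivial set-theoretic identity $S_T\cap C=\{X\in S\cap C\st X\cap\omega_1\in T\}$, so the latter set is stationary. As $T$ was an arbitrary stationary subset of $\omega_1$, this establishes that $S\cap C$ is projective stationary.

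There is no genuine obstacle here; this is a warm-up observation whose only ingredient beyond unwinding Definition~\ref{def:ProjectiveStationary} is the basic fact, part of the standard theory of generalized stationarity, that clubs in $[H_\kappa]^\omega$ intersect stationary sets in stationary sets. The point of recording it is that it licenses passing freely between ``$\in$-chain through $S$'' and ``elementary chain through $S$'': given projective stationary $S$ and enough structure on $H_\kappa$, one applies $\SRP(\kappa)$ to $S\cap C$ where $C$ is the club of elementary submodels, and the resulting continuous $\in$-chain through $S\cap C$ is automatically an elementary chain through $S$.
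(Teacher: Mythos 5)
Your proof is correct and is essentially the paper's argument: both reduce the claim to the identity $\{X\in S\st X\cap\omega_1\in T\}\cap C=\{X\in S\cap C\st X\cap\omega_1\in T\}$ together with the fact that a stationary subset of $[H_\kappa]^\omega$ meets every club (the paper just unwinds this last fact by intersecting with an arbitrary club $D$ rather than citing it). No issues.
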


\begin{proof}
Let us fix $S$ and $C$. Let $T\sub\omega_1$ be stationary and $D\sub[H_\kappa]^\omega$ be club. Then, since $S$ is projective stationary, it follows that
\[\{X\in S\st X\cap\omega_1\in T\}\cap(C\cap D)\neq\leer.\]
But the set on the left is equal to
\[\{X\in S\cap C\st X\cap\omega_1\in T\}\cap D,\]
and this shows that $S\cap C$ is projective stationary.
\end{proof}

In the following, for a model $\calM$ of a first order language, I write $|\calM|$ for its universe. Further, if $X\sub|\calM|$, then $\calM|X$ is the restriction of $\calM$ to $X$.

\begin{cor}
\label{cor:SRPgivesElementaryChains}
Assume $\SRP(\kappa)$.
Let $S\sub[H_\kappa]^\omega$ be projective stationary. Let $\calM=\kla{H_\kappa,\in\rest H_\kappa,\ldots}$ be a first order structure of a finite language.
\footnote{Since $H_\kappa$ is closed under ordered pairs, it is easy to code any countable language into a finite language, and it will sometimes be convenient to assume that the language at hand is finite, so I will usually make that assumption.}
Then there is a continuous elementary chain $\seq{\calM_i}{i<\omega_1}$ of elementary submodels of $\calM$ through $S$, meaning that, for all $i<\omega_1$, $|\calM_i|\in S$, $\calM_i\in|\calM_{i+1}|$, $\calM_i\prec\calM_{i+1}$ and if $i$ is a limit ordinal, then $|\calM_i|=\bigcup_{j<i}|\calM_j|$.
\end{cor}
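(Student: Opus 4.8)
The plan is to reduce the statement to a single application of $\SRP(\kappa)$, after thinning $S$ to those countable sets that are moreover universes of elementary submodels of $\calM$. Let $C$ be the set of $X\in[H_\kappa]^\omega$ with $\calM|X\elementaryIn\calM$. Since the language of $\calM$ is finite, $\calM$ has only countably many definable Skolem functions; by the Tarski--Vaught criterion $C$ is exactly the set of $X$ closed under all of these (and containing the interpretations of the constants), and such a set is club in $[H_\kappa]^\omega$. By Observation~\ref{obs:ProjectiveStationarityPreservedUnderIntersectionWithClubs}, $S\cap C$ is then projective stationary, so $\SRP(\kappa)$ yields a continuous $\in$-chain $\seq{X_i}{i<\omega_1}$ through $S\cap C$. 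The chain I will return is $\calM_i\mdf\calM|X_i$.

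Most of the requirements then follow by unwinding definitions: $|\calM_i|=X_i\in S$ by construction; $\calM_i\elementaryIn\calM$ because $X_i\in C$; and for limit $i$, continuity of the $\in$-chain gives $|\calM_i|=X_i=\bigcup_{j<i}X_j=\bigcup_{j<i}|\calM_j|$. For the two remaining clauses I would first establish that $X_i\sub X_{i+1}$, which is \emph{not} automatic from $X_i\in X_{i+1}$ alone. Here I use that $X_{i+1}$ is the universe of an elementary submodel of $\calM$: since $X_i$ is genuinely countable, a surjection $\omega\to X_i$ lies in $H_\kappa$, so $\calM$ (which extends $\kla{H_\kappa,\in\rest H_\kappa}$) believes ``$X_i$ is countable'', and by elementarity there is such a surjection $f\in X_{i+1}$; as every natural number belongs to $X_{i+1}$ and $X_{i+1}$ is closed under $f$, we get $X_i=\ran(f)\sub X_{i+1}$. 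Since $\calM_i$ and $\calM_{i+1}$ are both elementary submodels of $\calM$ with $|\calM_i|\sub|\calM_{i+1}|$, elementarity transfers downward in the usual way and $\calM_i\elementaryIn\calM_{i+1}$.

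For $\calM_i\in|\calM_{i+1}|$ I would fix a canonical code $\barm_i\in H_\kappa$ for $\calM_i$: say the tuple consisting of $X_i$ together with the restrictions to $X_i$ of the finitely many relations and functions interpreting the language in $\calM$, and the interpretations of the constants. Finiteness of the language enters twice here: each entry of the tuple is a countable subset of $H_\kappa$ (or an element of $H_\kappa$), hence lies in $H_\kappa$ since $\kappa\ge\omega_2$ is regular, and there are only finitely many entries, so $\barm_i\in H_\kappa$. The assignment $X\mapsto\barm_X$ is definable over $\calM$ from the parameter $X$, so from $X_i\in X_{i+1}$ and $\calM_{i+1}\elementaryIn\calM$ we obtain $\barm_i\in X_{i+1}=|\calM_{i+1}|$. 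Identifying $\calM_i$ with its code, as per the coding conventions, this is exactly $\calM_i\in|\calM_{i+1}|$, which completes the construction.

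The genuinely non-routine point is this last clause: one must recognize that the club $C$ should be built from true elementary submodels not merely to arrange $\calM_i\elementaryIn\calM$, but also in order to force $X_i\sub X_{i+1}$ and the membership of the code of $\calM_i$ inside $X_{i+1}$, and that finiteness of the language is exactly what keeps that code inside $H_\kappa$. Everything else is bookkeeping.
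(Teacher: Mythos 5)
Your proposal is correct and follows essentially the same route as the paper: intersect $S$ with the club of universes of elementary submodels of $\calM$, apply Observation~\ref{obs:ProjectiveStationarityPreservedUnderIntersectionWithClubs} and $\SRP(\kappa)$, and then use $X_i\in X_{i+1}$ together with elementarity of $\calM_{i+1}$ to get $X_i\sub X_{i+1}$ (via a surjection from $\omega$) and $\calM_i\in|\calM_{i+1}|$ (via definability of the code of $\calM_i$ from $X_i$, using finiteness of the language). You have merely spelled out in more detail the steps the paper's proof states tersely.
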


\begin{proof}
This follows immediately from the previous observation, since $C=\{X\in[H_\kappa]^\omega\st(\calM|X)\prec\calM\}$ contains a club. Any continuous chain $\seq{X_i}{i<\omega_1}$ through $S\cap C$ gives rise to a continuous elementary chain of models by setting $\calM_i=\calM|X_i$. Note that, for $i<\omega_1$, since $X_i\in X_{i+1}$, it follows that $\calM_{i+1}$ sees that $X_i$ is countable, as $\calM_{i+1}\prec\calM$, and hence, $X_i\sub X_{i+1}$ and $\calM_i\prec\calM_{i+1}$. Also, since $\calM_i$ is definable from $X_i\in|\calM_{i+1}|$, as the language of $\calM$ is finite, it follows that $\calM_i\in\calM_{i+1}$.
\end{proof}

The key to showing that Martin's Maximum implies \SRP is that the canonical forcing to shoot a continuous $\in$-chain of length $\omega_1$ through a projective stationary set, described in the following definition, preserves stationary subsets of $\omega_1$.

\begin{defn}
\label{def:P_S}
$\P_S$ is the forcing notion consisting of continuous $\in$-chains through $S$ of countable successor length, ordered by end-extension. For $p\in\P_S$, I write $p=\seq{M^p_i}{i\le\ell^p}$.
\end{defn}

The following fact is essentially contained in Feng \& Jech \cite{FengJech:ProjectiveStationarityAndSRP}, even though it is not explicitly stated.

\begin{fact}
\label{fact:P_SisCountablyDistributiveAndExhaustive}
Let $\kappa$ be an uncountable regular cardinal, and let $S\sub[H_\kappa]^\omega$ be stationary. Then
\begin{enumerate}[label=(\arabic*)]
  \item
  \label{item:GenericSequenceHasLengthOmega1}
  for every countable ordinal $\alpha$, the set of conditions $p$ with $\ell^p\ge\alpha$ is dense in $\P_S$,
  \item
  \label{item:UnionOfGenericIsExhaustive}
  for every $a\in H_\kappa$, the set of conditions $p$ such that there is an $i<\ell^p$ with $a\in M^p_i$ is dense in $\P_S$,
  \item
  \label{item:CountablyDistributive}
  $\P_S$ is countably distributive.
\end{enumerate}
\end{fact}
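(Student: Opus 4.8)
\emph{Proof strategy.} The plan is to prove the three clauses in the order \ref{item:UnionOfGenericIsExhaustive}, \ref{item:GenericSequenceHasLengthOmega1}, \ref{item:CountablyDistributive}, each building on the earlier ones, with one device doing the heavy lifting throughout: fix a large regular $\theta$ with $S,\P_S\in H_\theta$ and a fixed well-order $<$ of $H_\theta$; then, given any finitely many parameters $\vec a\in H_\theta$, there is a countable $N\prec(H_\theta,\in,<)$ with $\vec a\in N$ and $N\cap H_\kappa\in S$, because $S$ stationary in $[H_\kappa]^\omega$ implies $S\projectup[H_\theta]^\omega$ is stationary in $[H_\theta]^\omega$, so it meets the club of suitable elementary submodels. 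I will also use repeatedly that a countable $M\prec(H_\kappa,\in,<)$ has $x\subseteq M$ whenever $x\in M$ is countable, and that for finite $\vec b\subseteq H_\kappa$ the club $\{M\in[H_\kappa]^\omega\st\vec b\subseteq M,\ M\prec(H_\kappa,\in,<)\}$ meets the stationary set $S$. Clause~\ref{item:UnionOfGenericIsExhaustive} is then the easy case: given $p$ and $a\in H_\kappa$, pick such an $M\in S$ with $\{M^p_{\ell^p},a\}\subseteq M$; then $M^p_{\ell^p}\in M$, $M^p_{\ell^p}\subseteq M$, and $M^p_{\ell^p}\in H_\kappa$, so $p\verl\kla{M}\le p$ is a condition meeting the required dense set. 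Dropping the demand $a\in M$, the same one-model extension shows that every condition has a proper extension; this is the successor step of clause~\ref{item:GenericSequenceHasLengthOmega1}.

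For clause~\ref{item:GenericSequenceHasLengthOmega1} I would show by induction on $\alpha<\omega_1$ that $D_\alpha=\{p\in\P_S\st\ell^p\ge\alpha\}$ is dense; the successor case is the one-model extension. For a limit $\alpha$ (so $\cf(\alpha)=\omega$) and a given $p$: fix $N$ as above with $p,S,\alpha\in N$ and $M^*:=N\cap H_\kappa\in S$ (note $\alpha\subseteq N$, so each $\alpha_n$ below lies in $N$), fix $\seq{\alpha_n}{n<\omega}$ increasing cofinal in $\alpha$, and enumerate $N\cap H_\kappa=\{a_n\st n<\omega\}$. Build a descending sequence $\seq{p_n}{n<\omega}$ of conditions, $p_0=p$ and $p_{n+1}\in N$: at stage $n$ the inductive hypothesis for $\alpha_n$ (true in $V$, so reflected by $N$ since its parameters lie in $N$) yields an extension of $p_n$ of length $\ge\alpha_n$, and two further one-model extensions upgrade it so that also $\ell^{p_{n+1}}>\ell^{p_n}$ and $a_n\in M^{p_{n+1}}_i$ for some $i$; by elementarity we may take $p_{n+1}\in N$. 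Let $\lambda:=\sup_n\ell^{p_n}$, a countable limit ordinal with $\lambda\ge\alpha$; then $\bigcup_n p_n$ is a continuous $\in$-chain $\seq{M_i}{i<\lambda}$ through $S$, and the crucial point is $\bigcup_{i<\lambda}M_i=M^*$: ``$\subseteq$'' holds because each $M_i$ occurs in some $p_n\in N$ and is a countable subset of $H_\kappa$ lying in $N$, hence $M_i\subseteq N\cap H_\kappa$, and ``$\supseteq$'' holds because each $a_n$ was absorbed into some $M_i$. Since $M^*\in S$, the sequence $q:=\seq{M_i}{i<\lambda}\verl\kla{M^*}$ is a condition with $q\le p$ and $\ell^q=\lambda\ge\alpha$.

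Clause~\ref{item:CountablyDistributive} follows by the same template. Given dense open sets $\seq{D_n}{n<\omega}$ and a condition $p$, fix $N$ with $p,S,\P_S,\seq{D_n}{n<\omega}\in N$ and $M^*:=N\cap H_\kappa\in S$, enumerate $N\cap H_\kappa=\{a_n\st n<\omega\}$, and build $\seq{p_n}{n<\omega}$ descending inside $N$ with $p_0=p$ and, at stage $n$, $p_{n+1}\le p_n$ chosen (by density of $D_n$, reflected into $N$, followed by clauses~\ref{item:GenericSequenceHasLengthOmega1} and~\ref{item:UnionOfGenericIsExhaustive}, using that $D_n$ is open) so that $p_{n+1}\in D_n$, $\ell^{p_{n+1}}>\ell^{p_n}$, and $a_n\in M^{p_{n+1}}_i$ for some $i$. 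Exactly as before, $\bigcup_n p_n=\seq{M_i}{i<\lambda}$ with $\bigcup_{i<\lambda}M_i=M^*\in S$, so $q:=\seq{M_i}{i<\lambda}\verl\kla{M^*}$ is a condition extending $p$; since $q\le p_{n+1}\in D_n$ and each $D_n$ is open, $q\in\bigcap_n D_n$. Thus $\bigcap_n D_n$ is dense below $p$, which is countable distributivity.

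The genuinely non-formal point --- and what the whole construction is engineered around --- is the limit step of clauses~\ref{item:GenericSequenceHasLengthOmega1} and~\ref{item:CountablyDistributive}: a union of a continuous $\in$-chain through $S$ need not lie back in $S$, so one cannot take unions blindly. Choosing $N$ with $N\cap H_\kappa\in S$ \emph{in advance}, and then arranging the bookkeeping so that the conditions built simultaneously lengthen past $\alpha_n$ (resp.\ meet $D_n$), strictly increase in length, absorb $a_n$, and stay inside $N$, is exactly what forces the chain being constructed to exhaust precisely $N\cap H_\kappa$ at the limit. Everything else --- nonemptiness of the relevant club/stationary intersections, absoluteness of ``$D$ is dense'' between $H_\theta$ and $N$, and the elementary-submodel facts --- is routine.
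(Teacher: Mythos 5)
Your proof is correct and rests on exactly the same machinery as the paper's: clause (3) is argued precisely as in the paper (fix a countable $N\prec H_\theta$ in advance with $N\cap H_\kappa\in S$, run a descending/generic sequence inside $N$ meeting the relevant dense sets so that the chain exhausts $N\cap H_\kappa$, then cap it with $N\cap H_\kappa$). The only organizational difference is in clauses (1) and (2): the paper invokes Feng--Jech's Lemma 1.2 (continuous $\in$-chains of any countable length through any stationary set) and concatenates, whereas you re-derive that lemma by induction on $\alpha$, with the limit step being the same catch-up argument --- so your write-up is self-contained where the paper outsources one step to a citation.
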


\begin{proof}
We prove clauses \ref{item:GenericSequenceHasLengthOmega1} and \ref{item:UnionOfGenericIsExhaustive} simultaneously. Let $p\in\P_S$, $\alpha<\omega_1$ and $a\in H_\kappa$ be given. We use
Lemma 1.2 of \cite{FengJech:ProjectiveStationarityAndSRP}, which states that if $T\sub[H_\kappa]^\omega$ is stationary, then, for every countable ordinal $i$, there is a continuous $\in$-chain through $T$ of length at least $i+1$. Let $T=\{M\in S\st a, M^p_{\ell^p}\in M\}$. Clearly, $T$ is stationary, so by the lemma, let $\seq{N_i}{i\le\beta}$ be a continuous elementary chain through $T$, with $\beta\ge\alpha$. Let $\gamma=\ell^p+1+\beta+1$, and define a condition $q=\seq{M^q_i}{i<\gamma}$ by setting $M^q_i=M^p_i$ for $i\le\ell^p$ and $M^q_{\ell^p+1+j}=N_j$ for $j\le\beta$. Then $q\le p$ has length at least $\alpha+1$ and eventually contains $a$, as wished.

In order to prove clause \ref{item:CountablyDistributive}, we have to show that, given a sequence $\vec{D}=\seq{D_n}{n<\omega}$ of dense open subsets of $\P_S$, the intersection $\Delta=\bigcap_{n<\omega}D_n$ is dense in $\P_S$. So, fixing a condition $p\in\P_S$, we have to find a $q\le p$ in $\Delta$. To this end, let $\lambda$ be a regular cardinal much greater than $\kappa$, say $\lambda>2^{2^{\card{\P_S}}}$, and consider the model $\mathcal{N}=\kla{H_\lambda,\in,<^*,S,\P_S,\vec{D},p}$, where $<^*$ is a well-ordering of $H_\lambda$. Let $\calM\prec\mathcal{N}$ be a countable elementary submodel with $|\calM|\cap H_\kappa\in S$. Since $\calM$ is countable, we can pick a filter $G$ which is $\calM$-generic for $\P_S$ and contains $p$. Let $\bar{q}=\bigcup G$. Using the density facts proved in \ref{item:GenericSequenceHasLengthOmega1} and \ref{item:UnionOfGenericIsExhaustive}, it follows that $\delta:=\dom(\bar{q})=M\cap\omega_1$, and that $\bigcup_{i<\delta}\bar{q}(i)=M\cap\kappa\in S$. Thus, if we define the sequence $q$ of length $\delta+1$ by setting $q(i)=\bar{q}(i)$ for $i<\delta$ and $q(\delta)=M\cap\kappa$, then $q\in\P_S$, and $q$ extends every condition in $G$. Moreover, since $D_n\in M$, for each $n<\omega$, it follows that $G$ meets each $D_n$, and hence that $p\ge q\in\Delta$, as desired.
\end{proof}

\begin{fact}[Feng \& Jech]
\label{fact:SisProjStatIffP_SisSSP}
Let $\kappa\ge\omega_2$ be an uncountable regular cardinal. Then a stationary set $S\sub[H_\kappa]^\omega$ is projective stationary iff $\P_S$ preserves stationary subsets of $\omega_1$.
\end{fact}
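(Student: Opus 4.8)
The plan is to prove both implications by the usual forcing analysis of $\P_S$, taking Fact~\ref{fact:P_SisCountablyDistributiveAndExhaustive} as a black box and using standard elementary submodel arguments. For the forward direction, suppose $S$ is projective stationary and let $T\sub\omega_1$ be stationary; I must show $T$ stays stationary in a $\P_S$-extension, i.e., given $p\in\P_S$ and a $\P_S$-name $\dot C$ forced to be a club in $\omega_1$, I must find $q\le p$ and $\delta\in T$ with $q\forces\check\delta\in\dot C$. Pick a regular $\lambda$ much bigger than $\card{\P_S}$, let $\mathcal{N}=\kla{H_\lambda,\in,<^*,\P_S,S,\dot C,T,p}$, and note that $A=\{Y\in S\st Y\cap\omega_1\in T\}$ is stationary in $[H_\kappa]^\omega$ by projective stationarity, so its projection $A\projectup[H_\lambda]^\omega$ is stationary; intersecting this with the club of $Z$ with $(\mathcal{N}|Z)\prec\mathcal{N}$ produces a countable $\calM\prec\mathcal{N}$ with $\calM\cap H_\kappa\in S$ and $\delta:=\calM\cap\omega_1=(\calM\cap H_\kappa)\cap\omega_1\in T$.

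From here I replay the proof of Fact~\ref{fact:P_SisCountablyDistributiveAndExhaustive}\ref{item:CountablyDistributive}: build an $\calM$-generic filter $G$ for $\P_S$ with $p\in G$, all of whose conditions lie in $\calM$. Clauses \ref{item:GenericSequenceHasLengthOmega1} and \ref{item:UnionOfGenericIsExhaustive} of that Fact, relativized to $\calM$, give that $\bigcup G$ is a continuous $\in$-chain through $S$ of length $\delta$ with $\bigcup_{i<\delta}(\bigcup G)(i)=\calM\cap H_\kappa$; since $\calM\cap H_\kappa\in S$, appending it as a top node yields a condition $q$ of length $\delta+1$ with $q\le p$ and $q\le r$ for every $r\in G$. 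Now, for each $\alpha<\delta$ the set $D_\alpha$ of conditions forcing some ordinal of $(\alpha,\omega_1)$ into $\dot C$ is dense (because $\dot C$ is forced unbounded) and belongs to $\calM$, so $G$ meets it in some $r\in\calM$, and by elementarity the ordinal this $r$ forces into $\dot C$ may be taken in $\calM$, hence below $\delta$. Therefore $q$ forces $\dot C$ to be unbounded below the limit ordinal $\delta$, and since $\dot C$ is forced closed, $q\forces\check\delta\in\dot C$; as $\delta\in T$, this is what was needed.

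For the converse, suppose $\P_S$ preserves stationary subsets of $\omega_1$, let $T\sub\omega_1$ be stationary, and fix $F\colon[H_\kappa]^{<\omega}\To H_\kappa$ (it suffices to meet the clubs of sets closed under such an $F$, as these generate the club filter on $[H_\kappa]^\omega$). Let $G$ be $\P_S$-generic over $V$. By Fact~\ref{fact:P_SisCountablyDistributiveAndExhaustive}, $\omega_1$ is preserved and $\bigcup G$ is a continuous $\in$-chain $\seq{X_i}{i<\omega_1}$ through $S$ with $\bigcup_{i<\omega_1}X_i=H_\kappa^V$; moreover each $X_i$, being a node of some condition in $G\sub\P_S$, lies in $V$. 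In $V[G]$: continuity of the chain together with $\bigcup_iX_i\super F$ makes $\{i<\omega_1\st X_i\tx{closed under}F\}$ club, and standard continuity considerations (via the functions $i\mapsto\sup(X_i\cap\omega_1)$ and $\gamma\mapsto\min\{i\st\gamma\in X_i\}$) make $\{i<\omega_1\st X_i\cap\omega_1=i\}$ club; since $T$ is still stationary, choose $i$ in the intersection of these two clubs with $i\in T$. Then $X_i\in S$ is closed under $F$ and $X_i\cap\omega_1=i\in T$; as $X_i\in V$ and these properties are absolute, $X_i$ witnesses already in $V$ that $\{X\in S\st X\cap\omega_1\in T\}$ meets the club determined by $F$. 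Since $F$ and $T$ were arbitrary, $S$ is projective stationary.

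The substantive step is the forward direction, and the one genuinely new point there is the selection of $\calM$: projective stationarity is invoked exactly to secure a single countable $\calM\prec\mathcal{N}$ with both $\calM\cap H_\kappa\in S$ and $\calM\cap\omega_1\in T$, via the lifting of $\{Y\in S\st Y\cap\omega_1\in T\}$ to $[H_\lambda]^\omega$. Everything after that — constructing the $\calM$-generic filter, the top node, and forcing $\delta$ into $\dot C$ — is the argument already made for Fact~\ref{fact:P_SisCountablyDistributiveAndExhaustive}\ref{item:CountablyDistributive}, now carrying along the extra fact that $\calM\cap\omega_1\in T$. In the converse the only point to watch is that individual members of the generic chain lie in $V$ although the chain itself does not, which is what allows the witness found in $V[G]$ to be transported back to $V$.
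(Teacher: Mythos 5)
Your proof is correct; note that the paper itself does not prove this fact but defers to Feng \& Jech, and your argument is exactly the standard one it points to there (the forward direction via a countable $\calM\prec H_\lambda$ with $\calM\cap H_\kappa\in S$ and $\calM\cap\omega_1\in T$ supplied by projective stationarity, followed by the same $\calM$-generic-filter construction used in the paper's proof of Fact~\ref{fact:P_SisCountablyDistributiveAndExhaustive}\ref{item:CountablyDistributive}; the converse by reading off a suitable node of the generic chain and pulling it back to $\V$). No gaps.
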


For the proof of this fact, see Feng \& Jech \cite{FengJech:ProjectiveStationarityAndSRP} -- one direction is given by the proof of Theorem 1.1, and the converse is outlined in the paragraph after the proof, on page 275.

\subsection{Relativizing to a forcing class}
\label{subsec:RelativizingToForcingClass}

\begin{defn}
\label{def:SSP}
I write $\SSP$ for the class of all forcing notions that preserve stationary subsets of $\omega_1$.
\end{defn}

With hindsight, the results in the previous subsection show that the strong reflection principle can be formulated as follows.

\bigskip

\begin{quote}
  \emph{Whenever $\kappa\ge\omega_2$ is regular, $S\sub[H_\kappa]^\omega$ is stationary, and the forcing $\P_S$ to shoot a continuous elementary chain through $S$ is in \SSP, then $S$ already contains a continuous $\in$-chain of length $\omega_1$.}
\end{quote}

\bigskip

The advantage of this formulation is that it generalizes easily to arbitrary forcing classes. First, let me generalize the concept of projective stationarity.

\begin{defn}
\label{def:GammaProjectiveStationary}
Let $\Gamma$ be a forcing class. Then a stationary subset $S$ of $H_\kappa$, where $\kappa\ge\omega_2$ is regular, is \emph{$\Gamma$-projective stationary} iff $\P_S\in\Gamma$.
\end{defn}

Thus, the Feng-Jech notion of projective stationarity is the same thing as \SSP-projective stationarity. Generalizing the above formulation of \SRP, we arrive at:

\begin{defn}
\label{def:Gamma-SRP}
Let $\Gamma$ be a forcing class. Let $\kappa\ge\omega_2$ be regular. The \emph{$\Gamma$-fragment of the strong reflection principle at $\kappa$}, denoted $\Gamma$-$\SRP(\kappa)$, states that whenever $S\sub[H_\kappa]^\omega$ is $\Gamma$-projective stationary, then $S$ contains a continuous chain of length $\omega_1$. The \emph{$\Gamma$-fragment of the strong reflection principle,} $\Gamma$-\SRP, states that $\Gamma$-$\SRP(\kappa)$ holds for every $\kappa\ge\omega_2$.
\end{defn}

The idea is that the collection of the $\Gamma$-projective stationary sets captures exactly those sets whose instance of the strong reflection principle follows from the forcing axiom for $\Gamma$ using the simplest possible argument, namely that $\P_S$ is in $\Gamma$. Let me make this precise. First, by the forcing axiom for $\Gamma$, I mean the version of Martin's Axiom $\MA_{\omega_1}$ for $\Gamma$ rather than the collection of all ccc partial orders.

\begin{defn}
\label{def:FA_Gamma}
Let $\Gamma$ be a class of forcing notions. The \emph{forcing axiom for $\Gamma$}, denoted $\FA(\Gamma)$, states that whenever $\P$ is a forcing notion in $\Gamma$ and $\seq{D_i}{i<\omega_1}$ is a sequence of dense subsets of $\P$, there is a filter $F\sub\P$ such that for all $i<\omega_1$, $F\cap D_i\neq\leer$.
\end{defn}

It is now easy to check that $\Gamma$-\SRP behaves as claimed in the introduction.

\begin{obs}
Let $\Gamma$ be a forcing class. Then $\FA(\Gamma)$ implies $\Gamma$-\SRP.
\end{obs}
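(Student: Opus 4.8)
The plan is to unwind the definitions and apply the density facts already established. Suppose $\FA(\Gamma)$ holds, fix a regular $\kappa \ge \omega_2$, and let $S \sub [H_\kappa]^\omega$ be $\Gamma$-projective stationary; by Definition \ref{def:GammaProjectiveStationary} this means precisely that the forcing $\P_S$ to shoot a continuous $\in$-chain through $S$ lies in $\Gamma$. We must produce a continuous $\in$-chain of length $\omega_1$ through $S$. The natural move is to apply $\FA(\Gamma)$ to $\P_S$ with a carefully chosen $\omega_1$-sequence of dense sets, and then show that the union of a filter meeting all of them \emph{is} such a chain (up to reindexing its limit).

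First I would specify the dense sets. For each countable ordinal $\alpha$, let $D_\alpha = \{p \in \P_S \st \ell^p \ge \alpha\}$; by Fact \ref{fact:P_SisCountablyDistributiveAndExhaustive}\ref{item:GenericSequenceHasLengthOmega1}, each $D_\alpha$ is dense. This is a sequence of $\omega_1$-many dense subsets of $\P_S$, so by $\FA(\Gamma)$ there is a filter $F \sub \P_S$ with $F \cap D_\alpha \neq \leer$ for every $\alpha < \omega_1$. Since conditions in $\P_S$ are ordered by end-extension and $F$ is a filter, the union $\bigcup F$ is a single continuous $\in$-chain through $S$ (any two conditions in $F$ are compatible, hence one end-extends the other, so their sequences cohere), and because $F$ meets every $D_\alpha$, this chain has length $\ge \alpha$ for every $\alpha < \omega_1$, i.e.\ length $\ge \omega_1$. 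Truncating to the first $\omega_1$ coordinates gives the desired continuous $\in$-chain of length $\omega_1$ through $S$; its limit stages are unions of earlier stages by the defining coherence of conditions in $\P_S$, and those unions lie in $S$ because each condition's sequence consists of members of $S$ and at successor-of-limit the value is the union, which is again a member of $S$ as $\P_S$ only admits such chains.

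There is essentially no obstacle here — this is a direct application of the forcing axiom to the forcing whose membership in $\Gamma$ is the very definition of $\Gamma$-projective stationarity. The only minor point worth stating carefully is that one needs the length of $\bigcup F$ to reach $\omega_1$ exactly (not overshoot in a way that matters, not fall short): falling short is ruled out by the density of the $D_\alpha$'s, and any excess is harmless since a continuous $\in$-chain of length $> \omega_1$ restricts to one of length $\omega_1$. So the observation follows, and this is exactly the sense in which, as remarked after Definition \ref{def:Gamma-SRP}, the $\Gamma$-projective stationary sets are those whose instance of \SRP follows from $\FA(\Gamma)$ by the simplest argument: ``$\P_S \in \Gamma$, now apply the forcing axiom.''
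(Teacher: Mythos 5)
Your proof is correct and follows exactly the paper's argument: apply $\FA(\Gamma)$ to $\P_S$ (which is in $\Gamma$ by the definition of $\Gamma$-projective stationarity) with the dense sets $D_\alpha$ of conditions of length at least $\alpha$ from Fact \ref{fact:P_SisCountablyDistributiveAndExhaustive}\ref{item:GenericSequenceHasLengthOmega1}, and observe that the union of the resulting filter is a continuous $\in$-chain of length $\omega_1$ through $S$. The extra detail you supply about coherence of conditions in the filter and membership of limit stages in $S$ is accurate and harmless elaboration of what the paper leaves implicit.
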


\begin{proof}
Let $\kappa\ge\omega_2$ be regular, and let $S\sub[H_\kappa]^\omega$ be $\Gamma$-projective stationary. Then $\P_S\in\Gamma$, and, for $i<\omega_1$, we can let $D_i$ be the set of conditions in $\P_S$ of length at least $i$. By clause \ref{item:GenericSequenceHasLengthOmega1} of Fact \ref{fact:P_SisCountablyDistributiveAndExhaustive}, $D_i$ is a dense subset of $\P_S$. So by $\FA(\Gamma)$, there is a filter $F$ meeting each $D_i$. But then $\bigcup F$ is a continuous $\in$-chain through $S$.
\end{proof}

The utility of \SRP is, of course, that it encapsulates many of the consequences of the forcing axiom for stationary set preserving forcing without mentioning forcing. Thus, in order to arrive at a similarly useful version of it for other forcing classes, it will be crucial to express $\Gamma$-projective stationarity in a purely combinatorial way that does not mention $\Gamma$ explicitly.

As an illustration, let's look at two examples.

\begin{example}
\label{example:ProperSRPtrivial}
Let \Proper be the class of all proper forcing notions, and let's consider the notion of projective stationarity associated to that class. It is then not hard to see that:

\begin{obs}
Let $\kappa\ge\omega_2$ be regular. Then a stationary set
$S\sub[H_\kappa]^\omega$ is \Proper-projective stationary iff $S$ contains a club.
\end{obs}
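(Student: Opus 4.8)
The plan is to prove both implications directly. For the easy direction, suppose $S$ contains a club $C \sub [H_\kappa]^\omega$. I would first argue that $\P_C$ is proper: given a sufficiently large $\theta$, a countable $M \elementaryIn \kla{H_\theta, \in, <^*, C, \P_C}$ with $M \cap H_\kappa \in C$, and a condition $p \in M \cap \P_C$, I would build an $(M, \P_C)$-generic condition by recursively meeting the dense sets in $M$ — this is exactly what Fact~\ref{fact:P_SisCountablyDistributiveAndExhaustive} gives us, since the argument there produces a condition $q$ of length $M \cap \omega_1 + 1$ extending an $M$-generic filter, and the top model $M \cap \kappa$ lies in $C$ because $C$ is club (closed under the relevant unions). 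Since the top point of $q$ is $M \cap \kappa$, $q$ forces $M[\dot G] \cap \mathrm{Ord} = M \cap \mathrm{Ord}$ in the usual way, so $q$ is $(M,\P_C)$-generic. Hence $\P_C \in \Proper$. Now since $S \supseteq C$, the identity map witnesses that $\P_C$ is a dense (in fact the relevant "restriction" gives a subforcing) — more carefully, any chain through $C$ is a chain through $S$, so $\P_C$ regularly embeds into, or rather every condition of $\P_S$ restricted to $C$-models is in $\P_C$; the cleanest route is to note $\P_C$ is a suborder of $\P_S$ that is dense below the condition $\kla{M^p_0}$ whenever $M^p_0 \in C$, and since $C$ is club this makes $\P_S$ forcing-equivalent to $\P_C$, hence proper, hence $S$ is $\Proper$-projective stationary.

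For the converse, suppose $S$ does not contain a club; I must show $\P_S$ is not proper. Since $[H_\kappa]^\omega \ohne S$ is stationary, fix a large regular $\theta$, a well-order $<^*$ of $H_\theta$, and consider $\calN = \kla{H_\theta, \in, <^*, S, \P_S}$. The key is to find a countable $M \elementaryIn \calN$ with $M \cap H_\kappa \notin S$: this is possible precisely because the complement of $S$ is stationary, so the set of such $M$ is stationary in $[H_\theta]^\omega$ (using that $X \mapsto X \cap H_\kappa$ maps a club to a club, and projecting the stationary complement up). Fix such an $M$ and the condition $p = \kla{M \cap H_\kappa}$ — wait, that is not a condition since $M \cap H_\kappa \notin S$. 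Instead I take any condition $p \in M \cap \P_S$ and claim no $q \le p$ is $(M, \P_S)$-generic: if $q$ were $(M,\P_S)$-generic, then by Fact~\ref{fact:P_SisCountablyDistributiveAndExhaustive}\ref{item:GenericSequenceHasLengthOmega1} and \ref{item:UnionOfGenericIsExhaustive} (applied inside $M$, as all the relevant dense sets are in $M$), the genericity would force $\sup(\mathrm{dom}) = M \cap \omega_1$ and $\bigcup_{i < M\cap\omega_1} q(i) = M \cap H_\kappa$; but then the model $M \cap H_\kappa$ would have to be a member of $S$ (being the union of a chain of elements of $S$ of limit length — one needs to observe that the last coordinate a generic condition would have to place is forced to equal this union, and any condition of limit-plus-one length through $S$ has its top coordinate in $S$). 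That contradicts $M \cap H_\kappa \notin S$. Hence $\P_S$ has no $(M,\P_S)$-generic condition below $p$, so $\P_S$ is not proper.

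The main obstacle, and the step deserving the most care, is the converse: pinning down exactly why an $(M,\P_S)$-generic condition would be forced to have top coordinate equal to $M \cap H_\kappa$, and hence force $M \cap H_\kappa \in S$. The subtlety is that a generic filter $G$ over $M$ yields $\bar q = \bigcup G$ of limit length $M \cap \omega_1$ with union $M \cap H_\kappa$, but $\bar q$ itself is not a condition (conditions have successor length); the genuine condition $q$ that is $(M,\P_S)$-generic must end-extend everything in $G$, and $q$'s first limit-union coordinate reached at $M \cap \omega_1$ would be $M \cap H_\kappa$, forcing that set to be in $S$ by definition of $\P_S$. I would spell this out by noting that $q \restriction (M\cap\omega_1)$ is determined (it equals $\bar q$), so $q(M\cap\omega_1) = \bigcup \bar q = M \cap H_\kappa \in S$. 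This is really the same computation as in the proof of Fact~\ref{fact:P_SisCountablyDistributiveAndExhaustive}\ref{item:CountablyDistributive}, just read contrapositively, so I would present it briskly by citing that argument.
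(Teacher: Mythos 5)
Your forward direction (``$\P_S$ proper implies $S$ contains a club'') is correct but takes a genuinely different route from the paper. The paper argues top--down: properness is equivalent to preserving stationary subsets of $[X]^\omega$, and since the generic object for $\P_S$ is a continuous chain of length $\omega_1$ through $S$ exhausting $H_\kappa^\V$, the forcing kills the stationarity of $[H_\kappa]^\omega\ohne S$, which therefore must already have been nonstationary in $\V$. You instead argue bottom--up: if $S$ omits a club, there are stationarily many countable $M\prec H_\theta$ with $M\cap H_\kappa\notin S$, and for such an $M$ no $(M,\P_S)$-generic condition can exist, because genericity forces the $M$-part of the generic filter to union up to a chain of length $M\cap\omega_1$ with union $M\cap H_\kappa$, so by continuity the generic chain places $M\cap H_\kappa$ at coordinate $M\cap\omega_1$ and hence forces $M\cap H_\kappa\in S$ --- an absolute statement, contradicting the choice of $M$. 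This is more self-contained (it avoids the stationary-preservation characterization of properness) at the cost of the limit-coordinate computation, which you identify as the delicate point and handle correctly.

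The converse direction as you wrote it has a genuine flaw, though it is easily repaired. You establish that $\P_C$ is proper and then try to transfer this to $\P_S$ by asserting that $\P_C$ is dense in $\P_S$ below conditions whose first model lies in $C$, hence that the two forcings are equivalent. This is false: the order is end-extension, so every extension of $r\in\P_S$ retains all models occurring in $r$; once some $r\le\kla{M^p_0}$ contains a model in $S\ohne C$, no extension of $r$ belongs to $\P_C$, so $\P_C$ is not dense below $\kla{M^p_0}$ and no forcing equivalence follows. Moreover, properness of $\P_S$ requires a generic condition below \emph{every} $p\in\P_S\cap M$, including conditions with models outside $C$, so even a correct equivalence on a cone would not suffice. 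The repair is simply to run your own construction directly on $\P_S$, which is what the paper does: for countable $M\prec H_\theta$ with $S,C,\P_S\in M$ and $p\in\P_S\cap M$, take an $M$-generic filter $G\ni p$ for $\P_S$; by Fact~\ref{fact:P_SisCountablyDistributiveAndExhaustive} its union is a continuous chain of length $M\cap\omega_1$ through $S$ with union $M\cap H_\kappa$, and $M\cap H_\kappa\in C\sub S$ because $C\in M$ is club, so capping with $M\cap H_\kappa$ yields a condition in $\P_S$ extending a member of every dense set in $M$, hence an $(M,\P_S)$-generic condition below $p$.
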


\begin{proof}
For the forward direction, suppose that $S$ is \Proper-projective stationary, that is, $\P_S$ is proper. One of the many characterizations of properness is the preservation of stationary subsets of $[X]^\omega$, for any uncountable $X$. Now $\P_S$ shoots a club through $S$, and this means that the complement $[H_\kappa]^\omega\ohne S$ could not have been stationary in $\V$, since its stationarity would be killed by $\P_S$. But this means that $S$ contains a club.

For the converse, suppose that $S$ contains a club $C\sub[H_\kappa]^\omega$. Let $\theta$ be sufficiently large, and let $M$ be a countable elementary submodel of $\kla{H_\kappa,\in,<^*}$, with $\P_S,C\in M$. Let $p\in\P_S\cap M$. Let $N=M^p_{\ell^q}$. Since $M$ believes that $C$ is club in $H_\kappa$, it is now easy to construct an $\in$-chain $\seq{N_i}{i<\omega}$ so that $N\in N_0$, $N_i\in M$ and $\bigcup_{i<\omega}N_i=M\cap H_\kappa$. It is then routine to verify that the condition $q=p\verl\vec{N}\verl(M\cap H_\kappa)$ is $(M,\P_S)$-generic.
\end{proof}

Since any club contains a continuous $\in$-chain of length $\omega_1$, the proper fragment of \SRP is thus provable in \ZFC.
\end{example}

\begin{example}
\label{example:SemiproperSRPisSRP}
For an example going in the other extreme, let $\Semiproper$ be the class of semiproper partial orders. In \cite{FengJechZapletal:StructureOfStationarySets}, a set $S\sub[\kappa]^\omega$ is defined to be \emph{spanning} if for every $\lambda\ge\kappa$ and every club $C\sub[\lambda]^\omega$, there is a club $D\sub[\lambda]^\omega$ such that for every $x\in D$, there is a $y\in C$ such that $x\sub y$ and $x\cap\omega_1=y\cap\omega_1$ and $y\cap\kappa\in S$. It is shown in \cite[Theorem 4.4]{FengJechZapletal:StructureOfStationarySets} that $S$ is spanning iff $\P_S$ is semiproper, that is, using our terminology, $S$ is spanning iff it is $\Semiproper$-projective stationary. However, \cite[Cor.~5.4]{FengJechZapletal:StructureOfStationarySets} can be expressed as saying that $\Semiproper$-\SRP implies \SRP, so the semiproper fragment of \SRP is equivalent to the full principle \SRP.
\end{example}

\subsection{The subcomplete fragment of \SRP}
\label{subsec:SCfragment}

In the previous subsection, we have seen that the class of all proper forcing notions is too small to be of interest, in the sense that $\Proper$-\SRP is provable in \ZFC, and the class of all semiproper forcing notions is too large to be of interest, in the sense that $\Semiproper$-\SRP is equivalent to the full principle \SRP, and so is nothing new. So let us now get ready to define when a forcing notion is subcomplete, so that we can turn to the subcomplete fragment of \SRP.

\begin{defn}[Jensen]
\label{def:Full}
A transitive model $N$ of \ZFCm is \emph{full} if there is an ordinal $\gamma>0$ such that $L_\gamma(N)\models\ZFCm$ and $N$ is regular in $L_\gamma(N)$, meaning that if $a\in N$, $f:a\To N$ and $f\in L_\gamma(N)$, then $\ran(f)\in N$. A possibly nontransitive but well-founded model of \ZFCm is full if its transitive isomorph is full.
\end{defn}

The notion of fullness is central to the theory of subcomplete forcing, and so, it seems worthwhile to elaborate on it a little bit, since it is somewhat subtle. First off, when I say that $N$ is a transitive model of \ZFCm, I mean that $N$ is a model of a countable language which may extend the language of set theory, in which the symbol $\dot{\in}$ is interpreted as the actual $\in$ relation, restricted to $N$, and that $N$ satisfies the usual axioms of \ZFCm, with respect to its language, that is, the formulas in the axiom schemes are allowed to contain the additional symbols available in the language of $N$. There is a subtlety in the concept of fullness, then, since whether or not a model $N$ is full depends on the way it is represented. For a simple example, let's assume that $N$ is a countable full model of \ZFCm in the language of set theory. Now let us consider $N_0$ to be like $N$, except that $N_0$ has a constant symbol $c_a$ for every $a\in N_0$, so that $c_a^{N_0}=a$. Clearly, $N_0$ is also a model of \ZFCm, and $N_0$ is also full. Now let $N_1$ be like $N$, but equipped with constant symbols $d_0, d_1, \ldots$, interpreted as $d_n^{N_1}=f(n)$, where $f:\omega\To N$ is a bijection. In a model-theoretic sense, $N_0$ and $N_1$ are essentially the same, it is just that their constant symbols are different. However, $N_0$ is full, while $N_1$ is not, since in $L_\gamma(N_1)$, the function $n\mapsto d_n^{N_1}=f(n)$ is available, and hence the fact that $N_1$ is countable is revealed. Thus, in order to make sense of the definition of fullness, one has to view the model $N$ as the triple $\kla{|N|,\mathcal{L},I}$, where $|N|$ is the universe of $N$, $\mathcal{L}$ is the language of $N$ in an explicitly given G\"odelization (since this example shows that it is important what the symbols in the language are), and $I$ is the function assigning each element of $\mathcal{L}$ its interpretation in $N$. In the context of subcomplete forcing, the model $N$ in question will always be a model of a language with just one additional predicate symbol (which avoids the complications just mentioned). In fact, it will always be the result of constructing relative to some set. The notation I use for relative constructibility follows Jensen's conventions: for a class $A$, define recursively:
\begin{itemize}
\item $L_0[A]=\leer$, $L_0^A=\kla{\leer,\leer,\leer}$,
\item $L_{\alpha+1}[A]=\Def(L_\alpha^A)$,\footnote{Here, in the case $\alpha=0$, $L_0[A]$ is not technically a model, because its universe is empty, so we have to set $\Def(\kla{\leer,\leer,\leer})=\{\leer\}$ to make literal sense of this definition.}
    $L_{\alpha+1}^A=\kla{L_{\alpha+1}[A],\in\rest L_{\alpha+1}[A],A\cap L_{\alpha+1}[A]}$,
\item for limit $\lambda$, $L_\lambda[A]=\bigcup_{\alpha<\lambda}L_\alpha[A]$ and $L_\lambda^A=\kla{L_\lambda[A],\in\rest L_\lambda[A],A\cap L_\lambda[A]}$.
\end{itemize}

\begin{defn}
\label{def:Density}
The \emph{density} of a poset $\P$, denoted $\delta(\P)$, is the least cardinal $\delta$ such that there is a dense subset of $\P$ of size $\delta$.
\end{defn}

I can now define Jensen's notion of subcompleteness and its simplification, $\infty$-subcompleteness, introduced in \cite{FuchsSwitzer:IterationTheorems}.

\begin{defn}
\label{def:(ininifty-)subcompleteness}
A forcing notion $\P$ is \emph{subcomplete} if every sufficiently large cardinal $\theta$ \emph{verifies} the subcompleteness of $\P$, which means that $\P\in H_\theta$, and for any \ZFCm{} model $N=L_\tau^A$ with $\theta<\tau$ and $H_\theta\sub N$, any $\sigma:\bN\prec N$ such that $\bN$ is countable, transitive and full and such that $\P,\theta\in\ran(\sigma)$, any $\bar{G}\sub\bar{\P}$ which is $\bar{\P}$-generic over $\bN$, any $\bar{s}\in\bN$, and any ordinals $\blambda_0,\ldots,\blambda_{n-1}$ such that $\blambda_0=\On\cap\bN$ and $\blambda_1,\ldots,\blambda_{n-1}$ are regular in $\bN$ and greater than $\delta(\bar{\P})^\bN$, the following holds. Letting $\sigma(\kla{\bar{\theta},\bar{\P}})=\kla{\theta,\P}$, and setting $\bar{S}=\kla{\bar{s},\bar{\theta},\bar{\P}}$, there is a condition $p\in\P$ such that whenever $G\sub\P$ is $\P$-generic over $\V$ with $p\in G$, there is in $\V[G]$ a $\sigma':\bN\prec N$ such that
	\begin{enumerate}[label=(\arabic*)]
		\item
        \label{item:AgreementOnParameters}
        $\sigma'(\bar{S})=\sigma(\bar{S})$,
		\item
        \label{item:LiftingCondition}
        $(\sigma')``\bar{G}\sub G$,
		\item
		\label{item:SupremumCondition}
        $\sup\sigma``\blambda_i=\sup\sigma'``\blambda_i$ for each $i<n$.
	\end{enumerate}
$\P$ is \emph{$\infty$-subcomplete} iff the above holds with \ref{item:SupremumCondition} removed.

I denote the classes of subcomplete and $\infty$-subcomplete forcing notions by $\SC$ and $\infSC$, respectively.
\end{defn}

It should be pointed out that full models as in the previous definition are abundant. For example, suppose that $H_\theta\sub L[A]$, where $A\sub L_\beta[A]$, and let $\tau<\tau'$ be successive cardinals in $L[A]$, say, with $\beta<\tau$. Then whenever $X'\prec L_{\tau'}^A$ and $X=X'\cap L_\tau[A]$, it follows that $L_\tau^A|X$ is full.

The following easy fact can be used in order to further simplify the definitions of subcompleteness/$\infty$-subcompleteness.

\begin{fact}
\label{fact:AbsorbingParameters}
Let $L_\tau^A$ be a model of \ZFCm, and let $s\in L_\tau[A]$. Then there is a $B$ such that $L_\tau[B]=L_\tau[A]$, $L_\tau^B\models\ZFCm$ and such that $s$ is definable (without parameters) in $L_\tau^B$. Moreover, $B$ is definable in $L_\tau^A$ and $A\cap L_\tau[A]$ is definable in $L_\tau^B$. In particular, $L_\tau^A$ is full iff $L_\tau^B$ is.
\end{fact}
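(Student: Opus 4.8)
The plan is to absorb the parameter $s$ into the predicate by a one-line coding and then verify the listed properties; the only non-trivial point is the identity $L_\tau[B]=L_\tau[A]$, which requires a standard induction-on-levels argument. Since only $A\cap L_\tau[A]$ influences $L_\alpha^A$ for $\alpha\le\tau$, I may assume $A\sub L_\tau[A]$. I fix the least $\gamma_0<\tau$ with $s\in L_{\gamma_0}[A]$ and put
\[B\mdf\{\kla{0,x}\st x\in A\}\cup\{\kla{1,s}\},\]
with $\kla{\cdot,\cdot}$ the Kuratowski pair. Then $B\sub L_\tau[A]$; from $B$ one recovers $A=\{x\st\kla{0,x}\in B\}$ and the element $s$ (the unique $y$ with $\kla{1,y}\in B$), uniformly and $\Delta_0$-definably over any transitive set containing $B$ and closed under pairing, while $B$ is definable from the two parameters $A$ and $s$. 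In particular $A$ and $s$ are definable without parameters in $L_\tau^B$, and $B$ is definable in $L_\tau^A$ from the parameter $s$.

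Next I argue $L_\tau[B]=L_\tau[A]$. For a limit $\lambda\ge\omega$ the level $L_\lambda[A]$ is closed under pairing, so over $\kla{L_\lambda[A],\in,B\cap L_\lambda[A]}$ one can define $A\cap L_\lambda[A]$ without parameters, and (once $\lambda>\gamma_0$) over $L_\lambda^A$ one can define $B\cap L_\lambda[A]$ using $s$; hence $L_{\lambda+1}[A]=L_{\lambda+1}[B]$. Because the Kuratowski pair raises rank, at successor stages the two hierarchies can drift apart, but by induction the drift stays finite within each $\omega$-block, so they re-synchronize at every limit stage; as $\tau$ is a limit ordinal $>\gamma_0$, a straightforward induction on limit ordinals $\le\tau$ gives $L_\tau[A]=L_\tau[B]$. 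From this, $L_\tau^B\models\ZFCm$: the $\dot B$-free instances of the axiom schemes hold since $L_\tau^A\models\ZFCm$, and any instance mentioning $\dot B$ translates, via the definition of $B$ from $A$ and $s$, into an instance mentioning $\dot A$ with the extra parameter $s\in L_\tau[A]$, which holds in $L_\tau^A$ because parameters are permitted in the schemes.

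For the fullness claim, write $N_A=L_\tau^A$ and $N_B=L_\tau^B$. These structures have the same transitive universe $|N_A|=L_\tau[A]=L_\tau[B]=|N_B|$, and $s\in L_\tau[A]\sub\TC(\{N_A\})=\TC(\{N_B\})$. Since the third component of $N_A$ is $A$ and $B$ is definable from $A$ and $s$ (and $A$ from $B$), there is a finite $m$ with $N_A\in L_m(N_B)$ and $N_B\in L_m(N_A)$; by the same bounded-drift observation as before, $L_\omega(N_A)=L_\omega(N_B)$, and hence $L_\gamma(N_A)=L_\gamma(N_B)$ for every $\gamma\ge\omega$. Now if $\gamma>0$ witnesses fullness of $N_A$, then $L_\gamma(N_A)\models\ZFCm$ forces $\gamma\ge\omega$ (each $L_n(N_A)$ has bounded rank and so fails \Pairing), whence $L_\gamma(N_B)=L_\gamma(N_A)\models\ZFCm$; and the statement ``$N$ is regular in $L_\gamma(N)$'' refers only to $|N|$ and $L_\gamma(N)$, which coincide for $N_A$ and $N_B$ at such $\gamma$. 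So $\gamma$ also witnesses fullness of $N_B$, and by symmetry $L_\tau^A$ is full iff $L_\tau^B$ is.

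The coding itself is trivial; the one genuinely technical step is $L_\tau[B]=L_\tau[A]$, where one must face the fact that set-coding raises rank, so the two $L$-hierarchies really do drift apart at successor levels and one has to observe that they re-synchronize at limit levels and hence agree at the limit ordinal $\tau$ — routine relative-constructibility bookkeeping, but the only place where care is needed.
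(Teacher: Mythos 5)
Your proof is correct, but it takes a genuinely different route from the paper's. The paper first normalizes the predicate to a set of ordinals: using the definable well-order of $L_\tau^A$ it takes the monotone enumeration $F:\tau\To L_\tau[A]$, codes the relation $R=\{\kla{\alpha,\beta}\st F(\alpha)\in F(\beta)\}$ together with $A$ into a set $A'\sub\tau$ via \Goedel{} pairing \emph{of ordinals}, and then absorbs $s$ by adjoining the single ordinal $\goedel{1,\gamma}$ where $s=F(\gamma)$. Because the resulting $B$ is a subset of $\tau$, one has $B\cap L_\alpha[B]=B\cap\alpha$ at every level, so the rank-drift problem you wrestle with never arises; the identity $L_\tau[B]=L_\tau[A]$ instead comes from the facts that $A'$ is a definable class of the \ZFCm-model $L_\tau^A$ (so $L_\tau[A']\sub L_\tau[A]$ by absoluteness of the relative-constructibility construction) and that $A'$ codes $F$ as the Mostowski collapse of $\kla{\tau,R}$ (so $L_\tau[A]=\ran(F)\sub L_\tau[A']$). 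You instead code with Kuratowski pairs of sets and confront the drift head-on, showing by a finite-drift induction that the two hierarchies re-synchronize at every limit stage; I checked this argument and it goes through (the key inductive step being that if $L_{\lambda+n}[A]\in L_{\lambda+m}[B]$ then $A\cap L_{\lambda+n}[A]=\{x\in L_{\lambda+n}[A]\st\kla{0,x}\in B\cap L_{\lambda+m+2}[B]\}$ is definable over a slightly later $B$-level, and symmetrically using $s$ as a parameter once it has appeared). What each approach buys: the paper's ordinal coding makes the level-by-level identity essentially trivial and yields a reusable normalization (any finite list of parameters and the predicate itself can be packed into a subset of $\tau$), at the cost of invoking the definable well-order and a collapse/absoluteness argument; your version is more self-contained and elementary, but concentrates all the real work in the re-synchronization bookkeeping, which you correctly identify as the crux but leave at sketch level --- roughly the same level of detail as the paper's own ``it is easy to'' steps, so this is not a gap, just a different allocation of effort. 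Your treatment of the \ZFCm{} schemes (translating $\dot B$-instances into $\dot A$-instances with parameter $s$) and of fullness (observing that $L_\gamma(N_A)=L_\gamma(N_B)$ for $\gamma\ge\omega$ and that any witness to fullness must be infinite) is sound and slightly more explicit than the paper's.
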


\begin{proof}
First, by replacing $A$ with $A\cap L_\tau[A]$ if necessary, we may assume that $A\sub L_\tau[A]$. Second, we may assume that $A\sub\tau$. That is, we may construct a set $A'\sub\tau$ such that $L_\tau[A]=L_\tau[A']$, $A$ is definable in $L_\tau^{A'}$ and $A'$ is definable in $L_\tau^A$. Namely, $L_\tau^A$ has a definable well-order of its universe, and since it is a model of \ZFCm, the monotone enumeration of $L_\tau[A]$ according to this well-order is definable in $L_\tau^A$, and its domain is $\tau$. Let's call it $F:\tau\To L_\tau[A]$. Let $R=\{\kla{\alpha,\beta}\st F(\alpha)\in F(\beta)\}$. Then $F$ is the Mostowski-collapse of the structure $\kla{\tau,R}$. Now it is easy to encode $R$ and $A$ as a set of ordinals, using G\"{o}del pairs, for example, say
\[A'=\{\goedel{0,\alpha,\beta}\st F(\alpha)\in F(\beta)\}\cup\{\goedel{1,\alpha}\st F(\alpha)\in A\}.\]
Since $A'$ is a definable class in the $\ZFCm$-model $L_\tau^A$, it follows that $L_\tau^{A'}=(L^{A'})^{L_\tau^A}$ is also a model of $\ZFCm$, and since $A'$ codes $F$, it follows that $L_\tau[A']=L_\tau[A]$. Moreover, $A$ is definable in $L_\tau^{A'}$, by design.

It is now easy to prove the fact: we can define $B=\{\goedel{0,\alpha}\st\alpha\in A\}\cup\{\goedel{1,\gamma}\}$, where $s$ is the $\gamma$-th element of $L_\tau[A]$ in the canonical well-order.
\end{proof}

Of course, if any one element of $L_\tau[A]$ can be made definable by changing $A$ as in the previous fact, then any finitely many elements can be made definable by applying the same method to a finite sequence listing these elements. A consequence of this fact, or rather, its proof, is that in Definition \ref{def:(ininifty-)subcompleteness}, condition \ref{item:AgreementOnParameters} is vacuous, because if $\P$ satisfies this simplified definition, in the notation of that definition, one can modify $A$ to $A'$ in such a way that the desired parameters in $\bar{S}$ become definable in $N'=L_\tau^{A'}$. Letting $\bN=L_\tau^{\bar{A}}$, and $\bN'=L_\btau^{\bar{A}'}$ (where $\bar{A}'=\sigma^{-1}``A$ is constructed from $\bar{A}$ the same way that $A'$ is constructed from $A$), it then follows that $\bN'$ is full and $\sigma:\bN'\prec N'$. Thus, since $\P$ satisfies the simplified version of subcompleteness, there is a condition in $\P$ forcing the existence of an elementary embedding $\sigma':\bN'\prec N$ such that $\sigma'``\bG\sub\dot{G}$, where $\dot{G}$ is the canonical name for the generic filter. But then, $\sigma':\bN\prec N$ as well, and $\sigma'$ must move the desired parameters the same way $\sigma$ did, since they/their preimages are definable in $N'$/$\bN'$. This means, in particular, that only condition \ref{item:LiftingCondition} is really needed in the definition of $\infty$-subcompleteness.

%
%

The following definition is designed to capture the concept of $\infSC$-projective stationarity. If $N$ is a model and $X$ is a subset of $|N|$, the universe of $N$, then I write $N|X$ for restriction of $N$ to $X$.

\begin{defn}
\label{def:SpreadOut}
Let $\kappa$ be an uncountable regular cardinal. A stationary set $S\sub[H_\kappa]^\omega$ is \emph{spread out} if for every sufficiently large cardinal $\theta$, whenever $\tau$, $A$, $X$ and $a$ are such that $H_\theta\sub L_\tau^A=N\models\ZFCm$, $S,a,\theta\in X$, $N|X\prec N$, and $N|X$ is countable and full, then there are a $Y$ such that $N|Y\prec N$ and an isomorphism $\pi:N|X\To N|Y$ such that $\pi(a)=a$  and $Y\cap H_\kappa\in S$.
\end{defn}

Using Fact \ref{fact:AbsorbingParameters} as before, one can see that the definition of being spread out can be simplified by dropping any reference to $a$, since any desired parameter, or even any finite list of such parameters, can be made definable by modifying $A$ while preserving fullness. So  $S\sub[H_\kappa]^\omega$ is spread out if for all sufficiently large $\theta$, whenever $H_\theta\sub L_\tau^A=N\models\ZFCm$, $S\in X$, $N|X\prec N$, and $N|X$ is countable and full, then there is a $Y\in S\projectup[N]^\omega$ such that $N|X\isomorphic N|Y\prec N$.

Thus, in the situation of the previous definition, the stationarity of $S$ guarantees the existence of \emph{some} elementary submodel of $N$ in $S\projectup[N]^\omega$, but if $S$ is spread out, then \emph{every} elementary submodel of $N$ has an isomorphic copy in $S\projectup[N]^\omega$, as long as it is full.

The following theorem is the analog of Fact \ref{fact:SisProjStatIffP_SisSSP} for $\infty$-subcompleteness, providing a combinatorial characterization of $\infSC$-projective stationarity.

\begin{thm}
\label{thm:SpreadOutIffinfSCProjectiveStationary}
Let $\kappa$ be an uncountable regular cardinal, and let $S\sub[H_\kappa]^\omega$. Then $S$ is spread out iff $S$ is \infSC-projective stationary.
\end{thm}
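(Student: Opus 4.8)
The plan is to prove the two implications of the theorem separately, paralleling Feng and Jech's proof of Fact~\ref{fact:SisProjStatIffP_SisSSP}, with the embeddings provided by $\infty$-subcompleteness playing the role that countable elementary submodels of some $H_\lambda$ play there. Write $\P=\P_S$. I shall use the two density facts from Fact~\ref{fact:P_SisCountablyDistributiveAndExhaustive} (for every countable $\alpha$, the conditions of length $\ge\alpha$ are dense; for every $a\in H_\kappa$, the conditions some model of which contains $a$ are dense) together with the countable distributivity of $\P$ (clause~\ref{item:CountablyDistributive} of that fact). Fix $\theta$ large enough that, moreover, every $N=L_\tau^A\models\ZFCm$ with $H_\theta\sub N$ contains all club subsets of $[H_\kappa]^\omega$ and hence satisfies ``$S$ is stationary in $[H_\kappa]^\omega$''; then the two density facts hold inside any such $N$, and — for the preimage $\bP$ of $\P$ — inside any transitive model $\bN$ admitting an elementary embedding $\sigma:\bN\prec N$ into such an $N$ (as $\bN$ then satisfies ``$\sigma^{-1}(S)$ is stationary''). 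I tacitly assume $\theta<\tau$ throughout, as in Definition~\ref{def:(ininifty-)subcompleteness}.

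Suppose first that $\P$ is $\infty$-subcomplete; I verify that $S$ is spread out, for any such $\theta$ that also verifies $\infty$-subcompleteness of $\P$ and satisfies $\P,S\in H_\theta$. Let $N=L_\tau^A\models\ZFCm$ with $H_\theta\sub N$, let $S,a,\theta\in X$ with $N|X\prec N$ countable and full, and let $\sigma:\bN\prec N$ with $\bN$ transitive and $\ran(\sigma)=X$ (the inverse of the Mostowski collapse of $N|X$); so $\bN$ is countable, transitive and full, and $\P,\theta,S,a\in X$. Put $\bs=\sigma^{-1}(a)$, $\bP=\sigma^{-1}(\P)$, $\bkappa=\sigma^{-1}(\kappa)$, and fix $\bG$ that is $\bP$-generic over $\bN$. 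By $\infty$-subcompleteness there is $p\in\P$ forcing that there is a $\sigma':\bN\prec N$ with $\sigma'(\bs)=\sigma(\bs)=a$ and $(\sigma')``\bG\sub\dot G$; pass to a $\P$-generic $G\ni p$ and fix such a $\sigma'$ in $V[G]$. The key observation is that $\sigma'$ is a function whose domain $\bN$ is countable in $V$ and whose range is contained in the $V$-set $|N|$; coding it as an $\omega$-sequence of elements of $|N|$ via a $V$-enumeration of $\bN$ and using countable distributivity of $\P$, we get $\sigma'\in V$. Put $Y=\ran(\sigma')$ and $\pi=\sigma'\circ\sigma^{-1}$. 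Then $\pi:N|X\To N|Y$ is an isomorphism, $\pi(a)=a$, $N|Y\prec N$, and finally $Y\cap H_\kappa\in S$: writing $r=\bigcup\{\sigma'(\bar r)\st\bar r\in\bG\}$, the computation spelled out in the next paragraph gives $\bigcup\ran(r)=(\sigma')``H^{\bN}_{\bkappa}=Y\cap H_\kappa$, while on the other hand $r$ is an initial segment of limit length of the continuous chain $\bigcup G$, so $\bigcup\ran(r)$ is a value of $\bigcup G$ and hence lies in $S$. Since all of these assertions concern only sets lying in $V$, and satisfaction in the set structure $N$ and membership in $S$ are absolute, they hold in $V$; so $S$ is spread out.

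Conversely, suppose $S$ is spread out; I verify $\infty$-subcompleteness of $\P$, for any such $\theta$ with $\P,S\in H_\theta$ and large enough for the spread-out conclusion. By the remark following Fact~\ref{fact:AbsorbingParameters} it suffices to establish clause~\ref{item:LiftingCondition} of Definition~\ref{def:(ininifty-)subcompleteness}. So let $N=L_\tau^A\models\ZFCm$, $\theta<\tau$, $H_\theta\sub N$, let $\sigma:\bN\prec N$ with $\bN$ countable, transitive, full and $\P,\theta\in\ran(\sigma)=:X$, and let $\bG$ be $\bP$-generic over $\bN$, $\bP=\sigma^{-1}(\P)$; I must produce $p\in\P$ forcing that there is a $\sigma':\bN\prec N$ with $(\sigma')``\bG\sub\dot G$. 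Since $N|X\prec N$ is countable and full and $S\in X$, ``spread out'' gives $Y$ and an isomorphism $\pi:N|X\To N|Y$ with $N|Y\prec N$ and $Y\cap H_\kappa\in S$. Set $\sigma'=\pi\circ\sigma:\bN\prec N$, so $\ran(\sigma')=Y$, and write $\bkappa=\sigma^{-1}(\kappa)$. By the density facts inside $\bN$, $\bq:=\bigcup\bG$ is a continuous $\in$-chain through $\sigma^{-1}(S)$ of domain $\omega_1^{\bN}$ with $\bigcup\ran(\bq)=H^{\bN}_{\bkappa}$. Since $\bG$ is a filter, $\{\sigma'(\bar r)\st\bar r\in\bG\}$ is a chain under end-extension of genuine elements of $\P$ (recall $\P\in H_\theta$), so $r:=\bigcup\{\sigma'(\bar r)\st\bar r\in\bG\}$ is a continuous $\in$-chain through $S$ of some limit length $<\omega_1$. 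For $\bar r\in\bG$, $\bN$ regards $\bigcup\ran(\bar r)$ as countable, so $\sigma'$ acts on it by pointwise image; by elementarity this set is $\bigcup\ran(\sigma'(\bar r))$. Taking the union over $\bar r\in\bG$, and noting that the lengths $\ell^{\bar r}$ are cofinal in $\omega_1^{\bN}$ so that the sets $\bigcup\ran(\bar r)$ exhaust $H^{\bN}_{\bkappa}$, we obtain $\bigcup\ran(r)=(\sigma')``H^{\bN}_{\bkappa}=\ran(\sigma')\cap H_\kappa=Y\cap H_\kappa\in S$. Hence $p:=r\verl\langle Y\cap H_\kappa\rangle$ is a condition in $\P$. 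Finally, for each $\bar r\in\bG$, $\sigma'(\bar r)$ is an initial segment of $r$, hence of $p$, so $p\le\sigma'(\bar r)$; therefore whenever $G$ is a filter on $\P$ with $p\in G$ we have $(\sigma')``\bG\sub G$, witnessed by $\sigma'\in V\sub V[G]$.

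The forward direction is conceptually the easy one; its only non-routine ingredient is the observation that countable distributivity of $\P$ brings the embedding $\sigma'$, a priori available only in $V[G]$, back into $V$, after which the conclusion is a matter of absoluteness. The bulk of the work is in the converse: one must check that the ``limit'' $r$ of the $\sigma'$-images of the initial segments of $\bigcup\bG$ really is a continuous $\in$-chain through $S$, and — the essential point — that $\bigcup\ran(r)$ is exactly $Y\cap H_\kappa$. The subtlety is that $\sigma'$ need not fix countable ordinals, so the conditions $\sigma'(\bar r)$ can carry coordinates outside the range of $\sigma'$; one has to see that the models occupying those coordinates still lie in $Y\cap H_\kappa$, and that nothing outside $(\sigma')``H^{\bN}_{\bkappa}$ enters $\bigcup\ran(r)$. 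This is handled using that members of $S$ are countable subsets of $H_\kappa$, that $\bq$ exhausts $H^{\bN}_{\bkappa}$ (the exhaustiveness clause of Fact~\ref{fact:P_SisCountablyDistributiveAndExhaustive}), and that $\sigma'$ acts by pointwise image on sets that $\bN$ deems countable. A minor loose end is the degenerate configuration $\tau=\theta$ permitted by the letter of Definition~\ref{def:SpreadOut}, which one either rules out at the start or reduces to the case $\tau>\theta$.
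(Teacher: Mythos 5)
Your proof is correct and follows essentially the same route as the paper's: in one direction the master condition $r\verl\kla{Y\cap H_\kappa}$ built from $\sigma'=\pi\compose\sigma$, and in the other the use of countable distributivity of $\P_S$ to pull $\sigma'$ back into $\V$ together with the computation $\bigcup\ran(r)=(\sigma')``H^{\bN}_{\bkappa}=Y\cap H_\kappa\in S$ via continuity of the generic chain at $\delta=\omega_1^{\bN}$. The only point worth making explicit is that when you invoke ``spread out'' in the converse direction you should take the fixed parameter $a$ to include $S$ (equivalently $\P_S$), so that $\pi(S)=S$ and hence $\sigma'(\bar r)$ really is a condition in $\P_S$ rather than in $\P_{\pi(S)}$ --- exactly as the paper does when it asks for an isomorphism fixing $a,\P_S$.
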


\begin{proof}
For the direction from left to right, suppose $S$ is spread out. We have to show that $\P_S\in\infSC$. To that end, let $\theta$ be large enough for Definition \ref{def:SpreadOut} to apply. Let $N=L_\tau^A\models\ZFCm$ with $H_\theta\sub N$, and let $\P_S\in X$, $N|X\prec N$, $X$ countable and full. Let $a$ be some member of $X$, and let $\sigma:|\bN|\To X$ be the inverse of the Mostowski collapse of $X$, $|\bN|$ transitive, and $\sigma:\bN\prec N$. Let $\bP_\bS=\sigma^{-1}(\P_S)$, $\bar{a}=\sigma^{-1}(a)$, and let $\bG\sub\bP_\bS$ be $\bN$-generic. Note that since $\P_S\in X$, $S, H_\kappa\in X$. Let $\bkappa=\sigma^{-1}(\kappa)$. It follows from Fact \ref{fact:P_SisCountablyDistributiveAndExhaustive} that $\bigcup\bG$ is of the form $\seq{\bM_i}{i<\omega_1^\bN}$ and $\bigcup_{i<\omega_1^\bN}\bM_i=H_\bkappa^\bN$. Now, since $S$ is spread out, let $\pi:\kla{X,\in}\To\kla{Y,\in}$ be an isomorphism that fixes $a,\P_S$, with $Y\cap H_\kappa\in S$. Let $\sigma'=\pi\circ\sigma:\bN\prec Y$. Let $q=\seq{\sigma'(\bM_i)}{i<\omega_1^\bN}\verl(Y\cap H_\kappa)$. Since $Y\cap H_\kappa\in S$, it follows that $q\in\P_S$, and whenever $G\ni q$ is $\P_S$-generic over $\V$, then $\sigma'``\bG\sub G$. Since $\sigma'(\bar{a})=a$, this shows that $\P_S$ is $\infty$-subcomplete.

For the converse, suppose that $S$ is $\infSC$-projective stationary, that is, that $\P_S$ is $\infty$-subcomplete. Let $\theta$ witness that $\P_S$ is $\infty$-subcomplete. Let $N=L_\tau^A$, $X$, $a$ be as in Definition \ref{def:SpreadOut}. Since $S\in X$, it follows that $\kappa,\P_S\in X$ as well. Let $\sigma:\bN\To X$ be the inverse of the Mostowski collapse of $X$. Thus, $\sigma:\bN\prec N$, and as usual, let $\bar{a}$, $\bS$, $\bkappa$, $\bP_\bS$ denote the preimages of $a$, $S$, $\kappa$, $\P_S$ under $\sigma$. Let $\bG\sub\bP_\bS$ be an arbitrary $\bN$-generic filter. By $\infty$-subcompleteness of $\P_S$, let $p\in\P_S$ force the existence of an elementary embedding $\sigma':\bN\To N$ with $\sigma'(\bar{a})=a$ and $(\sigma')``\bG\sub\dot{G}$ ($\dot{G}$ being the canonical $\P_S$-name for the generic filter). Since $\P_S$ is countably distributive by Fact \ref{fact:P_SisCountablyDistributiveAndExhaustive}, it follows that there is such a $\sigma'\in\V$. Let $Y=\ran(\sigma')$, and let $G$ be $\P_S$-generic over $\V$ with $p\in G$. Let $\delta=\omega_1^\bN=\omega_1\cap X$. As before, $\bigcup\bG$ is of the form $\seq{\bM_i}{i<\delta}$. By Fact \ref{fact:P_SisCountablyDistributiveAndExhaustive}, we have that $\bM^*=\bigcup_{i<\delta}\bM_i=H_{\bkappa}^\bN$. For $i<\delta$, let $M_i=\sigma'(\bM_i)$ - note that this is the same as $(\sigma')``\bM_i$, as $\bM_i$ is countable in $\bN$. Since $G$ contains a condition of length $\delta+1$, letting $M_\delta=\bigcup_{i<\delta}M_i$, the sequence $q=\seq{M_i}{i\le\delta}$ is in $G$. It follows that $M_\delta\in S$. Moreover, \[M_\delta=\bigcup_{i<\delta}\sigma'``\bM_i=\sigma'``\bM^*=\sigma'``H_\bkappa^\bN=Y\cap H_\kappa^N\]
and so, $Y\cap H_\kappa\in S$. Letting $\pi=\sigma'\compose\sigma^{-1}$, one sees that $\pi:X\To Y$ is an isomorphism that fixes $a$, thus verifying that $S$ is spread out.
\end{proof}

Having a characterization of $\infSC$-projective stationarity of course gives a characterization of the $\infSC$-fragement of \SRP.

\begin{thm}
For an uncountable regular cardinal $\kappa$, the principle $\infSC$-$\SRP(\kappa)$ holds iff every spread out subset of $[H_\kappa]^\omega$ contains a continuous $\in$-chain of length $\omega_1$.
\end{thm}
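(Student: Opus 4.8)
The plan is to derive this theorem directly from Theorem~\ref{thm:SpreadOutIffinfSCProjectiveStationary} and the definition of $\infSC$-$\SRP(\kappa)$. By Definition~\ref{def:Gamma-SRP}, $\infSC$-$\SRP(\kappa)$ asserts precisely that every $\infSC$-projective stationary set $S\sub[H_\kappa]^\omega$ contains a continuous $\in$-chain of length $\omega_1$. So the first (and essentially only) step is to invoke Theorem~\ref{thm:SpreadOutIffinfSCProjectiveStationary}, which says that for $S\sub[H_\kappa]^\omega$, being spread out is equivalent to being $\infSC$-projective stationary. Substituting one property for the other in the statement of $\infSC$-$\SRP(\kappa)$ yields the desired reformulation.

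To be careful about the scope of the universal quantifier, I would note that both notions in play --- ``spread out'' (Definition~\ref{def:SpreadOut}) and ``$\infSC$-projective stationary'' (Definition~\ref{def:GammaProjectiveStationary}) --- are, by definition, properties of \emph{stationary} subsets of $[H_\kappa]^\omega$; in particular, a spread out set is stationary. Hence ``every spread out $S\sub[H_\kappa]^\omega$ contains a continuous $\in$-chain of length $\omega_1$'' and ``every $\infSC$-projective stationary $S\sub[H_\kappa]^\omega$ contains such a chain'' quantify over exactly the same family of sets, and Theorem~\ref{thm:SpreadOutIffinfSCProjectiveStationary} tells us these families coincide. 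Thus the two statements are literally equivalent, and the proof can be phrased in a single sentence.

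There is no genuine obstacle here: all the work was already done in proving Theorem~\ref{thm:SpreadOutIffinfSCProjectiveStationary}, whose left-to-right direction used that $S$ spread out implies $\P_S\in\infSC$ (lifting an $\bN$-generic filter along an isomorphic copy of $X$ landing in $S$), and whose converse used the countable distributivity of $\P_S$ from Fact~\ref{fact:P_SisCountablyDistributiveAndExhaustive} to pull the generically added embedding back into $\V$. The present theorem merely repackages that equivalence at the level of the reflection principle, so I would present it essentially as an immediate corollary.
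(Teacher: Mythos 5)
Your proof is correct and is exactly what the paper intends: the theorem is stated there without proof as an immediate consequence of Theorem~\ref{thm:SpreadOutIffinfSCProjectiveStationary} together with Definition~\ref{def:Gamma-SRP}. Your extra remark that both notions quantify over the same (stationary) sets is a sensible bit of care, but nothing beyond the one-line substitution is needed.
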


It will often be useful to work with the following seemingly weaker form of the notion ``spread out.'' It will turn out to be equivalent, but it will sometimes be easier to verify.

\begin{defn}
\label{def:WeaklySpreadOut}
Let $\kappa$ be an uncountable regular cardinal. A stationary set $S\sub[H_\kappa]^\omega$ is \emph{weakly spread out} if there is a set $b$ such that for all sufficiently large $\theta$, the condition described in Definition \ref{def:SpreadOut} is true of all $X$ with $b\in X$.
\end{defn}

\begin{obs}
\label{obs:WeaklySpreadOutImpliesSpreadOut}
Let $\kappa$ be an uncountable regular cardinal. A stationary set $S\sub[H_\kappa]^\omega$ is spread out iff it is weakly spread out.
\end{obs}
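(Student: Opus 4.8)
The plan is to treat the two implications separately. The direction from left to right is immediate: if $S$ is spread out, then $b=\leer$ witnesses that it is weakly spread out, since $\leer\in X$ holds automatically whenever $N|X\prec N\models\ZFCm$, so for this choice of $b$ the requirement of Definition \ref{def:WeaklySpreadOut} --- that the condition of Definition \ref{def:SpreadOut} hold of every $X$ with $b\in X$ --- is literally the assertion that $S$ is spread out. The content is in the converse, and the idea there is to \emph{absorb the witness $b$ into the predicate of the model}, using Fact \ref{fact:AbsorbingParameters}, so that the side condition ``$b\in X$'' becomes vacuous.

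In detail: assume $S$ is weakly spread out, as witnessed by $b$. Fix a cardinal $\theta$ large enough both for the weakly-spread-out hypothesis to apply and so that $b\in H_\theta$, and let $\tau,A,X,a$ be as in Definition \ref{def:SpreadOut}, so that $H_\theta\sub L_\tau^A=N\models\ZFCm$, $S,a,\theta\in X$, $N|X\prec N$, and $N|X$ is countable and full. Apply Fact \ref{fact:AbsorbingParameters} with $s=b$ (legitimate since $b\in H_\theta\sub L_\tau[A]$) to obtain a predicate $A'$ with $L_\tau[A']=L_\tau[A]$, $N':=L_\tau^{A'}\models\ZFCm$, $b$ definable without parameters over $N'$, the predicate of $N$ definable without parameters over $N'$, $A'$ definable without parameters over $N$, and $N$ full iff $N'$ full. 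Since $N$ and $N'$ have the same universe and are interdefinable without parameters, $N|X\prec N$ yields $N'|X\prec N'$; the transitive collapse of $N'|X$ is obtained from that of $N|X$ by the same (uniform, parameter-free) recipe, so fullness of $N|X$ transfers to $N'|X$; and $b\in X$ now holds automatically, since $b$ is parameter-free definable over $N'$ and $N'|X\prec N'$.

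Next, apply the weakly-spread-out hypothesis to $N',X,a$: this produces a set $Y$ with $N'|Y\prec N'$ and an isomorphism $\pi:N'|X\To N'|Y$ with $\pi(a)=a$ and $Y\cap H_\kappa\in S$. It remains to translate this back to $N$ along the fixed parameter-free definition of the predicate of $N$ over $N'$: $N'|Y\prec N'$ gives $N|Y\prec N$, and $\pi$, being an isomorphism of the $N'$-structures, must carry the definable predicate of $N|X$ onto that of $N|Y$, hence is also an isomorphism $\pi:N|X\To N|Y$, still fixing $a$ and still with $Y\cap H_\kappa\in S$. This is exactly the conclusion of Definition \ref{def:SpreadOut} for $N,X,a$; since $\theta$ ranged over all sufficiently large cardinals and $\tau,A,X,a$ over all admissible instances, $S$ is spread out.

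The only steps that are not pure bookkeeping are two uniformity claims invoked above: that the construction $A\mapsto A'$ of Fact \ref{fact:AbsorbingParameters} commutes with transitive collapse (so that fullness of $N|X$ descends to $N'|X$), and that an isomorphism preserving the richer $N'$-structure automatically preserves the coarser, parameter-free-definable $N$-structure. Both are immediate from the fact that the construction in Fact \ref{fact:AbsorbingParameters} is given by fixed formulas and produces parameter-free definitions, so I expect this to be the only place where any care is needed, and even there only routine care.
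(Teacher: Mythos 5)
Your forward direction is fine, but the converse contains a genuine gap, located at the step ``since $N$ and $N'$ have the same universe and are interdefinable without parameters, $N|X\prec N$ yields $N'|X\prec N'$.'' The two models are \emph{not} interdefinable without parameters: in the proof of Fact \ref{fact:AbsorbingParameters}, the new predicate is defined from the ordinal $\gamma$ coding the position of $s=b$ in the canonical well-order of $L_\tau^A$, so $A'$ is definable in $N$ only from the parameter $b$. Consequently, if $b\notin X$, there is no reason for $X$ to reflect existential statements in the language of $N'$ --- for instance, ``there is a $y$ such that $\goedel{1,y}$ lies in the predicate'' is true in $N'$, but its only witness is $\gamma$, which will typically not lie in $X$. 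Indeed, your own remark that $N'|X\prec N'$ forces $b\in X$ (because $b$ is parameter-free definable over $N'$) already refutes the implication: if $N|X\prec N$ entailed $N'|X\prec N'$, then every countable elementary submodel of $N$ would contain $b$, which fails whenever $b$ is not definable in $N$. The parameter-absorption trick runs only in the direction the paper uses it after Definition \ref{def:SpreadOut}: there the parameter $a$ is \emph{assumed} to lie in $X$, so $N'|X\prec N'$ does follow and one may delete $a$ from the definition; it cannot be used to smuggle a new element $b$ into an arbitrary $X$.

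The intended argument is different and genuinely needs more machinery. One first observes that weak spread-out-ness with witness $b$ makes $\P_S$ satisfy the definition of $\infty$-subcompleteness restricted to those embeddings $\sigma$ with $b\in\ran(\sigma)$, exactly as in the proof of Theorem \ref{thm:SpreadOutIffinfSCProjectiveStationary}; one then invokes Jensen's lemma that verifying ($\infty$-)subcompleteness for all embeddings whose range contains a fixed parameter already yields the full property --- an argument that passes to larger models and re-embeds, rather than re-coding the predicate --- and finally applies Theorem \ref{thm:SpreadOutIffinfSCProjectiveStationary} once more to conclude that $S$ is spread out. If you want a self-contained proof, you would have to reproduce that re-embedding argument; the predicate-recoding shortcut does not substitute for it.
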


\begin{proof}
Of course, if $S$ is spread out, it is also weakly spread out. For the converse, suppose $S$ is weakly spread out, as witnessed by the set $b$, say. Then the proof of Theorem \ref{thm:SpreadOutIffinfSCProjectiveStationary} shows that $\P_S$ satisfies the definition of $\infty$-subcompleteness, Definition \ref{def:(ininifty-)subcompleteness}, under the extra condition that $b\in\ran(\sigma)$, in the notation of that definition. But this implies that $\P_S$ is $\infty$-subcomplete, see the arguments in Jensen \cite[P.~115f., in particular Lemma 2.5]{Jensen2014:SubcompleteAndLForcingSingapore}. But if $\P_S$ is $\infty$-subcomplete, then $S$ is spread out, by Theorem \ref{thm:SpreadOutIffinfSCProjectiveStationary}.
\end{proof}

I would now like to make a few simple observations on the structure of the spread out sets and their relation to other notions of largeness of subsets of $[H_\kappa]^\omega$. First, of course, being spread out is a strengthening of projective stationarity.

\begin{obs}
\label{obs:SpreadOutImpliesProjectiveStationary}
If a stationary set $S\sub[H_\kappa]^\omega$ is spread out, then $S$ is projective stationary.
\end{obs}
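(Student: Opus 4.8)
The plan is to verify projective stationarity directly from the combinatorial definition of ``spread out'', exploiting the fact that an isomorphism between two full elementary submodels of $N=L_\tau^A$ automatically fixes $\omega_1$ and is the identity on the common ordinal $X\cap\omega_1=Y\cap\omega_1$, and hence transports any prescribed value of $X\cap\omega_1$ and any parameters it is fed. So fix a stationary set $T\sub\omega_1$ and a club $D\sub[H_\kappa]^\omega$; I want to produce a $Z\in S$ with $Z\in D$ and $Z\cap\omega_1\in T$.

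First I would set up a suitable model. Choose a cardinal $\theta$ large enough to witness that $S$ is spread out, and also large enough that $S$, $D$, $H_\kappa$ and $[H_\kappa]^\omega$ all lie in $H_\theta$ (so that clubhood in $[H_\kappa]^\omega$ is computed correctly in any transitive model containing $H_\theta$); pick a set $A$ with $H_\theta\sub L[A]$ and $A\sub L_\beta[A]$ for some $\beta<\theta$; let $\tau$ be a sufficiently large successor cardinal of $L[A]$, $\tau'=(\tau^+)^{L[A]}$, and $N=L_\tau^A$. Then $N\models\ZFCm$ and $H_\theta\sub N$. Now I need a \emph{full} countable $X\prec N$ with $\{S,D,\theta,\kappa\}\sub X$ and $X\cap\omega_1\in T$, and for this I would combine two standard ingredients. (i) The set of $\delta<\omega_1$ of the form $\Hull^{L_{\tau'}^A}(\{S,D,\theta,\kappa\}\cup\delta)\cap\omega_1$ contains a club of $\omega_1$, hence meets $T$; fixing such a $\delta\in T$ and putting $X'=\Hull^{L_{\tau'}^A}(\{S,D,\theta,\kappa\}\cup\delta)$ gives a countable $X'\prec L_{\tau'}^A$ with $X'\cap\omega_1=\delta$ and $\{S,D,\theta,\kappa\}\sub X'$. (ii) By the remark on the abundance of full models following Definition~\ref{def:(ininifty-)subcompleteness}, the set $X:=X'\cap L_\tau[A]$ satisfies $N|X\prec N$ (a routine elementarity argument, since $\tau\in X'$) and $N|X$ is full. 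As $\omega_1\sub L_\tau[A]$ we get $X\cap\omega_1=X'\cap\omega_1=\delta\in T$, and $N|X$ is countable with $S,\theta\in X$ and $a:=\kla{D,\kappa}\in X$.

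Next I would apply the hypothesis that $S$ is spread out: this yields a $Y$ with $N|Y\prec N$ and an isomorphism $\pi\colon N|X\To N|Y$ such that $\pi(a)=a$ and $Z:=Y\cap H_\kappa\in S$. Since $\pi$ is an $\in$-isomorphism fixing the ordered pair $a$, it fixes $D$ and $\kappa$; hence $D,\kappa\in Y$, so $H_\kappa,\omega_1\in Y$ too (being definable in $N$ from $\kappa$, resp.\ without parameters), and $Y$ is countable since $|Y|=|X|$. Then $Z\in D$, by the standard fact that a countable $Y\prec N$ with $D\in Y$ and $D$ a club of $[H_\kappa]^\omega$ (computed correctly in $N$) has $Y\cap H_\kappa\in D$: concretely, by elementarity $Y$ contains a function $F\colon[H_\kappa]^{<\omega}\to H_\kappa$ with $D\super\{x\in[H_\kappa]^\omega\st F``[x]^{<\omega}\sub x\}$, and $Y\cap H_\kappa$ is closed under $F$. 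Finally, $\pi(\omega_1)=\omega_1$ by elementarity (as $\omega_1\in X$ and is computed correctly in $N$ and $N|Y$), while a straightforward $\in$-induction, using that $\delta=X\cap\omega_1$ is transitive and $\delta\sub X$, shows $\pi\rest\delta=\id$; therefore $Y\cap\omega_1=\pi``\delta=\delta$, and so $Z\cap\omega_1=Y\cap\omega_1=\delta\in T$, as required. Since $T$ and $D$ were arbitrary, $S$ is projective stationary.

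I do not anticipate a real obstacle; the argument is essentially a repackaging of (half of) the proof of Theorem~\ref{thm:SpreadOutIffinfSCProjectiveStationary}. The one point that must be handled with care is that the test model $X$ has to be simultaneously (a) full, (b) of the prescribed value $X\cap\omega_1=\delta\in T$, and (c) large enough to contain $D$ (and $\kappa$) --- which is exactly why one works with $X=X'\cap L_\tau[A]$ for some $X'\prec L_{\tau'}^A$ rather than with an arbitrary countable elementary submodel. The conceptual heart is just the observation that the isomorphism $\pi$ delivered by ``spread out'' preserves $X\cap\omega_1$ and fixes whatever parameters are coded into $a$, so that both the value $\delta\in T$ and membership in the club $D$ survive the passage from $X$ to $Y$. (Alternatively one could combine Theorem~\ref{thm:SpreadOutIffinfSCProjectiveStationary} with Fact~\ref{fact:SisProjStatIffP_SisSSP} and the inclusion $\infSC\sub\SSP$ --- i.e.\ that $\infty$-subcomplete forcing preserves stationary subsets of $\omega_1$ --- but since the excerpt does not establish that inclusion, the direct argument above seems preferable.)
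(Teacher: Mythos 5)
Your proof is correct, but it takes a genuinely different route from the paper's. The paper disposes of this observation in one line: by Theorem \ref{thm:SpreadOutIffinfSCProjectiveStationary}, spread out means $\P_S$ is $\infty$-subcomplete; since $\infSC\sub\SSP$ (an external fact about subcomplete-type forcing that the paper cites but does not prove), $\P_S$ preserves stationary subsets of $\omega_1$, and then Fact \ref{fact:SisProjStatIffP_SisSSP} (Feng--Jech) gives projective stationarity. You instead verify projective stationarity directly from Definition \ref{def:SpreadOut}: build a full countable $X\prec N$ with prescribed trace $X\cap\omega_1=\delta\in T$ containing $D$ and $\kappa$ as parameters, and observe that the isomorphism $\pi$ supplied by spread-out-ness fixes $D$, $\kappa$ and every countable ordinal of $X$, so the target $Y\cap H_\kappa$ lands in $S\cap D\cap\lifting(T,[H_\kappa]^\omega)$. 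What the paper's route buys is brevity and a clean conceptual placement within the $\Gamma$-projective-stationarity framework; what yours buys is self-containedness (no appeal to the unproved inclusion $\infSC\sub\SSP$) and, as a bonus, it is essentially the template the paper itself later needs for Claim \ref{claim:AlmostPreservingSpreadOutSets} in Lemma \ref{lem:SRP+nonCHisWeak} and for Lemma \ref{lem:SFPpreliminary}, where the trace of $X$ on $\omega_1$ must likewise be prescribed in advance and survive the passage to $Y$. One small point worth tightening: the assertion $\pi\rest\delta=\id$ is not quite a bare $\in$-induction on the transitive set $\delta\sub X$, since a priori $\pi(\alpha)$ could contain ordinals outside $Y$; the standard fix (used implicitly by the paper in Lemma \ref{lem:SFPpreliminary} and Observation \ref{obs:SCSRPtrivialuptocontinuum}) is that each $\alpha<\delta$ is countable in $N|X$, so a surjection $f\colon\omega\onto\alpha$ in $X$ forces $\pi(\alpha)=\pi``\alpha$, after which the induction closes. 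With that noted, the argument is complete.
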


\begin{proof}
This is because $\infSC\sub\SSP$.
\end{proof}

In particular, spread out sets are stationary. In fact, being spread out is preserved by intersecting with a club; this is the analog of Fact \ref{obs:ProjectiveStationarityPreservedUnderIntersectionWithClubs}.

\begin{obs}
\label{obs:SpreadOutSetsClosedUnderIntersectionWithClubs}
Let $\kappa$ be an uncountable regular cardinal, let $S\sub[H_\kappa]^\omega$ be spread out, and let $C\sub[H_\kappa]^\omega$ be club. Then $S\cap C$ is spread out.
\end{obs}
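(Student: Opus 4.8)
The plan is to show that $S\cap C$ is \emph{weakly} spread out, which by Observation \ref{obs:WeaklySpreadOutImpliesSpreadOut} is equivalent to being spread out. (Note that $S\cap C$ is stationary, being the intersection of the stationary set $S$ — stationary since it is spread out — with a club.) The convenient feature of the weak form here is that it lets me address the following obstacle: the definition of ``spread out'' presents an elementary submodel $N|X\prec N$ with $S\cap C\in X$, but to invoke that $S$ is spread out I need $S$ itself, not merely $S\cap C$, to be a member of $X$, and $S$ need not be definable from $S\cap C$. This is exactly what the witnessing parameter in the definition of ``weakly spread out'' is for: I take it to be $b:=\langle S,C,H_\kappa,\kappa\rangle$, so that whenever $b\in X$ and $X\prec N$, also $S,C,H_\kappa,\kappa\in X$, and hence $S\cap C\in X$ as well (since $N\models\ZFCm$).

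So fix a sufficiently large $\theta$ — in particular large enough to witness that $S$ is spread out, and large enough that $[H_\kappa]^\omega$, together with a suitable auxiliary function described below, lies in $H_\theta$ — and suppose $H_\theta\subseteq L_\tau^A=N\models\ZFCm$, $b,a,\theta\in X$, $N|X\prec N$ with $N|X$ countable and full; then $S,C,H_\kappa,\kappa\in X$. Applying that $S$ is spread out, in the form that allows a finite list of parameters to be preserved by the isomorphism (justified via Fact \ref{fact:AbsorbingParameters} exactly as in the discussion following Definition \ref{def:SpreadOut}), with the parameters $a$, $C$, $H_\kappa$, I obtain a set $Y$ with $N|Y\prec N$ and an isomorphism $\pi:N|X\To N|Y$ such that $\pi(a)=a$, $\pi(C)=C$, $\pi(H_\kappa)=H_\kappa$, and $Y\cap H_\kappa\in S$. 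In particular $Y$ is countable and $C,H_\kappa\in Y$.

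It remains — and this is the only real point — to verify $Y\cap H_\kappa\in C$. Here I use the standard characterization of club subsets of $[H_\kappa]^\omega$: since $C$ is club, there is a function $f:H_\kappa^{<\omega}\To H_\kappa$ such that every member of $[H_\kappa]^\omega$ closed under $f$ lies in $C$. Such an $f$ lies in $H_\theta\subseteq N$ once $\theta$ is large enough, and since $[H_\kappa]^\omega\subseteq H_\theta\subseteq N$, the assertion ``every $z\in[H_\kappa]^\omega$ closed under $f$ belongs to $C$'' is absolute between $N$ and $\V$; so $N$ correctly sees that $C$ contains the club determined by some such $f$. By elementarity of $N|Y\prec N$, using $C,H_\kappa\in Y$, there is such an $f\in Y$. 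Now $Y\cap H_\kappa$ is closed under $f$: a finite tuple $s$ from $Y\cap H_\kappa$ satisfies $s\in Y$ (as $Y\prec N$ and $N$ is closed under forming tuples) and $s\in H_\kappa^{<\omega}\subseteq H_\kappa$, whence $f(s)\in Y\cap H_\kappa$. Since $Y\cap H_\kappa\in[H_\kappa]^\omega$ (it is countable and contains $\omega$) and is closed under $f$, we conclude $Y\cap H_\kappa\in C$, and therefore $Y\cap H_\kappa\in S\cap C$. Thus $\pi$ witnesses the defining condition of Definition \ref{def:SpreadOut} for $S\cap C$ at $\theta$ relative to the parameter $b$, so $S\cap C$ is weakly spread out, hence spread out.

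I expect the only delicate bit to be the absoluteness bookkeeping of the last paragraph — ensuring that ``$C$ is club in $[H_\kappa]^\omega$'', equivalently the existence of the function $f$, transfers correctly into $N$ so that the elementarity of $N|Y\prec N$ can be exploited — which is handled by the clause ``for every sufficiently large $\theta$'' in Definition \ref{def:SpreadOut}. The remaining verifications (that $b\in X$ delivers $S,C,H_\kappa\in X$, and the finite-parameter form of spread-out) are routine given the material already developed.
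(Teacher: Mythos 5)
Your proof is correct and follows essentially the same route as the paper's: reduce to ``weakly spread out'' via Observation \ref{obs:WeaklySpreadOutImpliesSpreadOut}, then use a function $f:H_\kappa^{<\omega}\To H_\kappa$ witnessing that $C$ is club to check that $Y\cap H_\kappa$ lands in $C$. The only (harmless) variation is that the paper puts $f$ itself into the witnessing parameter and demands $\pi(f)=f$, whereas you fix $C$ and $H_\kappa$ and recover a suitable $f\in Y$ by elementarity; you are also somewhat more careful than the paper in explicitly ensuring that $S$ itself, and not just $S\cap C$, is available in $X$.
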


\begin{proof}
Let $f:H_\kappa^{{<}\omega}\To H_\kappa$ be such that every $a\in H_\kappa$ closed under $f$ is in $C$.
By Observation \ref{obs:WeaklySpreadOutImpliesSpreadOut}, it suffices to show that $S\cap C$ is weakly spread out. Thus, it will be enough to show that the condition described in Definition \ref{def:SpreadOut} is satisfied for all sufficiently large $\theta$, assuming that $f\in X$, using the notation in the definition. Since $S$ is spread out, there is a $Y$ such that $Y\cap H_\kappa$ in $S$ and such that there is an isomorphism $\pi:\kla{X,\in\cap X^2}\To\kla{Y,\in\cap Y^2}$ that fixes a given $a$, and also $f$, that is, $\pi(\kla{f,a})=\kla{f,a}$. Since $f\in Y$, it follows that $Y$ is closed under $f$, and hence that $Y\cap H_\kappa\in S\cap C$.
\end{proof}

Thus, $\infSC$-\SRP guarantees the existence of \emph{elementary} chains through spread out sets. Thirdly, being spread out is preserved by projections, analogous to the situation with stationarity and projective stationarity.

\begin{obs}
\label{obs:SpreadOutPreservedUnderProjections}
Let $A\sub B\sub C$, and let $S\sub[B]^\omega$ be spread out, with $\bigcup S=B$. Then:
\begin{enumerate}[label=(\arabic*)]
\item
\label{item:ProjectingUpwardPreservesBeingSpreadOut}
$S\projectup C$ is spread out.
\item
\label{item:ProjectingDownwardPreservesBeingSpreadOut}
$S\projectdown A$ is spread out.
\end{enumerate}
\end{obs}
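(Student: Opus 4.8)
The plan is to deduce both parts directly from the definition of being spread out (Definition~\ref{def:SpreadOut}), because an elementary submodel witnessing spread-out-ness for $S$ will automatically witness it for the projections $S\projectup C$ and $S\projectdown A$. The one combinatorial identity I would record at the start is that for $A\sub B\sub C$ and any set $Y$ one has $(Y\cap C)\cap B=Y\cap B$ and $(Y\cap B)\cap A=Y\cap A$; hence, recalling Definition~\ref{defn:ProjectionsAndLiftings}, if $Y\cap B\in S$ then $Y\cap C\in S\projectup C$ and $Y\cap A\in S\projectdown A$, and both of these sets are countable whenever $Y$ is. To keep the quantifier bookkeeping minimal I would phrase everything in terms of the ``weakly spread out'' reformulation of Observation~\ref{obs:WeaklySpreadOutImpliesSpreadOut}, so that it is enough to exhibit a single parameter witnessing weak spread-out-ness of each projection.

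Concretely, for part~\ref{item:ProjectingUpwardPreservesBeingSpreadOut} I would fix a parameter $b$ witnessing that $S$ is weakly spread out and set $b'=\kla{b,S,B,C}$. Given a sufficiently large $\theta$ (large enough for $b$ to witness weak spread-out-ness of $S$, and with $C\in H_\theta$, which also puts $B\in H_\theta$ since $B\sub C$), a model $H_\theta\sub L_\tau^A=N\models\ZFCm$, and an $X$ with $b',a\in X$, $N|X\prec N$, $N|X$ countable and full, I would first invoke Fact~\ref{fact:AbsorbingParameters}, exactly as in the discussion after Definition~\ref{def:SpreadOut}, to arrange that $S,B,C$ are definable in $N$, hence fixed by every isomorphism between elementary submodels of $N$. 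Since $b,S\in X$, applying weak spread-out-ness of $S$ (with witness $b$) produces a $Y$ with $N|Y\prec N$ and an isomorphism $\pi\colon N|X\To N|Y$ with $\pi(a)=a$ and $Y\cap B\in S$; by the opening identity $Y\cap C\in S\projectup C$ and is countable, so $Y$ and $\pi$ are exactly what Definition~\ref{def:SpreadOut} demands for $S\projectup C$ at this $X$. Part~\ref{item:ProjectingDownwardPreservesBeingSpreadOut} is the same argument word for word with $A$ in place of $C$ and $b''=\kla{b,S,A,B}$, using $Y\cap A=(Y\cap B)\cap A\in S\projectdown A$.

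I do not expect a genuine obstacle here; the only points requiring any care are routine. First, one must check that a single ``sufficiently large'' $\theta$ can serve both $S$ and the projection, which is immediate since $A,B\sub C$ keeps all the relevant sets in $H_\theta$ as soon as $C$ is. Second, one must ensure the parameters $B$ and $C$ (resp.\ $A$ and $B$) are visible inside $X$ and respected by $\pi$, which is handled by bundling them into the witnessing parameter and by Fact~\ref{fact:AbsorbingParameters}. Third, there is the trivial observation that $Y\cap C$ and $Y\cap A$ lie in $[C]^\omega$ and $[A]^\omega$ because $Y$ is countable. Finally, I note that the hypothesis $\bigcup S=B$ is not actually needed for the spread-out-ness conclusions as such; it merely guarantees that $\bigcup(S\projectup C)=C$ and $\bigcup(S\projectdown A)=A$, i.e.\ that the projected sets sit on the expected base sets.
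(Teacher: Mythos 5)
Your proof is correct and is essentially the paper's own argument: apply spread-out-ness of $S$ to the given $X$ (with the extra parameters absorbed, which the paper dispatches with the parenthetical remark about requiring an additional parameter in $X$) to get $Y$ with $Y\cap B\in S$, and then read off $Y\cap C\in S\projectup C$ and $Y\cap A\in S\projectdown A$ from the identities $(Y\cap C)\cap B=Y\cap B$ and $(Y\cap B)\cap A=Y\cap A$. Your side remarks (routing through the weakly-spread-out reformulation and noting that $\bigcup S=B$ is not really needed) are fine but do not change the substance.
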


\begin{proof}
We prove \ref{item:ProjectingUpwardPreservesBeingSpreadOut} and \ref{item:ProjectingDownwardPreservesBeingSpreadOut} simultaneously. Let $\theta$ be sufficiently large, and let $X\prec N=L_\tau^D\models\ZFCm$ be countable and full, with $S,a,A,C\in N$ (as usual, we may require some additional parameter to be in $X$). Since $S$ is spread out, there are a $Y$ with $N|Y\prec N$ and an isomorphism $\pi:N|X\To N|Y$ that fixes $a$, and such that $Y\cap B\in S$. But this means that $Y\cap C\in S\projectup C$ and that $Y\cap A\in S\projectdown A$, as wished.
\end{proof}

\begin{obs}
\label{obs:ContainingAClubImpliesBeingSpreadOut}
If $S\sub[H_\kappa]^\omega$ contains a club, then $S$ is spread out.
\end{obs}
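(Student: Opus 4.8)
The plan is to reduce to the notion of being \emph{weakly spread out} and then exhibit a trivial witness. Since $S$ contains a club $C\sub[H_\kappa]^\omega$, it is in particular stationary, so it makes sense to ask whether it is weakly spread out; by Observation \ref{obs:WeaklySpreadOutImpliesSpreadOut} it suffices to establish that.

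First I would fix a function $f:H_\kappa^{{<}\omega}\To H_\kappa$ such that every $x\in[H_\kappa]^\omega$ closed under $f$ belongs to $C$; such an $f$ exists since $C$ is club. I claim that $b:=f$ witnesses that $S$ is weakly spread out. Indeed, let $\theta$ be sufficiently large, let $H_\theta\sub L_\tau^A=N\models\ZFCm$, and let $X$, $a$ be such that $S,a,\theta,f\in X$, $N|X\prec N$, and $N|X$ is countable and full. Then I simply take $Y=X$ and $\pi=\id_X:N|X\To N|Y$, the identity. Clearly $N|Y\prec N$ and $\pi$ is an isomorphism fixing $a$, so it only remains to check that $Y\cap H_\kappa\in S$. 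But $Y\cap H_\kappa=X\cap H_\kappa$, and since $f\in X$ and $N|X\prec N$, this set is closed under $f$: any finite sequence $s$ of elements of $X\cap H_\kappa$ is definable in $N$ from its finitely many entries, hence lies in $X$, and then $f(s)\in X\cap H_\kappa$ by elementarity. Therefore $X\cap H_\kappa\in C\sub S$, as required. By Observation \ref{obs:WeaklySpreadOutImpliesSpreadOut}, $S$ is then spread out.

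There is essentially no obstacle in this argument; the only subtlety is that the function $f$ coding the club need not lie in $H_\kappa$ and so cannot be assumed to belong to an arbitrary small $X\prec N$, which is exactly why one routes the proof through the ``weakly spread out'' reformulation rather than verifying Definition \ref{def:SpreadOut} directly. (Alternatively, one could observe that $[H_\kappa]^\omega$ is spread out via the identity and then apply Observation \ref{obs:SpreadOutSetsClosedUnderIntersectionWithClubs} together with the evident monotonicity of the ``spread out'' condition in $S$.)
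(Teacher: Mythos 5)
Your proof is correct and is essentially the paper's own argument: both fix a function $f$ witnessing that $C$ is club, use $f$ as the parameter $b$ in the definition of ``weakly spread out,'' take $Y=X$ with $\pi=\id$, and conclude via Observation \ref{obs:WeaklySpreadOutImpliesSpreadOut}. The closing remarks (on why one routes through the weak form, and the alternative via Observation \ref{obs:SpreadOutSetsClosedUnderIntersectionWithClubs}) are sensible additions but not part of the paper's proof.
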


\begin{proof}
Let $f:[H_\kappa]^{{<}\omega}\To H_\kappa$ be such that every $x\in C$ is closed under $f$, that is, $f``[x]^{{<}\omega}\sub x$. Let $\theta$ be sufficiently large that $[H_\kappa]^\omega\in H_\theta$. I claim that this $\theta$, together with the function $f$, witnesses that $S$ is weakly spread out, which implies that $S$ is spread out by Observation \ref{obs:WeaklySpreadOutImpliesSpreadOut}. To see this, suppose $H_\theta\sub L_\tau^A\models\ZFCm$ and $S,f\in L_\tau[A]$. Let $X\sub L_\tau[A]$ be countable, $N|X\prec L_\tau^A$, and $f\in X$ (and $N|X$ is full). Since $f\in X$, $X$ is closed under $f$, and hence, so is $X\cap H_\kappa$. Thus, $X\cap H_\kappa\in S$ (so we can choose $Y=X$ in the notation of Definition \ref{def:(ininifty-)subcompleteness}).
\end{proof}

Finally, an elementary argument shows that in the situation of Definition \ref{def:(ininifty-)subcompleteness}, necessarily, $\sigma\rest(2^\omega)^{\bN}=\sigma'\rest(2^\omega)^\bN$, see \cite[Obs.~4.2]{Fuchs:ATP}, or \cite[Proof of Lemma 3.22]{Fuchs:HierarchiesOfRA}. That argument, adapted to the present context, has the following consequence. Notice the parallel to Example \ref{example:ProperSRPtrivial}.

\begin{obs}
\label{obs:SCSRPtrivialuptocontinuum}
Let $\kappa\le 2^\omega$ be an uncountable regular cardinal, and let $S\sub[H_\kappa]^\omega$. Then $S$ is spread out iff $S$ contains a club. Hence, $\infSC$-$\SRP(\kappa)$ holds trivially.
\end{obs}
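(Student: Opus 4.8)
The backward direction is immediate from Observation~\ref{obs:ContainingAClubImpliesBeingSpreadOut}, so the plan is to prove the forward direction: assuming that $S\sub[H_\kappa]^\omega$ is spread out and $\kappa\le 2^\omega$, I want to produce a club contained in $S$, equivalently to show that $[H_\kappa]^\omega\ohne S$ is nonstationary. By Theorem~\ref{thm:SpreadOutIffinfSCProjectiveStationary}, $\P_S\in\infSC$. Forcing with $\P_S$ adds a continuous $\in$-chain $\seq{M_i}{i<\omega_1}$ through $S$ with $\bigcup_{i<\omega_1}M_i=H_\kappa$, using Fact~\ref{fact:P_SisCountablyDistributiveAndExhaustive}; each $M_i$ occurs in some condition, hence $M_i\in\V$, and a short computation from continuity and exhaustiveness shows that $\{M_i\st i<\omega_1\}$ is cofinal in $(([H_\kappa]^\omega)^\V,\sub)$ and closed under unions of countable $\sub$-increasing chains drawn from $([H_\kappa]^\omega)^\V$. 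Since this set is contained in $S$, $\P_S$ forces that $([H_\kappa]^\omega\ohne S)^\V$ is nonstationary, regarded as a subset of the structure $(([H_\kappa]^\omega)^\V,\sub)$. Hence it suffices to show that $\P_S$ preserves the stationarity of every $\V$-stationary subset of $([H_\kappa]^\omega)^\V$: granting this, $[H_\kappa]^\omega\ohne S$ cannot have been stationary in $\V$, so $S$ contains a club.

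This preservation statement is the analog, for $\infSC$ forcing, of the familiar fact that proper forcing preserves stationary subsets of $[X]^\omega$, and it is exactly here that $\kappa\le 2^\omega$ enters, via the argument referenced just before the statement (see \cite[Obs.~4.2]{Fuchs:ATP} or \cite[proof of Lemma~3.22]{Fuchs:HierarchiesOfRA}). In the notation of Definition~\ref{def:(ininifty-)subcompleteness}, $\sigma$ and $\sigma'$ agree on $(2^\omega)^{\bN}$; because $\bN$ and $N$ are transitive, both $\sigma$ and $\sigma'$ in fact fix $(2^\omega)^{\bN}$, and hence every hereditarily countable set of $\bN$, pointwise. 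Moreover, after folding a fixed injection of $\kappa$ into $\power(\omega)$ into the predicate defining $N$ (permissible by Fact~\ref{fact:AbsorbingParameters}), the same reasoning forces $\sigma$ and $\sigma'$ to agree on $\sigma^{-1}(\kappa)$, since each ordinal below $\sigma^{-1}(\kappa)$ is then coded by a real of $\bN$. To run the preservation argument one starts, given a $\V$-stationary $T\sub([H_\kappa]^\omega)^\V$, a $\P_S$-name $\dot F$ for a function generating a $\V[G]$-club of $([H_\kappa]^\omega)^\V$, and a condition $p$, by choosing a sufficiently large $N=L_\tau^A\models\ZFCm$ with $\P_S,\dot F,T,\kappa$ and the chosen injection all definable, a countable full $X\prec N$ with $X\cap H_\kappa\in T$, and the transitive collapse $\sigma\colon\bN\prec N$ of $X$, with $\sigma^{-1}(p)$ lying in a chosen $\bN$-generic $\bG$; $\infSC$-ness of $\P_S$ then yields, below $p$, a condition forcing an embedding $\sigma'\colon\bN\prec N$ that lifts $\bG$ and agrees with $\sigma$ as above, and one extends $\sigma'$ by the generic and uses the agreement on reals and on $\sigma^{-1}(\kappa)$ to locate inside $T$ a countable subset of $H_\kappa$ closed under the generating function, contradicting the disjointness of $T$ from the generated club.

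Finally, any club $C\sub[H_\kappa]^\omega$ contains a continuous $\in$-chain of length $\omega_1$: recursively pick $M_{i+1}\in C$ with $M_i\in M_{i+1}$ (possible since $C$ is cofinal) and take unions at limits (which remain in $C$ by closure). So the assertion that every spread out subset of $[H_\kappa]^\omega$ contains a club yields at once that every such set contains a continuous $\in$-chain of length $\omega_1$, i.e.\ that $\infSC$-$\SRP(\kappa)$ holds trivially. The crux of the whole argument is the preservation statement of the middle paragraph; converting the agreement of the two lifting embeddings on the reals (and on the ordinals below $\kappa$) into the conclusion that the ground model stationary set is actually met is where essentially all the work lies, and is the step I expect to be the main obstacle — note in particular that, unlike in Example~\ref{example:ProperSRPtrivial}, one cannot simply arrange $\ran(\sigma')\cap H_\kappa=X\cap H_\kappa$, since the two models genuinely differ above the hereditarily countable level once $\kappa>\omega_1$.
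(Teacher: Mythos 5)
Your backward direction and the deduction of $\infSC$-$\SRP(\kappa)$ from ``$S$ contains a club'' are fine, but the forward direction takes a long detour through forcing, and the detour has a gap exactly where you predict the main work to be. The paper does not force at all: it applies Definition \ref{def:SpreadOut} directly to a countable full $X\prec N$ (arranged as $X=X'\cap L_\tau[A]$ for $X'\prec L_{\tau'}^A$), and observes that the isomorphism $\pi:N|X\To N|Y$ must fix $\power(\omega)$ pointwise, hence fix $2^\omega$ pointwise (via the $N$-least bijection $\power(\omega)\To 2^\omega$), hence fix $H_{2^\omega}\supseteq H_\kappa$ pointwise; therefore $Y\cap H_\kappa=X\cap H_\kappa$, and the requirement $Y\cap H_\kappa\in S$ already forces $X\cap H_\kappa\in S$. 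Since this holds for club-many $X$, projecting down gives a club contained in $S$. No appeal to $\P_S$, to Theorem \ref{thm:SpreadOutIffinfSCProjectiveStationary}, or to any preservation lemma is needed.

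The gap in your route is the preservation claim itself. To conclude that the designated witness $X\cap H_\kappa\in T$ is closed under $\dot F^G$, you need $\ran(\sigma')\cap H_\kappa=\ran(\sigma)\cap H_\kappa$ (so that the element of $T$ you chose coincides with the $F$-closed set $\ran(\sigma'^*)\cap H_\kappa^\V$). Agreement of $\sigma$ and $\sigma'$ on the reals and on the \emph{ordinals} below $\sigma^{-1}(\kappa)$ — which is all your sketch invokes — only yields $\ran(\sigma')\cap\kappa=X\cap\kappa$, not $\ran(\sigma')\cap H_\kappa=X\cap H_\kappa$; since $T$ lives on $[H_\kappa]^\omega$ rather than $[\kappa]^\omega$, that is not enough. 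Worse, your closing sentence asserts that one \emph{cannot} arrange $\ran(\sigma')\cap H_\kappa=X\cap H_\kappa$, which simultaneously contradicts what your middle paragraph needs and misses the entire point of the observation: for $\kappa\le 2^\omega$ this identity is automatic for \emph{any} two elementary embeddings of $\bN$ into $N$ (this is the $\pi\rest H_{2^\omega}=\id$ computation above), and once you have it, you should notice that it can be fed straight into Definition \ref{def:SpreadOut} — every full countable $X\prec N$ containing $S$ satisfies $X\cap H_\kappa\in S$ — which finishes the proof in one step and makes the whole apparatus of $\P_S$, the generic chain, and stationarity preservation unnecessary.
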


\begin{proof}
Suppose $S\sub[H_\kappa]^\omega$ is spread out. Let $\theta$ be sufficiently large, and let $N=L_\tau^A\models\ZFCm$ with $H_\theta\sub L_\tau[A]$, and suppose $\tau$ is an $L[A]$-cardinal. Let $\tau'=(\tau^+)^{L[A]}$. Let $N'=L_{\tau'}^A$. It is then easy to see that whenever $X$ is countable and $N'|X\prec N'$, then, letting $\tilde{X}=X\cap L_\tau[A]$, $N|\tilde{X}\prec N$ and $N|\tilde{X}$ is full. Now let
\[C=\{X\in[L_{\tau'}^\omega]\st N'|X\prec N\}. \]
I claim that $\bC=C\projectdown[H_\kappa]^\omega\sub S$. To see this, let $X\in\bC$. Then $X=Y\cap H_\kappa$, for some $Y\in C$. Let $\tilde{Y}=Y\cap L_\tau[A]$, so $N|Y\prec N$ is full. Since $S$ is spread out, there is a $Z$ such that $N|Z\prec N$ so that $N|Y$ is isomorphic to $N|Z$ and $Z\cap H_\kappa\in S$. Let $\pi:N|Y\To N|Z$ be this isomorphism. First, observe that $\pi\rest 2^\omega=\id$. To see this, first note that $\pi\rest\power(\omega)=\id$, and hence that $\power(\omega)\cap Y=\power(\omega)\cap Z$. Let $f$ be the $<_{L_\tau^A}$-least bijection between $\power(\omega)$ and $2^\omega$. Clearly, $f\in Y\cap Z$, and $\pi(f)=f$. It follows that $\pi\rest 2^\omega=\id$ and hence that $2^\omega\cap Y=2^\omega\cap Z$, because if $\alpha<2^\omega$ and $\alpha\in Y$, then letting $a=f^{-1}(\alpha)$, we have that $\pi(\alpha)=\pi(f(a))=\pi(f)(\pi(a))=f(a)=\alpha$. But then, it follows that $\pi\rest H_{2^\omega}=\id$, and hence that $H_{2^\omega}\cap Y=H_{2^\omega}\cap Z$. This is because if $a\in H_{2^\omega}\cap X$, then $a$ can be coded by a bounded subset $A$ of $2^\omega$. Then $\pi(A)=A$ codes the same object, and so, $\pi(a)$ is the set coded by $\pi(A)=A$, which is $a$. In particular, $X=Y\cap H_\kappa=Z\cap H_\kappa\in S$. Thus, $\bC=C\projectdown[H_\kappa]^\omega\sub S$, as claimed. Since $C$ is a club, $C\projectdown[H_\kappa]^\omega$ contains a club, as wished.

The converse is trivial, by Observation \ref{obs:ContainingAClubImpliesBeingSpreadOut}.

This proves the equivalence claimed, and of course, it is easy to construct an continuous $\in$-chain of length $\omega_1$ through $S$ if $S$ contains a club, so that $\infSC$-$\SRP(\kappa)$ holds.
\end{proof}

In ending this section, for completeness, let me mention an obvious modification to the concept of being spread out that corresponds to subcomplete (not $\infty$-subcomplete) projective stationary, as follows.

\begin{defn}
\label{def:FullySpreadOut}
Let $\kappa$ be an uncountable regular cardinal. A stationary set $S\sub[H_\kappa]^\omega$ is \emph{fully spread out} if for all sufficiently large $\theta$, whenever $\tau$, $A$ are such that $H_\theta\sub L_\tau^A\models\ZFC$, $S,\theta\in X\prec L_\tau^A$ is countable and full, if $a\in X$, $\lambda_0,\ldots,\lambda_n$ are such that $\lambda_n=\sup(X\cap\On)$ and for every $i<n$, $\lambda_i$ is a regular cardinal in the interval $(2^{2^{{<}\kappa}},\lambda_n)$, then there exist $\pi$, $Y$ such that $N|Y\prec N$ and $\pi:\kla{X,\in\cap X^2}\To\kla{Y,\in\cap Y^2}$ is an isomorphism such that $\pi(a)=a$ and for $i\le n$, $\sup(X\cap\lambda_i)=\sup(Y\cap\lambda_i)$, and $Y\cap H_\kappa\in S$.
\end{defn}

A repeat of the proof of Theorem \ref{thm:SpreadOutIffinfSCProjectiveStationary} shows that being fully spread captures subcomplete projective stationarity:

\begin{thm}
\label{thm:FullySpreadOutIffSCProjectiveStationary}
Let $\kappa$ be an uncountable regular cardinal, and let $S\sub[H_\kappa]^\omega$. Then $S$ is fully spread out iff $S$ is \SC-projective stationary.
\end{thm}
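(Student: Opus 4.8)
The plan is to obtain this by a repeat of the proof of Theorem~\ref{thm:SpreadOutIffinfSCProjectiveStationary}, the only genuinely new point being that the supremum clause~\ref{item:SupremumCondition} of Definition~\ref{def:(ininifty-)subcompleteness} has to be matched with the clause ``$\sup(X\cap\lambda_i)=\sup(Y\cap\lambda_i)$'' of Definition~\ref{def:FullySpreadOut}. The computation that makes this match work is the following: in the notation of the proof of Theorem~\ref{thm:SpreadOutIffinfSCProjectiveStationary}, the embedding $\sigma'$ is always of the form $\sigma'=\pi\circ\sigma$ for an isomorphism $\pi:N|X\To N|Y$, where $X=\ran(\sigma)$; hence, as soon as we arrange that $\pi$ fixes $\sigma(\blambda)$ for a given $\blambda$ regular in $\bN$, we get $\sigma'``\blambda=\pi``(X\cap\sigma(\blambda))=Y\cap\sigma(\blambda)$, so that $\sup\sigma``\blambda=\sup\sigma'``\blambda$ becomes precisely $\sup(X\cap\sigma(\blambda))=\sup(Y\cap\sigma(\blambda))$; and likewise $\sigma'``(\On\cap\bN)=Y\cap\On$, turning $\sup\sigma``\blambda_0=\sup\sigma'``\blambda_0$ into $\sup(X\cap\On)=\sup(Y\cap\On)$. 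To make $\pi$ fix the relevant finitely many ordinals $\sigma(\blambda_i)$, one absorbs them into the parameter $a$ of Definition~\ref{def:FullySpreadOut}, exactly as Fact~\ref{fact:AbsorbingParameters} is used elsewhere.

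So, for ``$S$ fully spread out $\Rightarrow\P_S$ subcomplete'', I would take $\theta$ sufficiently large (in particular $\theta>2^{2^{<\kappa}}$ and $\P_S\in H_\theta$), be handed $N=L_\tau^A$, $\sigma:\bN\prec N$ as in Definition~\ref{def:(ininifty-)subcompleteness}, an $\bN$-generic $\bG\sub\bP_\bS$, $\bar s\in\bN$ and $\blambda_0=\On\cap\bN,\blambda_1,\dots,\blambda_{n-1}$, and apply the hypothesis to $X=\ran(\sigma)$ with $a=\kla{\sigma(\bar s),\sigma(\blambda_1),\dots,\sigma(\blambda_{n-1})}$ and with $\sigma(\blambda_1),\dots,\sigma(\blambda_{n-1})$ (together with $\sup(X\cap\On)$) in the role of the $\lambda_i$; the condition $q=\seq{\sigma'(\bM_i)}{i<\omega_1^\bN}\verl(Y\cap H_\kappa)$, where $\sigma'=\pi\circ\sigma$, built exactly as in the proof of Theorem~\ref{thm:SpreadOutIffinfSCProjectiveStationary}, then witnesses the relevant instance of subcompleteness, the supremum clauses coming from ``fully spread out'' via the computation above. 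For the converse I would reverse this: collapse $X$ to get $\sigma:\bN\prec N$, pull the $\lambda_i$, $a$, $\P_S$, $\kappa$ back along $\sigma$, feed the preimages (the $\lambda_i$ among the $\blambda$'s, the rest absorbed into the parameter) into the definition of subcompleteness of $\P_S$, use countable distributivity of $\P_S$ (Fact~\ref{fact:P_SisCountablyDistributiveAndExhaustive}) to bring the resulting $\sigma'$ into $\V$, and read off $Y=\ran(\sigma')$ and $\pi=\sigma'\circ\sigma^{-1}$.

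The step I expect to need the most care — really a bookkeeping nuisance rather than a serious difficulty — is that the large regular cardinals are bounded and placed differently in the two definitions: $\blambda_i$ is merely regular in $\bN$ above $\delta(\bP_\bS)^\bN$, whereas $\lambda_i$ is regular in $\V$, lies above $2^{2^{<\kappa}}$, and need not a priori be reachable inside $X$. I would reconcile this by noting first that $|\P_S|\le 2^{2^{<\kappa}}$, so $\delta(\P_S)\le 2^{2^{<\kappa}}$, and then aligning the cardinals by a finite descent on $N$-cofinalities: to control $\sup(X\cap\mu)$ for a regular cardinal $\mu$, pass to $\rho=\cf^N(\sup(X\cap\mu))<\mu$, absorb $\rho$ and the $<_N$-least continuous cofinal map $\rho\To\sup(X\cap\mu)$ into the parameter $a$, and repeat; the descent bottoms out either at a cardinal above $\delta(\P_S)$, where subcompleteness of $\P_S$ applies directly and the cardinal may be thrown among the $\blambda$'s, or at a cardinal small enough that $\sigma$ and $\sigma'$ agree on it automatically by the remark preceding Observation~\ref{obs:SCSRPtrivialuptocontinuum}. (Equivalently, this reconciliation is the standard fact that the bound in the definition of subcompleteness may be taken to be any fixed cardinal $\ge\delta(\P)$, for which one invokes the cleanup arguments of Jensen~\cite[Lemma~2.5]{Jensen2014:SubcompleteAndLForcingSingapore}.) None of this touches the core of the argument, which is the transcription of the proof of Theorem~\ref{thm:SpreadOutIffinfSCProjectiveStationary} with the supremum clause carried along, and it is precisely this matching that dictates the shape of Definition~\ref{def:FullySpreadOut}.
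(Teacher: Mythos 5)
Your proposal is correct and follows exactly the route the paper takes: the paper's entire proof of this theorem is the one-line remark that it is a repeat of the proof of Theorem \ref{thm:SpreadOutIffinfSCProjectiveStationary}, and your transcription --- with the supremum clauses matched via $\sigma'``\blambda=Y\cap\sigma(\blambda)$ once $\pi$ fixes $\sigma(\blambda)$ --- is precisely that repeat carried out in detail. The only caveat is your ``descent on cofinalities'': since $X$ is countable, $\sup(X\cap\mu)$ always has cofinality $\omega$ and the witnessing cofinal map need not lie in $X$ (so it cannot be absorbed into the parameter $a$), so that mechanism does not get off the ground; but your parenthetical alternative --- reducing the bound on the $\blambda_i$ from $2^{2^{{<}\kappa}}$ down to $\delta(\P_S)$ by the standard cleanup arguments for subcompleteness --- is the right reconciliation, and it addresses a point the paper itself silently elides.
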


From here on out, to make things slightly less technical, I will for the most part focus on spread out sets. Everything I do would also go through for fully spread out sets, unless I explicitly say otherwise.

\section{Consequences}
\label{sec:SRPandConsequences}

In order to carry over consequences of \SRP to its subcomplete fragment, I will need to know that certain sets are not only projective stationary, but in fact spread out. Proving that a set is spread out is generally not an easy task, but fortunately, all I need will follow from one technical lemma, which I will prove in the following subsection. In later subsections, I will use this in order to derive consequences concerning Friedman's problem, the failure of square, the singular cardinal hypothesis and mutual stationarity.

\subsection{Barwise theory and a technical lemma}
\label{subsec:BarwiseTheoAndTechnicalLemma}

The proof of the main technical, but very useful lemma will emply methods of Barwise, so I will summarize what I will need very briefly. I follow Jensen's excellent presentation of this material in \cite[p.~102 ff]{Jensen2014:SubcompleteAndLForcingSingapore}. For a more detailed treatment, see Barwise \cite{ASS}, or Jensen's set of handwritten notes \cite{Jensen:AdmissibleSets}.

Recall that a structure $\kla{M,A_1,\ldots,A_n}$ is \emph{admissible} if it is transitive and satisfies $\KP$ (which I take to include the axiom of infinity), using the predicates $A_1,\ldots,A_n$. For admissible $M$, Barwise developed an infinitary logic where the infinitary formulas are (coded by) elements of $M$. Thus, infinitary conjunctions and disjunctions are allowed, as long as they are in $M$, but only finite strings of quantifiers may occur, and all predicate symbols are finitary. Let $A$ be a $\Sigma_1(M)$ set of such infinitary formulas. Thus, the set of formulas $A$ itself can be defined by a finitary first order formula over $M$ that is $\Sigma_1$. The set $A$ may well contain formulas that are not $\Sigma_1$, so it is not a $\Sigma_1$-theory in the usual model theoretic sense. The intuition is that elements of $M$ behave like finite sets in finitary logic (and hence, they are called ``$M$-finite''), and $\Sigma_1(M)$ sets behave like recursively enumerable ones. The logic comes with a proof theory and a model theory whose main features are:
\begin{enumerate}[label=(\arabic*)]
\item \emph{The $M$-finiteness lemma:}
if a formula $\phi$ is provable from $A$, then there is a $u\in M$ such that $u\sub A$ and $\phi$ is provable from $u$.
\item \emph{The correctness theorem:} if there is a model $\mathfrak{A}$ with $\mathfrak{A}\models A$, then $A$ is consistent.
\item \emph{The Barwise completeness theorem:} if $M$ is \emph{countable} and $A$ is consistent, then there is a model $\mathfrak{A}$ with $\mathfrak{A}\models A$.
\end{enumerate}

\begin{defn}
\label{def:EpsilonTheory}
Let $M$ be admissible. If $A$ consists of infinitary formulas in $M$, then $A$ is a \emph{theory on $M$}. $A$ is an \emph{$\in$-theory} on $M$ if the language it is formulated in contains the symbol $\in$, a constant symbol $\unt{x}$, for every $x\in M$, and if the theory contains the extensionality axiom, as well as the \emph{basic axiom}
\[\forall y\quad(y\in\unt{x} \iff \bigvee_{z\in x} y=\unt{z})\]
for every $x\in M$.
It is a $\ZFCm$-theory on $M$ if it is an $\in$-theory on $M$ that contains the $\ZFCm$ axioms (viewed as a set of finitary formulas, which are also in $M$).
\end{defn}

If $A$ is an $\in$-theory on $M$ and $\mathfrak{A}$ is a model for $A$ whose well-founded part is transitive, then automatically, $\unt{x}^{\mathfrak{A}}=x$, which is why I won't specify the interpretation of these constants by such a model. The following property is a slight weakening of fullness, see Definition \ref{def:Full}.

\begin{defn}
\label{def:AlmostFull}
A transitive model $N$ of \ZFCm{} is \emph{almost full} if there is a model $\mathfrak{A}$ of \ZFCm{} whose well-founded part is transitive, $N$ is an element of the well-founded part of $\mathfrak{A}$ and $N$ is \emph{regular} in $\mathfrak{A}$, i.e., if $x\in N$, $f\in\mathfrak{A}$, and $f:x\To N$, then $\ran(f)\in N$.
\end{defn}

The same comments made after Definition \ref{def:Full} apply here as well. In applications, the model $N$ will be of the form $L_\tau^A$, so that no complications arise.

\begin{defn}
If $N$ is a transitive set, then I write $\alpha(N)$ for the least $\alpha>0$ such that $L_{\alpha}(N)\models\KP$.
\end{defn}

The next lemma will be used crucially in the proof of Lemma \ref{lem:OneStep}.

\begin{namedthm}{Transfer Lemma}[{\cite[p.~123, Lemma 4.5]{Jensen2014:SubcompleteAndLForcingSingapore}}]
\label{lem:ConsistencyGoesUp}
Let $\bN$ and $N$ be transitive \ZFCm-models. Let $\bN$ be almost full and $\sigma:\bN\emb{\Sigma_0} N$ be cofinal, that is, $N=\bigcup\ran(\sigma)$. Then $N$ is almost full. Further, let $\bar{\mathcal{L}}$ be a theory in an infinitary language on $L_{\alpha(\bN)}(\bN)$ that has a $\Sigma_1$-definition in $L_{\alpha(\bN)}(\bN)$ in the parameters $\bN$ and  $p_1,\ldots,p_n\in\bN$. Let $\mathcal{L}$ be the infinitary theory on $L_{\alpha(N)}(N)$ defined over $L_{\alpha(N)}(N)$ by the same $\Sigma_1$-formula, using the parameters $N$, $\sigma(p_1),\ldots,\sigma(p_n)$. If $\bar{\mathcal{L}}$ is consistent, then so is $\mathcal{L}$. I will denote $\mathcal{L}$ by $\sigma(\mathcal{L})$.
\end{namedthm}

Coming up is the technical lemma I need, a general version of Jensen's \cite[Lemma 6.3]{Jensen2014:SubcompleteAndLForcingSingapore}. The present lemma differs from Jensen's version in several respects.

First, the formulation is different. Jensen's lemma states that if $\kappa>\omega_1$ is regular and $A\sub\kappa$ is a stationary set consisting of ordinals of countable cofinality, then the usual forcing to shoot a club through $A$ with countable conditions is subcomplete. The present version of the lemma implies this, under the additional assumption that $\kappa>2^\omega$.

More importantly, the original lemma assumes that $\kappa>\omega_1$, while I need $\kappa>2^\omega$. It was first observed by Sean Cox that there is a step in Jensen's proof that seems to only go through if $\kappa>2^\omega$, and I thank him sincerely for pointing this out to me. The assumption is actually needed for the lemma, as I observe in the note following the statement of the lemma, and it is also needed for the original lemma, as was originally noticed by Hiroshi Sakai (as communicated by Corey Switzer). The assumption is used in Claim \ref{Claim:BarNinN1} of the proof.

On the other hand, while Jensen's lemma was missing a needed assumption, the its proof made an assumption that is not needed, namely that $A$ belongs to the range of the embedding $\sigma$. It will play a role later on, in Lemma \ref{lem:OmegaStep}, that this is unnecessary. In fact, the present version of the lemma does not mention $A$.

In the article containing the original lemma, Jensen works with a variant of subcompleteness \cite[Def.~on p.~114]{Jensen2014:SubcompleteAndLForcingSingapore} in which the ``suprema condition'' \ref{item:SupremumCondition} of Definition \ref{def:(ininifty-)subcompleteness} is replaced with a ``hull condition'' which implies the suprema condition. Accordingly, the proof of his lemma establishes a potentially stronger property than \ref{item:SameSupremaAbove}
and \ref{item:HeightMovesCorrectly}. But the proof of the present version of the lemma establishes that condition as well, as I point out in Remark \ref{remark:HullCondition}, after the proof.

Finally, Jensen's lemma does not mention clause \ref{item:SameBehaviorBelow}, and this clause will also be important in the aforementioned application.

I will carry the proof out in considerable detail, because it is a subtle argument in which it is easy to overlook problems (and this has happened in the past, as I expained).

\begin{lem}
\label{lem:OneStep}
Let $\kappa>2^\omega$ be a regular cardinal, $\theta>2^{2^\kappa}$ regular, $N$ a transitive model of $\ZFCm$ (in a finite language) with a definable well-ordering of its universe and $H_\theta\sub N$, $\sigma:\bN\prec N$, where $\bN$ is countable and full and $\kappa\in\ran(\sigma)$. Let $\eta\in\kappa\cap\ran(\sigma)$ be such that $\eta^\omega<\kappa$. Let $\bar{\kappa},\bar{\eta}$ be the preimages of $\kappa,\eta$ under $\sigma$, respectively. Let $\seq{\blambda_i}{i<n}$ be regular cardinals in $\bN$, each greater than $\bkappa$. Let $\bar{a}$ be some element of $\bN$.

Then there is an $\omega$-club of ordinals $\kappa_0<\kappa$ (i.e., the set of such $\kappa_0$ is unbounded in $\kappa$ and closed under limits of countable cofinality) for which there is an embedding $\sigma':\bN\prec N$ with the following properties:
\begin{enumerate}[label=\textnormal{(\alph*)}]
  \item
  \label{item:MoveBasicParametersTheSameWay}
  Letting $p=\{\bar{a},\bar{\kappa},\bar{\eta},\bar{\lambda}_0,\ldots,\bar{\lambda}_{n-1}\}$, we have that $\sigma\rest p=\sigma'\rest p$.
  \item
  \label{item:SameSupremaAbove}
  For $i<n$, $\sup\sigma``\blambda_i=\sup\sigma'``\blambda_i$.
  \item
  \label{item:SameBehaviorBelow}
  $\sigma\rest\bar{\eta}=\sigma'\rest\bar{\eta}$.
  \item
  \label{item:CorrectSupremum}
  $\sup\sigma'``\bkappa=\kappa_0$.
  \item
  \label{item:HeightMovesCorrectly}
  $\sup\sigma``(\On\cap\bN)=\sup\sigma'``(\On\cap\bN)$.
\end{enumerate}
\end{lem}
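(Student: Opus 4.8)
The argument follows the template of Jensen's proof that club‑shooting is subcomplete (\cite[Lemma 6.3]{Jensen2014:SubcompleteAndLForcingSingapore}), using the Barwise apparatus of Subsection~\ref{subsec:BarwiseTheoAndTechnicalLemma}, and in particular the Transfer Lemma. Before starting, I would make the standard reductions. By Fact~\ref{fact:AbsorbingParameters} and the remarks following it, applied to $N$ and the finite list $\bar a,\bkappa,\bar\eta,\blambda_0,\dots,\blambda_{n-1}$ (together with $\eta$), I may assume all of these are definable without parameters over $\bN$, and their $\sigma$‑images over $N$. Then \emph{any} elementary $\sigma'\colon\bN\prec N$ automatically sends each $\bN$‑definable element to the corresponding $N$‑definable element, so \ref{item:MoveBasicParametersTheSameWay} becomes automatic and only \ref{item:SameSupremaAbove}--\ref{item:HeightMovesCorrectly} remain to be arranged. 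Note that $\sigma\restriction\bar\eta$ is a countable object, hence lies in $H_{\omega_1}\subseteq H_\theta\subseteq N$; this is what makes it available on the $N$‑side. Write $N=L^A_\tau$; since $\bN$ is full it is almost full, so $\mathcal M:=L_{\alpha(\bN)}(\bN)$ is a countable admissible set.

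The core step is to fix a prescribed target $\kappa_0$ and build, on $\mathcal M$, an infinitary $\in$‑theory $\bar{\mathcal L}$ whose models code an embedding $\sigma'\colon\bN\prec N$ with the required agreement and supremum behaviour. Concretely, $\bar{\mathcal L}$ contains: the $\in$‑theory axioms for constants $\unt x$ ($x\in\mathcal M$); the $\ZFCm$ axioms together with ``$V=L^{\dot A}$''; the translated $\Sigma_1(\mathcal M)$‑fragment of the elementary diagram of $\kla{\bN,\in,\bar A}$ — which, by elementarity of $\sigma$, coincides with the corresponding fragment of the diagram of $\kla{N,\in,A}$ read through $\sigma$, and which forces any model $\mathfrak A$ to be $\in$‑correct on a tall initial segment and elementarily equivalent to $N$; axioms for a new function symbol $\dot\sigma'$ asserting it is an elementary embedding of $\unt\bN$ into $\mathfrak A$; and ``control'' axioms that, after transfer, become precisely \ref{item:SameSupremaAbove}--\ref{item:CorrectSupremum} relative to the given $\sigma$ (using a parameter coding $\kappa_0$). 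One then checks $\bar{\mathcal L}$ is $\Sigma_1$‑definable over $\mathcal M$ in parameters from $\bN$; applies the Transfer Lemma to pass to the $\sigma$‑transfer $\mathcal L$ on $L_{\alpha(N)}(N)$ — the structure in which $\sigma$, $\sigma\restriction\bar\eta$, $\kappa$, $\eta$ and $\kappa_0$ are all visible — which is the theory genuinely describing $\sigma'\colon\bN\prec N$; and establishes consistency (the bulk of the work) on the $N$‑side by exhibiting a witnessing model, obtained by a collapsing/covering construction over $N$. Running Barwise completeness on the \emph{countable} $\mathcal M$ then yields a model of $\bar{\mathcal L}$; collapsing and restricting its well‑founded part — which, by the $\in$‑correctness and tallness axioms, is a transitive $\ZFCm$‑model elementarily embeddable into $N$ — extracts $\sigma'$ together with the embedding into $N$ realizing it.

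For the $\omega$‑club: let $E$ be the set of $\kappa_0<\kappa$ realizable as $\sup\sigma'``\bkappa$ by an embedding satisfying \ref{item:MoveBasicParametersTheSameWay}--\ref{item:HeightMovesCorrectly}. I would show $E$ is unbounded in $\kappa$ by running the above construction with targets pushed cofinally high — here $\kappa$ regular and $\eta^\omega<\kappa$ guarantee there is room, since the relevant countable approximations, and in particular the $\le\eta^\omega$‑many possible values of $\sigma'``\bar\eta$, all fit below $\kappa$ — and closed under limits of countable cofinality by a gluing argument that assembles an embedding hitting $\kappa_0$ from embeddings realizing cofinally‑many ordinals below $\kappa_0$. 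Necessarily $E$ lies above $\sup\sigma``\bar\eta$ (since \ref{item:SameBehaviorBelow} forces $\sup\sigma'``\bar\eta=\sup\sigma``\bar\eta\le\sup\sigma'``\bkappa$), which is no obstruction to $E$ being an $\omega$‑club. This $E$ is the promised $\omega$‑club, and for each $\kappa_0\in E$ the construction of the previous paragraph delivers the required $\sigma'$.

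The step I expect to be the main obstacle is the consistency verification of the Barwise theory, namely Claim~\ref{Claim:BarNinN1}: that $\bN$, or rather the relevant countable structure built over it, belongs to a certain intermediate model used to assemble the witnessing model on the $N$‑side. This is exactly where $\kappa>2^\omega$ is indispensable — one needs the $\le 2^\omega$‑many (respectively $\le\eta^\omega$‑many) relevant objects to be captured below $\kappa$ — and it is the gap Cox and Sakai identified in Jensen's original argument. A secondary subtlety, absent from Jensen's version, is checking that clauses \ref{item:SameBehaviorBelow} (freezing $\sigma\restriction\bar\eta$) and \ref{item:CorrectSupremum} (hitting $\kappa_0$) are jointly achievable; this is where $\bar\eta<\bkappa$ and $\eta<\kappa$ are used, so that fixing the behaviour of $\sigma'$ below $\bar\eta$ leaves its cofinal behaviour up to $\bkappa$ free, for any $\kappa_0\in E$ (equivalently, $\kappa_0\ge\sup\sigma``\bar\eta$).
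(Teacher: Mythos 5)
Your overall architecture (a Barwise $\in$-theory, the Transfer Lemma, completeness over a countable admissible set) matches the paper's, and you correctly locate where $\kappa>2^\omega$ and $\eta^\omega<\kappa$ must enter. But the proposal omits the device that makes the argument work, and without it two of your steps fail. First, clauses \ref{item:SameSupremaAbove}, \ref{item:CorrectSupremum} and \ref{item:HeightMovesCorrectly} speak about ordinals of $N$ --- $\kappa_0$, $\sup\sigma``\blambda_i$, $\sup\sigma``(\On\cap\bN)$ --- that are not elements of $\bN$, so your ``control axioms, using a parameter coding $\kappa_0$'' cannot be written into a theory on the countable set $L_{\alpha(\bN)}(\bN)$: no such parameter lives there. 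The paper resolves this by interpolating two hulls, $N_0=$ the transitive collapse of $\Hull^N(\kappa_0\cup\ran(\sigma))$ (so that $\kappa_0$ becomes the image of $\bkappa$ and the critical point of $k_0$) and, inside it, the $\bkappa$-cofinal sub-hull $N_1$. Relative to these, the suprema conditions turn into first-order conditions on the embedding itself (``$\dot\sigma$ is $\bkappa$-cofinal and sends $\bkappa,\blambda_i$ to $\kappa_0,\lambda_{i,0}$''), and the theory is formulated on $L_{\alpha(N_1)}(N_1)$ and $L_{\alpha(N_0)}(N_0)$, where $\bN$ and $\sigma_1``\baeta$ are available as constants precisely because of Claims \ref{Claim:BarNinN1} and \ref{Claim:PointwiseImageOfEtaBarInN1} --- this, and not ``pushing targets cofinally high,'' is where $\kappa>2^\omega$ and $\eta^\omega<\kappa$ are used. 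Second, your logical flow through the Transfer Lemma is inverted: that lemma moves consistency \emph{upward} along a cofinal embedding from the almost full side, and Barwise completeness needs consistency on the \emph{countable} side as input. You propose to exhibit a witnessing model on the $N$-side and then run completeness on $L_{\alpha(\bN)}(\bN)$; neither step yields the other, and trying to get consistency of the countable theory from a model would be circular. In the paper, consistency is witnessed concretely on the $N_1$ side by $\sigma_1$ itself (which is $\bkappa$-cofinal, hence continuous at $\bkappa$ --- the whole point of the second interpolation), transferred up to $N_0^+$ along $k_1$, and only then pulled into a countable elementary hull of $\kla{H_\kappa,\in,N_0,\sigma_0,\ldots}$ where completeness applies; the resulting embedding is composed with the hull map and $k_0$ to land in $N$.

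A further point: the $\omega$-club is not produced by a gluing argument, and no such argument can work --- one cannot assemble an elementary embedding with $\sup\sigma'``\bkappa=\kappa_0$ out of embeddings realizing cofinally many ordinals below $\kappa_0$. The $\omega$-club is simply the set of points of countable cofinality in the club $C=\{\alpha<\kappa\st\Hull^N(\alpha\cup\ran(\sigma))\cap\kappa=\alpha\}$, and the countable cofinality of $\kappa_0$ is used pointwise, in Claim \ref{Claim:CofinalityOfSigmaHat}, to upgrade the embedding delivered by Barwise completeness from being $\bkappa$-cofinal into the countable hull's copy of $N_0$ to being $\bkappa$-cofinal into $N_0$ itself; that cofinality is exactly what forces $\sup\sigma'``\bkappa=\kappa_0$ after composing with $k_0$.
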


\noindent\emph{Note:}
\begin{enumerate}
\item Instead of a single $\bar{a}$, one can choose finitely many members of $\bN$, say $\bar{a}_0,\ldots,\bar{a}_{n-1}$, and find a $\sigma'$ as in the lemma that moves each of these elements the same way $\sigma$ does, because one can apply the lemma to the sequence $\kla{\bar{a}_0,\ldots,\bar{a}_{n-1}}$.
\item The assumption that $\kappa>2^\omega$ is necessary, because if $\sigma':\bN\prec N$, then $\sigma'\rest((2^\omega)^\bN+1)=\sigma\rest((2^\omega)^{\bN}+1)$. Clearly, $\sigma((2^\omega)^\bN)=2^\omega=\sigma'((2^\omega)^{\bN})$. And if $f$ is the $N$-least bijection from $\power(\omega)$ to $2^\omega$, and $\barf$ is the $\bN$-least bijection from $(\power(\omega))^{\bN}$ to $(2^\omega)^{\bN}$, then $\sigma(\barf)=f=\sigma'(\barf)$. It follows that for any ordinal $\xi<(2^\omega)^\bN$, letting $x=\barf^{-1}(\xi)$, then $\sigma(\xi)=\sigma(\barf(x))=\sigma(\barf)(\sigma(x))=\sigma'(\barf)(\sigma'(x))=\sigma'(\barf(x))=\sigma'(\xi)$.
\end{enumerate}

\noindent{\emph{Proof.}}
Let me define $a=\sigma(\bar{a})$, $\lambda_i=\sigma(\blambda_i)$, for $i<n$, and $\eta=\sigma(\baeta)$.

Since $N$ has a definable well-order of its universe, for every subset $X$ of the universe of $N$, there is a minimal (with respect to inclusion) subset $Y$ of the universe of $N$ that contains $X$ such that $N|Y\prec N$. I denote this $Y$ by $\Hull^N(X)$, and I will use this notation for other models that have a definable well-order as well. Clearly, the set
\[C=\{\alpha<\kappa\st\Hull^N(\alpha\cup\ran(\sigma))\cap\kappa=\alpha\}\]
is club in $\kappa$. I claim that whenever $\kappa_0\in C$ has countable cofinality, then there is a $\sigma'$ as described, proving the lemma.

For the purpose of the proof, let me define that for transitive models $\bM$ and $M$ of $\ZFCm$, an embedding $j:\bM\prec M$ is \emph{cofinal} if for every $x\in M$ there is a $y\in\bM$ such that $x\in j(y)$. If $\bdelta$ is a cardinal in $\bM$, then $j:\bM\prec M$ is $\bdelta$-cofinal if for every $x\in M$ there is a $y\in\bM$ of $\bM$-cardinality less than $\bdelta$ with $x\in j(y)$. I will use the following facts several times throughout the proof.
\begin{enumerate}[label=(\Alph*)]
\item
\label{item:HullsAndCofinalEmbeddings}
If $j:\bM\prec M$, $\bdelta$ is a cardinal in $\bM$, $\delta=j(\bdelta)$ and $M=\Hull^M(\ran(j)\cup\delta)$, then $j$ is $(\bdelta^+)^\bM$-cofinal.
\item
\label{item:ContinuityOfCofinalEmbeddings}
If $j:\bM\prec M$ is $\bdelta$-cofinal, then $j$ is continuous at every $\blambda\in\bM$ with $\cf^\bM(\blambda)\ge\bdelta$, that is, $j(\blambda)=\sup j``\blambda$.
\end{enumerate}

\begin{proof}[Proof of \ref{item:HullsAndCofinalEmbeddings} \& \ref{item:ContinuityOfCofinalEmbeddings}.]
To see \ref{item:HullsAndCofinalEmbeddings}, let $x\in M$ be given.
By assumption, $x$ is definable in $M$ from some ordinal $\alpha<\delta$ and some elements $a_0,\ldots,a_{n-1}$ of $\ran(j)$. Let's say $x$ is the unique $z$ such that $M\models\phi(z,\alpha,\vec{a})$. Let $\bar{a}_0,\ldots,\bar{a}_{n-1}$ be the preimages of $a_0,\ldots,a_{n-1}$ under $j$, and consider the function $\barf:\bdelta\To\bM$ defined in $\bM$ by letting $\barf(\balpha)$ be the unique $z$ such that $\phi(z,\balpha,\vec{\bar{a}})$ holds, if such a $z$ exists, and let $\barf(\balpha)=0$ otherwise. Clearly, if we set $y=\ran(\barf)$, then $x\in j(y)$, and $y$ has $\bM$-cardinality at most $\bdelta$.

To see \ref{item:ContinuityOfCofinalEmbeddings}, fix a $\blambda$ as stated. To show that $j(\blambda)=\sup j``\blambda$,
note that the right hand side is obviously less than or equal to the left hand side. For the converse, suppose $\alpha<j(\blambda)$. By $\bdelta$-cofinality, let $y\in\bN$ have $\bN$-cardinality less than $\bdelta$, with $\alpha\in j(y)$. We may assume that $y\sub\blambda$ (by intersecting it with $\blambda$ if necessary). But then $y$ is bounded in $\blambda$, since the $\bN$-cofinality of $\blambda$ is at least $\bdelta$, say $y$ is bounded by $\xi<\blambda$. Then $\alpha\in j(y)\sub j(\xi)<\sup j``\blambda$.
\end{proof}

Now, let me fix $\kappa_0\in C$ with $\cf(\kappa_0)=\omega$, let $N_0$ be the transitive collapse of $\Hull^N(\kappa_0\cup\ran(\sigma))$, and let $k_0$ be the inverse of the collapse. We have:
\[k_0:N_0\prec N,\ \crit(k_0)=\kappa_0,\ k_0(\kappa_0)=\kappa.\]
Note that since $\eta,2^\omega\in\ran(\sigma)\cap\kappa$, it follows that $\kappa_0>\eta,2^\omega$.

\tikzset{paint/.style={ draw=#1!50!black},
    decorate with/.style=
    {decorate,decoration={shape backgrounds,shape=#1,shape size=1mm}}}

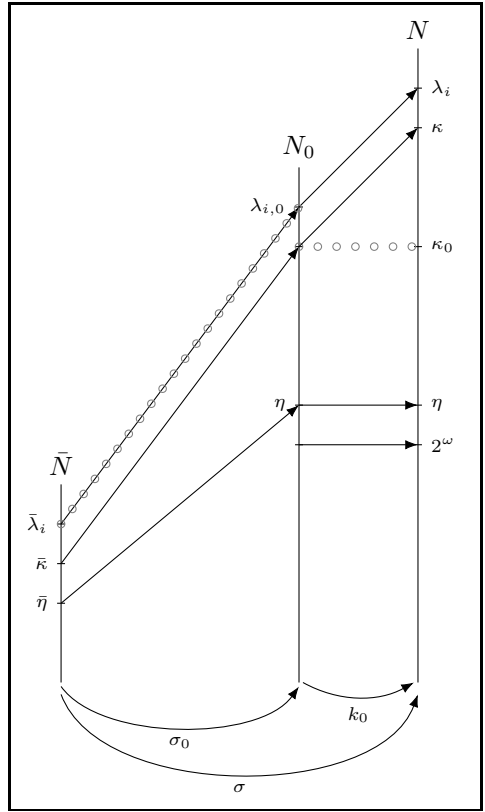
\begin{wrapfigure}[26]{r}{.5\textwidth}
\fbox{

\begin{tikzpicture}[scale=.5\textwidth/12cm,>=Latex]
\draw (1,0) -- (1,5) node[anchor=south] {$\bar{N}$};
\draw (1.1,2) -- (.9,2) node[anchor=east] {$\scriptstyle{\bar{\eta}}$};
\draw (1.1,3) -- (.9,3) node[anchor=east] {$\scriptstyle{\bar{\kappa}}$};
\draw (1.1,4) -- (.9,4) node[anchor=east] {$\scriptstyle{\bar{\lambda}_i}$};
\draw (10,0) -- (10,16) node[anchor=south] {$N$};
\draw (9.9,6) -- (10.1,6) node[anchor=west] {$\scriptstyle{2^\omega}$};
\draw (9.9,7) -- (10.1,7) node[anchor=west] {$\scriptstyle{\eta}$};
\draw (9.9,11) -- (10.1,11) node[anchor=west] {$\scriptstyle{\kappa_0}$};
\draw (9.9,14) -- (10.1,14) node[anchor=west] {$\scriptstyle{\kappa}$};
\draw (9.9,15) -- (10.1,15) node[anchor=west] {$\scriptstyle{\lambda_i}$};
\draw (7,0) -- (7,13) node[anchor=south] {$N_0$};
\draw (7.1,6) --(6.9,6);
\draw (7.1,7) -- (6.9,7) node[anchor=east] {$\scriptstyle{\eta}$};
\draw (7.1,11) -- (6.9,11);
\draw (7.1,12) -- (6.9,12) node[anchor=east] {$\scriptstyle{\lambda_{i,0}}$};
\draw[->] (7,6) -- (10,6);
\draw[->] (7,7) -- (10,7);
\draw[decorate with=circle, paint=white, ->] (7,11) -- (10,11);
\draw[->] (7,11) -- (10,14);
\draw[->] (7,12) -- (10,15);
\draw[->] (7.1,0) .. controls (8,-.5) and (9,-.5) .. (9.9,0)
    node[midway, sloped, below] {$\scriptstyle{k_0}$};
\draw[->] (1,2) -- (7,7);
\draw[->] (1,3) -- (7,11);
\draw[->] (1,4) -- (7,12);
\draw[decorate with=circle, paint=white, ->] (1,4) -- (7,12);


\draw[->] (1,-.3) .. controls (2,-3) and (9,-3) .. (10,-.3)
    node[midway, sloped, below] {$\scriptstyle{\sigma}$};


\draw[->] (1,-.1) .. controls (2,-1.5) and (6,-1.5) .. (7,-.1)
    node[midway, sloped, below] {$\scriptstyle{\sigma_0}$};






\end{tikzpicture} }
\caption{The first interpolation.}
\label{fig:FirstInterpolation}
\end{wrapfigure}
Since $\ran(\sigma)\sub\ran(k_0)$, there is an elementary embedding
\begin{enumerate}
\item[] $\sigma_0:\bN\prec N_0$
\end{enumerate}
defined by $\sigma_0=k_0^{-1}\compose\sigma$. Then we have
\begin{itemize}
\item $\sigma_0:\bN\prec N_0$,
\item $\sigma_0(\bkappa)=\kappa_0$,
\item $k_0\compose\sigma_0=\sigma$.
\end{itemize}%
Let me define $a_0=\sigma_0(\bar{a})$, $\eta_0=\sigma_0(\bar{\eta})$ and $\lambda_{i,0}=\sigma_0(\blambda_i)$, for $i<n$.

\hspace*{\parindent} By \ref{item:HullsAndCofinalEmbeddings}, it follows that

\begin{NumberedClaim}
\label{claim:Sigma0isCofinal}
$\sigma_0:\bN\prec N_0$ is a $(\bkappa^+)^\bN$-cofinal embedding.
\end{NumberedClaim}

Figure \ref{fig:FirstInterpolation} on the right summarizes the situation so far. The circles indicate the cofinal image, that is, the function $\alpha\mapsto\sup f``\alpha$. An arrow with superimposed circles indicates that the function at hand is continuous at this point, that is, that the point is mapped to its cofinal image by the function.

Next, I will define an intermediate model in between $\bN$ and $N_0$. The construction is similar to forming an extender ultrapower of $\bN$ by an extender derived from $\sigma_0\rest\bkappa$.

More precisely, $N_1$ is the Mostowski collapse of the set \[H=\{\sigma_0(\barf)(\alpha)\st\exists\beta<\bkappa\quad \barf:\beta\To\bN,\ \barf\in\bN,\ \alpha<\sigma_0(\beta)\}\]
and, letting $k_1$ be the inverse of the collapsing isomorphism, $\sigma_1=k_1^{-1}\compose\sigma_0$. It is not hard to check that $H\prec N_0$, so that this makes sense. Then:
\[\sigma_1:\bN\prec N_1,\ k_1:N_1\prec N_0,\ \sigma_0=k_1\compose\sigma_1.\]
Let me set $a_1=\sigma_1(\bar{a})$, $\eta_1=\sigma_1(\bar{\eta})$, $\kappa_1=\sigma_1(\bkappa)$ and $\lambda_{i,1}=\sigma_1(\blambda_i)$, for $i<n$. The situation is illustrated in Figure \ref{fig:SecondInterpolation}

It is easy to see from the definition of $N_1$ that
\begin{NumberedClaim}
$\sigma_1:\bN\prec N_1$ is $\bkappa$-cofinal.
\end{NumberedClaim}

Namely, given $y\in N_1$, it follows that $k_1(y)$ is of the form $\sigma_0(\barf)(\alpha)$, for some function $\barf:\beta\To\bN$ in $\bN$, where $\beta<\bkappa$ and $\alpha<\sigma_0(\beta)$. Thus, letting $x=\ran(\barf)$, we have that $x$ has cardinality less than $\bkappa$ in $\bN$, and  $k_1(y)\in\sigma_0(x)=k_1(\sigma_1(x))$, and so, pulling back via $k_1^{-1}$, we have that $y\in\sigma_1(x)$.

By \ref{item:ContinuityOfCofinalEmbeddings}, we have:

\begin{NumberedClaim}
\label{claim:Continuity}
For any $\blambda\in\bN$ of $\bN$-cofinality at least $\bkappa$, it follows that $\sup\sigma_1``\blambda=\sigma_1(\blambda)$.
\end{NumberedClaim}

It is also clear that

\begin{NumberedClaim}
$k_1\rest\kappa_1=\id$. As a result, $\sigma_1\rest\bkappa=\sigma_0\rest\bkappa$.
\end{NumberedClaim}

This is because $\kappa_1\sub H$: suppose $\alpha<\kappa_1$. Then by \ref{claim:Continuity}, $\alpha<\sigma_1(\bkappa)=\sup\sigma_1``\bkappa$. So let $\beta<\bkappa$ be such that $\alpha<\sigma_1(\beta)\le\sigma_0(\beta)$. Then $\alpha=\sigma_0(\id\rest\beta)(\alpha)\in H$.

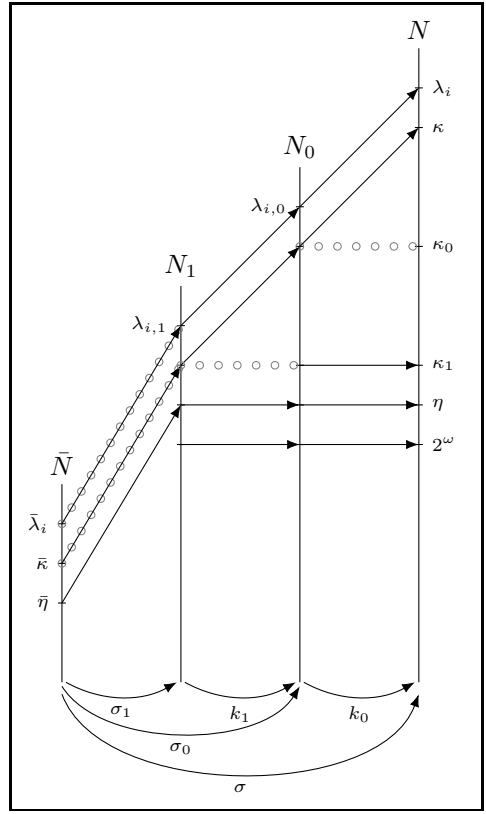
\begin{wrapfigure}[28]{r}{0pt}
\fbox{\begin{tikzpicture}[scale=.5\textwidth/12cm,>=Latex]
\draw (1,0) -- (1,5) node[anchor=south] {$\bar{N}$};
\draw (1.1,2) -- (.9,2) node[anchor=east] {$\scriptstyle{\bar{\eta}}$};
\draw (1.1,3) -- (.9,3) node[anchor=east] {$\scriptstyle{\bar{\kappa}}$};
\draw (1.1,4) -- (.9,4) node[anchor=east] {$\scriptstyle{\bar{\lambda}_i}$};
\draw (10,0) -- (10,16) node[anchor=south] {$N$};
\draw (9.9,6) -- (10.1,6) node[anchor=west] {$\scriptstyle{2^\omega}$};
\draw (9.9,7) -- (10.1,7) node[anchor=west] {$\scriptstyle{\eta}$};
\draw (9.9,8) -- (10.1,8) node[anchor=west] {$\scriptstyle{\kappa_1}$};
\draw (9.9,11) -- (10.1,11) node[anchor=west] {$\scriptstyle{\kappa_0}$};
\draw (9.9,14) -- (10.1,14) node[anchor=west] {$\scriptstyle{\kappa}$};
\draw (9.9,15) -- (10.1,15) node[anchor=west] {$\scriptstyle{\lambda_i}$};
\draw (7,0) -- (7,13) node[anchor=south] {$N_0$};
\draw (7.1,6) --(6.9,6);
\draw (7.1,7) -- (6.9,7);
\draw (7.1,8) -- (6.9,8);
\draw (7.1,11) -- (6.9,11);
\draw (7.1,12) -- (6.9,12) node[anchor=east] {$\scriptstyle{\lambda_{i,0}}$};
\draw[->] (7,6) -- (10,6);
\draw[->] (7,7) -- (10,7);
\draw[decorate with=circle, paint=white, ->] (7,11) -- (10,11);
\draw[->] (7,8) -- (10,8);
\draw[->] (7,11) -- (10,14);
\draw[->] (7,12) -- (10,15);
\draw[->] (7.1,0) .. controls (8,-.5) and (9,-.5) .. (9.9,0)
    node[midway, sloped, below] {$\scriptstyle{k_0}$};


\draw[->] (1,-.3) .. controls (2,-3) and (9,-3) .. (10,-.3)
    node[midway, sloped, below] {$\scriptstyle{\sigma}$};

\draw (4,0) -- (4,10) node[anchor=south] {$N_1$};
\draw (4.1,6) -- (3.9,6);
\draw (4.1,7) -- (3.9,7);
\draw (4.1,8) -- (3.9,8);
\draw (4.1,9) -- (3.9,9) node[anchor=east] {$\scriptstyle{\lambda_{i,1}}$};

\draw[->] (1,-.1) .. controls (2,-1.7) and (6,-1.7) .. (7,-.1)
    node[midway, sloped, below] {$\scriptstyle{\sigma_0}$};

\draw[->] (1,2) -- (4,7);
\draw[decorate with=circle, paint=white, ->] (1,3) -- (4,8);
\draw[->] (1,3) -- (4,8);
\draw[decorate with=circle, paint=white, ->] (1,4) -- (4,9);
\draw[->] (1,4) -- (4,9);

\draw[->] (4,6) -- (7,6);
\draw[->] (4,7) -- (7,7);
\draw[->] (4,8) -- (7,11);
\draw[->] (4,9) -- (7,12);
\draw[decorate with=circle, paint=white, ->] (4,8) -- (7,8);

\draw[->] (1.1,0) .. controls (2,-.5) and (3,-.5) .. (3.9,0)
    node[midway, sloped, below] {$\scriptstyle{\sigma_1}$};
\draw[->] (4.1,0) .. controls (5,-.5) and (6,-.5) .. (6.9,0)
    node[midway, sloped, below] {$\scriptstyle{k_1}$};
\end{tikzpicture} }
\caption{The second interpolation.}
\label{fig:SecondInterpolation}
\end{wrapfigure}
\smallskip


The next step in the proof is crucial. It is where the assumption that $\kappa>2^\omega$ is used, and it is the verification of the next claim that is missing in Jensen's \cite[Lemma 6.3]{Jensen2014:SubcompleteAndLForcingSingapore}.

\begin{NumberedClaim}
\label{Claim:BarNinN1}
$\bN\in N_1$.
\end{NumberedClaim}

To see this, let us view $\bN$ as a subset of $\omega$ temporarily; it can easily be coded that way.
Note that by elementarity, $\power(\omega)\in N$, and also $\power(\omega)\in N_0$. The cardinality of $\power(\omega)$ is $2^\omega$, which is less than $\kappa$, by assumption. Thus, since $\kappa_0\in C$, $2^\omega<\kappa_0$ as well. Since $\crit(k_0)=\kappa_0$, it follows that $(2^\omega)^{N_0}=2^\omega$. By elementarity, there is in $N_0$ a bijection $g:2^\omega\To\power(\omega)$. Since $\crit(k_0)=\kappa_0>2^\omega$, it follows that $k_0(g)=g$, and hence that $g$ is actually a bijection between $2^\omega$ and $\power(\omega)$. Thus, $\power(\omega)\sub N_0$, since $2^\omega\sub\kappa_0\sub N_0$. Moreover, $\power(\omega)\sub H$. This is because if we let $\bar{c}=(2^\omega)^\bN$, then $\bar{c}<\bkappa$, and so, letting $\bar{g}:\bar{c}\To\power(\omega)^\bN$ be a bijection, it follows that $\sigma_0(\bar{g}):\sigma_0(\bar{c})\To\power(\omega)$ is a bijection and for every $\xi<\sigma_0(\bar{c})$, $\sigma_0(\bar{g})(\xi)\in H$. Every subset of $\omega$ is of this form. Thus, $\bN\in H$, and so, $\bN=k_1^{-1}(\bN)\in N_1$.

\bigskip

Notice that clause \ref{item:SameBehaviorBelow} of the lemma can be equivalently expressed as $\sigma``\baeta=\sigma'``\baeta$. This is why the following point will be relevant.

\begin{NumberedClaim}
\label{Claim:PointwiseImageOfEtaBarInN1}
$\sigma_0``\bar{\eta}=\sigma_1``\bar{\eta}\in N_1$.
\end{NumberedClaim}

The reasoning is much like the argument for the previous claim. First, though, since $k_1\rest\kappa_1=\id$, it follows that $\sigma_0``\bar{\eta}=\sigma_1``\bar{\eta}$, as $\bar{\eta}<\bkappa$, and it also follows that $\eta=\eta_0=\eta_1$, since $\kappa_1\le k_0(\kappa_1)=\kappa_0$ and $k_0\rest\kappa_0=\id$.

Clearly, $\eta<\kappa_0$, and since $\eta^\omega<\kappa$, it also follows that $\eta^\omega<\kappa_0$, as $\kappa_0\in C$. It follows as before that $[\eta]^\omega\sub\Hull^N(\ran(\sigma)\cup\kappa_0)$, and since $\crit(k_0)=\kappa_0$, it follows that $[\eta]^\omega\sub N_0$. Further, since in $\bN$, $\bar{\eta}^\omega<\bkappa$, it follows as before that $[\eta]^\omega\sub H$. Hence, $k_1^{-1}``[\eta]^\omega=[\eta]^\omega\sub N_1$. In particular, $\sigma_1``\baeta\in N_1$.

\bigskip

Now let $\alpha(N_1)$ be the least $\beta>0$ such that $L_\beta(N_1)$ is admissible, and let $N_1^+=L_{\alpha(N_1)}(N_1)$. By Claims \ref{Claim:BarNinN1} and \ref{Claim:PointwiseImageOfEtaBarInN1}, $\bar{N}$ and $\sigma_0``\baeta$ are elements of $N_1$, which allows us to define the \ZFCm-theory $\mathcal{L}_1$ on $N_1^+$ that has an extra constant symbol $\dot{\sigma}$ and the following additional axioms:
\begin{itemize}
  \item $\dot{\sigma}:\unt{\bN}\prec\unt{N_1}$ $\unt{\bkappa}$-cofinally.
  \item $\dot{\sigma}(\unt{\bkappa},\unt{\bar{a}},\unt{\bar{\eta}},\unt{\blambda_0},\ldots,\unt{\blambda_{n-1}})=
      \unt{\kappa_1},\unt{a_1},\unt{\eta_1},\unt{\lambda_{0,1}},\ldots,\unt{\lambda_{{n-1},1}}$.
  \item $\dot{\sigma}``\unt{\bar{\eta}}=\unt{\sigma_1``{\bar{\eta}}}$.
\end{itemize}
It is crucial here that $\bN,\sigma_1``\baeta\in N_1$, so that the first and third item of the theory make sense.

Clearly, this theory is consistent, since $\kla{H_\kappa,\in,\sigma_1}$ is a model, for example.

Since $\sigma_0=k_1\compose\sigma_1:\bN\prec N_0$ is cofinal, by Claim \ref{claim:Sigma0isCofinal}, it follows that $k_1:N_1\prec N_0$ is cofinal as well, and hence, we know by the Transfer Lemma \ref{lem:ConsistencyGoesUp}, which is applicable since $\bN$ is full, that the theory $\mathcal{L}_0=$``$k_1(\mathcal{L}_1)$'' on $N_0^+=L_{\alpha(N_0)}(N_0)$ is also consistent. In more detail, $\mathcal{L}_0$ is the \ZFCm-theory on $N_0^+$ with the extra constant symbol $\dot{\sigma}$ and the additional axioms
\begin{itemize}
  \item $\dot{\sigma}:\unt{\bN}\prec\unt{N_0}$ $\unt{\bkappa}$-cofinally.
  \item $\dot{\sigma}(\unt{\bkappa},\unt{\bar{a}},\unt{\bar{\eta}},\unt{\blambda_0},\ldots,\unt{\blambda_{n-1}})=
      \unt{\kappa_0},\unt{a_0},\unt{\eta_0},\unt{\lambda_{0,0}},\ldots,\unt{\lambda_{{n-1},0}}$.
  \item $\dot{\sigma}``\unt{\bar{\eta}}=\unt{\sigma_0``{\bar{\eta}}}$.
\end{itemize}
Notice here that $k_1(\bN)=\bN$, since by elementarity, $N_1$ sees that $\bN$ is coded by a real, and that real is not moved by $k_1$. And $k_1(\sigma_1``\baeta)=\sigma_1``\baeta=\sigma_0``\baeta$ since $k_1\rest\kappa_1=\id$ and $\kappa_1>\sigma_1(\baeta)=\eta$.

In the last step of the proof, I would like to use Barwise completeness, ideally to find an elementary embedding as described in $\mathcal{L}_0$. But Barwise completeness only applies to countable theories, so the idea is to put all the relevant information inside a sufficiently rich model, and take a countable elementary substructure. Thus, let $M=\kla{H_\kappa,\in,N_0,\kappa_0,\sigma_0,a_0,\eta_0,\vec{\lambda}_0}$. Let $\pi:\tilde{M}\prec M$ be such that $\tilde{M}$ is countable and transitive.
Note that $\bN$ is definable from $\sigma_0$ and $N_0$, and so are $\bkappa,\baeta$ and $\vec{\blambda}$, and so, all of these objects are in the range of $\pi$. Note further that $\pi^{-1}(\bN)=\bN$, since $\bN$ is coded by a real number in $M$, and moreover, $\pi\rest\bN=\id$. I will write $\tilde{x}$ for $\pi^{-1}(x)$ when $x\in\ran(\pi)$. So $\tsigma_0:\bN\prec\tN_0$, and it follows that $\sigma_0=\pi\compose\tsigma_0$, because for $x\in\bN$, $\sigma_0(x)=\pi(\tsigma_0)(x)=\pi(\tsigma_0)(\pi(x))=\pi(\tsigma_0(x))$.

Clearly, $\mathcal{L}_0$ is definable in $M$, and hence in the range of $\pi$. Its preimage is $\tilde{\mathcal{L}}_0=\pi^{-1}(\mathcal{L}_0)$. $\tilde{\mathcal{L}}_0$ is then a consistent language on the structure $\tilde{N}^+=\pi^{-1}(N_0^+)$. Since this structure is countable (in $\V$), it has a model, say $\mathcal{A}$, whose well-founded part may be chosen to be transitive. Let $\tsigma'=\dot{\sigma}^{\mathcal{A}}$. Note that we do not know that $\tsigma'\in\tM$. Set
\[\sigma'=k_0\compose\pi\compose\tsigma'.\]
I claim that $\sigma'$ has the desired properties. To see this, it will be useful to write out what the axioms in $\tilde{\mathcal{L}}$ express:
\begin{itemize}
  \item the basic axioms and \ZFCm.
  \item $\dot{\sigma}:\unt{\bN}\prec\unt{\tilde{N}_0}$ $\unt{\bkappa}$-cofinally.
  \item $\dot{\sigma}(\unt{\bkappa},\unt{\bar{a}},\unt{\bar{\eta}},\unt{\blambda_0},\ldots,\unt{\blambda_{n-1}})=
      \unt{\tilde{\kappa}_0},\unt{\tilde{a}_0},\unt{\tilde{\eta}_0},\unt{\tilde{\lambda}_{0,0}},\ldots,\unt{\tilde{\lambda}_{{n-1},0}}$.
  \item $\dot{\sigma}``\unt{\bar{\eta}}=\unt{\tilde{\sigma}_0``{\bar{\eta}}}$.
\end{itemize}
Since $\mathcal{A}$ is a model of this theory, we have that
\begin{itemize}
\item $\tsigma':\bN\prec\tilde{N}_0$ is $\bkappa$-cofinal,
\item $\tsigma'(\bkappa,\bar{a},\bar{\eta},\blambda_0,\ldots,\blambda_{n-1})=
\tilde{\kappa}_0,\tilde{a}_0,\tilde{\eta}_0,\tilde{\lambda}_{0,0},\ldots,\tilde{\lambda}_{{n-1},0}$, and
\item $(\tsigma')``\baeta=\tsigma_0``\baeta$.
\end{itemize}
Composing with $\pi$, and writing $\hat{\sigma}=\pi\compose\tsigma'$, this translates to:
\begin{itemize}
\item $\hat{\sigma}:\bN\prec N_0$,
\item $\hat{\sigma}(\bkappa,\bar{a},\bar{\eta},\blambda_0,\ldots,\blambda_{n-1})=
\kappa_0,a_0,\eta_0,\lambda_{0,0},\ldots,\lambda_{{n-1},0}$, and
\item $\hat{\sigma}``\baeta=\sigma_0``\baeta$.
\end{itemize}
Remembering that $k_0\rest\eta_0=\id$, composing with $k_0$ results in:

\begin{itemize}
\item $\sigma':\bN\prec N$,
\item $\sigma'(\bkappa,\bar{a},\bar{\eta},\blambda_0,\ldots,\blambda_{n-1})=
\kappa,a,\eta,\lambda_0,\ldots,\lambda_{{n-1}}$, and
\item $(\sigma')``\baeta=\sigma_0``\baeta$.
\end{itemize}

In particular, clauses \ref{item:MoveBasicParametersTheSameWay} and \ref{item:SameBehaviorBelow} of the lemma are satisfied. For the remaining clauses, it will be useful to analyze $\hat{\sigma}$ in more detail.
The fact that $\tsigma':\bN\prec\tilde{N}_0$ is $\bkappa$-cofinal gives some more information about this embedding. It is this argument in which I will make use of the fact that $\kappa_0$ has countable cofinality.

\begin{NumberedClaim}
\label{Claim:CofinalityOfSigmaHat}
$\hat{\sigma}:\bN\prec N_0$ is $\bkappa$-cofinal.
\end{NumberedClaim}

To see this, let $a\in N_0$. Since $\sigma_0:\bN\prec N_0$ is $\bkappa^+$-cofinal, there is a $b\in\bN$ of $\bN$-cardinality $\bkappa$ with $a\in\sigma_0(b)$. Let $f:\bkappa\To b$ be surjective, $f\in\bN$, and let $\xi<\kappa_0$ be such that $a=\sigma_0(f)(\xi)$. Since $M$ sees that $\kappa_0$ has countable cofinality, $\tM$ sees that $\tkappa_0$ has countable cofinality, and it follows that $\pi``\tkappa_0$ is unbounded in $\kappa_0$. So let $\beta<\tkappa_0$ be such that $\xi<\pi(\beta)$. Let $b'=\tsigma_0(f)``\beta$, so that $a\in\pi(\beta')$, since $\pi(b')=\sigma_0(f)``\pi(\beta)\ni\sigma_0(f)(\xi)=a$. Since $\tsigma':\bN\prec\tN_0$ is $\bkappa$-cofinal, there is a $c\in\bN$ of $\bN$-cardinality less than $\bkappa$ and such that $b'\in\tsigma'(c)$. Since the $\tN_0$-cardinality of $b'$ is less than $\tkappa_0$, we may assume that every element of $c$ has size less than $\bkappa$ in $\bN$, by shrinking $c$ if necessary. Now, since $\bkappa$ is regular in $\bN$, it follows that $\bigcup c$ has $\bN$-cardinality less than $\bkappa$, and $a\in\pi(b')\sub\pi(\tsigma'(\bigcup c))=\hat{\sigma}(\bigcup c)$.

%
%
%

\begin{NumberedClaim}
If $\cf^\bN(\blambda)\ge\bkappa$, then
$\hat{\sigma}(\blambda)=\sup\hat{\sigma}``\blambda$, and if $\cf^\bN(\blambda)>\bkappa$, then
$\sigma_0(\blambda)=\sup\sigma_0``\blambda$.
\end{NumberedClaim}

This follows from the previous claim, \ref{claim:Sigma0isCofinal} and \ref{item:ContinuityOfCofinalEmbeddings}.

It is now obvious that clause \ref{item:CorrectSupremum} holds, that is, that $\sup\sigma'``\bkappa=\kappa_0$. This is because $\sup\sigma'``\bkappa=\sup k_0``\hat{\sigma}``\bkappa=\sup k_0``\hat{\sigma}(\bkappa)=\sup k_0``\kappa_0=\kappa_0$.

The next claim shows that $\sigma'$ satisfies clause \ref{item:SameSupremaAbove} of the lemma.

\begin{NumberedClaim}
For $i<n$,
$\sup\sigma_0``\blambda_i=\sup\hat{\sigma}``\blambda_i$ and
$\sup\sigma'``\blambda_i=\sup\sigma``\blambda_i$.
\end{NumberedClaim}

The first part follows from the previous claim, because
\[\sup\sigma_0``\blambda_i=\sigma_0(\blambda_i)=\lambda_{i,0}=\hat{\sigma}(\blambda_i)=\sup\hat{\sigma}``\blambda_i.\]
The second part follows from the first, since $\sigma'=k_0\compose\hat{\sigma}$ and $\sigma=k_0\compose\sigma_0$.

Finally, let me check that clause \ref{item:HeightMovesCorrectly} is satisfied, that is, that $\sup\sigma``(\On\cap\bN)=\sup\sigma'``(\On\cap\bN)$. This is easy to see: Both $\sigma_0$ and $\hat{\sigma}$ cofinal, and hence,
\begin{ea*}
\sup\sigma``(\On\cap\bN)\quad=&\sup k_0``\sup\sigma_0``(\On\cap\bN)&=\quad\sup k_0``(\On\cap N_0)\\
=&\sup k_0``\sup\hat{\sigma}``(\On\cap\bN)&=\quad\sup\sigma'``(\On\cap\bN).
\end{ea*}
%
%
\qed

I will not use the following remark, but I would like to state and prove it anyway, since in some of Jensen's writings, he defines subcompleteness by requiring that, in the notation of Definition \ref{def:(ininifty-)subcompleteness}, the embedding $\sigma'$ satisfy the ``hull condition'' that $\Hull^N(\ran(\sigma)\cup\delta)=\Hull^N(\ran(\sigma')\cup\delta)$, where $\delta=\delta(\P)$ is the density of the forcing in question, instead of the ``suprema condition,'' that is, condition \ref{item:SupremumCondition} of that Definition.

\begin{remark}
\label{remark:HullCondition}
In the notation of the previous lemma, the embedding $\sigma'$ can be guaranteed to have the property that
\[\Hull^N(\ran(\sigma)\cup\kappa_0)=\Hull^N(\ran(\sigma')\cup\kappa_0).\]
\end{remark}

\begin{proof}
The embedding constructed in the proof has this property. To see this, let me freely use notation from the proof.
By construction, $N_0=\Hull^{N_0}(\ran(\sigma_0)\cup\kappa_0)$, and so, it suffices to show that $\Hull^{N_0}(\ran(\hat{\sigma})\cup\kappa_0)=N_0$ as well, since $\sigma$/$\sigma'$ result from composing $\sigma_0$/$\hat{\sigma}$ with $k_0$. But I showed that $\hat{\sigma}:\bN\prec N_0$ is $\bkappa$-cofinal, which immediately implies this.
\end{proof}

It is maybe worth mentioning that, still in the notation of Lemma \ref{lem:OneStep}, if $\sigma'$ has the property stated in the previous remark, and actually it is enough that $\Hull^N(\ran(\sigma)\cup\kappa)=\Hull^N(\ran(\sigma')\cup\kappa)$, which is a weaker condition, then for any $\blambda>\bkappa$ of $\bN$-cofinality greater than $\bkappa$, if $\sigma(\blambda)=\sigma'(\blambda)$, then $\sup\sigma``\blambda=\sup\sigma'``\blambda$. So this condition is a strong form of clause \ref{item:SameSupremaAbove} of the lemma. For a proof, see \cite[Fact 1.6]{Fuchs:ParametricSubcompleteness}.

\subsection{Friedman's problem, the failure of square, and \SCH}
\label{subsec:FPandSCH}

In Section \ref{sec:Gamma-ProjectiveStationarity}, I relativized the strong reflection principle to a forcing class $\Gamma$ by saying that every $\Gamma$-projective stationary set contains a continuous elementary chain of length $\omega_1$. I would now like to derive some consequences of this $\Gamma$-fragment of \SRP, and the reason why these consequences arise is that certain sets are $\Gamma$-projective stationary. In order to be able to keep track of the sets that are responsible for these consequences, it will be useful to name them.

\begin{defn}
\label{def:SRP(S)}

For an uncountable regular cardinal $\kappa$, let
\[
\mathcal{S}_{\lifting}(\kappa)=\{\lifting(A,[H_\kappa]^\omega)\cap C\st A\sub S^\kappa_\omega\ \text{is stationary in $\kappa$}\ \text{and}\ C\sub[H_\kappa]^\omega\ \text{is club}\}.
\]

Given a collection $\mathcal{S}$ of stationary subsets of $H_\theta$, the \emph{$\mathcal{S}$-fragment of the strong reflection principle}, $\SRP(\mathcal{S})$, asserts that
if $S\in\mathcal{S}$, 
then there is a continuous $\in$-chain of length $\omega_1$ through $S$.
\end{defn}

So $\mathcal{S}_{\lifting}(\kappa)$ consists of all the liftings of stationary subsets of $S^\kappa_\omega$ to $[H_\kappa]^\omega$, and their intersections with clubs; see Definition \ref{defn:ProjectionsAndLiftings}.
The reason why I isolated the class $\mathcal{S}_{\lifting}$ is that $\SRP(\mathcal{S}_{\lifting}(\kappa))$ has some interesting consequences hinted at in the title of the present subsection, and follows from $\Gamma$-\SRP, for the classes $\Gamma$ of interest. The following fact has been known for a long time.

\begin{fact}[{Feng \& Jech \cite[Example 2.2]{FengJech:ProjectiveStationarityAndSRP}}]
\label{fact:OldNews}
Let $\kappa>\omega_1$ be a regular cardinal, and let $A\sub S^\kappa_\omega$ be stationary. Then the set
\[S=\{X\in[H_\kappa]^\omega\st \sup(X\cap\kappa)\in A\}=\lifting(A,[H_\kappa]^\omega)\]
is projective stationary.
\end{fact}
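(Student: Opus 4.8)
The plan is to verify Definition~\ref{def:ProjectiveStationary} directly. Fix a stationary $T\sub\omega_1$ and a club $\calC\sub[H_\kappa]^\omega$; we must find $X\in S$ with $X\cap\omega_1\in T$ and $X\in\calC$. By the usual characterization of clubs on $[H_\kappa]^\omega$ (Kueker), we may assume $\calC$ is the set of $Y\in[H_\kappa]^\omega$ closed under some fixed function $F\colon[H_\kappa]^{<\omega}\To H_\kappa$. Fix a regular $\theta>2^\kappa$ and work with $\frA=\kla{H_\theta,\in,<^*}$, which has definable Skolem functions; write $p=\{F,A,T,\kappa\}$ and let $\Hull^\frA$ denote Skolem hulls in $\frA$. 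It then suffices to produce a countable $X\prec\frA$ with $p\sub X$, $X\cap\omega_1\in T$, and $\sup(X\cap\kappa)\in A$: setting $Y=X\cap H_\kappa$, the condition $F\in X$ gives $Y\in\calC$, while $Y\cap\omega_1=X\cap\omega_1\in T$ and $\sup(Y\cap\kappa)=\sup(X\cap\kappa)\in A$, so $Y\in S$ with $Y\cap\omega_1\in T$, as desired.

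To build $X$, I would first use the stationarity of $A$ to cap $\sup(X\cap\kappa)$ \emph{from above}. Since $\kappa$ is regular and uncountable, $\{\beta<\kappa\st\Hull^\frA(\beta\cup p)\cap\kappa=\beta\}$ is club in $\kappa$, hence meets the stationary set $A$; as $A$ is unbounded we may pick $\alpha$ in this intersection with $\alpha>\omega_1$. Put $\frA'=\Hull^\frA(\alpha\cup p)$, so $\frA'\prec\frA$, $\frA'\cap\kappa=\alpha$, and $\omega_1\sub\alpha\sub\frA'$; since $\alpha\in A\sub S^\kappa_\omega$ we have $\cf(\alpha)=\omega$, so fix a sequence $\seq{\alpha_n}{n<\omega}$ cofinal in $\alpha$. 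Now use the stationarity of $T$: the set $\{\delta<\omega_1\st\Hull^\frA(\delta\cup\{\alpha_n\st n<\omega\}\cup p)\cap\omega_1=\delta\}$ is club in $\omega_1$, so choose $\delta$ in it with $\delta\in T$, and set $X=\Hull^\frA(\delta\cup\{\alpha_n\st n<\omega\}\cup p)$.

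Finally one checks that this $X$ works: it is countable (its generating set is), an elementary submodel of $\frA$ with $p\sub X$, and $X\cap\omega_1=\delta\in T$ by the choice of $\delta$. Since the generators of $X$ all lie in $\frA'$ and $\frA'$ is closed under the Skolem functions of $\frA$, we get $X\sub\frA'$, hence $X\cap\kappa\sub\frA'\cap\kappa=\alpha$ and so $\sup(X\cap\kappa)\le\alpha$; on the other hand $\alpha_n\in X$ for every $n$ and $\sup_n\alpha_n=\alpha$, so $\sup(X\cap\kappa)=\alpha\in A$. This completes the argument. The only delicate point — and the reason for the two-stage construction — is that $\sup(X\cap\kappa)$ and $X\cap\omega_1$ must be controlled simultaneously; passing first to $\frA'$ with $\frA'\cap\kappa=\alpha\in A$ reduces the ``$\kappa$-side'' constraint to the lower bound provided by the $\alpha_n$, which then leaves us free to pin $X\cap\omega_1$ at a prescribed $\delta\in T$ in the standard way. (Note that $\kappa>\omega_1$ is used to take $\alpha>\omega_1$, and $A\sub S^\kappa_\omega$ is what guarantees $\cf(\alpha)=\omega$, so that the $\alpha_n$ exist.)
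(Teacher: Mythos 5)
Your proof is correct. The paper does not prove this statement at all --- it is quoted as a Fact with a citation to Feng \& Jech's Example 2.2 --- but your argument is exactly the standard one: reduce to finding a countable $X\prec\kla{H_\theta,\in,<^*}$ containing the relevant parameters with $X\cap\omega_1\in T$ and $\sup(X\cap\kappa)\in A$, first pin down $\sup(X\cap\kappa)$ by passing to a hull $\frA'$ with $\frA'\cap\kappa=\alpha\in A$ and importing a cofinal $\omega$-sequence in $\alpha$ as generators, then pin down $X\cap\omega_1$ at a point of $T$ via a club in $\omega_1$. This is also structurally the same hull device (the club $C=\{\alpha<\kappa\st\Hull(\alpha\cup p)\cap\kappa=\alpha\}$) that the paper uses in its proof of Lemma \ref{lem:OneStep}, so nothing further is needed.
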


By Observation \ref{obs:ProjectiveStationarityPreservedUnderIntersectionWithClubs}, this can be restated by saying that for regular $\kappa>\omega_1$, every set in $\mathcal{S}_\lifting(\kappa)$ is projective stationary. With Lemma \ref{lem:OneStep} at my disposal, I am now ready to prove the corresponding fact about spread out sets. But I do need that $\kappa>2^\omega$.

\begin{lem}
\label{lem:LiftingsAreSpreadOut}
Let $\kappa>2^\omega$ be regular. Then $\mathcal{S}_\lifting(\kappa)$ consists of spread out sets.
%
%
\end{lem}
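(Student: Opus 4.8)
The plan is to verify directly that an arbitrary set of the form $S = \lifting(A,[H_\kappa]^\omega)\cap C$, where $A\sub S^\kappa_\omega$ is stationary and $C\sub[H_\kappa]^\omega$ is club, is spread out. By Observation \ref{obs:SpreadOutSetsClosedUnderIntersectionWithClubs}, being spread out is preserved under intersection with clubs, so it suffices to treat $S = \lifting(A,[H_\kappa]^\omega)$. Moreover, by Observation \ref{obs:WeaklySpreadOutImpliesSpreadOut} it is enough to check that $S$ is \emph{weakly} spread out, i.e.\ to exhibit a single parameter $b$ such that for all sufficiently large $\theta$, the condition in Definition \ref{def:SpreadOut} holds for all countable full $X\prec N=L_\tau^A{}'$ (writing $A'$ for the predicate, to avoid clashing with the stationary set $A$) with $b\in X$. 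The natural choice is $b = \{A, \kappa\}$ (or a code for it), and the key point is that the forcing $\P_S$ to shoot a continuous $\in$-chain through $S$ is essentially the forcing to shoot a continuous chain cofinal in $A$ with countable conditions — this is exactly the situation of Jensen's Lemma 6.3, now available in the corrected form of Lemma \ref{lem:OneStep}, which needs $\kappa>2^\omega$.

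First I would set up the hypotheses of Lemma \ref{lem:OneStep}: let $\theta > 2^{2^\kappa}$ be regular, let $N = L_\tau^{A'}\models\ZFCm$ with $H_\theta\sub N$ and a definable well-order, and let $\sigma:\bN\prec N$ with $\bN$ countable and full, $\kappa\in\ran(\sigma)$, and $A,C\in\ran(\sigma)$ (these are in the range since we require $b\in X$). Take $\eta = \omega_1$ (so $\eta^\omega = 2^\omega < \kappa$ by hypothesis), and take $n=0$, so there are no $\blambda_i$'s to worry about — this is the reason the lifting lemma does not need the full strength of Lemma \ref{lem:OneStep} with suprema conditions. Then Lemma \ref{lem:OneStep} produces an $\omega$-club of ordinals $\kappa_0 < \kappa$ and, for each such $\kappa_0$ (of countable cofinality), an embedding $\sigma':\bN\prec N$ with $\sigma'\rest\{\bar a,\bkappa,\bar\eta\} = \sigma\rest\{\bar a,\bkappa,\bar\eta\}$, $\sigma'\rest\bar\eta = \sigma\rest\bar\eta$ (so in particular $\sigma'\rest\omega_1^{\bN} = \sigma\rest\omega_1^{\bN}$, whence $\sigma$ and $\sigma'$ induce the same $X\cap\omega_1$), and $\sup\sigma'``\bkappa = \kappa_0$.

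The crucial step is to choose $\kappa_0$ so that $Y\cap H_\kappa \in S$, where $Y = \ran(\sigma')$. Since $A$ is stationary in $S^\kappa_\omega$, the set of $\kappa_0\in A$ that lie in the $\omega$-club furnished by Lemma \ref{lem:OneStep} is nonempty — indeed, the $\omega$-club of valid $\kappa_0$ intersected with the club $\{\alpha<\kappa : \Hull^N(\alpha\cup\ran\sigma)\cap\kappa = \alpha\}$ contains a genuine club in $\kappa$ (relative to $\cf=\omega$ it is $\omega$-club), and $A$ meets it. Pick such a $\kappa_0\in A$; then $\sup(Y\cap\kappa) = \sup\sigma'``\bkappa = \kappa_0\in A = A\cap S^\kappa_\omega$, which is precisely the condition $Y\cap H_\kappa\in\lifting(A,[H_\kappa]^\omega) = S$. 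Finally, $\pi = \sigma'\compose\sigma^{-1}\colon N|X \to N|Y$ is the required isomorphism, and it fixes $\bar a$'s image $a$ by clause \ref{item:MoveBasicParametersTheSameWay}. This verifies Definition \ref{def:SpreadOut} for $S$, so $S$ is weakly spread out, hence spread out.

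The main obstacle, and the only nontrivial input, is Lemma \ref{lem:OneStep} itself — in particular that its $\omega$-club of candidate $\kappa_0$'s is genuinely $\omega$-club (so it still meets the stationary-in-$S^\kappa_\omega$ set $A$), and that the embedding $\sigma'$ it produces really is continuous at $\bkappa$ with $\sup\sigma'``\bkappa = \kappa_0$. Once that lemma is in hand, the argument here is essentially bookkeeping: checking that the lifting of a stationary subset of $S^\kappa_\omega$ is exactly the class of $Y$ with $\sup(Y\cap\kappa)\in A$, and that "$\sigma'$ moves $\bar a$ correctly and agrees with $\sigma$ below $\omega_1$" translates into "$\pi$ is an isomorphism fixing $a$ with the right $\omega_1$." I would remark afterward that the same argument, using Lemma \ref{lem:OneStep} with the $\blambda_i$'s and clause \ref{item:SameSupremaAbove}, shows the corresponding sets are fully spread out, so the conclusion holds for \SC-projective stationarity as well.
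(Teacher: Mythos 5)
Your proposal is correct and follows essentially the same route as the paper: reduce via Observation \ref{obs:SpreadOutSetsClosedUnderIntersectionWithClubs} to a pure lifting, apply Lemma \ref{lem:OneStep} to get an $\omega$-club of candidate $\kappa_0=\sup\sigma'``\bkappa$, intersect it with the stationary set $A\sub S^\kappa_\omega$, and read off the isomorphism $\pi=\sigma'\compose\sigma^{-1}$. The only differences are cosmetic (the explicit detour through weak spread-out-ness and the choice $\eta=\omega_1$, neither of which is needed since the paper uses only clauses \ref{item:MoveBasicParametersTheSameWay} and \ref{item:CorrectSupremum} of Lemma \ref{lem:OneStep} here).
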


\begin{proof}
By Observation \ref{obs:SpreadOutSetsClosedUnderIntersectionWithClubs}, it suffices to show that if $B\sub S^\kappa_\omega$ is stationary, then $S=\{X\in[H_\kappa]^\omega\st \sup(X\cap\kappa)\in B\}$ is spread out.
To this end, let $X\prec N=L_\tau^A$, $\sigma:\bN\To X$ the inverse of the collapse of $X$, $S\in X$, $\bN$ countable and full, $H_\theta\sub N$, where $\theta$ is sufficiently large. Let $a=\sigma(\bar{a})$ be fixed, and assume that $\kappa=\sigma(\bkappa)\in\ran(\sigma)$. By Lemma \ref{lem:OneStep}, there is an $\omega$-club of ordinals $\kappa_0$ less than $\kappa$ such that there is a $\sigma':\bN\prec N$ with $\sigma'(\bar{a})=a$, $\sigma'(\bkappa)=\kappa$ and $\sup\sigma'``\bkappa=\kappa_0$. Since $B\sub S^\kappa_\omega$ is stationary, there is such a $\kappa_0\in B$. Let $\sigma'$ be the corresponding embedding, and set $Y=\ran(\sigma')$. Since $Y\cap\kappa=\sigma'``\bkappa$, we have that $\sup(Y\cap\kappa)=\sup\sigma'``\bkappa=\kappa_0\in B$, so $Y\cap H_\kappa\in S$. Thus, since
$\pi=\sigma'\compose\sigma^{-1}:X\To Y$ is an isomorphism fixing $a$, $S$ is spread out.
\end{proof}

Clauses \ref{item:SameSupremaAbove} and \ref{item:HeightMovesCorrectly} of Lemma \ref{lem:OneStep} can be used to show that in the situation of the previous lemma, $\mathcal{S}_\lifting(\kappa)$ actually consists of \emph{fully} spread out sets.

The following theorem explains the import of $\mathcal{S}_\lifting(\kappa)$. Of course, only the last part mentioning $\infSC$-\SRP is new.
For the definition of $\FP{\kappa}$, see Definition \ref{def:FPkappa}.

\begin{thm}
\label{thm:infSCSRPimpliesFPkappa}
Let $\kappa>\omega_1$ be regular.
\begin{enumerate}[label=(\arabic*)]
\item
\label{item:SRPSliftingImpliesFPkappa}
$\SRP(\mathcal{S}_\lifting(\kappa))$ implies $\FP{\kappa}$.
\item
\label{item:SRPImpliesSRP(Slifting)}
$\SRP(\kappa)$ implies $\SRP(\mathcal{S}_\lifting(\kappa))$ and hence $\FP{\kappa}$.
\item
\label{item:SCSRPImpliesSRP(Slifting)}
If $\kappa>2^\omega$, then $\infSC$-$\SRP(\kappa)$ implies $\SRP(\mathcal{S}_\lifting(\kappa))$ and hence $\FP{\kappa}$.
\end{enumerate}
\end{thm}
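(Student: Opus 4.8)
The plan is to prove \ref{item:SRPSliftingImpliesFPkappa} directly and then read off \ref{item:SRPImpliesSRP(Slifting)} and \ref{item:SCSRPImpliesSRP(Slifting)} from results already available. For \ref{item:SRPSliftingImpliesFPkappa}, I would assume $\SRP(\mathcal{S}_\lifting(\kappa))$, fix a stationary $A\sub S^\kappa_\omega$, and produce a normal $f:\omega_1\To A$. Fixing a well-ordering $<^*$ of $H_\kappa$, let $C=\{X\in[H_\kappa]^\omega\st X\prec\kla{H_\kappa,\in,<^*}\}$; this is a club in $[H_\kappa]^\omega$, so $S\mdf\lifting(A,[H_\kappa]^\omega)\cap C$ is a member of $\mathcal{S}_\lifting(\kappa)$. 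Applying $\SRP(\mathcal{S}_\lifting(\kappa))$ then gives a continuous $\in$-chain $\seq{X_i}{i<\omega_1}$ through $S$, and the candidate function is $f(i)=\sup(X_i\cap\kappa)$.

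It then remains to check that $f$ is as desired. We have $f(i)\in A$ for all $i$, since $X_i\in\lifting(A,[H_\kappa]^\omega)$; $f$ is continuous at limits $j$, because $X_j=\bigcup_{i<j}X_i$ yields $X_j\cap\kappa=\bigcup_{i<j}(X_i\cap\kappa)$ and hence $f(j)=\sup_{i<j}f(i)$; and $f$ is strictly increasing, because $X_{i+1}$, being an elementary submodel of $\kla{H_\kappa,\in,<^*}$ containing the countable set $X_i\in X_{i+1}$, contains a surjection $\omega\To X_i$ (so $X_i\sub X_{i+1}$) and contains the ordinal $\sup(X_i\cap\kappa)$ together with its successor, both of which lie below $\kappa=\On\cap H_\kappa$ and hence in $X_{i+1}\cap\kappa$, forcing $f(i)<f(i+1)$. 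Thus $\ran(f)$ is a closed subset of $A$ of order type $\omega_1$, which establishes $\FP{\kappa}$.

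For \ref{item:SRPImpliesSRP(Slifting)}: Fact \ref{fact:OldNews} says that each lifting $\lifting(A,[H_\kappa]^\omega)$ of a stationary $A\sub S^\kappa_\omega$ is projective stationary, and by Observation \ref{obs:ProjectiveStationarityPreservedUnderIntersectionWithClubs} so is its intersection with any club; hence every member of $\mathcal{S}_\lifting(\kappa)$ is projective stationary, so $\SRP(\kappa)$ produces a continuous $\in$-chain of length $\omega_1$ through each of them, which is exactly $\SRP(\mathcal{S}_\lifting(\kappa))$, and then \ref{item:SRPSliftingImpliesFPkappa} gives $\FP{\kappa}$. For \ref{item:SCSRPImpliesSRP(Slifting)}, assuming additionally $\kappa>2^\omega$: by Lemma \ref{lem:LiftingsAreSpreadOut} every member of $\mathcal{S}_\lifting(\kappa)$ is spread out, which by Theorem \ref{thm:SpreadOutIffinfSCProjectiveStationary} is the same as being $\infSC$-projective stationary, so $\infSC$-$\SRP(\kappa)$ again supplies a continuous $\in$-chain of length $\omega_1$ through each member of $\mathcal{S}_\lifting(\kappa)$, i.e.\ $\SRP(\mathcal{S}_\lifting(\kappa))$, whence $\FP{\kappa}$ by \ref{item:SRPSliftingImpliesFPkappa}.

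The genuinely substantial input is already in place, namely Lemma \ref{lem:OneStep} (where the hypothesis $\kappa>2^\omega$ is essential) and its consequence Lemma \ref{lem:LiftingsAreSpreadOut}. So for this theorem the only delicate point is in \ref{item:SRPSliftingImpliesFPkappa}: extracting a \emph{normal}, i.e.\ strictly increasing, function rather than merely a nondecreasing one from the $\in$-chain. This is precisely why one intersects the lifting with the elementarity club $C$ before applying the reflection principle, since without elementarity of the $X_i$ the successor steps of the chain need not strictly increase $\sup(X_i\cap\kappa)$.
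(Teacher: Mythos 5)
Your proposal is correct and follows essentially the same route as the paper: part (1) by lifting $A$, extracting the chain, and setting $f(i)=\sup(X_i\cap\kappa)$, and parts (2) and (3) by quoting Fact \ref{fact:OldNews} (with Observation \ref{obs:ProjectiveStationarityPreservedUnderIntersectionWithClubs}) and Lemma \ref{lem:LiftingsAreSpreadOut}, respectively. The only difference is that you make explicit the intersection with the elementarity club needed to get strict increase of $f$, which the paper handles implicitly by taking a continuous \emph{elementary} chain through $S$ (as licensed by the closure of $\mathcal{S}_\lifting(\kappa)$ under intersections with clubs); your added care on this point is warranted and matches the paper's intent.
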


\begin{proof}
For \ref{item:SRPSliftingImpliesFPkappa}, let $A\sub S^\kappa_\omega$ be stationary. Then the set
\[S=\{X\in[H_\kappa]^\omega\st\sup(X\cap\kappa)\in A\}\]
is in $\mathcal{S}_\lifting(\kappa)$. By $\SRP(\mathcal{S}_\lifting(\kappa))$, let $\seq{M_i}{i<\omega_1}$ be a continuous elementary chain through $S$. Define $f:\omega_1\To A$ by $f(i)=\sup M_i\cap\kappa$. Then $f$ is a normal function, verifying the instance of $\FP{\kappa}$ given by $A$.

Now \ref{item:SRPImpliesSRP(Slifting)} follows from Fact \ref{fact:OldNews} and \ref{item:SRPSliftingImpliesFPkappa}, and \ref{item:SCSRPImpliesSRP(Slifting)} 
from Lemma \ref{lem:LiftingsAreSpreadOut} and \ref{item:SRPSliftingImpliesFPkappa}.
\end{proof}

In item \ref{item:SCSRPImpliesSRP(Slifting)} of the previous theorem, $\infSC$-$\SRP(\kappa)$ can be replaced with $\SC$-$\SRP(\kappa)$, since the relevant sets are fully spread out, as pointed out before.

It is well-known that for a cardinal $\kappa$, $\FP{\kappa^+}$ implies the failure of Jensen's principle $\square_\kappa$, so as a consequence of the previous theorem, one obtains:

\begin{cor}
\label{cor:FailureOfSquare}
$\infSC$-\SRP implies that for every cardinal $\kappa\ge 2^\omega$, $\square_\kappa$ fails.
\end{cor}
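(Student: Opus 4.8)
The plan is to obtain Corollary \ref{cor:FailureOfSquare} as an immediate application of Theorem \ref{thm:infSCSRPimpliesFPkappa} together with the classical implication from Friedman's problem to the failure of square. Fix a cardinal $\kappa\ge 2^\omega$. Since $2^\omega\ge\omega_1$, $\kappa$ is uncountable, so $\square_\kappa$ is a meaningful principle, and $\kappa^+$ is a regular cardinal satisfying $\kappa^+>\omega_1$ and, crucially, $\kappa^+>2^\omega$ (this holds whether $\kappa=2^\omega$ or $\kappa>2^\omega$). Assuming $\infSC$-\SRP, Definition \ref{def:Gamma-SRP} yields $\infSC$-$\SRP(\kappa^+)$, and then part \ref{item:SCSRPImpliesSRP(Slifting)} of Theorem \ref{thm:infSCSRPimpliesFPkappa}, applied with $\kappa^+$ in the role of $\kappa$ (which is legitimate precisely because $\kappa^+>2^\omega$ is regular and $>\omega_1$), gives $\FP{\kappa^+}$.

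\textbf{The classical step.} It then remains to invoke the well-known fact that $\FP{\kappa^+}$ implies $\neg\square_\kappa$; for completeness I would include its short proof. Suppose $\langle C_\alpha\st\alpha<\kappa^+\text{ limit}\rangle$ were a $\square_\kappa$-sequence. Partition $S^{\kappa^+}_\omega$ into the at most $\kappa$ many pieces $E_\rho=\{\alpha\in S^{\kappa^+}_\omega\st\otp(C_\alpha)=\rho\}$; since $S^{\kappa^+}_\omega$ is stationary in the regular cardinal $\kappa^+$, some $E_\rho$ is stationary. If $\gamma<\kappa^+$ has uncountable cofinality, then for any $\alpha\in E_\rho$ that is a limit point of $C_\gamma$, coherence gives $C_\alpha=C_\gamma\cap\alpha$, hence $\otp(C_\gamma\cap\alpha)=\rho$; as $\beta\mapsto\otp(C_\gamma\cap\beta)$ is strictly increasing on $C_\gamma$ there is at most one such $\alpha$, so $E_\rho\cap\gamma$ meets the club of limit points of $C_\gamma$ below $\gamma$ in at most one point and is therefore non-stationary in $\gamma$. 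On the other hand, $\FP{\kappa^+}$ applied to $E_\rho$ produces a normal $f\colon\omega_1\to E_\rho$; then $\gamma\mdf\sup\ran f$ has cofinality $\omega_1$ and $\ran f$ is club in $\gamma$, so $E_\rho\cap\gamma\supseteq\ran f$ is stationary in $\gamma$, a contradiction. Hence $\square_\kappa$ fails.

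\textbf{Main obstacle.} I do not expect a genuine obstacle here: the corollary is a routine combination of a result already proved in the paper with a classical \ZFC fact. The only points to watch are the elementary verification that $\kappa^+>2^\omega$ (so that part \ref{item:SCSRPImpliesSRP(Slifting)} of Theorem \ref{thm:infSCSRPimpliesFPkappa} is available at $\kappa^+$) and a careful citation of the implication $\FP{\kappa^+}\Rightarrow\neg\square_\kappa$. As remarked after Theorem \ref{thm:infSCSRPimpliesFPkappa}, one may replace $\infSC$-\SRP by $\SC$-\SRP throughout, since the relevant sets in $\mathcal{S}_{\lifting}(\kappa^+)$ are in fact fully spread out.
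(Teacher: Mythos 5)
Your proposal is correct and follows exactly the route the paper takes: apply part \ref{item:SCSRPImpliesSRP(Slifting)} of Theorem \ref{thm:infSCSRPimpliesFPkappa} at $\kappa^+>2^\omega$ to get $\FP{\kappa^+}$, then invoke the classical fact that $\FP{\kappa^+}$ implies $\neg\square_\kappa$. The only difference is that the paper cites this last implication as well-known, whereas you write out its (correct) standard proof.
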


Again, $\SC$-\SRP is sufficient here. The following terminology expands on \cite{Fuchs:HierarchiesOfForcingAxioms}.

\begin{defn}
\label{def:SFPkappa}
Let $\kappa$ be an uncountable regular cardinal.

The \emph{strong Friedman Property} at $\kappa$, denoted $\SFP{\kappa}$, says that for any partition $\seq{D_i}{i<\omega_1}$ of $\omega_1$ into stationary sets, and  for any sequence $\seq{S_i}{i<\omega_1}$ of stationary subsets of $S^\kappa_\omega$, there is a normal function $f:\omega_1\To\bigcup_{i<\omega_1}S_i$ such that for every $i<\omega_1$, $f``D_i\sub S_i$.
\end{defn}

This is a somewhat technical concept, but I will extract from it something more natural. The following expands on \cite[Def.~8.17]{ST3}.

\begin{defn}
\label{def:Trace}
Let $\kappa$ be a cardinal of uncountable cofinality, and let $S\sub\kappa$ be stationary. Then an ordinal $\delta<\kappa$ is a \emph{reflection point} of $S$ if $\delta$ has uncountable cofinality and $S\cap\delta$ is stationary in $\delta$. It is an $\emph{exact reflection point}$ of $S$ if $S\cap\delta$ contains a club in $\delta$.

If $\vS=\seq{S_i}{i<\lambda}$ is a sequence of stationary subsets of $\kappa$, then an ordinal $\delta<\kappa$ is a \emph{simultaneous reflection point} of $\vS$ if for every $i<\lambda$, $\delta$ is a reflection point of $S_i$. It is an \emph{exact simultaneous reflection point} of $\vS$ if it is a simultaneous reflection point of $\vS$ and $\delta\cap(\bigcup_{i<\lambda}S_i)$ contains a club in $\delta$.

The \emph{trace} of $S$, denoted $\Tr(S)$ is the set of reflection points of $S$, and similarly, the trace of $\vS$, denoted $\Tr(\vS)$, is the set of simultaneous reflection points of $\vS$. Similarly, the \emph{exact trace} of $S$, denoted $\eTr(S)$, is the set of exact reflection points of $S$, and $\eTr(\vS)$ is the set of exact reflection points of $\vS$.
\end{defn}

Clearly, $\FP{\kappa}$ implies not only that every stationary subset $A$ of $S^\kappa_\omega$ has a reflection point, but that it has an exact reflection point. $\SFP{\kappa}$ has a similar effect on $\omega_1$-sequences of stationary subsets of $S^\kappa_\omega$, as the following observation shows - note that it obviously implies \ref{item:SFPkappa} if $S=S^\kappa_\omega$, and hence each of the equivalent conditions stated. In fact, the observation shows that \ref{item:SFPkappa} is maybe a more natural version of $\SFP{\kappa}$.

\begin{obs}
\label{obs:CharacterizationOfE-versionOfSFPkappa}
Let $\kappa>\omega_1$ be regular and fix a stationary subset $S$ of $\kappa$. The following are equivalent:
\begin{enumerate}[label=\textnormal{(\alph*)}]
\item
\label{item:SFPkappa}
Whenever $\vS=\seq{S_i}{i<\omega_1}$ is a sequence stationary subsets of $S$, there are a partition $\seq{D_i}{i<\omega_1}$ of $\omega_1$ into stationary sets and a normal function $f:\omega_1\To\kappa$ such that for all $i<\omega_1$, $f``D_i\sub S_i$.
\item
\label{item:NonemptyExactTrace}
Whenever $\vS=\seq{S_i}{i<\omega_1}$ is a sequence of stationary subsets of $S$, then $\eTr(\vS)\neq\leer$.
\item
\label{item:StationaryExactTrace}
Whenever $\vS=\seq{S_i}{i<\omega_1}$ is a sequence of stationary subsets of $S$, then $\eTr(\vS)$ is stationary.
\end{enumerate}
\end{obs}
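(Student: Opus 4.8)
The plan is to establish the cycle $(a)\Rightarrow(c)\Rightarrow(b)\Rightarrow(c)$ — which already gives $(b)\Leftrightarrow(c)$ and $(a)\Rightarrow(b),(c)$ — and then close the loop with $(b)\Rightarrow(a)$. The first three implications are soft. For $(a)\Rightarrow(c)$, given $\vS=\seq{S_i}{i<\omega_1}$ and an arbitrary club $C\sub\kappa$, I would let $C'$ be the club of limit points of $C$ and apply $(a)$ to $\seq{S_i\cap C'}{i<\omega_1}$ (a legitimate instance, each $S_i\cap C'$ being stationary and contained in $S$), getting a partition $\seq{D_i}{i<\omega_1}$ of $\omega_1$ into stationary sets and a normal $f\colon\omega_1\To\kappa$ with $f``D_i\sub S_i\cap C'$. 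Then $\delta:=\sup(\ran f)$ satisfies $\cf(\delta)=\omega_1$ (since $f$ is strictly increasing and continuous with domain $\omega_1$, its range is a club in $\delta$ of order type $\omega_1$) and $\delta\in C'\sub C$; moreover $(\bigcup_iS_i)\cap\delta\super\ran f$ contains a club in $\delta$, and each $S_i\cap\delta\super f``D_i$ is stationary in $\delta$ because $f$ is a normal cofinal map $\omega_1\To\delta$ and $D_i$ is stationary. Hence $\delta\in\eTr(\vS)\cap C$, so $\eTr(\vS)$ is stationary (and in particular nonempty, so $(a)\Rightarrow(b)$ as well). The implication $(c)\Rightarrow(b)$ is immediate. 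For $(b)\Rightarrow(c)$, if $\eTr(\vS)$ were disjoint from a club $C$ — taken closed and consisting of limit ordinals — then applying $(b)$ to $\seq{S_i\cap C}{i<\omega_1}$ gives $\delta\in\eTr(\seq{S_i\cap C}{i<\omega_1})$; now $S_0\cap C\cap\delta$ is unbounded in $\delta$, so $\delta\in C$, while $S_i\cap\delta\super S_i\cap C\cap\delta$ and $(\bigcup_iS_i)\cap\delta\super(\bigcup_i(S_i\cap C))\cap\delta$ give $\delta\in\eTr(\vS)$, a contradiction.

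For $(b)\Rightarrow(a)$, fix $\vS=\seq{S_i}{i<\omega_1}$; replacing $S$ by $\bigcup_iS_i$ (legitimate, since $(b)$ passes to subsets and the conclusion of $(a)$ only mentions $\bigcup_iS_i$), I may assume $\bigcup_iS_i=S$. The crux is the reduction: \emph{it suffices to produce a single $\delta<\kappa$ with $\cf(\delta)=\omega_1$ and $\delta\in\eTr(\vS)$.} Indeed, given such $\delta$, fix a club $E\sub S\cap\delta$ witnessing union-exactness at $\delta$ and shrink it to a sub-club $\tilde E\sub E$ of order type $\omega_1$; then each $S_i\cap\tilde E$ is stationary in $\delta$, so, with $g\colon\omega_1\To\tilde E$ the increasing enumeration, the sets $\hat D_i:=\{\nu<\omega_1\st g(\nu)\in S_i\}$ are stationary in $\omega_1$ (preimages of stationary sets under a normal cofinal map) and cover $\omega_1$ (as $\tilde E\sub\bigcup_iS_i$). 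A \emph{disjoint-refinement lemma} — if $\seq{A_i}{i<\omega_1}$ are stationary subsets of $\omega_1$ with union $\omega_1$, then there is a partition $\seq{B_i}{i<\omega_1}$ of $\omega_1$ with $B_i\sub A_i$ and each $B_i$ stationary, which I would prove by starting from the first‑occurrence partition $A_i\ohne\bigcup_{j<i}A_j$ and repairing its non‑stationary classes using Solovay's splitting theorem — then yields a partition $\seq{D_i}{i<\omega_1}$ of $\omega_1$ into stationary sets with $D_i\sub\hat D_i$, and $f:=g$ together with $\seq{D_i}{i<\omega_1}$ witnesses $(a)$ (since $f``D_i\sub g``\hat D_i=S_i\cap\tilde E\sub S_i$).

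To produce $\delta$, I would use $(b)$ to fix some $\delta_0\in\eTr(\vS)$; if $\cf(\delta_0)=\omega_1$ we are done. Otherwise $\cf(\delta_0)=\mu\ge\omega_2$, and I fix a club $E_0\sub S\cap\delta_0$ witnessing union-exactness at $\delta_0$. Since $\mu\ge\omega_2$, the cofinality-$\omega_1$ limit points of $E_0$ form a stationary subset of $\delta_0$, and each of them is already a union-exact point of $\vS$ (an initial segment of $E_0$ is a club inside $\bigcup_iS_i$); what must be arranged is that at one of them \emph{all} the $S_i$ reflect simultaneously. The plan here is to feed $(b)$ (equivalently $(c)$) auxiliary sequences of the shape $\seq{(S_i\cap E_0)\cup(S_i\ohne\delta_0)}{i<\omega_1}$ — whose members are stationary in $\kappa$ and contained in $S$, and whose exact reflection points lying below $\delta_0$ are precisely the union-exact points $\gamma<\delta_0$ of $E_0$ at which every $S_i\cap E_0$ reflects — and, combining the reflection so obtained with the ``club trick'' applied to the honest club $E_0\sub\bigcup_iS_i$, to descend to a $\gamma<\delta_0$ with $\gamma\in\eTr(\vS)$. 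Since this shows that no $\le$-minimal element of $\eTr(\vS)$ can have cofinality $\ge\omega_2$, and $\eTr(\vS)\neq\leer$ by $(b)$, the $\le$-least element of $\eTr(\vS)$ then has cofinality $\omega_1$, completing $(b)\Rightarrow(a)$.

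I expect this descent step to be the main obstacle. In ZFC the reflection of a single stationary subset of $\delta_0$ need not propagate to the cofinality-$\omega_1$ points below $\delta_0$, so the global hypothesis must be invoked for carefully chosen auxiliary sequences rather than for $\vS$ itself; moreover one has to control the behaviour of these sequences both below and above $\delta_0$ in order to guarantee that the new simultaneous exact reflection point can in fact be taken strictly below $\delta_0$. (The disjoint-refinement lemma, by contrast, I expect to be routine.)
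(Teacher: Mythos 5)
Your implications (a)$\Rightarrow$(c)$\Rightarrow$(b) and your direct argument for (b)$\Rightarrow$(c) are fine. The gap is in (b)$\Rightarrow$(a), which you reduce to two components, neither of which is secured. First, the ``disjoint-refinement lemma'' you need lives on $\omega_1$: given stationary $\hat D_i\sub\omega_1$ covering $\omega_1$, you need pairwise disjoint stationary $D_i\sub\hat D_i$. This is not routine, and your sketched proof does not work as described: after passing to the first-occurrence partition $P_i=\hat D_i\ohne\bigcup_{j<i}\hat D_j$, ``repairing'' a nonstationary class $P_i$ means extracting a stationary subset of $\hat D_i$ from some stationary $P_j\cap\hat D_i$ with $P_j$ stationary; but $\omega_1$ many bad indices $i$ may all have to steal from the same $P_j$, and making their stolen pieces pairwise disjoint (while leaving a stationary remainder for $P_j$ itself) is precisely another instance of the disjoint-refinement problem, now for an $\omega_1$-family of stationary subsets of $P_j$. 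Solovay's splitting of a single stationary set does not hand you a simultaneous disjoint refinement of $\omega_1$ many of them. Second, the ``descent'' to an exact reflection point of cofinality $\omega_1$ is explicitly left as a plan, and you yourself identify it as the main obstacle; as written, (b)$\Rightarrow$(a) is therefore not proved.

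Both difficulties are artifacts of the order in which you do things, and the paper's proof avoids them entirely. The paper first refines $\vS$ \emph{upstairs}: since $\kappa>\omega_1$, there is a pairwise disjoint sequence $\vS'=\seq{S'_i}{i<\omega_1}$ of stationary sets with $S'_i\sub S_i$ (this is where the Baumgartner--Hajnal--M\'{a}t\'{e} lemma is invoked, and it is available at $\kappa$, not at $\omega_1$). Only then does it apply (b), to $\vS'$: an exact reflection point $\delta$ of $\vS'$ comes with a club $C\sub\delta\cap\bigcup_{i<\omega_1}S'_i$, and because the $S'_i$ are pairwise disjoint, the sets $D_i=f^{-1}``S'_i$ (with $f$ the monotone enumeration of $C$) are automatically a partition of $\omega_1$ into stationary sets --- no refinement at $\omega_1$ is needed. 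Moreover, in the case in which the principle is actually used (the $S_i$ consist of ordinals of countable cofinality), the cofinality of $\delta$ is automatically $\omega_1$: a club contained in a set of ordinals of countable cofinality cannot exist in an ordinal of cofinality at least $\omega_2$, since such a club would contain its own limit points of cofinality $\omega_1$. So the descent you fear never has to happen. I recommend restructuring (b)$\Rightarrow$(a) along these lines.
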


\begin{proof}
\ref{item:SFPkappa}$\implies$\ref{item:StationaryExactTrace}:
Let $\vS$ be as in \ref{item:StationaryExactTrace}, and let $C\sub\kappa$ be club. Then let $\vec{D}$, $f$ be as in \ref{item:SFPkappa} with respect to the sequence $\seq{S_i\cap C}{i<\omega_1}$. Letting $\delta=\sup\ran(f)$, it follows that $\delta\in C\cap\eTr(\vS)$.

\ref{item:StationaryExactTrace}$\implies$\ref{item:NonemptyExactTrace}: trivial.

\ref{item:NonemptyExactTrace}$\implies$\ref{item:SFPkappa}:
Let $\vS=\seq{S_i}{i<\omega_1}$ be a sequence of stationary subsets of $S^\kappa_\omega$.
Let $\vS'=\seq{S'_i}{i<\omega_1}$ be a refinement of $\vS$ into a sequence of pairwise disjoint stationary sets. Such a sequence $\vS'$ exists because $\kappa>\omega_1$, so that the nonstationary ideal on $\kappa$ is ``nowhere $\omega_2$-saturated'', see Baumgartner-Hajnal-M\'{a}t\'{e} \cite[Lemma 2.1]{BHM:WeakSaturationPropertiesOfIdeals} for details.
By \ref{item:NonemptyExactTrace}, let $\delta$ be an exact reflection point of $\vS'$. Let $C\sub(\delta\cap\bigcup_{i<\omega_1}S_i)$ be club, and let $f:\omega_1\To C$ be the monotone enumeration of $C$. For
$i<\omega_1$, let $D_i=f^{-1}``S'_i$.
Then $\seq{D_i}{i<\omega_1}$ is a partition of $\omega_1$ into stationary sets such that for every $i<\omega_1$, $f``D_i\sub S'_i\sub S_i$.
\end{proof}

The following fact is usually stated assuming some variation of $\SFP{\kappa}$ in place of my assumption, but it can be filtered through the ``exact'' reflection property of the previous observation. See Foreman-Magidor-Shelah \cite{FMS:MM1}, or Jech \cite[p.~686, proof of Theorem 37.13]{ST3}. Note that if $\delta$ is an exact reflection point of a sequence $\vS$, then $\delta$ is a reflection point of each $S_i$, but if $T$ is a stationary subset of $\kappa$ that's disjoint from $\bigcup\vS$, then $\delta$ is \emph{not} a reflection point of $T$, and this is why I refer to it as an \emph{exact} reflection point. This property is used in the proof of the following fact.

\begin{fact}
\label{fact:SFPandArithmetic}
Let $\kappa>\omega_1$ be a regular cardinal, and let $S$ be a stationary subset of $\kappa$ such that any $\omega_1$-sequence of stationary subsets of $S$ has an exact simultaneous reflection point. Then $\kappa^{\omega_1}=\kappa$.
\end{fact}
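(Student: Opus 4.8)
The plan is to reduce the statement to a cardinal-arithmetic inequality below $\kappa$ and then invoke the Foreman--Magidor--Shelah counting argument, whose heart is a clash between the exact simultaneous reflection property and the existence of a ``wide tall'' tree with too many branches. Throughout, note that $S$, being stationary in the regular cardinal $\kappa$, is unbounded, so $|S|=\kappa$.

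\emph{Step 1 (reduction to arithmetic).} Since $\kappa>\omega_1$ is regular, $\kappa^{\omega_1}=|[\kappa]^{\omega_1}|$ (replace a function by its graph and use $|\kappa\times\omega_1|=\kappa$), and $|[\kappa]^{\omega_1}|=|[S]^{\omega_1}|$ as $|S|=\kappa$. Every $x\in[S]^{\omega_1}$ is bounded in $\kappa$ because $|x|=\omega_1<\cf\kappa$, so $[S]^{\omega_1}=\bigcup_{\delta<\kappa}[S\cap\delta]^{\omega_1}$ and hence $\kappa^{\omega_1}\le\kappa\cdot\sup_{\mu<\kappa}\mu^{\omega_1}$. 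Thus it suffices to show $\mu^{\omega_1}\le\kappa$ for every cardinal $\mu<\kappa$, which I would prove by induction on $\mu$: if $\mu=\nu^+$ then $\mu^{\omega_1}=\nu^+\cdot\nu^{\omega_1}\le\kappa$ by Hausdorff's formula and the inductive hypothesis, and if $\mu$ is a limit cardinal with $\cf\mu>\omega_1$ then $\mu^{\omega_1}=\mu\cdot\sup_{\nu<\mu}\nu^{\omega_1}\le\kappa$. So the only cases actually requiring the reflection hypothesis are those in which $\mu$ is the \emph{least} cardinal below $\kappa$ with $\mu^{\omega_1}>\kappa$; for such $\mu$ one has $\cf\mu\le\omega_1$, and $\nu^{\omega_1}\le\kappa$ (hence also $\nu^{\omega}\le\kappa$) for every cardinal $\nu<\mu$.

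\emph{Step 2 (producing a bad sequence).} Suppose toward a contradiction that such a $\mu$ exists. Fix $\kappa^+$ pairwise distinct functions $g_\alpha\colon\omega_1\to\mu$ $(\alpha<\kappa^+)$ and let $T$ be the tree of their initial segments under extension. Then $T$ has height $\omega_1$, at least $\kappa^+$ cofinal branches, and each level of $T$ has size at most $\mu^{\omega}$; this is $\le\kappa$ when $\cf\mu=\omega_1$ (since then $\mu^{\omega}=\mu\cdot\sup_{\nu<\mu}\nu^{\omega}$), and it is $\le\kappa$ when $\cf\mu=\omega$ once one knows $\kappa^{\omega}=\kappa$ (which follows, by the arithmetic of Step~1, from $2^{\omega}\le\kappa$, the case $\mu=\omega$ being handled first in the induction). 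In all cases $|T|\le\kappa$. By the classical theorem that a stationary subset of a regular uncountable cardinal splits into as many pairwise disjoint stationary pieces as its own cardinality, fix disjoint stationary sets $\langle T_t : t\in T\rangle$ with each $T_t\subseteq S$, and set $S_i=\bigcup\{T_t : t\text{ on level }i\text{ of }T\}$ for $i<\omega_1$; each $S_i$ is a stationary subset of $S$. Applying the hypothesis on $S$ in the form of clause (a) of Observation~\ref{obs:CharacterizationOfE-versionOfSFPkappa} to $\langle S_i : i<\omega_1\rangle$ yields a partition $\langle D_i : i<\omega_1\rangle$ of $\omega_1$ into stationary sets and a normal, cofinal $f\colon\omega_1\to\delta$ for some $\delta<\kappa$, with $f[D_i]\subseteq S_i$ for all $i$; in particular $\ran(f)\subseteq\bigcup_i S_i$.

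\emph{Step 3 (the main obstacle).} What remains is to extract a contradiction from this configuration, and this is essentially the whole difficulty. The plan is to choose the stationary pieces $\langle T_t\rangle$ sufficiently coherent with the tree order that the continuity and monotonicity of the normal function $f$ force the nodes $t(\xi)$ determined by $f(\xi)\in T_{t(\xi)}$ to follow, along a club of $\xi<\omega_1$, a single cofinal branch $b$ of $T$ which is moreover hit on a stationary set on each of its levels, and then to exploit the fact that $T$ has more than $\kappa$ cofinal branches --- more than could be ``resolved'' this way below a single ordinal $<\kappa$ --- to see that the construction of the $\langle T_t\rangle$ can be arranged so that resolving any branch would push the reflection point past every ordinal below $\kappa$. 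The delicate points, exactly where the Foreman--Magidor--Shelah argument does its work, are (i) that $f[D_i]\subseteq S_i$ only pins $f$ down to a union of level-$i$ pieces, so the naive labelling above is not by itself enough and a more careful encoding of the tree into the family of stationary sets is needed to force branch-coherence, and (ii) that it is precisely the \emph{exactness} of the reflection points, i.e.\ $\ran(f)\subseteq\bigcup_i S_i$, that controls which $\delta$ and which $f$ can occur and makes the counting close. In the base case $\mu=\omega$ (equivalently $2^{\omega_1}\le\kappa$), and in the classical instance $\kappa=\omega_2$, $S\subseteq S^{\omega_2}_{\omega}$, this is the Foreman--Magidor--Shelah proof that $\SFP{\omega_2}$ implies $2^{\omega_1}=\omega_2$, which I would reproduce from \cite{FMS:MM1} or \cite[p.~686]{ST3}.
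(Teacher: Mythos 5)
There is a genuine gap, and you acknowledge it yourself: Step 3 is a ``plan'' whose ``delicate points'' are exactly the missing contradiction, so the proof is not complete. Moreover, the tree-of-functions strategy is a misremembering of what the Foreman--Magidor--Shelah counting argument actually is; no such tree appears, and as you note in (i), the data you get from Observation~\ref{obs:CharacterizationOfE-versionOfSFPkappa}(a) --- namely $f[D_i]\subseteq S_i$ where $S_i$ is a \emph{union} of level-$i$ pieces --- does not force branch-coherence, and it is not clear how any re-encoding of $T$ into stationary sets would. The whole reduction of Step 1 and the tree of Step 2 are a detour around a one-paragraph argument.

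The actual proof is a direct injection of $[\kappa]^{\omega_1}$ into $\kappa$, using exactness in precisely the way your point (ii) gestures at but never exploits. Fix a sequence $\langle S_i : i<\kappa\rangle$ of \emph{pairwise disjoint} stationary subsets of $S$ (this exists since $\kappa>\omega_1$). Given $x\in[\kappa]^{\omega_1}$, let $\delta$ be an exact simultaneous reflection point of $\langle S_i : i\in x\rangle$. For $i\in x$, $S_i\cap\delta$ is stationary in $\delta$; for $j\notin x$, $S_j\cap\delta$ is disjoint from $\bigcup_{i\in x}S_i$, which contains a club of $\delta$ by exactness, so $S_j\cap\delta$ is nonstationary in $\delta$. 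Hence $x=R_\delta:=\{i<\kappa : \delta \text{ is a reflection point of } S_i\}$, so $[\kappa]^{\omega_1}\subseteq\{R_\delta : \delta<\kappa\}$ and $\kappa^{\omega_1}\le\kappa$. Your Step 1 is then unnecessary (and, as written, its induction only closes by appeal to the unfinished Steps 2--3 anyway). I'd suggest discarding the tree construction entirely and noting that the disjointness of the $S_i$ plus exactness is what lets a single ordinal $\delta<\kappa$ code an arbitrary $\omega_1$-sized subset of $\kappa$.
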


\begin{proof}
Fix a sequence $\seq{S_i}{i<\kappa}$ of pairwise disjoint stationary subsets of $S$. If $x\in[\kappa]^{\omega_1}$ and $\delta$ is an exact simultaneous reflection point for $\vS\rest x$, then $x=R_\delta=\{i<\kappa\st\delta\ \text{is a reflection point of}\ S_i\}$. Thus, $[\kappa]^{\omega_1}\sub\{R_\delta\st\delta<\kappa\}$.
\end{proof}

The following is due to Feng \& Jech \cite[Example 2.3]{FengJech:ProjectiveStationarityAndSRP}.

\begin{lem}
\label{lem:FengJechSFPpreliminary}
Let $\kappa>\omega_1$ be regular.
Let $\vD=\seq{D_i}{i<\omega_1}$ be a partition of $\omega_1$ into stationary sets which is \emph{maximal} in the sense that for every stationary $T\sub\omega_1$, there is an $i<\omega_1$ such that $D_i\cap T$ is stationary.
Let $\vS=\seq{S_i}{i<\omega_1}$ be a sequence of stationary subsets of $S^\kappa_\omega$. Then the set
\[S=\{X\in[H_\kappa]^\omega\st
\forall i<\omega_1\ (X\cap\omega_1\in D_i\to \sup(X\cap\kappa)\in S_i)\}.\]
is projective stationary.
\end{lem}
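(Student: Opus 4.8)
The plan is to reduce projective stationarity of $S$ to a single application of Fact~\ref{fact:OldNews}, using the maximality of the partition $\vec D$ to select the right index. So fix an arbitrary stationary set $T\subseteq\omega_1$ and an arbitrary club $D\subseteq[H_\kappa]^\omega$; unwinding Definition~\ref{def:ProjectiveStationary}, I must produce an $X\in S\cap D$ with $X\cap\omega_1\in T$. By maximality of $\vec D$, there is an index $i^*<\omega_1$ such that $T':=T\cap D_{i^*}$ is stationary in $\omega_1$. This $i^*$ is the only index I will actively use; the point is that because $\vec D$ is a \emph{partition}, arranging $X\cap\omega_1\in D_{i^*}$ automatically makes the universally quantified implication defining $S$ vacuous at every other index.

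Next I would apply Fact~\ref{fact:OldNews} to the stationary set $S_{i^*}\subseteq S^\kappa_\omega$: it tells us that the lifting
\[
L:=\lifting(S_{i^*},[H_\kappa]^\omega)=\{X\in[H_\kappa]^\omega\mid\sup(X\cap\kappa)\in S_{i^*}\}
\]
is projective stationary. By Observation~\ref{obs:ProjectiveStationarityPreservedUnderIntersectionWithClubs}, $L\cap D$ is still projective stationary (here I also tacitly intersect with the club of $X$ with $X\cap\omega_1\in\omega_1$, so that ``$X\cap\omega_1\in T'$'' is meaningful). Applying the definition of projective stationarity to $L\cap D$ and the stationary set $T'$ shows that $\{X\in L\cap D\mid X\cap\omega_1\in T'\}$ is stationary, in particular nonempty; pick any $X$ in it. Then $X\in D$ and $X\cap\omega_1\in T'\subseteq T$, and it remains only to verify $X\in S$. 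Given $i<\omega_1$ with $X\cap\omega_1\in D_i$: since the $D_i$ are pairwise disjoint and $X\cap\omega_1\in D_{i^*}$, necessarily $i=i^*$, and for that index $\sup(X\cap\kappa)\in S_{i^*}=S_i$ holds because $X\in L$. Thus $X\in S$, completing the verification that $\{X\in S\mid X\cap\omega_1\in T\}$ meets $D$; as $T$ and $D$ were arbitrary, $S$ is projective stationary (taking $T=\omega_1$ in particular re-derives that $S$ is stationary).

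I do not anticipate a real obstacle: all the content sits in Fact~\ref{fact:OldNews} together with the maximality of $\vec D$, and the argument is essentially Feng \& Jech's. The only points requiring a little care are (i) that one works throughout inside the club of models $X$ with $X\cap\omega_1\in\omega_1$, and (ii) that the disjointness of the $D_i$ is what lets the single-index fact $\sup(X\cap\kappa)\in S_{i^*}$ discharge the universally quantified condition defining $S$. A more hands-on alternative would build a countable $X\prec\langle H_\theta,\in,<^*,\vec D,\vec S,\kappa,\ldots\rangle$ directly, steering $X\cap\omega_1$ into $T'$ by a standard $\omega_1$-chain argument while simultaneously driving $\sup(X\cap\kappa)$ into $S_{i^*}$, but invoking Fact~\ref{fact:OldNews} avoids repeating that construction.
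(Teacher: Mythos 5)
Your proof is correct, and it is essentially the argument the paper intends: the lemma is stated with a citation to Feng \& Jech rather than proved in the text, and your reduction --- shrink $T$ to a stationary $T'=T\cap D_{i^*}$ via maximality, then observe that $\lifting(S_{i^*},[H_\kappa]^\omega)\cap\lifting(T',[H_\kappa]^\omega)\subseteq S\cap\lifting(T,[H_\kappa]^\omega)$ by disjointness of the $D_i$, with stationarity supplied by Fact~\ref{fact:OldNews} --- is exactly the standard Feng--Jech argument.
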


The status of the maximality assumption on $\vD$ here is interesting. First, let me note:

\begin{remark}
Maximal partitions $\vD$ of $\omega_1$ into stationary sets as in the previous lemma are easy to construct:
start with an arbitrary partition $\vD'$ of $\omega_1$ into stationary sets. 
It can be identified with the function $f':\omega_1\To\omega_1$ defined by $i\in D'_{f'(i)}$. 
Modify $f'$ to the regressive function $f:\omega_1\To\omega_1$ defined by $f(i)=f'(i)$ if $f'(i)<i$, and $f(i)=0$ otherwise. The corresponding partition $\seq{D_i}{i<\omega_1}$ with $D_i=\{\alpha\st f(\alpha)=i\}$ is then as wished: each $D_i$ is stationary, because it contains $D'_i\ohne(i+1)$, and if $T\sub\omega_1$ is stationary, then $f\rest T$ is constant on a stationary subset of $T$, say with value $i_0$, so $T\cap D_{i_0}$ is stationary.
\end{remark}

Following is the version of Feng \& Jech's Lemma \ref{lem:FengJechSFPpreliminary}, with ``spread out'' in place of ``projective stationary.'' I have to strengthen the assumption that $\kappa>\omega_1$ to $\kappa>2^\omega$, but I can drop the maximality assumption on the partition $\vD$.

\begin{lem}
\label{lem:SFPpreliminary}
Let $\kappa>2^\omega$ be regular.
Let $\vD=\seq{D_i}{i<\omega_1}$ be a partition of $\omega_1$ into stationary sets and $\vS=\seq{S_i}{i<\omega_1}$ a sequence of stationary subsets of $S^\kappa_\omega$. Let
\[S=\{X\in[H_\kappa]^\omega\st
\forall i<\omega_1\ (X\cap\omega_1\in D_i\to \sup(X\cap\kappa)\in S_i)\}.\]
Then $S$ is spread out.
\end{lem}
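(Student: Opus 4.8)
The plan is to mimic the proof of Lemma~\ref{lem:LiftingsAreSpreadOut} essentially verbatim, the only new wrinkle being that we must also control $Y\cap\omega_1$. This is exactly what clause~\ref{item:SameBehaviorBelow} of Lemma~\ref{lem:OneStep} is for: I would apply that lemma with the ordinal $\omega_1$ playing the role of its ``$\eta$'', which is legitimate because $\omega_1^\omega=2^\omega<\kappa$.

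First, by Observation~\ref{obs:WeaklySpreadOutImpliesSpreadOut} it suffices to show that $S$ is \emph{weakly} spread out, and the witnessing parameter can be taken to be $b=\kla{\vD,\vS}$. So fix a sufficiently large regular $\theta>2^{2^\kappa}$, a transitive $\ZFCm$-model $N=L_\tau^A$ with a definable well-order and $H_\theta\sub N$, and $X\prec N$ countable and full with $b,a\in X$, where $a$ is the parameter from Definition~\ref{def:SpreadOut} (as usual, using Fact~\ref{fact:AbsorbingParameters} one could even absorb $a$ into $A$). Then $S,\kappa,\omega_1,\vD,\vS\in X$. Let $\sigma:\bN\prec N$ be the inverse of the transitive collapse of $X$, so $\ran(\sigma)=X$, and let $\bar a=\sigma^{-1}(a)$, $\bkappa=\sigma^{-1}(\kappa)$, and $\baeta=\sigma^{-1}(\omega_1)$; note $\baeta=X\cap\omega_1=:\delta$, a countable ordinal, and $\sigma\restriction\baeta=\id$.

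Next I would invoke Lemma~\ref{lem:OneStep} with $\eta=\omega_1$, with no cardinals $\blambda_i$ (i.e.\ $n=0$), and with $\bar a$ as the designated parameter. This yields an $\omega$-club $E\sub\kappa$ such that for each $\kappa_0\in E$ there is an embedding $\sigma':\bN\prec N$ with $\sigma'(\bar a)=a$, $\sigma'(\bkappa)=\kappa$, $\sigma'\restriction\baeta=\sigma\restriction\baeta=\id$, and $\sup\sigma'``\bkappa=\kappa_0$. Since $\sigma'(\baeta)=\omega_1$ and $\sigma'\restriction\baeta=\id$, elementarity gives $\ran(\sigma')\cap\omega_1=\delta$. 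Now, because $\vD$ is a partition of $\omega_1$, there is a unique $i^*<\omega_1$ with $\delta\in D_{i^*}$. Since $S_{i^*}$ is a stationary subset of $S^\kappa_\omega$ and $E$ is $\omega$-closed, the set of limit points of $E$ is club and its elements of countable cofinality lie in $E$, so $S_{i^*}\cap E\neq\leer$; pick $\kappa_0\in S_{i^*}\cap E$, take the corresponding $\sigma'$, and put $Y=\ran(\sigma')$.

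It remains only to verify the two demands of Definition~\ref{def:SpreadOut}. First, $\pi=\sigma'\compose\sigma^{-1}:N|X\To N|Y$ is an isomorphism with $\pi(a)=\sigma'(\bar a)=a$, and $N|Y\prec N$ since $\sigma':\bN\prec N$. Second, $Y\cap\kappa=\sigma'``\bkappa$, so $\sup(Y\cap\kappa)=\kappa_0\in S_{i^*}$, while $Y\cap\omega_1=\delta\in D_{i^*}$ and $\delta\notin D_i$ for $i\neq i^*$; hence for every $i<\omega_1$, if $Y\cap\omega_1\in D_i$ then $i=i^*$ and $\sup(Y\cap\kappa)=\kappa_0\in S_{i^*}=S_i$, i.e.\ $Y\cap H_\kappa\in S$. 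This shows $S$ is weakly spread out, hence spread out. The only genuinely delicate points are the two uses of extra hypotheses: we need $Y\cap\omega_1=X\cap\omega_1$ so that the single constraint ``$\kappa_0\in S_{i^*}$'' is pinned down in advance by $X$ — this is exactly what clause~\ref{item:SameBehaviorBelow} of Lemma~\ref{lem:OneStep} supplies — and we need $\kappa_0$ to land in $E\cap S_{i^*}$ rather than just in $E$, which is possible precisely because $S_{i^*}\sub S^\kappa_\omega$ and $E$ is $\omega$-closed. In contrast to Feng--Jech's Lemma~\ref{lem:FengJechSFPpreliminary}, no maximality of $\vD$ is needed, since for ``spread out'' one only has to produce a single suitable $Y$ for the given $X$, not control $\{X\in S : X\cap\omega_1\in T\}$ for every stationary $T\sub\omega_1$.
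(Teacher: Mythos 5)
Your proof is correct and follows essentially the same route as the paper's: fix $i^*$ with $X\cap\omega_1\in D_{i^*}$ in advance, then run the argument of Lemma \ref{lem:LiftingsAreSpreadOut} to land $\sup(Y\cap\kappa)$ in $S_{i^*}$. The only cosmetic difference is that you invoke clause \ref{item:SameBehaviorBelow} of Lemma \ref{lem:OneStep} with $\eta=\omega_1$ to secure $Y\cap\omega_1=X\cap\omega_1$, whereas the paper simply observes that this is automatic, since any isomorphism between such countable elementary submodels must fix the countable ordinals.
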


\begin{proof}
Let $\theta$ be sufficiently large, $H_\theta\sub L_\tau^A=N\models\ZFCm$, $X\prec N$ countable and full with $S\in X$, and fix $a\in X$. Let $i<\omega_1$ be such that $\delta=X\cap\omega_1\in D_i$. We can now use Lemma \ref{lem:OneStep} as in the proof of Lemma \ref{lem:LiftingsAreSpreadOut}, showing that there are a $Y\prec N$ and an isomorphism $\pi':X\To Y$ fixing $a$ such that $\kappa_0=\sup(Y\cap\kappa)\in S_i$. Since this isomorphism has to fix the countable ordinals, it follows that $Y\cap\omega_1=X\cap\omega_1\in S_i$, and hence that $Y\cap H_\kappa\in S$.
\end{proof}

Again, using properties \ref{item:SameSupremaAbove} and
\ref{item:HeightMovesCorrectly} of Lemma \ref{lem:OneStep}, one obtains that in the previous lemma, $S$ is \emph{fully} spread out.

Since spread out sets are also projective stationary (see Observation \ref{obs:SpreadOutImpliesProjectiveStationary}), the previous lemma shows that if $\kappa>2^\omega$, then it is not necessary to assume $\vD$ is maximal in Lemma \ref{lem:FengJechSFPpreliminary}. This seems to be new, and I am not sure how one would prove this without using something along the lines of Lemma \ref{lem:OneStep}.

\begin{thm}
\label{thm:infSCSRPimpliesSFPkappa}
Let $\kappa>2^\omega$ be regular. Then $\infSC$-$\SRP(\kappa)$ implies $\SFP{\kappa}$.
\end{thm}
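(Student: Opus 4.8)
Fix a partition $\vec D=\seq{D_i}{i<\omega_1}$ of $\omega_1$ into stationary sets and a sequence $\vec S=\seq{S_i}{i<\omega_1}$ of stationary subsets of $S^\kappa_\omega$; we must produce a normal function $f\colon\omega_1\To\bigcup_{i<\omega_1}S_i$ with $f``D_i\sub S_i$ for every $i<\omega_1$. The plan is to feed the right set into $\infSC$-$\SRP(\kappa)$. Let
\[S=\{X\in[H_\kappa]^\omega\st\forall i<\omega_1\ (X\cap\omega_1\in D_i\to\sup(X\cap\kappa)\in S_i)\}\]
be the set isolated in Lemma \ref{lem:SFPpreliminary}. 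Since $\kappa>2^\omega$, that lemma gives that $S$ is spread out (indeed fully spread out), so by Theorem \ref{thm:SpreadOutIffinfSCProjectiveStationary} it is $\infSC$-projective stationary, i.e.\ $\P_S\in\infSC$. By the combinatorial form of the principle — $\infSC$-$\SRP(\kappa)$ holds iff every spread out subset of $[H_\kappa]^\omega$ contains a continuous $\in$-chain of length $\omega_1$ — the set $S$ contains such a chain. (Since $S$ is in fact fully spread out, the same argument works from $\SC$-$\SRP(\kappa)$.)

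\textbf{Upgrading the chain.} Next I would intersect $S$ with a suitable club of $[H_\kappa]^\omega$; by Observation \ref{obs:SpreadOutSetsClosedUnderIntersectionWithClubs} this keeps it spread out, and, arguing as in Corollary \ref{cor:SRPgivesElementaryChains}, I may take the resulting continuous $\in$-chain to be a continuous \emph{elementary} chain $\seq{\calM_i}{i<\omega_1}$ through $S$, where each $\calM_i$ is an elementary submodel of a structure of the form $\kla{H_\kappa,\in,<^*,\vec D}$ (absorbing $\vec S$ by first working over $H_{\kappa^+}$ and projecting down, if $\vec S\notin H_\kappa$; this is routine). Then $\vec D\in\calM_i$, the ordinal $\delta_i:=\calM_i\cap\omega_1$ is well defined, and — this is the point of passing to $S$ — since $\calM_i\in S$ we have $\gamma_i:=\sup(\calM_i\cap\kappa)\in S_j$ whenever $\delta_i\in D_j$. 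Both $i\mapsto\delta_i$ and $i\mapsto\gamma_i$ are normal functions (using continuity of the chain: $\calM_\lambda=\bigcup_{i<\lambda}\calM_i$ at limits).

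\textbf{Reading off $f$, and the main obstacle.} The natural candidate is $f(i):=\gamma_i$; it is a normal function into $\bigcup_{i<\omega_1}S_i$, and the remaining task is to check $f``D_i\sub S_i$ for every $i$, i.e.\ that $\gamma_i\in S_{j}$ for the $j$ with $i\in D_j$. What the chain directly provides is $\gamma_i\in S_j$ for the $j$ with $\delta_i\in D_j$, so the heart of the argument is to match the cell of $\vec D$ containing $\delta_i$ with the one containing the index $i$ itself. This is exactly where care is needed: the trace $i\mapsto\delta_i$ is a normal function with $\delta_0\ge\omega$, so it is not the identity — it agrees with the identity only on its club of fixed points — whereas $\SFP{\kappa}$ demands a function correct at \emph{every} index, not just on a club. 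I would handle this the way the corresponding consequence of the full \SRP is handled (cf.\ Feng \& Jech \cite[Example 2.3]{FengJech:ProjectiveStationarityAndSRP} and the surrounding discussion): having $\vec D$ inside the structure, one tracks along the continuous elementary chain which cell of $\vec D$ the ordinal $\calM_i\cap\omega_1$ falls into, exploiting elementarity and the continuity of the chain to align this with the index and thereby obtain a normal $f$ with $f``D_i\sub S_i$ for all $i<\omega_1$. The genuinely new content is all in Lemma \ref{lem:SFPpreliminary} (hence in Lemma \ref{lem:OneStep}, which is what forces the relevant lifting-type sets to be spread out, not merely projective stationary); the conversion of the chain into a normal function witnessing $\SFP{\kappa}$ uses only the bookkeeping already available for \SRP, and this alignment step is the one I expect to require the most attention.
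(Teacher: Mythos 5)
Your architecture coincides with the paper's: you feed the set $S$ of Lemma \ref{lem:SFPpreliminary} into the principle, invoke the characterization of $\infSC$-projective stationarity via spread-out sets, extract a continuous ($\in$-)chain, and read off $\delta_i=M_i\cap\omega_1$ and $\gamma_i=\sup(M_i\cap\kappa)$. You also correctly locate both the genuinely new content (Lemma \ref{lem:SFPpreliminary}, hence Lemma \ref{lem:OneStep}) and the residual difficulty: the chain only guarantees $\gamma_i\in S_j$ for the $j$ with $\delta_i\in D_j$, whereas $\SFP{\kappa}$ asks about the $j$ with $i\in D_j$.

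The gap is in your proposed resolution of that difficulty. There is no ``alignment'' to be had: for $i$ outside the club $C=\{i<\omega_1\st M_i\cap\omega_1=i\}$, the ordinals $i$ and $\delta_i$ are simply different and need not lie in the same cell of $\vec{D}$, and elementarity and continuity of the chain cannot change which $S_j$ the ordinal $\gamma_i$ lands in --- that is determined by $\delta_i$ alone. What actually happens (in the paper and in the classical argument you cite) is a two-step construction in which the values of $f$ at indices off $C$ are \emph{not read off the chain at all}. First, restrict to $C$: there $\delta_i=i$, so $f\rest C$ given by $f(i)=\sup(M_i\cap\kappa)$ is already correct, and one obtains a normal function defined on a club rather than on all of $\omega_1$. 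Second, one fills in each gap $(\zeta_i,\zeta_{i+1})$ between consecutive points of $C$ by a separate interpolation: consider the forcing of countable partial continuous functions $h$ respecting the correspondence (i.e.\ $h(\xi)\in S_j$ whenever $\xi\in D_j$), note that for each $\alpha$ the conditions with $\alpha$ in their domain are dense (this is the density fact from \cite[p.~686]{ST3}, which uses the stationarity of each $S_j$ in $\kappa$), and take a filter generic over the \emph{countable} model $M_{\zeta_{i+1}}$ --- which exists outright since only countably many dense sets are involved. The union of these generic interpolants together with $f\rest C$ is the required total normal function. So your proof is salvageable and follows the paper's route, but the last step needs this genericity argument spelled out in place of the alignment heuristic, which as stated would fail.
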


\begin{proof}
Let $\vD$, $\vS$ be as in Definition \ref{def:SFPkappa}. By Lemma \ref{lem:SFPpreliminary}, the set
\[S=\{X\in[H_\kappa]^\omega\st
\forall i<\omega_1\ (X\cap\omega_1\in D_i\to \sup(X\cap\kappa)\in S_i)\}\]
is \infSC-projective stationary. By \infSC-\SRP, let $\seq{M_i}{i<\omega_1}$ be a continuous $\in$-chain through $S$. Let $C=\{\alpha<\omega_1\st M_\alpha\cap\omega_1=\alpha\}$. Clearly, $C$ is closed and unbounded in $\omega_1$. Define $f:C\To\kappa$ by $f(i)=\sup(M_i\cap\kappa)$. Then $f$ is strictly increasing and continuous in the sense that if $\alpha$ is a countable limit point of $C$, then $f(\alpha)=\sup_{\beta\in C\cap\alpha}f(\beta)$. Moreover, for $j\in C$, if $j\in D_i$, then $f(j)\in S_i$, since $j=M_j\cap\omega_1\in D_i$ and hence $\sup(M_j\cap\kappa)\in S_i$, as $M_j\in S$. So $f$ is almost like the function postulated to exist by $\SFP{\kappa}$, except that it is only defined on $C$, a club subset of $\omega_1$, rather than on all of $\omega_1$. This form of $\SFP{\kappa}$ is enough for the applications, so let me just sketch how to obtain the full version from this.

All that needs to be done is fill in the gaps.
Let $\seq{\zeta_i}{i<\omega_1}$ be the monotone enumeration of $C$. Fixing $i<\omega_1$, consider the forcing notion $\P$ consisting of all functions $h:(\zeta_i,\alpha]\To\kappa$, continuous on their domains, such that (1), $\zeta_i<\alpha<\omega_1$, (2), for all $\xi\in\dom(h)$, if $j$ is such that $\xi\in D_j$, then $h(\xi)\in S_j$, and (3), if $\zeta_i+1\in\dom(h)$, then $h(\zeta_i+1)>f(\zeta_i)$. This forcing is stationary set preserving, and for every $\alpha\in(\zeta_i,\omega_1)$, the set $D_\alpha$ of conditions in $\P$ whose domain contains $\alpha$ is dense in $\P$. The latter property is what I need here, and an argument establishing it can be found in \cite[p.~686]{ST3}. Now let $G_i$ be $M_{\zeta_{i+1}}$-generic for $\P$ (we may assume that each element of the chain is an elementary submodel of $\kla{H_\kappa,\in,\vec{D},\vec{S}}$, so that $\P$ belongs to every model in the chain), and let $g_i=\bigcup G_i$. It then follows that $\dom(g_i)=[\zeta_i+1,\zeta_{i+1})$, $g_i(\zeta_i+1)>f(\zeta_i)$, $\sup g_i``\zeta_{i+1}=\sup(M_{i+1}\cap\kappa)=f(\zeta_{i+1})$, and for all $\alpha\in(\zeta_i,\zeta_{i+1})$, if $\alpha\in D_j$, then $g_i(\alpha)\in S_j$. Hence, if we define $f'=f\cup\bigcup_{i<\omega_1}g_i$, then $f'$ is as desired.
\end{proof}

Again, $\SC$-$\SRP(\kappa)$ is sufficient in this theorem.
It has been well-known for a long time that if $\kappa>\omega_1$ is regular, then $\SRP(\kappa)$ implies the version of $\SFP{\kappa}$ in which it is assumed that the partition $\vD$ used (see Definition \ref{def:SFPkappa}) is maximal in the sense of Lemma \ref{lem:FengJechSFPpreliminary}. The previous theorem shows that already the subcomplete fragment of $\SRP(\kappa)$ implies the full $\SFP{\kappa}$ principle, provided that $\kappa>2^\omega$. In this corollary, $\SC$-\SRP is enough.

\begin{cor}
\label{cor:SCSRPimpliesCardArithmeticAboveContinuum}
$\infSC$-\SRP implies that for regular $\kappa>2^\omega$, $\kappa^{\omega_1}=\kappa$.
\end{cor}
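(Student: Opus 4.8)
The plan is to chain together three results already established in the excerpt. Fix a regular cardinal $\kappa>2^\omega$; note that then $\kappa>\omega_1$, so the cardinal-arithmetic and reflection machinery below applies. By Theorem \ref{thm:infSCSRPimpliesSFPkappa}, $\infSC$-$\SRP(\kappa)$ implies $\SFP{\kappa}$. Unwinding Definition \ref{def:SFPkappa}, the principle $\SFP{\kappa}$ is exactly condition \ref{item:SFPkappa} of Observation \ref{obs:CharacterizationOfE-versionOfSFPkappa} instantiated with the stationary set $S=S^\kappa_\omega$: for every partition $\seq{D_i}{i<\omega_1}$ of $\omega_1$ into stationary sets and every sequence $\seq{S_i}{i<\omega_1}$ of stationary subsets of $S^\kappa_\omega$, there is a normal function $f\colon\omega_1\To\kappa$ with $f``D_i\sub S_i$ for all $i<\omega_1$.

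Next I would invoke the equivalence in Observation \ref{obs:CharacterizationOfE-versionOfSFPkappa} (again with $S=S^\kappa_\omega$) to pass from condition \ref{item:SFPkappa} to condition \ref{item:NonemptyExactTrace}: every $\omega_1$-sequence $\vS=\seq{S_i}{i<\omega_1}$ of stationary subsets of $S^\kappa_\omega$ satisfies $\eTr(\vS)\neq\leer$, that is, it has an exact simultaneous reflection point. (One in fact obtains condition \ref{item:StationaryExactTrace}, that $\eTr(\vS)$ is stationary in $\kappa$, but mere nonemptiness is all that is needed below.) Finally, I would apply Fact \ref{fact:SFPandArithmetic} with $S=S^\kappa_\omega$: since $\kappa>\omega_1$ is regular and every $\omega_1$-sequence of stationary subsets of $S^\kappa_\omega$ has an exact simultaneous reflection point, the fact yields $\kappa^{\omega_1}=\kappa$. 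As this argument goes through for every regular $\kappa>2^\omega$, the corollary follows.

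There is essentially no genuine obstacle here beyond verifying that the hypotheses of the cited results match up: the assumption $\kappa>2^\omega$ is exactly what is needed both to feed into Theorem \ref{thm:infSCSRPimpliesSFPkappa} and to ensure $\kappa>\omega_1$ so that Observation \ref{obs:CharacterizationOfE-versionOfSFPkappa} and Fact \ref{fact:SFPandArithmetic} are available. One may also remark, as in the analogous statements above, that $\SC$-$\SRP(\kappa)$ suffices in place of $\infSC$-$\SRP(\kappa)$, since the relevant set in Lemma \ref{lem:SFPpreliminary} is fully spread out.
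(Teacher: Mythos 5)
Your proposal is correct and is exactly the paper's proof: the paper derives the corollary by citing Theorem \ref{thm:infSCSRPimpliesSFPkappa}, Observation \ref{obs:CharacterizationOfE-versionOfSFPkappa}, and Fact \ref{fact:SFPandArithmetic} in precisely the chain you describe. The only nitpick is that $\SFP{\kappa}$ is not literally identical to condition \ref{item:SFPkappa} of the observation (the partition is universally quantified in $\SFP{\kappa}$ but existentially quantified there), but $\SFP{\kappa}$ trivially implies that condition, which is all the argument needs.
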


\begin{proof}
This is by the previous theorem, Observation \ref{obs:CharacterizationOfE-versionOfSFPkappa} and Fact \ref{fact:SFPandArithmetic}.
\end{proof}

Results of \cite{CummingsMagidor:MMandWeakSquare} can be used to derive further consequences of Theorem \ref{thm:infSCSRPimpliesSFPkappa} in terms of the failure of weak square principles. I will not go into the details here, but I would like to keep track of the fragment of \SRP responsible for the latest consequences mentioned. First, one could ask:

\begin{question}
\label{question:SFPfromSRP(Liftings)}
Assume $\kappa>\omega_1$ is regular. Does $\SRP(\mathcal{S}_\lifting(\kappa))$ imply $\SFP{\kappa}$?
\end{question}

On the positive side, one can list the sets used to derive $\SFP{\kappa}$.

\begin{defn}
\label{def:Correspondences}
Given an uncountable regular cardinal $\kappa$, a partition $\vD$ of $\omega_1$ into stationary sets and an $\omega_1$-sequence $\vec{S}$ of stationary subsets of $S^\kappa_\omega$, call the pair $\kla{\vD,\vS}$ a \emph{$\kappa$-correspondence,} and define the \emph{lifting} of such a correspondence to any $X$ with $\kappa\sub X$ by
\[\lifting(\kla{\vD,\vS},[X]^\omega)=\{x\in[X]^\omega\st\forall i<\omega_1\quad(x\cap\omega_1\in D_i\To\sup(x\cap\kappa)\in S_i)\}\]
and then define the class of liftings of correspondences by letting
\begin{ea*}
\mathcal{S}_{\text{corr}}(\kappa)=\{\lifting(\kla{\vD,\vS},[H_\kappa]^\omega)\cap C&\st&\kappa>\omega_1,\ \kla{\vD,\vS}\ \text{is a $\kappa$-correspondence}\\ &&\text{and}\ C\sub[H_\kappa]^\omega\ \text{is club}\}.
\end{ea*}
\end{defn}

Clearly then, for $\kappa>2^\omega$, $\mathcal{S}_{\text{corr}}(\kappa)$ consists of spread out sets, by Lemma \ref{lem:SFPpreliminary}, and $\SRP(\mathcal{S}_{\text{corr}}(\kappa))$ implies $\SFP{\kappa}$, by the proof of Theorem \ref{thm:infSCSRPimpliesSFPkappa}. In fact, fixing one partition $\vD$ as in Definition \ref{def:Correspondences} would suffice.

\begin{cor}
\label{cor:infSCSRPimpliesSCH}
$\infSC$-$\SRP$ implies \SCH. Actually, $\SRP(\mathcal{S}_{\text{corr}}(\kappa))$, for all $\kappa>2^\omega$, suffices.
\end{cor}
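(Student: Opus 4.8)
The plan is to reduce to the cardinal arithmetic consequence already in hand and then to invoke a standard \ZFC argument. First I would note that each of the two hypotheses of the corollary yields $\kappa^{\omega_1}=\kappa$ for every regular cardinal $\kappa>2^\omega$: for $\infSC$-\SRP this is precisely Corollary \ref{cor:SCSRPimpliesCardArithmeticAboveContinuum}, while for the weaker hypothesis ``$\SRP(\mathcal{S}_{\text{corr}}(\kappa))$ for every regular $\kappa>2^\omega$'' one repeats that proof: by the remarks following Definition \ref{def:Correspondences}, $\SRP(\mathcal{S}_{\text{corr}}(\kappa))$ implies $\SFP{\kappa}$; hence, by Observation \ref{obs:CharacterizationOfE-versionOfSFPkappa} applied with $S=S^\kappa_\omega$, every $\omega_1$-sequence of stationary subsets of $S^\kappa_\omega$ has an exact simultaneous reflection point; and then Fact \ref{fact:SFPandArithmetic} gives $\kappa^{\omega_1}=\kappa$. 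So it remains to show, in \ZFC, that ``$\kappa^{\omega_1}=\kappa$ for all regular $\kappa>2^\omega$'' implies \SCH.

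For that I would argue by minimal counterexample: assume \SCH\ fails and let $\mu$ be the least singular cardinal for which it fails, so that $2^{\cf(\mu)}<\mu$, $\mu^{\cf(\mu)}>\mu^+$, and \SCH\ holds at every singular cardinal below $\mu$. Since $2^\omega\le 2^{\cf(\mu)}<\mu<\mu^+$ and $\mu^+$ is regular, the hypothesis applies to $\kappa=\mu^+$ and gives $(\mu^+)^{\omega_1}=\mu^+$.

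If $\cf(\mu)=\omega$, then $\mu^{\cf(\mu)}=\mu^\omega\le(\mu^+)^\omega\le(\mu^+)^{\omega_1}=\mu^+$, whereas $\mu^{\cf(\mu)}\ge\mu^+$ by K\"onig's theorem, so $\mu^{\cf(\mu)}=\mu^+$, contradicting the choice of $\mu$. If $\cf(\mu)$ is uncountable, then, as \SCH\ holds below $\mu$, Silver's theorem --- in the standard form asserting that \SCH\ holds at a singular cardinal of uncountable cofinality whenever it holds below it --- gives $\mu^{\cf(\mu)}=\mu^+$, again a contradiction. So no minimal counterexample exists, and \SCH\ holds.

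I do not expect any real obstacle here; the only points needing a moment's care are the bookkeeping in the first reduction (checking that $2^{\cf(\mu)}<\mu$ really puts $\mu^+$ above $2^\omega$, which is what makes the hypothesis applicable at $\mu^+$) and quoting Silver's theorem in the appropriate ``power function'' form for the uncountable-cofinality case. The \ZFC step is in essence the route by which \SRP (and Martin's Maximum) is classically shown to imply \SCH; see Foreman--Magidor--Shelah \cite{FMS:MM1} and Jech \cite[Chapters 8 and 37]{ST3}.
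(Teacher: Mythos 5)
Your proposal is correct and follows essentially the same route as the paper: both reduce \SCH to the countable-cofinality case via Silver's theorem and then settle that case with the chain $\mu^+\le\mu^\omega\le(\mu^+)^{\omega_1}=\mu^+$, where $(\mu^+)^{\omega_1}=\mu^+$ comes from Corollary \ref{cor:SCSRPimpliesCardArithmeticAboveContinuum} (equivalently, from $\SRP(\mathcal{S}_{\text{corr}}(\kappa))\Rightarrow\SFP{\kappa}$ together with Observation \ref{obs:CharacterizationOfE-versionOfSFPkappa} and Fact \ref{fact:SFPandArithmetic}). Your minimal-counterexample packaging is just a more explicit rendering of the paper's one-line appeal to Silver.
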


\begin{proof}
We have to show that if $\lambda$ is a singular cardinal with $2^{\cf(\lambda)}<\lambda$, then $\lambda^{\cf(\lambda)}=\lambda^+$. By Silver's Theorem (see \cite[Theorem 8.13]{ST3}), it suffices to prove this in the case that $\lambda$ has countable cofinality.
In this case, we have that $\lambda>2^\omega$. Since $\SRP(\mathcal{S}_\infSC)$ holds, it follows from Corollary \ref{cor:SCSRPimpliesCardArithmeticAboveContinuum} that $(\lambda^+)^{\omega_1}=\lambda^+$.
Thus we have
\[\lambda^+\le\lambda^{\cf(\lambda)}=\lambda^\omega\le(\lambda^+)^{\omega_1}=\lambda^+\]
as wished.
\end{proof}

Of course, $\SC$-$\SRP$ is sufficient in this corollary.

\subsection{Mutual stationarity}
\label{subsec:MutualStationarity}

The ideas of the previous subsection can be carried a little further. Let me recall the notion of mutual stationarity, introduced by Foreman \& Magidor \cite{ForemanMagidor:MutualStationarity}.

\begin{defn}
\label{def:MutualStationarity}
Let $K$ be a collection of regular cardinals with supremum $\delta$, and let $\vec{S}=\seq{S_\kappa}{\kappa\in K}$ be a sequence such that for every $\kappa\in K$, $S_\kappa$ is a subset of $\kappa$. Then $\vec{S}$ is \emph{mutually stationary} if for every algebra $\mathcal{A}$ on $\delta$, there is an $N\prec\mathcal{A}$ such that for all $\kappa\in N\cap K$, $\sup(N\cap\kappa)\in S_\kappa$.
\end{defn}

It is easy to see that if $\vec{S}$ is mutually stationary, then for all $\kappa\in K$, $S_\kappa$ is a stationary subset of $\kappa$. The following beautiful and fundamental fact on mutual stationarity was proved in the article in which the concept was introduced and gives a condition under which the converse is also true.

\begin{fact}[{Foreman \& Magidor \cite[Thm.~7]{ForemanMagidor:MutualStationarity}}]
\label{fact:MutualStationarity}
Let $K$ be a set of uncountable regular cardinals, and let $\vec{S}=\seq{S_\kappa}{\kappa\in K}$ be a sequence such that for every $\kappa\in K$, $S_\kappa$ is a stationary subset of $S^\kappa_\omega$. Then, $\vec{S}$ is mutually stationary: for any algebra $\mathcal{A}$ on $\sup K$, there is a countable $N\prec\mathcal{A}$ such that for all $\kappa\in N\cap K$, $\sup(N\cap\kappa)\in S_\kappa$.
\end{fact}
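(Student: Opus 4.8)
The plan is to establish the (standard) assertion that $\vec S$ is mutually stationary by directly constructing the required elementary substructure, following Foreman and Magidor \cite{ForemanMagidor:MutualStationarity}. Fix an algebra $\mathcal A$ on $\delta=\sup K$. It is convenient to expand first: choosing $\theta$ large, pass to a structure $\mathfrak A=\kla{H_\theta,\in,<^*,\mathcal A,K,\vec S}$ equipped with definable Skolem functions, where $<^*$ well-orders $H_\theta$. Since $N\cap\delta\prec\mathcal A$ whenever $N\prec\mathfrak A$, and $\sup(N\cap\kappa)$ and $N\cap K$ are computed the same way in $N$ and in $N\cap\delta$, it suffices to produce a \emph{countable} $N\prec\mathfrak A$ with $\sup(N\cap\kappa)\in S_\kappa$ for every $\kappa\in N\cap K$. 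For $\kappa\in K$ and a countable $p\sub H_\theta$ write $\Hull^{\mathfrak A}(p)$ for the Skolem hull; since $\kappa$ is regular and uncountable, $C_\kappa(p)=\{\alpha<\kappa\st\Hull^{\mathfrak A}(\alpha\cup p)\cap\kappa=\alpha\}$ is club in $\kappa$, so $S_\kappa\cap C_\kappa(p)$ is stationary, and each of its elements has cofinality $\omega$ because $S_\kappa\sub S^\kappa_\omega$.

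The basic building step is the following. Given a countable $M\prec\mathfrak A$ with $\kappa\in M$, a parameter $p$ coding $M$, and a closure point $\gamma\in C_\kappa(p)$ with $\sup(M\cap\kappa)<\gamma$, fix an increasing $\omega$-sequence $\vc$ cofinal in $\gamma$ and let $M'=\Hull^{\mathfrak A}(M\cup\{M,\vc\})$. Then $M'\prec\mathfrak A$, $M\in M'$, and $\sup(M'\cap\kappa)=\gamma$; the key point is that since $\gamma$ is a closure point relative to a parameter coding $M$, the hull does not reach past $\gamma$ below $\kappa$. Iterating this along an $\omega$-chain one can steer the approximations $\sup(N_n\cap\kappa)$ toward any prescribed target in $S_\kappa$, using crucially that such a target, lying in $S^\kappa_\omega$, is the limit of an increasing $\omega$-sequence.

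The construction proper builds an $\in$-increasing chain $N_0\sub N_1\sub\cdots$ of countable elementary substructures of $\mathfrak A$ with $N_n\in N_{n+1}$, with $N_0$ containing $\mathfrak A$, $K$, $\vec S$ and any prescribed parameter, and puts $N=\bigcup_nN_n$. Since $N_n\cap K\in N_{n+1}$, a bookkeeping function can ensure that every cardinal that ever enters $N\cap K$ is visited cofinally often among the stages; at a stage visiting $\kappa$ one applies the building step to push $\sup(\cdot\cap\kappa)$ up to the next available element of $S_\kappa\cap C_\kappa(\cdot)$ above its current value. One then checks that for each $\kappa\in N\cap K$ the approximating suprema converge into $S_\kappa$; here one uses that $\sup(N\cap\kappa)<\kappa$ automatically ($N$ countable, $\kappa$ regular uncountable), and that the targets may be taken to be closure points lying ever higher in $\kappa$, so that work at later stages on other cardinals cannot drag $\sup(\cdot\cap\kappa)$ past the chosen target. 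When $K$ itself is finite this can be done in a single sweep, without bookkeeping: enumerating $K=\{\kappa_0<\cdots<\kappa_{m-1}\}$, choose targets $\gamma_j\in S_{\kappa_j}$ by \emph{downward} recursion on $j$, each $\gamma_j$ a closure point of $\mathfrak A$ in $\kappa_j$ relative to a fixed parameter $p_0$ coding $K$, $\vec S$, $\mathcal A$ and the prescribed parameter together with $\{\gamma_i:i>j\}$ and fixed cofinal $\omega$-sequences $\vc_i$ in them, and each $\gamma_j>\kappa_{j-1}$; then $N=\Hull^{\mathfrak A}(p_0\cup\bigcup_{j<m}(\{\gamma_j\}\cup\vc_j))$ satisfies $\sup(N\cap\kappa_j)=\gamma_j\in S_{\kappa_j}$ for all $j$.

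I expect the main obstacle to be exactly the interference phenomenon in the infinite case: adding ordinals below one cardinal $\kappa$ generally forces, through the Skolem functions, new ordinals below every smaller cardinal, so one cannot finalize $\sup(\cdot\cap\kappa')$ before all cardinals of $N\cap K$ above $\kappa'$ have been seen, and when $N\cap K$ has order type $\omega$ there is no maximal one from which to start a downward recursion. Overcoming this is the delicate core of the Foreman–Magidor argument: one arranges the bookkeeping and the (ever higher) choice of simultaneous closure-point targets so that the perturbations caused by the cofinality-$\omega$ blocks used at later stages remain confined below the targets already committed to, and one exploits throughout that each target, being in $S^\kappa_\omega$, is approachable as the limit of the relevant $\omega$-subsequence of approximations.
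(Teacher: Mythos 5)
The paper does not prove this statement at all: it is quoted as a black-box fact with a citation to Foreman--Magidor, so the only thing to compare your attempt against is the original argument you are reconstructing. Your setup (expanding to a Skolem-hulled structure on $H_\theta$, the clubs $C_\kappa(p)$ of closure points, and the observation that each $S_\kappa\sub S^\kappa_\omega$ meets them) is the right framework, and your finite-$K$ argument by downward recursion is essentially correct. Two small technical slips there: you should put the \emph{range} of the cofinal sequence $\vc$ into the hull generators rather than the sequence $\vc$ (or the target $\gamma_j$) as an \emph{element}, since a Skolem function applied to $\vc$ or to $\gamma_j$ as a parameter can output an ordinal in $(\gamma_j,\kappa_j)$ that the closure-point property of $\gamma_j$ relative to $p$ does not control; with $\ran(\vc_j)\sub\gamma_j$ as generators the containment $N\sub\Hull(q_j\cup\gamma_j)$ does the job.

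The genuine gap is in the infinite case, which is the actual content of the theorem. You correctly note that interference only propagates downward (adding ordinals below $\kappa$ cannot raise $\sup(\cdot\cap\lambda)$ for regular $\lambda>\kappa$, by regularity of $\lambda$ and elementarity), and that this kills any attempt at a downward recursion when $N\cap K$ has no maximum. But your resolution -- ``the targets may be taken to be closure points lying ever higher in $\kappa$, so that work at later stages on other cardinals cannot drag $\sup(\cdot\cap\kappa)$ past the chosen target'' -- is exactly the step that is not justified. A target $\gamma_{\kappa'}\in S_{\kappa'}$ chosen at stage $n$ is a closure point only relative to the parameters available at stage $n$; when a \emph{larger} cardinal $\kappa''>\kappa'$ is activated at a later stage, the new generators $\ran(\vc_{\kappa''})$ are ordinals below $\kappa''$ that are not below $\gamma_{\kappa'}$ and were not part of the parameter set with respect to which $\gamma_{\kappa'}$ was a closure point, so Skolem terms in these new parameters can land in $(\gamma_{\kappa'},\kappa')$ and destroy the commitment already made at $\kappa'$. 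Preventing this requires choosing the later targets and their approximating $\omega$-sequences so as to be compatible with all earlier commitments simultaneously, and proving that such choices can always be found inside the relevant stationary sets; this is the delicate core of the Foreman--Magidor proof, and your write-up names the obstacle and asserts it can be overcome rather than overcoming it. As a standalone proof the proposal is therefore incomplete precisely where the theorem stops being routine; as an account of the strategy of the cited proof it is accurate up to that point.
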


It seems as though the following connection has not been made before:

\begin{cor}
\label{cor:ExpressingItInTermsOfMutualStationarity}
Let $K$ be a set of regular cardinals with $\min(K)>\omega_1$, and let $\vec{S}=\seq{S_\kappa}{\kappa\in K}$ be a sequence such that for every $\kappa\in K$, $S_\kappa\sub S^\kappa_\omega$ is stationary in $\kappa$. Let $\delta=\sup K$. Then the set
\[S=\{M\in[H_\delta]^\omega\st\forall\kappa\in M\cap K\quad\sup(M\cap\kappa)\in S_\kappa\}\]
is projective stationary.
\end{cor}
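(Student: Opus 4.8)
The plan is to deduce this directly from the Foreman--Magidor theorem, Fact~\ref{fact:MutualStationarity}, by folding the ``$X\cap\omega_1\in T$'' clause in the definition of projective stationarity into the mutual stationarity statement. The point is that, since $\min(K)>\omega_1$, the cardinal $\omega_1$ is not a member of $K$, so we are free to adjoin it to $K$ with $T$ as its associated stationary set.

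Concretely, fix a stationary $T\sub\omega_1$; we must show that $\{M\in S\st M\cap\omega_1\in T\}$ is stationary in $[H_\delta]^\omega$. First I would shrink $T$ to $T\cap\Lim$, which is again stationary and which is a stationary subset of $S^{\omega_1}_\omega$ (every nonzero countable limit ordinal has cofinality $\omega$); so we may assume $T\sub S^{\omega_1}_\omega$. Now set $K'=K\cup\{\omega_1\}$ and let $\vec S'=\seq{S'_\kappa}{\kappa\in K'}$ be given by $S'_\kappa=S_\kappa$ for $\kappa\in K$ and $S'_{\omega_1}=T$. Each member of $K'$ is an uncountable regular cardinal, each $S'_\kappa$ is a stationary subset of $S^\kappa_\omega$, and $\sup K'=\sup K=\delta$ because $\omega_1<\delta$. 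Hence Fact~\ref{fact:MutualStationarity} yields that $\vec S'$ is mutually stationary. Spelled out for algebras on the ordinal $\delta$, this says precisely that
\[S^\ast:=\{N\in[\delta]^\omega\st\forall\kappa\in N\cap K'\quad\sup(N\cap\kappa)\in S'_\kappa\}\]
is stationary in $[\delta]^\omega$ (every club of $[\delta]^\omega$ contains the set of $N$ closed under the operations of a suitable algebra on $\delta$).

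It remains to transfer this to $[H_\delta]^\omega$. Intersecting with the club of those $N\in[\delta]^\omega$ with $\omega_1\in N$ and $N\cap\omega_1$ a limit ordinal, we obtain a stationary $S^{\ast\ast}\sub S^\ast$; for $N\in S^{\ast\ast}$ we then have $\omega_1\in N\cap K'$, so $N\cap\omega_1=\sup(N\cap\omega_1)\in S'_{\omega_1}=T$, and $\sup(N\cap\kappa)\in S_\kappa$ for all $\kappa\in N\cap K$. Since $\delta\sub H_\delta$, the projection $S^{\ast\ast}\projectup[H_\delta]^\omega$ is stationary in $[H_\delta]^\omega$ by the preservation of stationarity under projections noted after Definition~\ref{defn:ProjectionsAndLiftings}. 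Finally, if $M\in S^{\ast\ast}\projectup[H_\delta]^\omega$ then $N:=M\cap\delta\in S^{\ast\ast}$, so $M\cap\omega_1=N\cap\omega_1\in T$, and for every $\kappa\in M\cap K$ we have $\kappa\in N\cap K$ (note $\kappa<\delta$, as $M\sub H_\delta$) and $\sup(M\cap\kappa)=\sup(N\cap\kappa)\in S_\kappa$, i.e. $M\in S$. Thus $\{M\in S\st M\cap\omega_1\in T\}\supseteq S^{\ast\ast}\projectup[H_\delta]^\omega$ is stationary, and since $T$ was an arbitrary stationary subset of $\omega_1$, $S$ is projective stationary.

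As for the main obstacle: there really is no deep one here — all the substance is contained in Fact~\ref{fact:MutualStationarity}, and the rest is the bookkeeping above. The one place that needs a little care is the interface between the ``algebra on $\delta$'' formulation of mutual stationarity and the ``countable elementary submodel of $H_\delta$'' formulation of (projective) stationarity, handled by passing through $[\delta]^\omega$ and projecting up, together with the trivial remark that on a club of models $M\cap\omega_1$ is a limit ordinal and hence equals $\sup(M\cap\omega_1)$.
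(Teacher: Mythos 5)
Your proposal is correct and follows exactly the paper's argument: adjoin $\omega_1$ to $K$ with the given stationary $T\sub\omega_1$ as its associated set and invoke Fact~\ref{fact:MutualStationarity}. The paper states this in one sentence and leaves implicit the details you spell out (restricting $T$ to limit ordinals so that it lies in $S^{\omega_1}_\omega$, and passing from $[\delta]^\omega$ to $[H_\delta]^\omega$ via projection), all of which you handle correctly.
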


\begin{proof}
This is because if $A\sub\omega_1$ is stationary, and if we let $K'=K\cup\{\omega_1\}$ and $\vec{S}'=\vec{S}\cup\{\kla{\omega_1,A}\}$, then we can apply Fact \ref{fact:MutualStationarity} to these objects, showing that the set
\[S_A=\{M\in S\st M\cap\omega_1\in A\}\]
is a stationary subset of $[H_\delta]^\omega$.
\end{proof}

This can be readily improved as follows by adding in a ``correspondence'' as before, but this time between a partition of $\omega_1$ into stationary sets and a corresponding list of $\omega_1$-sequences of stationary sets.

\begin{cor}
\label{cor:AProjectiveStationarySet}
Let $K$ be a set of regular cardinals with $\min(K)>\omega_1$, and let $\vec{S}=\seq{S_{\kappa,i}}{\kappa\in K, i<\omega_1}$ be a sequence such that for every $\kappa\in K$ and every $i<\omega_1$, $S_{\kappa,i}\sub S^\kappa_\omega$ is stationary in $\kappa$. Let $\delta=\sup K$. Let $\seq{D_i}{i<\omega_1}$ be a maximal partition of $\omega_1$ into stationary sets (in the sense of Lemma \ref{lem:FengJechSFPpreliminary}.)
Then the set
\[S=\{M\in[H_\delta]^\omega\st\forall\kappa\in M\cap K\forall i<\omega_1\quad(M\cap\omega_1\in D_i\implies\sup(M\cap\kappa)\in S_{\kappa,i})\}\]
is projective stationary.
\end{cor}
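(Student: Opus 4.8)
The plan is to imitate the proof of Corollary~\ref{cor:ExpressingItInTermsOfMutualStationarity}, but this time to reduce the desired projective stationarity to Fact~\ref{fact:MutualStationarity} applied to a cleverly chosen augmented sequence. So fix a stationary set $A\sub\omega_1$; I need to show that $S_A=\{M\in S\st M\cap\omega_1\in A\}$ is stationary in $[H_\delta]^\omega$. The key observation is that the maximality of the partition $\seq{D_i}{i<\omega_1}$ means there is some fixed $i^*<\omega_1$ such that $A\cap D_{i^*}$ is stationary in $\omega_1$. First I would replace $A$ by $A^*=A\cap D_{i^*}$, which is still stationary, and note that it suffices to show that $S_{A^*}=\{M\in S\st M\cap\omega_1\in A^*\}$ is stationary, since $S_{A^*}\sub S_A$.

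The point of passing to $A^*\sub D_{i^*}$ is that for any $M$ with $M\cap\omega_1\in A^*$, the only constraint imposed by membership in $S$ is that $\sup(M\cap\kappa)\in S_{\kappa,i^*}$ for all $\kappa\in M\cap K$, because $M\cap\omega_1\in D_{i^*}$ and, the $D_i$ being pairwise disjoint, $M\cap\omega_1\notin D_i$ for $i\neq i^*$. Thus, setting $K'=K\cup\{\omega_1\}$ and defining $\vec{T}=\seq{T_\kappa}{\kappa\in K'}$ by $T_{\omega_1}=A^*$ and $T_\kappa=S_{\kappa,i^*}$ for $\kappa\in K$, each $T_\kappa$ for $\kappa\in K$ is a stationary subset of $S^\kappa_\omega$, so Fact~\ref{fact:MutualStationarity} applies. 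Given any algebra $\calA$ on $H_\delta$ — more precisely, any countable-language structure whose universe contains $H_\delta$, which we may take to include all the relevant parameters $K$, $\vec{S}$, $\seq{D_i}{i<\omega_1}$, $i^*$, and enough of $H_\delta$ — I apply the fact to obtain a countable $N\prec\calA$ with $\sup(N\cap\kappa)\in T_\kappa$ for all $\kappa\in N\cap K'$. Since $\omega_1\in N$ (it is definable), we get $N\cap\omega_1=\sup(N\cap\omega_1)\in T_{\omega_1}=A^*$, and for every $\kappa\in N\cap K$ we get $\sup(N\cap\kappa)\in S_{\kappa,i^*}$. Because $N\cap\omega_1\in D_{i^*}$ and the partition is disjoint, the defining condition ``$\forall\kappa\in N\cap K\ \forall i<\omega_1\ (N\cap\omega_1\in D_i\implies\sup(N\cap\kappa)\in S_{\kappa,i})$'' is satisfied (the implication is vacuous for $i\neq i^*$), so $N\cap H_\delta\in S$, and moreover $N\cap\omega_1\in A^*\sub A$, so $N\cap H_\delta\in S_A$. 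Since $N$ is (the restriction of) an elementary substructure of an arbitrary algebra, this shows $S_A$ meets every club, i.e., is stationary. As $A$ was an arbitrary stationary subset of $\omega_1$, $S$ is projective stationary.

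The main thing to be careful about is bookkeeping with the parameters and the precise form of Fact~\ref{fact:MutualStationarity}: I need the algebra (equivalently, the first-order structure) against which I test stationarity of $S$ to have $\omega_1$, $K$, and the $D_i$'s among its definable or parameter elements, so that $N\cap\omega_1$ lands in $A^*$ and $N\cap K$ is meaningful; this is routine since one may always expand to a richer structure when checking stationarity in $[H_\delta]^\omega$. The only genuinely new idea beyond Corollary~\ref{cor:ExpressingItInTermsOfMutualStationarity} is the use of maximality to localize to a single index $i^*$, collapsing the $\omega_1$-indexed family of constraints down to a single mutual stationarity instance; once that reduction is made, the argument is essentially identical to the earlier corollary.
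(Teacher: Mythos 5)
Your proposal is correct and follows essentially the same route as the paper: fix a stationary $A\sub\omega_1$, use maximality of the partition to find $i^*$ with $A\cap D_{i^*}$ stationary, and apply Fact \ref{fact:MutualStationarity} to the augmented sequence on $K\cup\{\omega_1\}$ with $S_{\kappa,i^*}$ at each $\kappa\in K$ and $A\cap D_{i^*}$ at $\omega_1$, noting that disjointness of the $D_i$ makes all other implications vacuous. The paper's proof is word-for-word this argument (with $B$ and $i$ in place of your $A$ and $i^*$).
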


\begin{proof}
Let $B\sub\omega_1$ be stationary, and let $i<\omega_1$ be such that $D_i\cap B$ is stationary. Let $K'=K\cup\{\omega_1\}$, and consider the sequence $\seq{S'_\kappa}{\kappa\in K'}$ defined by letting $S'_\kappa=S_{\kappa,i}$ for $\kappa\in K$ and $S'_{\omega_1}=D_i\cap B$. 
By Fact \ref{fact:MutualStationarity}, the set $\bar{S}=\{M\in [H_\delta]^\omega\st\forall\kappa\in M\cap K'\quad\sup(M\cap\kappa)\in S'_\kappa\}$ is stationary. But $\omega_1\in M$, for a club $C$ of $M$, and $\bS\cap C\sub S_B=\{M\in S\st M\cap\omega_1\in B\}$, showing that $S_B$ is stationary.
\end{proof}

This connection to mutual stationarity gives rise to a somewhat ``diagonal'' reflection principle for sequences of stationary sets of ordinals which follows from \SRP, in contrast to others that I will discuss in Section \ref{sec:SRPLimitations}. It is a kind of simultaneous reflection principle for sequences of stationary sets that live on different regular cardinals. To motivate it, recall that Observation \ref{obs:CharacterizationOfE-versionOfSFPkappa} shows that a weak version of $\SFP{\kappa}$ that follows from \SRP for regular $\kappa>\omega_1$ implies that whenever $\{S_i\st i<\omega_1\}$ is a collection of stationary subsets of $S^\kappa_\omega$, then the set of exact simultaneous reflection points of this collection, is stationary in $\kappa$. The following theorem says that if we are given such collections of stationary sets, living on different regular cardinals, then the sequence consisting of the sets of the exact reflection points of these different collections is mutually stationary.

\begin{thm}
\label{thm:SRPimpliesMutuallyStationarySimultaneousReflection}
Assume \SRP. Let $K$ be a set of regular cardinals with $\min(K)>\omega_1$. Let $\vec{S}=\seq{S_{\kappa,i}}{\kappa\in K, i<\omega_1}$ be such that for every $\kappa\in K$ and $i<\omega_1$, $S_{\kappa,i}$ is a subset of $S^\kappa_\omega$ stationary in $\kappa$. For $\kappa\in K$, let $\vec{S}_\kappa=\seq{S_{\kappa,i}}{i<\omega_1}$. Then the sequence
\[\vec{T}=\seq{\eTr(\vS_\kappa)}{\kappa\in K}\]
is mutually stationary.
\end{thm}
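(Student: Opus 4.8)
The plan is to feed the full strength of \SRP into the projective stationary set provided by Corollary~\ref{cor:AProjectiveStationarySet}, and then to read off the exact simultaneous reflection at each relevant cardinal from the resulting continuous chain. The combinatorial heart of the matter is that the maximal partition $\vec{D}$ built into that set upgrades plain simultaneous reflection (which would already follow from Corollary~\ref{cor:ExpressingItInTermsOfMutualStationarity} together with \SRP) to \emph{exact} simultaneous reflection.

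Here is how I would carry this out. Fix once and for all a maximal partition $\vec{D}=\seq{D_i}{i<\omega_1}$ of $\omega_1$ into stationary sets (these exist; see the remark following Lemma~\ref{lem:FengJechSFPpreliminary}), and let $\delta=\sup K$ and
\[S=\{M\in[H_\delta]^\omega\st\forall\kappa\in M\cap K\ \forall i<\omega_1\quad(M\cap\omega_1\in D_i\implies\sup(M\cap\kappa)\in S_{\kappa,i})\},\]
which is projective stationary by Corollary~\ref{cor:AProjectiveStationarySet}. Now let $\mathcal{A}$ be an algebra on $\delta$; the task is to produce $N\prec\mathcal{A}$ with $\sup(N\cap\kappa)\in\eTr(\vec{S}_\kappa)$ for every $\kappa\in N\cap K$. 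Since $\delta$ need not be regular, I would move to a regular $\theta$ large enough that $H_\delta,\mathcal{A}\in H_\theta$, fix a well-order $<^*$ of $H_\theta$, and set $\mathcal{B}=\kla{H_\theta,\in,<^*,\mathcal{A},K,\vec{S},\vec{D},\delta}$. The lifting $S^*=S\projectup[H_\theta]^\omega$ is projective stationary in $[H_\theta]^\omega$: for stationary $T\sub\omega_1$ the set $\{M\in S\st M\cap\omega_1\in T\}$ is stationary in $[H_\delta]^\omega$, and its upward projection to $[H_\theta]^\omega$ — which is exactly $\{X\in S^*\st X\cap\omega_1\in T\}$ — is stationary because upward projections of stationary sets are stationary. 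Intersecting with the club $\{X\st(\mathcal{B}|X)\prec\mathcal{B}\}$ preserves projective stationarity by Observation~\ref{obs:ProjectiveStationarityPreservedUnderIntersectionWithClubs}, so $\SRP(\theta)$ yields a continuous $\in$-chain $\seq{X_j}{j<\omega_1}$ through $S^*$ with each $X_j\prec\mathcal{B}$; as in Corollary~\ref{cor:SRPgivesElementaryChains} this chain is $\sub$-increasing and continuous. Put $N=\bigcup_{j<\omega_1}X_j$ and $N'=N\cap\delta$. Since $\mathcal{A}\in N$, elementarity gives $N'\prec\mathcal{A}$; and since $j\mapsto X_j\cap\omega_1$ is a normal function into $\omega_1$, one gets $N\cap\omega_1=\omega_1$.

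It remains to verify the reflection property. Fix $\kappa\in N'\cap K$, let $j_0<\omega_1$ be least with $\kappa\in X_{j_0}$ (so $\kappa\in X_j$ for all $j\ge j_0$), and for $j\ge j_0$ write $\alpha_j=X_j\cap\omega_1$ and $\beta_j=\sup(X_j\cap\kappa)<\kappa$; set $\gamma=\sup(N\cap\kappa)=\sup_{j\ge j_0}\beta_j$, noting $\gamma<\kappa$ as $\card{N}=\omega_1<\kappa$. Using $\kappa\in X_{j+1}\prec\mathcal{B}$ one checks $\alpha_j<\alpha_{j+1}$ and $\beta_j<\beta_{j+1}$, and both sequences are continuous at limits, so $\seq{\beta_j}{j_0\le j<\omega_1}$ normally enumerates a club $C_\kappa\sub\gamma$ of order type $\omega_1$; in particular $\cf(\gamma)=\omega_1$, and $j\mapsto\beta_j$ carries stationary subsets of $\omega_1$ to stationary subsets of $\gamma$. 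Since $X_j\cap H_\delta\in S$ and $\kappa\in X_j$ for $j\ge j_0$, letting $i(j)$ be the unique index with $\alpha_j\in D_{i(j)}$ we get $\beta_j=\sup(X_j\cap\kappa)\in S_{\kappa,i(j)}$; hence $C_\kappa\sub\bigcup_{i<\omega_1}S_{\kappa,i}$, which gives the ``exact'' clause in the definition of $\eTr$. For the simultaneous reflection, fix $i<\omega_1$: on the club $\{j<\omega_1\st\alpha_j=j\}$ (the fixed points of the normal function $j\mapsto\alpha_j$) the conditions $\alpha_j\in D_i$ and $j\in D_i$ coincide, so $\{j\ge j_0\st\alpha_j\in D_i\}$ is stationary, and therefore $\{\beta_j\st j\ge j_0,\ \alpha_j\in D_i\}\sub S_{\kappa,i}\cap\gamma$ is stationary in $\gamma$. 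Together with $\cf(\gamma)$ uncountable this shows $\gamma\in\eTr(\vec{S}_\kappa)$. As $\kappa\in N'\cap K$ and the algebra $\mathcal{A}$ were arbitrary, $\vec{T}$ is mutually stationary.

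The genuinely load-bearing step, and the one I expect to require the most care, is this last bookkeeping: one must simultaneously (i) keep $C_\kappa$ inside $\bigcup_i S_{\kappa,i}$ (which uses that $\vec{D}$ is a partition, so every $\alpha_j$ lands in exactly one $D_{i(j)}$), and (ii) ensure that each individual $S_{\kappa,i}$ still reflects at $\gamma$ (which uses that each $D_i$ is stationary, transported along $j\mapsto\alpha_j$ via its club of fixed points). The remaining ingredients — reducing an arbitrary algebra on $\delta$ to the structure $\mathcal{B}$ on $H_\theta$, the lifting $S\projectup[H_\theta]^\omega$, and the verification that the $\SRP$-chain is $\sub$-increasing and continuous — are routine.
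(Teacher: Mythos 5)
Your proposal is correct and follows essentially the same route as the paper: apply Corollary \ref{cor:AProjectiveStationarySet} with a maximal partition to get a projective stationary set encoding the correspondence, run \SRP{} to obtain a continuous chain, take its union, and verify exact simultaneous reflection at each $\sup(N\cap\kappa)$ by using the partition for the exactness clause and the stationarity of each $D_i$ (pulled along the club of fixed points of $j\mapsto X_j\cap\omega_1$) for reflection of each $S_{\kappa,i}$. The only cosmetic difference is that the paper works directly in $[H_{(\sup K)^+}]^\omega$ whereas you lift to a large regular $\theta$; both are fine, and your explicit handling of the algebra $\mathcal{A}$ is if anything slightly more careful than the paper's.
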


\begin{proof}
Let $\delta=(\sup(K))^+$. Fix a partition $\seq{A_i}{i<\omega_1}$ of $\omega_1$ into stationary sets which is a maximal antichain. Let $S$ be the set of countable $M\prec H_\delta$ such that if $M\cap\omega_1\in A_i$, then for all $\kappa\in M\cap K$, $\sup(M\cap\kappa)\in S_{\kappa,i}$.
By Corollary \ref{cor:AProjectiveStationarySet}, this set is projective stationary. By \SRP, let $\seq{M_\alpha}{\alpha<\omega_1}$ be a continuous elementary chain through $S$. Let $M=\bigcup_{\alpha<\omega_1}M_\alpha$. Then $M\prec H_\delta$. We claim that $M$ verifies that $\vec{T}$ is mutually stationary. To see this, suppose $\kappa\in M\cap K$. We have to show that for every $i<\omega_1$, $s_\kappa=\sup(M\cap\kappa)\in\eTr(\vS_{\kappa})$. That is, we have to show that for every $i<\omega_1$, $s_\kappa$ is a reflection point of $S_{\kappa,i}$, and that $\bigcup_{i<\omega_1}S_{\kappa,i}\cap s_\kappa$ contains a club.

For the first part, fix any countable ordinal $i$. To see that $S_{\kappa,i}\cap s_\kappa$ is stationary, let $D\sub s_\kappa$ be club. We have to show that $S_{\kappa,i}\cap D\neq\leer$. Let $\beta<\omega_1$ be least such that $\kappa\in M_\beta$, and define, for $j\in[\beta,\omega_1)$, $\xi_j=\sup(M_j\cap\kappa)$. Then $\vec{\xi}$ is strictly increasing, continuous, and cofinal in $s_\kappa$. That is, $C=\{\xi_j\st\beta\le j<\omega_1\}$ is club in $s_\kappa$. Since $\cf(s_\kappa)=\omega_1$, $C\cap D$ is club in $s_\kappa$, and hence, $\bar{D}=\{j\in[\beta,\omega_1)\st\xi_j\in D\}$ is club in $\omega_1$.
Similarly, the sequence $\seq{M_j\cap\omega_1}{j<\omega_1}$ is strictly increasing and continuous, and so the set $E=\{j<\omega_1\st j=M_j\cap\omega_1\}$ is club in $\omega_1$. Now since $T_i$ is stationary in $\omega_1$, we can pick $\alpha\in T_i\cap\bar{D}\cap E$. Then $\xi_\alpha\in D$, since $\alpha\in\bar{D}$, and $\alpha=M_\alpha\cap\omega_1\in T_i$. Since $M_\alpha\in S$ and $\alpha\ge\beta$, that is, $\kappa\in M_\alpha$, it follows that $\xi_\alpha=\sup(M_\alpha\cap\kappa)\in S_{\kappa,i}$. Thus, $\xi_\alpha\in S_{\kappa,i}\cap D$, as wished.

For the second part, note that the club $C$ defined in the previous paragraph is contained in $\bigcup_{i<\omega_1}S_{\kappa,i}$
\end{proof}

I will show in the following that the same conclusion can be drawn from \infSC-\SRP under the additional assumption that \CH holds. Essentially, this amounts to showing a version of Corollary \ref{cor:ExpressingItInTermsOfMutualStationarity} with ``spread out'' in place of ``projective stationary''. To this end, I will use the following strengthening of Lemma \ref{lem:OneStep}. The argument will be a construction that proceeds in $\omega$ many steps, each of which will be an application of Lemma \ref{lem:OneStep}.

\begin{lem}
\label{lem:OmegaStep}
Let $K$ be a set of regular cardinals such that $\min(K)>2^\omega$ and such that whenever $\kappa<\lambda$, $\kappa,\lambda\in K$, then $\kappa^\omega<\lambda$. Let $\vec{S}=\seq{S_\kappa}{\kappa\in K}$ be such that for every $\kappa\in K$, $S_\kappa\sub S^\kappa_\omega$ is stationary in $\kappa$.
Let $\theta>2^{2^{\sup(K)}}$ be regular, $N$ a transitive model of $\ZFCm$ that has a definable well-order, with $H_\theta\sub N$. Let $\sigma:\bN\prec N$, where $\bN$ is countable and full and $K\in\ran(\sigma)$. Let $\bar{K}$ be the preimage of $K$, and let $\bar{a}$ be some element of $\bN$.

Then there is an embedding $\sigma':\bN\prec N$ with the following properties:
\begin{enumerate}[label=\textnormal{(\alph*)}]
  \item
  \label{item:LimitEmbeddingMovesStuffCorrectly}
  $\sigma(\bar{a})=\sigma'(\bar{a})$ and $\sigma'(\bar{K})=K$,
  \item
  \label{item:LimitEmbeddingHasTheRightSuprema}
  for every $\bar{\kappa}\in\bar{K}$, $\sup\sigma'``\bkappa\in S_{\sigma'(\bkappa)}$.
\end{enumerate}
\end{lem}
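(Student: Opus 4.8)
The plan is to iterate Lemma \ref{lem:OneStep} $\omega$ many times, enumerating $K$ in increasing order and at each step ``locking in'' the correct supremum at one more cardinal of $K$, while using clauses \ref{item:SameBehaviorBelow} and \ref{item:CorrectSupremum} to make sure earlier decisions are not disturbed. Fix an increasing enumeration $\seq{\kappa_m}{m<\omega'}$ of $K$ (where $\omega'\le\omega$; if $K$ is finite a single application of Lemma \ref{lem:OneStep} with $\bar\eta$ chosen appropriately suffices, so assume $K$ is infinite), and let $\bar\kappa_m=\sigma^{-1}(\kappa_m)$. The hypothesis on $K$ gives $\bar\kappa_m^\omega<\bar\kappa_{m+1}$ in $\bN$, which is exactly the ``$\eta^\omega<\kappa$'' hypothesis needed to invoke Lemma \ref{lem:OneStep} with $\kappa=\kappa_{m+1}$ and $\eta=\kappa_m$. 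I will build a sequence of embeddings $\seq{\sigma_m}{m<\omega}$ with $\sigma_0=\sigma$, such that at stage $m+1$ I apply Lemma \ref{lem:OneStep} to $\sigma_m$ with $\bar\kappa=\bar\kappa_m$ and $\bar\eta=\bar\kappa_{m-1}$ (for $m\ge 1$; at $m=0$ just take $\eta$ below $\bar\kappa_0$, or skip), and with the list of parameters to be fixed including $\bar a$, $\bar K$, and everything needed below. The $\omega$-club of candidate ordinals $\kappa_0^*<\kappa_m$ produced by the lemma meets the stationary set $S_{\kappa_m}\sub S^{\kappa_m}_\omega$, so I can choose $\sigma_{m+1}$ with $\sup\sigma_{m+1}``\bar\kappa_m=\kappa_0^*\in S_{\kappa_m}$.

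The key to making this an $\omega$-step construction that converges is clause \ref{item:SameBehaviorBelow}: when I apply Lemma \ref{lem:OneStep} at stage $m+1$ with $\bar\eta=\bar\kappa_{m-1}$, I get $\sigma_{m+1}\rest\bar\kappa_{m-1}=\sigma_m\rest\bar\kappa_{m-1}$. Hence the restrictions $\sigma_m\rest\bar\kappa_{m-2}$ stabilize, and more importantly the images $\sigma_m``\bar\kappa_\ell$ for $\ell<m-1$ are fixed from stage $m-1$ on. Actually, since I want $\sup\sigma_m``\bar\kappa_\ell$ to stay equal to the value $\kappa_0^*\in S_{\kappa_\ell}$ chosen at stage $\ell+1$, I should apply the lemma at stage $m+1$ with $\bar\eta$ chosen so that $\bar\eta>\bar\kappa_m$ is impossible — no, $\bar\eta$ must be below $\bar\kappa=\bar\kappa_m$. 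So I take $\bar\eta=\bar\kappa_{m-1}$: then clause \ref{item:SameBehaviorBelow} gives agreement below $\bar\kappa_{m-1}$, which in particular preserves $\sigma_m\rest\bar\kappa_\ell$ and hence $\sup\sigma_m``\bar\kappa_\ell=\sup\sigma_{m-1}``\bar\kappa_\ell$ for all $\ell\le m-2$; combined with the choice at stage $m$ fixing $\ell=m-1$, by induction every value $\sup\sigma_m``\bar\kappa_\ell$ for $\ell<m$ equals the chosen point in $S_{\kappa_\ell}$ and is never changed again. I also carry $\bar a$ and $\bar K$ in the parameter set $p$ at every stage via clause \ref{item:MoveBasicParametersTheSameWay}, so $\sigma_m(\bar a)=\sigma(\bar a)$ and $\sigma_m(\bar K)=K$ for all $m$.

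Finally I define $\sigma'=\bigcup_m(\sigma_m\rest\bar\kappa_m)\cup(\text{a tail})$ — more carefully: the sequence $\seq{\sigma_m}{m<\omega}$ stabilizes on each $\bar\kappa_\ell$, so $\sigma_\infty:=\bigcup_\ell\sigma_{\ell+2}\rest\bar\kappa_\ell$ is a well-defined function on $\bigcup_\ell\bar\kappa_\ell=\sup_\ell\bar\kappa_\ell$. But $\bN$ is countable, so $\On\cap\bN$ has countable cofinality only if $\sup_\ell\bar\kappa_\ell=\On\cap\bN$; in general I need to also control $\sigma$ above $\sup K$. The cleanest fix is to throw the ``height'' parameters into the $\vec\lambda$-list: at each stage include as $\blambda$'s a fixed cofinal-in-$\On\cap\bN$ set of regular cardinals of $\bN$ above all $\bar\kappa_m$, and use clauses \ref{item:SameSupremaAbove} and \ref{item:HeightMovesCorrectly} to keep $\sigma_m$ agreeing with $\sigma$ on suprema up there; but that does not pin $\sigma_m$ pointwise above $\sup K$. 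Since $\bN$ is countable and each $\sigma_m:\bN\prec N$, I instead observe that the relevant object I ultimately want is just an embedding $\sigma':\bN\prec N$ satisfying \ref{item:LimitEmbeddingMovesStuffCorrectly} and \ref{item:LimitEmbeddingHasTheRightSuprema}; I claim $\sigma_\infty$ (suitably completed) is elementary. Elementarity of the ``limit'' embedding is the main obstacle: one argues it exactly as in Jensen's treatment of iterated subcomplete constructions, noting that each $x\in\bN$ lies below some $\bar\kappa_m$ (after coding, since $\bN$ is countable we may assume $\bN\sub\sup_m\bar\kappa_m$, or pass to a code) so $\sigma_\infty(x)=\sigma_{m+2}(x)$ is eventually constant, and a formula $\phi(x_1,\dots,x_k)$ holds in $\bN$ iff it holds in $N$ under $\sigma_m$ for the stage $m$ large enough to capture all the parameters — here using that the target $N$ is fixed throughout. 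This gives $\sigma_\infty:\bN\prec N$ with $\sigma_\infty(\bar a)=\sigma(\bar a)$, $\sigma_\infty(\bar K)=K$, and for every $\bar\kappa=\bar\kappa_\ell\in\bar K$, $\sup\sigma_\infty``\bar\kappa_\ell$ is the point in $S_{\kappa_\ell}$ fixed at stage $\ell+1$, establishing \ref{item:LimitEmbeddingHasTheRightSuprema}. I expect the elementarity-in-the-limit argument and the bookkeeping that no later stage disturbs an earlier supremum (which is precisely what clause \ref{item:SameBehaviorBelow} of Lemma \ref{lem:OneStep} was built to provide — note the remark in the excerpt that this clause ``will be important in the aforementioned application'') to be the delicate points; the rest is routine induction.
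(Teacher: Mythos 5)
Your overall strategy is the paper's: iterate Lemma \ref{lem:OneStep} through $\omega$ stages, use its clauses to protect earlier commitments, and pass to a diagonal limit. But there is a genuine gap in the way you set up the recursion. You fix an \emph{increasing} enumeration of $K$ in order type at most $\omega$. The lemma places no such restriction on $K$: the object actually being enumerated is $\bar{K}=\sigma^{-1}(K)$, which is countable but may have any countable order type (and $K$ itself may be uncountable). When $\otp(\bar{K})>\omega$, any $\omega$-enumeration of $\bar{K}$ must be non-monotone, and then your protection mechanism breaks: at a stage where you treat some $\bkappa_{n+1}$ lying \emph{below} a previously treated $\bkappa_m$, clause \ref{item:SameBehaviorBelow} of Lemma \ref{lem:OneStep} (agreement below $\baeta<\bkappa_{n+1}$) says nothing about $\bkappa_m$; the only available protection for $\bkappa_m$ is clause \ref{item:SameSupremaAbove}, which preserves $\sup\sigma'_{n+1}``\bkappa_m=\sup\sigma'_n``\bkappa_m$ but \emph{not} the pointwise restriction $\sigma'_n\rest\bkappa_m$. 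Consequently, in the diagonal limit $\sigma'$ (where $\sigma'(\xi)$ is only the \emph{eventual} value of $\sigma'_n(\xi)$), the set $\sigma'``\bkappa_m$ is not literally equal to any $\sigma'_n``\bkappa_m$, and it could a priori fail to be cofinal in the chosen point $\tkappa_m\in S_{\kappa_m}$ --- each individual $\sigma'(\xi)$ lies below $\tkappa_m$, but nothing in your construction forces the images to accumulate up to $\tkappa_m$. This is exactly why the paper fixes, for each treated $\bkappa_m$, a cofinal map $f_{\tkappa_m}:\omega\To\tkappa_m$ and ordinals $\beta^n_{m,\ell}<\bkappa_m$ with $\sigma'_n(\beta^n_{m,\ell})>f_{\tkappa_m}(\ell)$, and then freezes the images of these finitely many ordinals at every later stage via clause \ref{item:MoveBasicParametersTheSameWay}; these frozen witnesses are what guarantee $\sup\sigma'``\bkappa_m=\tkappa_m$ in the limit. (In the special case $\otp(\bar{K})\le\omega$ your argument does work, since taking $\baeta$ to be the preceding element stabilizes the entire restriction $\sigma'_n\rest\bkappa_m$ for $n$ large, so $\sigma'``\bkappa_m=\sigma'_m``\bkappa_m$ exactly; but that case does not cover the lemma.)

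A secondary, fixable issue: your limit map $\sigma_\infty=\bigcup_\ell\sigma_{\ell+2}\rest\bkappa_\ell$ is only defined on ordinals below $\sup\bar{K}$, not on all of $\bN$, and the proposed remedies (coding $\bN$ into $\sup_m\bkappa_m$) do not obviously preserve the structure you need to be elementary about. The clean fix, which the paper uses, is to fix an enumeration $\seq{\bar{a}_n}{n<\omega}$ of all of $\bN$ and to include $\bar{a}_0,\ldots,\bar{a}_n$ in the frozen parameter set at stage $n+1$, so that $\sigma'(\bar{a}_n):=\sigma'_n(\bar{a}_n)$ is well defined on every element of $\bN$ and elementarity follows by choosing a stage late enough to freeze all parameters of a given formula. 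You gesture at this (``everything needed below''), but as written the limit embedding is not defined on all of $\bN$.
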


\begin{proof}
We may assume that $K$ is infinite. Let $\seq{\bkappa_n}{n<\omega}$ enumerate $\bar{K}$. Let us also fix an enumeration $\seq{\bar{a}_n}{n<\omega}$ of $\bN$. Also, letting $\rho=\sup(K)$, let us fix, for every $\alpha\in S^\rho_\omega$, an increasing and cofinal function $f_\alpha:\omega\To\alpha$.

We will construct sequences $\seq{\sigma'_n}{n<\omega}$, $\seq{\kappa_n}{n<\omega}$, $\seq{\tkappa_n}{n<\omega}$ and $\seq{\beta^n_{m,\ell}}{m\le\ell\le n<\omega}$ by simultaneous recursion on $n$, satisfying the following properties, for every $n<\omega$:
\begin{enumerate}[label=(\roman*)]
  \item
  \label{item:ElementaryEmbeddings}
  $\sigma'_n:\bN\prec N$.
  \item
  \label{item:SupremaCorrect}
  Let $\tkappa_n=\sup\sigma'_n``\bkappa_n$ and $\kappa_n=\sigma'_n(\bkappa_n)$. Then
  $\tkappa_n\in S_{\kappa_n}$.
  \item
  \label{item:EverybodyMovesBasicsTheSameWay}
  $\sigma'_n\rest\{\bar{K},\bar{a}\}=\sigma\rest\{\bar{K},\bar{a}\}$.
  \item
  \label{item:DiagonalUnionPrep}
  For $m\le n$, $\sigma'_n(\bar{a}_m)=\sigma'_{n+1}(\bar{a}_m)$.
  \item
  \label{item:SupremaPreserved}
  For $m<n$, $\sigma'_m(\bkappa_m)=\sigma'_n(\bkappa_m)$ and $\sup\sigma'_m``\bkappa_m=\sup\sigma'_n``\bkappa_m$.
  \item
  \label{item:InsuringSupremaOfLimitAreCorrect}
  For $m\le\ell\le n$, let $\beta^n_{m,\ell}<\bkappa_m$ be the least ordinal $\beta$ such that $\sigma'_n(\beta)> f_{\tkappa_m}(\ell)$. Then $\sigma'_n(\beta^n_{m,\ell})=\sigma'_{n+1}(\beta^n_{m,\ell})$. Moreover, for all $k<n$ and all $m\le\ell\le k$, $\sigma'_{n+1}(\beta^k_{m,\ell})=\sigma'_n(\beta^k_{m,\ell})$.
\end{enumerate}

To get started, let $\kappa_0=\sigma(\bkappa_0)$ and
apply Lemma \ref{lem:OneStep} to $\kappa_0$ and $S_{\kappa_0}$. We don't need the full strength of the lemma in step 0 of the construction, but just conditions \ref{item:MoveBasicParametersTheSameWay} and \ref{item:CorrectSupremum} (so we can let $\eta=0$). This gives us a $\sigma'_0:\bN\prec N$ that moves $\bar{K}$, $\bar{a}$, $\bkappa_0$ the same way $\sigma$ does, and such that letting $\tkappa_0=\sup\sigma'_0``\bkappa_0$, we have that $\tkappa_0\in S_{\kappa_0}$, since there is an $\omega$-club in $\kappa_0$ of possibilities for $\tkappa$ and $S_{\kappa_0}$ is a stationary subset of $\kappa_0$ consisting of ordinals of countable cofinality.
Thus, conditions \ref{item:ElementaryEmbeddings}-\ref{item:EverybodyMovesBasicsTheSameWay} are satisfied at $n=0$, and the remaining conditions are vacuous at $n=0$. Define $\beta^0_{0,0}$ as in condition \ref{item:InsuringSupremaOfLimitAreCorrect}. This is possible because $\sigma'``\bkappa_0$ is cofinal in $\tkappa_0$.

Now let us assume that $\seq{\sigma'_m}{m\le n}$ have been constructed, and $\seq{\kappa_m}{m\le n}$, $\seq{\tkappa_m}{m\le n}$ and $\seq{\beta^k_{m,\ell}}{m\le\ell\le k\le n}$ have been defined accordingly, so that all of the conditions are satisfied at each $m\le n$. Let $\kappa_{n+1}=\sigma'_n(\bkappa_{n+1})$. Let $\eta=\max(\{0\}\cup(\{\kappa_m\st m\le n\}\cap\kappa_{n+1}))$. Since $\kappa_{n+1}\in K$, $\eta<\kappa_{n+1}$ and either $\eta=0$ or $\eta\in K$, it follows from our assumption on $K$ that $\eta^\omega<\kappa_{n+1}$. Moreover, $\eta\in\ran(\sigma'_n)$, so we can let $\bar{\eta}$ be the preimage of $\eta$ under $\sigma'_n$. Let $\blambda_0,\ldots,\blambda_r$ enumerate the finite set $\{\bkappa_m\st m\le n\land\bkappa_m>\bkappa_{n+1}\}$. Let
\[p=\{\bar{K},\bar{a}\}\cup\{\bar{a}_m\st m\le n\}\cup\{\bkappa_m\st m\le n\}\cup\{\beta^{k}_{m,\ell}\st m\le\ell\le k\le n\}\]
Now apply Lemma \ref{lem:OneStep} to $\sigma'_n:\bN\prec N$, $S_{\kappa_{n+1}}$, $\kappa_{n+1}$, $\eta$, $\vec{\blambda}$ and $p$. This gives us an embedding $\sigma'_{n+1}:\bN\prec N$ such that $\sigma'_{n+1}\rest p=\sigma'_n\rest p$, $\sup\sigma'_{n+1}``\blambda_i=\sup\sigma'_n``\blambda_i$ for $i<r$, $\sigma'_{n+1}\rest\bar{\eta}=\sigma'_n\rest\bar{\eta}$ and, letting $\tkappa_{n+1}=\sup\sigma'_{n+1}``\bkappa_n$ and $\kappa_{n+1}=\sigma'_{n+1}(\bkappa_{n+1})$, we have that  $\tkappa_{n+1}\in S_{\kappa_{n+1}}$.

Let us check that the conditions are satisfied at $n+1$. This is immediate for conditions \ref{item:ElementaryEmbeddings}, \ref{item:SupremaCorrect}, \ref{item:EverybodyMovesBasicsTheSameWay} and \ref{item:DiagonalUnionPrep}. To check condition \ref{item:SupremaPreserved}, let $m<n+1$. We have to show that $\sigma'_m(\bkappa_m)=\sigma'_{n+1}(\bkappa_m)$ and $\sup\sigma'_m``\bkappa_m=\sup\sigma'_{n+1}``\bkappa_m$. Since inductively, the conditions are satisfied at $n$, we know that $\sigma'_m(\bkappa_m)=\sigma'_n(\bkappa_m)$ and
$\sup\sigma'_m``\bkappa_m=\sup\sigma'_n``\bkappa_m$. Hence, it suffices to show that $\sigma'_n(\bkappa_m)=\sigma'_{n+1}(\bkappa_m)$ and
$\sup\sigma'_n``\bkappa_m=\sup\sigma'_{n+1}``\bkappa_m$. The first part of this is clear, because we put $\bkappa_m$ into $p$, and $\sigma'_n\rest p=\sigma'_{n+1}\rest p$. For the second part, we consider two cases. If $\bkappa_m<\bkappa_{n+1}$, then $\bkappa_m\le\bar{\eta}$, and $\sigma'_{n+1}\rest\bar{\eta}=\sigma'_n\rest\bar{\eta}$, so in particular, $\sup\sigma'_n``\bkappa_m=\sup\sigma'_{n+1}``\bkappa_m$. If $\bkappa_m>\bkappa_{n+1}$, then $\bkappa_m=\blambda_i$, for some $i<r$, and hence, we have the desired equality because $\sup\sigma'_{n+1}``\blambda_i=\sup\sigma'_n``\blambda_i$. Finally, regarding Condition \ref{item:InsuringSupremaOfLimitAreCorrect}, observe that since $\sigma``_{n+1}\bkappa_{n+1}$ is cofinal in $\tkappa_{n+1}$, $\beta^{n+1}_{m,\ell}$ is well-defined for $m\le\ell\le n+1$. The remainder of this condition is again clear because $\sigma'_{n+1}\rest p=\sigma'_n\rest p$ and the relevant ordinals are in $p$.

This finishes the recursive construction $\seq{\sigma'_n}{n<\omega}$, $\seq{\kappa_n}{n<\omega}$, $\seq{\tkappa_n}{n<\omega}$ and $\seq{\beta^n_{m,\ell}}{m\le\ell\le n<\omega}$.

Observe that for every $\bar{x}\in\bN$, $\seq{\sigma'_n(\bar{x})}{n<\omega}$ is eventually constant, so we can define $\sigma':\bN\To N$ by letting $\sigma'(\bar{x})$ be the eventual value of this sequence, in other words,
\[\sigma'(\bar{a}_n)=\sigma'_n(\bar{a}_n)\]
for every $n<\omega$. We claim that $\sigma'$ is the desired embedding.

First, note that $\sigma':\bN\prec N$, because if $\phi(\vx)$ is a formula in the language of $N$ with $j<\omega$ free variables and $\bar{a}_{n_0},\ldots,\bar{a}_{n_{j-1}}$ are parameters in $\bN$, then if we choose $n\ge\max\{n_0,\ldots,n_{j-1}\}$, we have that $\sigma'(\vec{\bar{a}})=\sigma'_n(\vec{\bar{a}})$, and so
\[\bN\models\phi(\vec{\bar{a}})\iff N\models\phi(\sigma'_n(\vec{\bar{a}}))\iff N\models\phi(\sigma'(\vec{\bar{a}}))\]
since $\sigma'_n$ is an elementary embedding.
Obviously, we have that $\sigma'(\bkappa_n)=\kappa_n$, for every $n<\omega$, and $\sigma'(\bar{a})=\sigma(\bar{a})$. Thus, condition \ref{item:LimitEmbeddingMovesStuffCorrectly} is satisfied. Let us check the remaining condition \ref{item:LimitEmbeddingHasTheRightSuprema}. Let $\bkappa\in\bar{K}$, say $\bkappa=\bkappa_n$. We have to show that $\sup\sigma'``\bkappa_n\in S_{\sigma'(\bkappa_n)}$. Since $\sigma'(\bkappa_n)=\kappa_n$ and $\tkappa_n\in S_{\kappa_n}$, it will suffice to show that $\sup\sigma'``\bkappa_n=\tkappa_n$. Clearly, $\sigma'``\bkappa_n\sub\tkappa_n$, since for every $\xi<\bkappa_n$, there is a $k<\omega$ with $k\ge n$ such that $\sigma'(\xi)=\sigma'_k(\xi)$, but by condition \ref{item:SupremaPreserved}, $\sup\sigma'_k``\bkappa_n=\sup\sigma'_n``\bkappa_n=\tkappa_n$, so $\sigma'(\xi)<\tkappa_n$. Thus, $\sigma'``\bkappa_n\le\tkappa_n$. For the other inequality, we show that $\sigma'``\bkappa_n$ is unbounded in $\tkappa_n$. Since $\ran(f_{\tkappa_n})$ is unbounded in $\tkappa_n$, it suffices to show that for every $\ell<\omega$, there is a $\beta<\bkappa_n$ such that $\sigma'(\beta)>f_{\tkappa_n}(\ell)$. We may clearly assume that $\ell\ge n$.
Let $k<\omega$, $k\ge\max\{n,\ell\}$. Consider $\beta=\beta^k_{n,\ell}$. By definition, $\beta<\bkappa_n$ and $\sigma'_k(\beta)>f_{\tkappa_n}(\ell)$. Moreover, by the same condition, we have that $\sigma'_{k'}(\beta)=\sigma'_k(\beta)$ whenever $k<k'<\omega$. Hence, $\sigma'(\beta)=\sigma'_k(\beta)>f_{\tkappa_n}(\ell)$, as claimed.
\end{proof}

We thus obtain the following version of Foreman \& Magidor's mutual stationarity Fact \ref{fact:MutualStationarity}, or rather, the equivalent Corollary \ref{cor:ExpressingItInTermsOfMutualStationarity}.

\begin{cor}
\label{cor:ASimpleSpreadOutSet}
Let $K$ be a set of regular cardinals such that $\min(K)>2^\omega$ and such that whenever $\kappa<\lambda$, $\kappa,\lambda\in K$, then $\kappa^\omega<\lambda$. Let $\vec{S}=\seq{S_{\kappa}}{\kappa\in K}$ be such that for every $\kappa\in K$, $S_\kappa\sub S^\kappa_\omega$ is stationary in $\kappa$. Let $\rho\ge\sup(K)$. Then the set
\[S=\{M\in[H_\rho]^\omega\st\forall\kappa\in M\cap K\quad\sup(M\cap\kappa)\in S_{\kappa})\}\]
is spread out.
\end{cor}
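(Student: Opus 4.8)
The plan is to obtain this directly from Lemma~\ref{lem:OmegaStep}, by unwinding the definition of ``spread out'' exactly as in the proof of Lemma~\ref{lem:LiftingsAreSpreadOut}. First note that $S$ is stationary --- indeed projective stationary --- by Corollary~\ref{cor:ExpressingItInTermsOfMutualStationarity} (equivalently, Fact~\ref{fact:MutualStationarity}), so all that remains is to verify the combinatorial clause of Definition~\ref{def:SpreadOut}.

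So I would fix a regular $\theta>2^{2^{\sup(K)}}$ large enough to witness the relevant instance of the definition of spread out, let $N=L_\tau^A\models\ZFCm$ with $H_\theta\subseteq N$, and let $X\prec N$ be countable and full with $S\in X$; as usual, I may also assume that $K,\vec{S},\rho\in X$ (by the remarks following Definition~\ref{def:SpreadOut} on absorbing finitely many parameters via Fact~\ref{fact:AbsorbingParameters}). Let $\sigma:\bN\prec N$ be the inverse of the Mostowski collapse of $X$, so that $\bN$ is countable, transitive and full and $\ran(\sigma)=X$, fix a parameter $a=\sigma(\bar{a})\in X$, and set $\bar{K}=\sigma^{-1}(K)$. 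Since the hypotheses on $K$ and $\vec{S}$ hold and $K\in\ran(\sigma)$, Lemma~\ref{lem:OmegaStep} yields an embedding $\sigma':\bN\prec N$ with $\sigma'(\bar{a})=\sigma(\bar{a})=a$, $\sigma'(\bar{K})=K$, and $\sup\sigma'``\bar{\kappa}\in S_{\sigma'(\bar{\kappa})}$ for every $\bar{\kappa}\in\bar{K}$.

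Then I would set $Y=\ran(\sigma')$ and $\pi=\sigma'\circ\sigma^{-1}:X\to Y$; by construction $N|Y\prec N$ and $\pi$ is an isomorphism of $N|X$ onto $N|Y$ with $\pi(a)=a$. It remains to check $Y\cap H_\rho\in S$. Since $\sigma'$ is elementary and $\sigma'(\bar{K})=K$, we have $Y\cap K=\sigma'``\bar{K}$, so every $\kappa\in(Y\cap H_\rho)\cap K$ is of the form $\sigma'(\bar{\kappa})$ for some $\bar{\kappa}\in\bar{K}$; moreover, again by elementarity, $Y\cap\kappa=\sigma'``\bar{\kappa}$. As $\kappa\leq\rho$, the ordinals below $\kappa$ all belong to $H_\rho$, so $(Y\cap H_\rho)\cap\kappa=Y\cap\kappa=\sigma'``\bar{\kappa}$, whence $\sup\bigl((Y\cap H_\rho)\cap\kappa\bigr)=\sup\sigma'``\bar{\kappa}\in S_{\sigma'(\bar{\kappa})}=S_\kappa$ by clause~\ref{item:LimitEmbeddingHasTheRightSuprema} of Lemma~\ref{lem:OmegaStep}. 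Hence $Y\cap H_\rho\in S$, so $\pi$ witnesses the spread-out condition at $X$, and $S$ is spread out.

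All the real work is contained in Lemma~\ref{lem:OmegaStep}; the only points in this corollary that require attention are that $\theta$ can be taken simultaneously large enough to witness spread-outness and to satisfy $\theta>2^{2^{\sup(K)}}$, and the routine bookkeeping identifying $Y\cap\kappa$ with the pointwise image $\sigma'``\bar{\kappa}$, so that the supremum of $Y\cap H_\rho$ below $\kappa$ is precisely the one controlled by Lemma~\ref{lem:OmegaStep}. As in the analogous remarks after Lemmas~\ref{lem:LiftingsAreSpreadOut} and~\ref{lem:SFPpreliminary}, I expect that threading clauses \ref{item:SameSupremaAbove} and \ref{item:HeightMovesCorrectly} of Lemma~\ref{lem:OneStep} through the recursion proving Lemma~\ref{lem:OmegaStep} would upgrade the conclusion to: $S$ is \emph{fully} spread out.
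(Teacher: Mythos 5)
Your proposal is correct and is essentially the paper's own argument: the paper declines to prove this corollary directly and instead proves the more general Corollary~\ref{cor:ASpreadOutSet} (with a partition of $\omega_1$ built in) by exactly the steps you describe --- apply Lemma~\ref{lem:OmegaStep} to get $\sigma'$, set $Y=\ran(\sigma')$, and identify $Y\cap\kappa$ with $\sigma'``\bar{\kappa}$ --- and your write-up is the direct specialization of that proof. Your closing remark about upgrading to ``fully spread out'' also matches the paper's comment following Corollary~\ref{cor:ASpreadOutSet}.
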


Instead of providing a proof of this, let me build in a correspondence as before, and prove the following more general statement. This is the version of Corollary \ref{cor:AProjectiveStationarySet}.

\begin{cor}
\label{cor:ASpreadOutSet}
Let $K$ be a set of regular cardinals such that $\min(K)>2^\omega$ and such that whenever $\kappa<\lambda$, $\kappa,\lambda\in K$, then $\kappa^\omega<\lambda$. Let $\vec{S}=\seq{S_{\kappa,i}}{\kappa\in K, i<\omega_1}$ be such that for every $\kappa\in K$ and every $i<\omega_1$, $S_{\kappa,i}\sub S^\kappa_\omega$ is stationary in $\kappa$. Let $\seq{D_i}{i<\omega_1}$ be a partition of $\omega_1$ into stationary sets, and let $\rho\ge\sup(K)$ be regular. Then the set
\[S=\{M\in[H_\rho]^\omega\st\forall\kappa\in M\cap K\forall i<\omega_1\quad(M\cap\omega_1\in D_i\implies\sup(M\cap\kappa)\in S_{\kappa,i})\}\]
is spread out.
\end{cor}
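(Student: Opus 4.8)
The plan is to deduce the statement from Lemma \ref{lem:OmegaStep}, in exactly the way that Corollary \ref{cor:ASimpleSpreadOutSet} is meant to follow from it, with one extra bookkeeping step to absorb the ``correspondence'' built into the definition of $S$. So I would start by fixing a sufficiently large regular $\theta$ (in particular $\theta>2^{2^{\sup(K)}}$), a transitive $\ZFCm$-model $N=L_\tau^A$ with a definable well-order and $H_\theta\sub N$, a countable full $X\prec N$ with $S,K,\rho,\vec{S},\vec{D}$ and a designated parameter $a$ all in $X$, and the anti-collapse $\sigma:\bN\prec N$, writing $\bar K=\sigma^{-1}(K)$, $\bar a=\sigma^{-1}(a)$. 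Set $\delta=X\cap\omega_1=\omega_1^{\bN}$; since $\seq{D_i}{i<\omega_1}$ partitions $\omega_1$, there is a unique index $i_0$ with $\delta\in D_{i_0}$. The crucial observation is that I only need to handle this single slice: the sequence $\seq{S_{\kappa,i_0}}{\kappa\in K}$ again consists of stationary subsets of the $S^\kappa_\omega$, and $K$ satisfies the hypotheses of Lemma \ref{lem:OmegaStep}, so I would apply that lemma to $\seq{S_{\kappa,i_0}}{\kappa\in K}$, $N$, $\sigma$, $\bar K$ and $\bar a$.

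That produces an embedding $\sigma':\bN\prec N$ with $\sigma'(\bar a)=\sigma(\bar a)$, $\sigma'(\bar K)=K$, and $\sup\sigma'``\bkappa\in S_{\sigma'(\bkappa),i_0}$ for every $\bkappa\in\bar K$. Put $Y=\ran(\sigma')$; then $N|Y\prec N$ and $\pi=\sigma'\compose\sigma^{-1}$ is an isomorphism $N|X\to N|Y$ fixing $a$, so by Definition \ref{def:SpreadOut} it remains only to check $Y\cap H_\rho\in S$. First, since any two elementary embeddings $\bN\prec N$ agree on $(2^\omega)^{\bN}+1$, and $\omega_1^{\bN}\le(2^\omega)^{\bN}$ (Note 2 after Lemma \ref{lem:OneStep}), I get $Y\cap\omega_1=\sigma'``\omega_1^{\bN}=\delta\in D_{i_0}$. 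Second, for any $\kappa\in Y\cap K$, writing $\kappa=\sigma'(\bkappa)$ (so $\bkappa\in\bar K$ since $\sigma'(\bar K)=K$) and using the routine fact that $Y\cap\kappa=\sigma'``\bkappa$, the conclusion of the lemma yields $\sup(Y\cap\kappa)=\sup\sigma'``\bkappa\in S_{\kappa,i_0}$. Finally, since the $D_i$ are pairwise disjoint, $Y\cap\omega_1\in D_i$ forces $i=i_0$, so for every $\kappa\in Y\cap K$ and every $i<\omega_1$ with $Y\cap\omega_1\in D_i$ we have $\sup(Y\cap\kappa)\in S_{\kappa,i}$; and since $\rho\ge\sup(K)$, passing from $Y$ to $Y\cap H_\rho$ leaves all the relevant intersections untouched, so $Y\cap H_\rho\in S$.

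I do not expect a genuine obstacle here: once Lemma \ref{lem:OmegaStep} is in hand, this is the same ``read the block of the partition off from $X\cap\omega_1$'' argument by which Lemma \ref{lem:SFPpreliminary} extends Lemma \ref{lem:LiftingsAreSpreadOut} (and sharpens Lemma \ref{lem:FengJechSFPpreliminary}). The only place that needs a moment's care is the identity $Y\cap\omega_1=X\cap\omega_1$, which is precisely where one uses that the isomorphism $\pi$ respects countable ordinals. It is also worth flagging — and I would record this in a remark, paralleling the discussion after Lemma \ref{lem:SFPpreliminary} — that, in contrast with the projective-stationary version in Corollary \ref{cor:AProjectiveStationarySet}, no maximality assumption on $\vec{D}$ is needed, because being spread out lets us dictate into which block of the partition the witnessing model falls, rather than merely guaranteeing stationarity over each block.
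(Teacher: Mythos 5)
Your proposal is correct and follows essentially the same route as the paper's proof: fix the unique $i_0$ with $X\cap\omega_1\in D_{i_0}$, apply Lemma \ref{lem:OmegaStep} to the single slice $\seq{S_{\kappa,i_0}}{\kappa\in K}$, and check that $Y=\ran(\sigma')$ lands in $S$ using $Y\cap\omega_1=X\cap\omega_1$ and $Y\cap\kappa=\sigma'``\bkappa$. The paper's argument is word-for-word the same reduction, so there is nothing to add.
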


\begin{proof}
This is an immediate application of Lemma \ref{lem:OmegaStep}. Namely, to show that $S$ is spread out, let $X\prec L_\theta^A$ as usual, and let $\sigma:\bN\To X$ be the inverse of the Mostowski collapse of $X$, where we assume that $\bN$ is full. As usual, we may assume that $X$ contains the parameters we care about; in this case we choose $K$. Let $\bar{K}=\sigma^{-1}(K)$. Fix some $a\in X$, and let $\bar{a}=\sigma^{-1}(a)$. Let $\delta=X\cap\omega_1=\omega_1^\bN$, and let $i<\omega_1$ be such that $\delta\in D_i$. Applying Lemma \ref{lem:OmegaStep} to the sequence $\seq{S_{\kappa,i}}{\kappa\in K}$, there is a $\sigma':\bN\prec N$ with $\sigma'(\bar{K})=K$, and $\sigma'(\bar{a})=\sigma(\bar{a})$, such that for every $\bar{\kappa}\in\bar{K}$, $\sup\sigma'``\bkappa\in S_{\sigma'(\bkappa),i}$. Now let $Y=\ran(\sigma')$. Then $\pi=\sigma'\compose\sigma^{-1}:X\To Y$ is an isomorphism fixing $\sigma(\bar{a})$, and $Y\cap H_\rho\in S$: since neither $\sigma$ nor $\sigma'$ move countable ordinals, we have that $Y\cap\omega_1=X\cap\omega_1=\delta$. Now let $\kappa\in Y\cap K$. Let $\bkappa=\sigma'^{-1}(\kappa)$. Since $\sigma'(\bar{K})=K$, we have that $\bkappa\in\bar{K}$. Let $\tkappa=\sup\sigma'``\bkappa$. Then $\sigma'``\bkappa=Y\cap\kappa$, and so, $\tkappa=\sup(Y\cap\kappa)\in S_{\kappa,i}$, as wished.
\end{proof}

It is easy to see that the construction in the proof of Lemma \ref{lem:OmegaStep} can be modified so as to obtain the version of the previous corollary in which ``spread out'' is replaced with ``fully spread out.'' The corollary can be made to be closer to Corollary \ref{cor:AProjectiveStationarySet} by adding the assumption of $\infSC$-\SRP, because then the cardinal arithmetic requirements on $K$ are automatically satisfied:

\begin{cor}
\label{cor:ASpreadOutSetAssumingSCSRP}
Assume $\infSC$-\SRP. Let $K$ be a set of regular cardinals such that $\min(K)>2^\omega$. Let $\vec{S}=\seq{S_{\kappa,i}}{\kappa\in K, i<\omega_1}$ be such that for every $\kappa\in K$ and every $i<\omega_1$, $S_{\kappa,i}\sub S^\kappa_\omega$ is stationary in $\kappa$. Let $\seq{D_i}{i<\omega_1}$ be a partition of $\omega_1$ into stationary sets, and let $\rho\ge\sup(K)$ be regular. Then the set
\[S=\{M\in[H_\rho]^\omega\st\forall\kappa\in M\cap K\forall i<\omega_1\quad(M\cap\omega_1\in D_i\implies\sup(M\cap\kappa)\in S_{\kappa,i})\}\]
is spread out.
\end{cor}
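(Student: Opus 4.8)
The plan is to reduce this corollary directly to Corollary \ref{cor:ASpreadOutSet}, whose only extra hypothesis beyond the present one is the cardinal-arithmetic requirement that $\kappa^\omega<\lambda$ whenever $\kappa<\lambda$ are members of $K$. Under $\infSC$-\SRP this requirement is automatic, so nothing further needs to be proved.

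First I would invoke Corollary \ref{cor:SCSRPimpliesCardArithmeticAboveContinuum}: assuming $\infSC$-\SRP, every regular cardinal $\kappa>2^\omega$ satisfies $\kappa^{\omega_1}=\kappa$. In particular $\kappa\le\kappa^\omega\le\kappa^{\omega_1}=\kappa$, so $\kappa^\omega=\kappa$ for every regular $\kappa>2^\omega$.

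Now let $K$, $\vec S=\seq{S_{\kappa,i}}{\kappa\in K,\ i<\omega_1}$, $\seq{D_i}{i<\omega_1}$ and $\rho$ be as in the statement. Since $\min(K)>2^\omega$ and every member of $K$ is regular, the previous paragraph applies to each $\kappa\in K$, giving $\kappa^\omega=\kappa$. Hence whenever $\kappa<\lambda$ with $\kappa,\lambda\in K$, we have $\kappa^\omega=\kappa<\lambda$. Thus $K$ satisfies the hypothesis of Corollary \ref{cor:ASpreadOutSet} that $\kappa^\omega<\lambda$ whenever $\kappa<\lambda$ lie in $K$, and all the remaining hypotheses of that corollary ($\min(K)>2^\omega$, the sequence $\vec S$ of stationary subsets of the $S^\kappa_\omega$, the partition $\seq{D_i}{i<\omega_1}$ of $\omega_1$ into stationary sets, and the regular cardinal $\rho\ge\sup(K)$) coincide with those given here. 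Applying Corollary \ref{cor:ASpreadOutSet} therefore yields that the set $S$ in question is spread out, which is exactly what we want.

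Since every step is either a direct citation of an earlier result or an elementary cardinal-arithmetic inequality, there is no genuine obstacle; the only point requiring attention is to check that Corollary \ref{cor:SCSRPimpliesCardArithmeticAboveContinuum} is legitimately applicable to each $\kappa\in K$, i.e.\ that such $\kappa$ are regular and exceed $2^\omega$, both of which are built into the hypothesis $\min(K)>2^\omega$ together with the fact that $K$ consists of regular cardinals.
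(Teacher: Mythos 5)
Your proposal is correct and follows essentially the same route as the paper: the paper's proof likewise observes that $\infSC$-$\SRP$ gives $\kappa^{\omega_1}=\kappa$ for regular $\kappa>2^\omega$ (via Theorem \ref{thm:infSCSRPimpliesSFPkappa} and Fact \ref{fact:SFPandArithmetic}, which is exactly the content of Corollary \ref{cor:SCSRPimpliesCardArithmeticAboveContinuum} that you cite), so the hypothesis $\kappa^\omega<\lambda$ of Corollary \ref{cor:ASpreadOutSet} holds automatically and that corollary applies. No gaps.
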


\begin{proof}
The point is that under $\infSC$-$\SRP$, we have that for any regular cardinal $\kappa>2^\omega$, $\kappa^{\omega_1}=\kappa$, by Theorem \ref{thm:infSCSRPimpliesSFPkappa} and Fact \ref{fact:SFPandArithmetic}. Thus, Corollary \ref{cor:ASpreadOutSet} applies, completing the proof.
\end{proof}

Thus, under $\infSC\text{-}\SRP+\CH$, an even stronger version of Corollary \ref{cor:AProjectiveStationarySet} holds with ``spread out,'' and even ``fully spread out,'' in place of ``projective stationary,'' since it does not assume the partition of $\omega_1$ into stationary sets to be maximal.
Here is the promised version of Theorem \ref{thm:SRPimpliesMutuallyStationarySimultaneousReflection} for \infSC-\SRP, and even \SC-\SRP:

\begin{thm}
\label{thm:infSC-SRP+CHimpliesMutuallyStationarySimultaneousReflection}
Assume $\infSC$-$\SRP$. Let $K$ be a set of regular cardinals with $\min(K)>2^\omega$. Then the conclusions of Theorem \ref{thm:SRPimpliesMutuallyStationarySimultaneousReflection} hold:
let $\vec{S}=\seq{S_{\kappa,i}}{\kappa\in K, i<\omega_1}$ be such that for every $\kappa\in K$ and $i<\omega_1$, $S_{\kappa,i}$ is a subset of $S^\kappa_\omega$ stationary in $\kappa$.
For $\kappa\in K$, let $\vec{S}_\kappa=\seq{S_{\kappa,i}}{i<\omega_1}$. Then the sequence
\[\vec{T}=\seq{\eTr(\vS_\kappa)}{\kappa\in K}\]
is mutually stationary.

Thus, under the additional assumption of \CH, we get the full conclusion of Theorem \ref{thm:SRPimpliesMutuallyStationarySimultaneousReflection} from \infSC-\SRP.
\end{thm}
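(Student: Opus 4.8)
The plan is to mimic the proof of Theorem~\ref{thm:SRPimpliesMutuallyStationarySimultaneousReflection}, replacing the appeal to Corollary~\ref{cor:AProjectiveStationarySet} (which produces a projective stationary set from a \emph{maximal} partition) with an appeal to Corollary~\ref{cor:ASpreadOutSetAssumingSCSRP} (which produces a spread out set from an arbitrary partition, using that $\infSC$-$\SRP$ gives $\kappa^{\omega_1}=\kappa$ for regular $\kappa>2^\omega$). Concretely, fix $\delta=(\sup K)^+$ and a partition $\seq{A_i}{i<\omega_1}$ of $\omega_1$ into stationary sets --- we do \emph{not} need this partition to be maximal, which is the whole point. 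Let
\[
S=\{M\in[H_\delta]^\omega\st \forall i<\omega_1\ (M\cap\omega_1\in A_i\implies\forall\kappa\in M\cap K\ \sup(M\cap\kappa)\in S_{\kappa,i})\}.
\]
By Corollary~\ref{cor:ASpreadOutSetAssumingSCSRP} (applied with $\rho=\delta$), $S$ is spread out, hence by Theorem~\ref{thm:SpreadOutIffinfSCProjectiveStationary} it is $\infSC$-projective stationary, so $\infSC$-$\SRP$ applies to it.

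Next, let $\seq{M_\alpha}{\alpha<\omega_1}$ be a continuous elementary chain through $S$, obtained from $\infSC$-$\SRP(\delta)$ together with the fact that spread out sets are closed under intersection with clubs (Observation~\ref{obs:SpreadOutSetsClosedUnderIntersectionWithClubs}), so that we may assume each $M_\alpha\prec\kla{H_\delta,\in,\vec S,\vec A,K}$. Set $M=\bigcup_{\alpha<\omega_1}M_\alpha$; then $M\prec H_\delta$, and I claim $M$ witnesses that $\vec T=\seq{\eTr(\vec S_\kappa)}{\kappa\in K}$ is mutually stationary. Fix $\kappa\in M\cap K$ and write $s_\kappa=\sup(M\cap\kappa)$; I must show $s_\kappa\in\eTr(\vec S_\kappa)$, i.e.\ that $s_\kappa$ is a reflection point of each $S_{\kappa,i}$ and that $s_\kappa\cap\bigcup_{i<\omega_1}S_{\kappa,i}$ contains a club. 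This part is a verbatim repeat of the corresponding argument in the proof of Theorem~\ref{thm:SRPimpliesMutuallyStationarySimultaneousReflection}: let $\beta<\omega_1$ be least with $\kappa\in M_\beta$, put $\xi_j=\sup(M_j\cap\kappa)$ for $j\in[\beta,\omega_1)$; then $C=\{\xi_j\st\beta\le j<\omega_1\}$ is club in $s_\kappa$ and $\cf(s_\kappa)=\omega_1$. Given $i<\omega_1$ and a club $D\sub s_\kappa$, the set $\bar D=\{j\in[\beta,\omega_1)\st\xi_j\in D\}$ is club in $\omega_1$, as is $E=\{j\st j=M_j\cap\omega_1\}$; picking $\alpha\in A_i\cap\bar D\cap E$ with $\alpha\ge\beta$, we get $\alpha=M_\alpha\cap\omega_1\in A_i$ and $\kappa\in M_\alpha\in S$, so $\xi_\alpha=\sup(M_\alpha\cap\kappa)\in S_{\kappa,i}\cap D$. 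This shows reflection; and $C\sub\bigcup_{i<\omega_1}S_{\kappa,i}$ gives the exactness clause, since for $\beta\le j<\omega_1$, taking $i$ with $M_j\cap\omega_1\in A_i$ yields $\xi_j\in S_{\kappa,i}$.

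The main obstacle --- and really the only nontrivial point --- is making sure the hypotheses of Corollary~\ref{cor:ASpreadOutSetAssumingSCSRP} are genuinely met, i.e.\ that $\infSC$-$\SRP$ alone (without $\CH$) supplies $\kappa^{\omega_1}=\kappa$ for every regular $\kappa>2^\omega$; this is exactly Corollary~\ref{cor:SCSRPimpliesCardArithmeticAboveContinuum} (via Theorem~\ref{thm:infSCSRPimpliesSFPkappa} and Fact~\ref{fact:SFPandArithmetic}), and it is what lets us dispense with the maximality of the partition that was needed in the projective-stationary version. Everything else is bookkeeping. Finally, the last sentence of the statement is immediate: under the additional hypothesis $\CH$, $\min(K)>\omega_1$ already implies $\min(K)>2^\omega$, so Theorem~\ref{thm:SRPimpliesMutuallyStationarySimultaneousReflection}'s conclusion follows from $\infSC$-$\SRP+\CH$; and since fully spread out sets work equally well in Corollary~\ref{cor:ASpreadOutSetAssumingSCSRP} (as remarked after Corollary~\ref{cor:ASpreadOutSet}), the same argument goes through with $\SC$-$\SRP$ in place of $\infSC$-$\SRP$.

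\begin{proof}
By Corollary~\ref{cor:SCSRPimpliesCardArithmeticAboveContinuum}, $\infSC$-$\SRP$ implies that $\kappa^{\omega_1}=\kappa$ for every regular cardinal $\kappa>2^\omega$. Hence the cardinal arithmetic hypotheses of Corollary~\ref{cor:ASpreadOutSet} are automatic for any set $K$ of regular cardinals with $\min(K)>2^\omega$, which is the content of Corollary~\ref{cor:ASpreadOutSetAssumingSCSRP}.

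Let $\delta=(\sup(K))^+$, and fix a partition $\seq{A_i}{i<\omega_1}$ of $\omega_1$ into stationary sets (not necessarily maximal). Let
\[S=\{M\in[H_\delta]^\omega\st\forall\kappa\in M\cap K\,\forall i<\omega_1\quad(M\cap\omega_1\in A_i\implies\sup(M\cap\kappa)\in S_{\kappa,i})\}.\]
By Corollary~\ref{cor:ASpreadOutSetAssumingSCSRP}, $S$ is spread out, hence $\infSC$-projective stationary by Theorem~\ref{thm:SpreadOutIffinfSCProjectiveStationary}. By Observation~\ref{obs:SpreadOutSetsClosedUnderIntersectionWithClubs}, $S\cap C$ is spread out for any club $C\sub[H_\delta]^\omega$, so applying $\infSC$-$\SRP(\delta)$ to $S$ intersected with the club $\{M\st M\prec\kla{H_\delta,\in,\vec S,\vec A,K}\}$, we obtain a continuous $\in$-chain $\seq{M_\alpha}{\alpha<\omega_1}$ through $S$ with each $M_\alpha\prec\kla{H_\delta,\in,\vec S,\vec A,K}$. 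Let $M=\bigcup_{\alpha<\omega_1}M_\alpha$; then $M\prec H_\delta$.

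We claim $M$ witnesses that $\vec T=\seq{\eTr(\vec S_\kappa)}{\kappa\in K}$ is mutually stationary. Suppose $\kappa\in M\cap K$, and set $s_\kappa=\sup(M\cap\kappa)$. We must show that for every $i<\omega_1$, $s_\kappa$ is a reflection point of $S_{\kappa,i}$, and that $s_\kappa\cap\bigcup_{i<\omega_1}S_{\kappa,i}$ contains a club in $s_\kappa$.

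Let $\beta<\omega_1$ be least such that $\kappa\in M_\beta$, and for $j\in[\beta,\omega_1)$ put $\xi_j=\sup(M_j\cap\kappa)$. Then $\vec\xi$ is strictly increasing, continuous and cofinal in $s_\kappa$, so $C=\{\xi_j\st\beta\le j<\omega_1\}$ is club in $s_\kappa$, and $\cf(s_\kappa)=\omega_1$. Fix $i<\omega_1$, and let $D\sub s_\kappa$ be club. Then $C\cap D$ is club in $s_\kappa$, so $\bar D=\{j\in[\beta,\omega_1)\st\xi_j\in D\}$ is club in $\omega_1$; likewise $E=\{j<\omega_1\st j=M_j\cap\omega_1\}$ is club in $\omega_1$. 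Since $A_i$ is stationary, pick $\alpha\in A_i\cap\bar D\cap E$ with $\alpha\ge\beta$. Then $\alpha=M_\alpha\cap\omega_1\in A_i$ and $\kappa\in M_\alpha\in S$, so $\xi_\alpha=\sup(M_\alpha\cap\kappa)\in S_{\kappa,i}$; also $\xi_\alpha\in D$ since $\alpha\in\bar D$. Thus $S_{\kappa,i}\cap D\neq\leer$, proving that $s_\kappa$ is a reflection point of $S_{\kappa,i}$.

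For the exactness clause, note that for every $j\in[\beta,\omega_1)$, taking $i<\omega_1$ with $M_j\cap\omega_1\in A_i$ and using $\kappa\in M_j\in S$, we get $\xi_j=\sup(M_j\cap\kappa)\in S_{\kappa,i}\sub\bigcup_{i<\omega_1}S_{\kappa,i}$. Hence the club $C$ is contained in $\bigcup_{i<\omega_1}S_{\kappa,i}$, so $s_\kappa\in\eTr(\vec S_\kappa)$. This completes the proof that $\vec T$ is mutually stationary.

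For the final assertion: if $\CH$ holds, then $\min(K)>\omega_1$ implies $\min(K)>2^\omega$, so the displayed conclusion applies and yields exactly the conclusion of Theorem~\ref{thm:SRPimpliesMutuallyStationarySimultaneousReflection} from $\infSC$-$\SRP+\CH$. Finally, since fully spread out sets may be used in place of spread out ones throughout (cf.\ the remark following Corollary~\ref{cor:ASpreadOutSet} and Theorem~\ref{thm:FullySpreadOutIffSCProjectiveStationary}), the same argument establishes the conclusion under $\SC$-$\SRP$ as well.
\end{proof}
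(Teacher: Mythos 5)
Your proof is correct and follows exactly the route the paper takes: derive $\kappa^{\omega_1}=\kappa$ for regular $\kappa>2^\omega$ from $\infSC$-$\SRP$ so that Corollary \ref{cor:ASpreadOutSet} (in the form of Corollary \ref{cor:ASpreadOutSetAssumingSCSRP}) applies, and then repeat the chain argument from the proof of Theorem \ref{thm:SRPimpliesMutuallyStationarySimultaneousReflection} with the spread out set in place of the projective stationary one. The paper leaves that repetition implicit; you have simply written it out, correctly noting along the way that maximality of the partition is no longer needed.
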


\begin{proof}
The point is that under $\infSC$-$\SRP$, we have that for any regular cardinal $\kappa>2^\omega$, $\kappa^{\omega_1}=\kappa$, by Theorem \ref{thm:infSCSRPimpliesSFPkappa} and Fact \ref{fact:SFPandArithmetic}. Thus, the assumptions on $K$ in Corollary \ref{cor:ASpreadOutSet} are satisfied. The theorem now follows by an argument exactly as in the proof of Theorem \ref{thm:SRPimpliesMutuallyStationarySimultaneousReflection}.
\end{proof}

The following result of Jensen \cite{Jensen:FAandCH} fits in here well. I recast it as a statement about mutual stationarity.

\begin{thm}[Jensen]
\label{thm:JensenOnMutualStationarity}
Assume that \SCFA holds, and \GCH holds below $\lambda$, an uncountable cardinal. Let $K\sub\lambda$ be a set of regular cardinals greater than $\omega_1$, and let $f:K\To\{\omega,\omega_1\}$. Then the sequence $\seq{S^\kappa_{f(\kappa)}}{\kappa\in K}$ is mutually stationary.
\end{thm}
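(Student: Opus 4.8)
The plan is to derive this from the subcomplete forcing axiom by exhibiting, for a given algebra, a subcomplete poset whose generic object is the desired model. Fix an algebra $\calA$ on $\delta=\sup K$; by the usual coding we may pass to the expansion $\frA=\kla{H_\theta,\in,<^*,\calA,K,f}$ for a sufficiently large regular $\theta$, it being enough to produce $N\prec\frA$ with $\sup(N\cap\kappa)\in S^\kappa_{f(\kappa)}$ for every $\kappa\in N\cap K$ (then $N\cap\delta\prec\calA$ works). Split $K=K_0\cup K_1$ with $K_0=f^{-1}\{\omega\}$ and $K_1=f^{-1}\{\omega_1\}$; the case $K_1=\leer$ is Foreman \& Magidor's Fact~\ref{fact:MutualStationarity}, so the genuinely new point is the interaction of the two kinds of coordinates. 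The model $N$ will be built as the union of a continuous $\sub$-increasing $\omega_1$-chain $\seq{N_\alpha}{\alpha<\omega_1}$ of countable elementary substructures of $\frA$ containing $\calA,K,f$, arranged so that below each $\kappa\in K_1$ the chain grows cofinally (forcing $\cf(\sup(N\cap\kappa))=\omega_1$, and $\sup(N\cap\kappa)<\kappa$ since $\kappa$ is regular and $>\omega_1$), while below each $\kappa\in K_0$ the chain eventually \emph{freezes}, i.e.\ $N_\beta\cap\kappa$ is eventually constant, so that $\sup(N\cap\kappa)$ is a countable limit ordinal below $\kappa$, hence in $S^\kappa_\omega$. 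Note that $\GCH$ below $\lambda$ gives $2^\omega=\omega_1$ and $2^{<\kappa}=\kappa$ for every $\kappa\in K$, so the hypothesis ``$\kappa>2^\omega$'' needed throughout (cf.\ Lemmas~\ref{lem:OneStep} and \ref{lem:OmegaStep}) is automatic.

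The central difficulty, already visible on the purely combinatorial side, is that growing $N$ below a cardinal $\mu\in K_1$ ``leaks'' into the interval between $\mu$ and the next cardinal of $K_0$ above it: if $\kappa\in K_0$, $\kappa>\mu$, and $\calA$ happens to contain a bijection between $[\mu,\kappa)$ and $\mu$ (which it may, being arbitrary), then $N\cap[\mu,\kappa)$ is forced to have the same size as $N\cap\mu$. The freezing of the $K_0$-traces must therefore be organized coherently with the growth of the $K_1$-traces, choosing at each step, for every relevant pair $\mu<\kappa$ with $\mu\in K_1$ and $\kappa\in K_0$, a bound $\gamma_{\mu,\kappa}<\kappa$ and confining the new ordinals added below $\mu$ to a cofinal-in-$\mu$ set of size $<\kappa$ lying below $\gamma_{\mu,\kappa}$ under the bijection. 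This bookkeeping is what one expects to be carried by the conditions of a forcing $\P$: conditions are pairs $\kla{\seq{N_i}{i\le\alpha},c}$ where $\seq{N_i}{i\le\alpha}$ is a countable continuous $\sub$-chain of countable $N_i\prec\frA$ and $c$ is a finite ``commitment'' recording, for finitely many $\kappa\in N_\alpha\cap K_0$, a freezing stage together with the requirement $N_{i_\kappa}\cap\kappa=N_j\cap\kappa$ for $i_\kappa\le j\le\alpha$ (and the attendant confinement data), ordered by end-extension of the chain plus extension of the commitment. The density facts to run are: reaching any countable length; capturing any given $a\in H_\theta$ into $\bigcup_i N_i$; and adding any given $\kappa\in K_0$ to $\dom(c)$ once it has entered the chain --- the last of these is where a single step of Lemma~\ref{lem:OneStep}, applied simultaneously to the finitely many cardinals in $\dom(c)$, is used to produce the next model with the correct suprema.

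The main obstacle is to prove that $\P$ is subcomplete; this is essentially Jensen's theorem, and is where $\GCH$ below $\lambda$ is indispensable. I would verify it by the Barwise-theoretic method of the proof of Lemma~\ref{lem:OneStep}, iterated $\omega$ times as in Lemma~\ref{lem:OmegaStep}: given a countable full $\bN=L_\btau^{\bA}$, an embedding $\sigma\colon\bN\prec N=L_\tau^A$ with $\P$, $\theta$ and the distinguished parameters in $\ran(\sigma)$, and a $\bN$-generic $\bG\sub\bar\P$ (whose union codes, inside $\bN$, an $\omega_1^\bN$-chain with its commitments), one constructs, interpolating through ultrapower-like intermediate models and invoking Barwise completeness for a suitable $\Sigma_1$ $\in$-theory over the least admissible set above each intermediate model, an embedding $\sigma'\colon\bN\prec N$ that agrees with $\sigma$ on the distinguished parameters and maps the internal chain into a genuine $\P$-generic over $\V$; the interpolations are arranged so that $\sup\sigma'``\bkappa\in S^\kappa_\omega$ for the frozen $\kappa\in K_0$ while the suprema below the $K_1$-cardinals grow as prescribed. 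The steps ``$\bN\in N_1$'' and ``$[\eta]^\omega\sub N_1$'' of the interpolation go through precisely because $2^\omega<\kappa$, and $\eta^\omega<\kappa$ for the auxiliary parameters $\eta$ that appear, which $\GCH$ below $\lambda$ guarantees for all the cardinals involved. Once $\P$ is subcomplete, $\SCFA$ supplies a filter meeting the $\omega_1$ many dense sets listed above, and the union of the chain it determines is a model $N$ with $\sup(N\cap\kappa)\in S^\kappa_{f(\kappa)}$ for every $\kappa\in N\cap K$. I expect the subcompleteness verification --- in particular the compatibility of the freezing commitments with the control of suprema at each interpolation step --- to be the only genuinely delicate part; the remainder is bookkeeping of the kind already carried out in Section~\ref{sec:SRPandConsequences}.
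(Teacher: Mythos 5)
The paper does not actually prove this theorem: it is quoted from Jensen's \emph{Forcing axioms compatible with \CH}, and the only information given about the proof is that it uses the subcompleteness of an ``extended Namba'' forcing that makes the cardinals in $f^{-1}\{\omega\}$ have countable cofinality while preserving the uncountable cofinality of those in $f^{-1}\{\omega_1\}$ --- a forcing that, as the paper stresses, is \emph{not} countably distributive. Your forcing (countable chains of countable elementary submodels with finite freezing commitments, ordered by end-extension) is countably distributive by design, so you are proposing a genuinely different route; the paper's own remark after the theorem, that countable distributivity of $\P_S$-type forcings is precisely why \SC-\SRP is unlikely to yield this result, is already a warning against it.

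The concrete gap is that you have located the central difficulty in the wrong direction, and the real one makes your density facts unprovable. For $\mu\in K_1$ \emph{below} $\kappa\in K_0$, growth below $\mu$ is harmless: for countable $N\prec\frA$ and any $X\sub\mu$, one has $\Hull^{\frA}(N\cup X)\cap\kappa\sub\sup(N\cap\kappa)$, since for each Skolem term and parameters from $N$ the induced function on $\mu^{<\omega}$ has range in $\kappa$ of size $\le\mu<\kappa$ and lies in $N$, hence is bounded by an element of $N\cap\kappa$. (Your worry about a bijection between $[\mu,\kappa)$ and $\mu$ is vacuous, as $|[\mu,\kappa)|=\kappa>\mu$.) The genuine obstruction is the opposite configuration, $\kappa\in K_0$ below some $\mu\in K_1$: to extend $N_i$ so that $\sup(\cdot\cap\mu)$ strictly increases while $\sup(\cdot\cap\kappa)$ stays frozen at $\gamma$, you must find $\xi\in\mu$ above $\sup(N_i\cap\mu)$ with $F(\xi)<\gamma$ for every one of the countably many $F:\mu\To\kappa$ in $N_i$. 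The existence of such $\xi$ is not a consequence of $\ZFC+\GCH$; it is essentially the Namba/Chang-type transfer content of the theorem itself, and it fails in $L$ (where \GCH holds but, by Foreman--Magidor, already the all-$\omega_1$ instance of the conclusion fails). So the basic density lemmas for your poset cannot be proved and the argument is circular: the forcing need not even have conditions of every countable length. Jensen circumvents exactly this by letting the extended Namba forcing \emph{add} the witnessing $\omega$-sequences generically rather than assembling them from ground-model countable approximations. Relatedly, your plan to handle the $K_1$-coordinates inside the subcompleteness verification via Lemmas \ref{lem:OneStep} and \ref{lem:OmegaStep} cannot work at the level of a single countable model: $\sup\sigma'``\bkappa$ is the supremum of a countable set and always has countable cofinality, so cofinality $\omega_1$ can only emerge from the length-$\omega_1$ union, which your sketch conflates with the single-step suprema control.
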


The proof of this theorem uses the subcompleteness of an intricate forcing notion, developed in \cite{Jensen:ExtendedNamba}, that changes the cofinality of some regular cardinals to be countable, while preserving that others have uncountable cofinality. Clearly, this forcing is not countably distributive. It seems unlikely that in this theorem, \SCFA can be replaced with \SC-\SRP, as the forcing notions of the form $\P_S$ are countably distributive. This question is a fitting segue into the next section, in which I will explore stationary reflection principles that do not follow from fragments of \SRP, and consequences of \SCFA  that don't follow from \SC-\SRP, thus separating these assumptions.

%
%
%

\section{Limitations and separations}
\label{sec:SRPLimitations}

I will now explore limitations on the extent to which the subcomplete fragment of \SRP implies certain principles of stationary reflection, and I will develop some results going in the direction of separating the subcomplete fragment of \SRP from \SCFA. In the first subsection, I will focus on achieving such results in a general setting, but the results strongly suggest that the addition of \CH should be made. Consequently, the last two subsections deal with this scenario.

\subsection{The general setting}
\label{subsec:GeneralSetting}

Here, I will explore a framework for obtaining limiting results, following the approach of Larson \cite{Larson:SeparatingSRP}, originally in the setting of the full \SRP. The idea is to start in a model of set theory $\V$ in which an indestructible version of \SRP holds, namely, \SRP holds in $\V$ and in any forcing extension of $\V$ obtained by $\omega_2$-directed closed forcing. This indestructible form of \SRP follows from \MM. To show that \SRP does not imply a statement $\phi$, one then forces with a poset $\P$ to add a counterexample to $\phi$. Let's say the forcing extension is $\V[G]$. One then argues that there is in $\V[G]$ a further forcing $\bbT=\dot{\bbT}^G$ such that $\P*\dot{\bbT}$ is $\omega_2$-distributive. Letting $H$ be $\V[G]$-generic for $\Q$, one has then that $\V[G][H]$ satisfies \SRP. This is then used to argue that $\V[G]$ also satisfies $\SRP$. The crucial step here is to show that if $S$ is projective stationary in $\V[G]$, then this remains true in $\V[G][H]$. The main technical problem is thus to prove the \emph{preservation of projective stationarity.}

The property $\phi$ in this approach is usually a statement about stationary reflection, and so, the forcing $\P$ usually adds a sequence of stationary sets that does not reflect in a certain way. The forcing $\bbT$ is usually a forcing that destroys this counterexample to $\phi$ by destroying the stationarity of some of the sets in the sequence, that is, by shooting a club through the complement of some of these sets. Given a stationary and costationary subset $A$ of some regular uncountable cardinal $\kappa$, the forcing used is called $\bbT_A$ (the forcing to ``kill'' $A$), and consists of closed bounded subsets of $\kappa$ disjoint from $A$, and ordered by end-extension. Larson \cite[Lemma 4.5]{Larson:SeparatingSRP} has analyzed when this forcing notion preserves generalized stationarity; see  \cite[Lemma 4.3]{Fuchs-LambieHanson:SeparatingDSR} for a proof of this exact formulation.

\begin{lem}
\label{lem:WhenT_APreservesGenStationarity}
Let $\gamma>\omega_1$ be regular, $X\supseteq\gamma$ a set, $A\subseteq \gamma$ stationary and $S\subseteq[X]^\omega$ also stationary, such that $\gamma\setminus A$ is unbounded in $\gamma$ and $\bbT_A$ is countably distributive (this is the case, for example, if $A\subseteq S^\gamma_\omega$ and $S^\gamma_\omega \setminus
A$ is stationary). Then the following are equivalent:
\begin{enumerate}[label=\textnormal{(\arabic*)}]
\item
\label{item:SminusAisStationary}
$S\setminus\lifting(A,[X]^\omega)$ is stationary.
\item
\label{item:StationarityOfSisPreserved}
$\bbT_A$ preserves the stationarity of $S$.
\item
\label{item:SomebodyPreservesStationarityOfS}
There is a condition $p\in\bbT_A$ that forces that $\check{S}$ is  stationary.
\end{enumerate}
\end{lem}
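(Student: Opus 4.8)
The plan is to establish the cycle of implications $\ref{item:SminusAisStationary}\Rightarrow\ref{item:StationarityOfSisPreserved}\Rightarrow\ref{item:SomebodyPreservesStationarityOfS}\Rightarrow\ref{item:SminusAisStationary}$. The implication $\ref{item:StationarityOfSisPreserved}\Rightarrow\ref{item:SomebodyPreservesStationarityOfS}$ is trivial (if every condition forces $\check S$ stationary, then some condition does, as $\bbT_A\neq\leer$). The bulk of the work is in the other two implications, and both will be handled by the same basic tool: working inside a countable elementary submodel $M$ of some $H_\lambda$ for $\lambda$ large, with $\bbT_A, A, S\in M$, and analyzing the interaction between the generic club being shot through $\gamma\setminus A$ and the condition $\sup(M\cap\gamma)$.

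\medskip

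\textbf{$\ref{item:SminusAisStationary}\Rightarrow\ref{item:StationarityOfSisPreserved}$.} Assume $S'=S\setminus\lifting(A,[X]^\omega)$ is stationary. Let $p\in\bbT_A$ and let $\dot F$ be a $\bbT_A$-name forced by $p$ to be a function from $[X]^{<\omega}$ to $X$; I must find $q\leq p$ forcing that some element of $S$ is closed under $\dot F$. Fix $\lambda$ large and a countable $M\prec\kla{H_\lambda,\in,<^*}$ with $p,\dot F,A,S,\bbT_A,\gamma,X\in M$ and with $x:=M\cap X\in S'$ (possible since $S'$ is stationary, $S'\subseteq S$, and being in $S'$ is a statement that together with ``$M\prec H_\lambda$ containing the relevant parameters'' carves out a stationary set of $M$'s). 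Since $x\in S'$, we have $\delta:=\sup(M\cap\gamma)\notin A$. Using that $\bbT_A$ is countably distributive (so it adds no new countable sequences and in particular $M$'s generic can be completed below $\delta$) and that $\delta\notin A$, I build an $M$-generic filter $G\subseteq\bbT_A$ with $p\in G$: enumerate the dense subsets of $\bbT_A$ in $M$ and meet them one at a time, while keeping the supremum of the conditions below $\delta$ (this is where $\delta\notin A$ is crucial — it allows adding $\delta$ as a top point, so that $q:=\bigcup G\cup\{\delta\}$ is a legitimate condition of $\bbT_A$ extending every element of $G$). Then $q$ decides enough: by genericity over $M$, $\dot F$ is evaluated so that $x=M\cap X$ is closed under the interpreted function, and $x\in S$. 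Hence $q$ forces that $\check S\cap\dot C\neq\leer$ for the club $\dot C$ of sets closed under $\dot F$. This shows $\bbT_A$ preserves stationarity of $S$.

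\medskip

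\textbf{$\ref{item:SomebodyPreservesStationarityOfS}\Rightarrow\ref{item:SminusAisStationary}$.} Suppose toward a contrapositive that $S'=S\setminus\lifting(A,[X]^\omega)$ is nonstationary, as witnessed by a function $f:[X]^{<\omega}\to X$ such that no $x\in S'$ is closed under $f$; equivalently, every $x\in S$ closed under $f$ satisfies $\sup(x\cap\gamma)\in A$. Let $p\in\bbT_A$ be arbitrary; I show $p$ does not force $\check S$ stationary, i.e. $p$ forces $\check S\cap\dot C=\leer$ for a suitable club name $\dot C$. In $V$, let $\dot C$ name the club of $x\in[X]^\omega$ that are closed under $f$ and under a function reading off an increasing enumeration of the generic club $\dot D\subseteq\gamma\setminus A$; more precisely, $x\in\dot C$ should force $x\cap\gamma$ to be an initial segment of (the closure of) $\dot D$ up to $\sup(x\cap\gamma)$, so that $\sup(x\cap\gamma)$ is a limit point of $\dot D$, hence $\sup(x\cap\gamma)\notin A$ (as $\bbT_A$ shoots a club through the complement of $A$ and $A\subseteq S^\gamma_\omega$, limit points of countable cofinality of $\dot D$ avoid $A$). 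But if also $x\in\check S$ and $x$ is closed under $f$, then $\sup(x\cap\gamma)\in A$ by choice of $f$ — contradiction. So $p$ forces $\check S\cap\dot C=\leer$, and since $p$ was arbitrary, no condition forces $\check S$ stationary, giving the contrapositive of $\ref{item:SomebodyPreservesStationarityOfS}$.

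\medskip

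\textbf{Main obstacle.} The delicate point is the construction of the $M$-generic filter in the first implication together with the verification that $q=\bigcup G\cup\{\delta\}$ is genuinely a condition: one needs $\bigcup G$ to have supremum exactly $\delta$ (which follows from $M$-genericity plus the density, inside $M$, of conditions with large maximum, together with countable distributivity so that $M$'s ordinals below $\omega_1$ and below $\gamma$ behave well), and one needs $\delta\notin A$, which is exactly where the hypothesis $x\in S'$ enters. In the third implication, the subtlety is arranging the club name $\dot C$ so that membership in it genuinely forces $\sup(x\cap\gamma)$ to be a limit of the generic club — this is a standard but fiddly ``the generic club reflects into elementary submodels'' argument, and it is the step that uses $A\subseteq S^\gamma_\omega$ (so that $\bbT_A$, shooting a club of order type $\gamma$ disjoint from $A$, passes through cofinally many ordinals of uncountable cofinality but its countable-cofinality limit points all avoid $A$). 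I expect the first implication to be the one requiring the most care, as it is essentially a ``preservation of stationarity under a distributive club-shooting'' argument of the kind in Larson's and Fuchs--Lambie-Hanson's work cited above, and I would follow their bookkeeping closely.
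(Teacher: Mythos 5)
The paper does not prove this lemma itself --- it states it and refers to Larson's Lemma 4.5 and to Lemma 4.3 of Fuchs--Lambie-Hanson for the proof --- and your argument is correct and is essentially the standard one from those references: for (1)$\Rightarrow$(2) you top off an $M$-generic filter with $\delta=\sup(M\cap\gamma)\notin A$ (which is legitimate precisely because $M\cap X\in S\setminus\lifting(A,[X]^\omega)$), and for (3)$\Rightarrow$(1) you observe that closure under $\alpha\mapsto\min(\dot{D}\setminus(\alpha+1))$ forces $\sup(x\cap\gamma)$ to be a limit point of the generic club and hence outside $A$. Two cosmetic quibbles only: completing the $M$-generic below $\delta$ uses just the countability of $M$ and the fact that conditions in $M$ are bounded below $\delta$, not countable distributivity (whose real role is to keep $[X]^\omega$ and the stationarity of subsets of $\omega_1$ unchanged); and every limit point of the generic club below $\gamma$ avoids $A$ simply because that club is closed and disjoint from $A$, so no appeal to $A\subseteq S^\gamma_\omega$ is needed at that step.
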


This leads to a characterization of when $\bbT_A$ preserves \emph{projective} stationarity as follows.

\begin{lem}
\label{lem:WhenT_APreservesProjectiveStationarity}
Let $\gamma>\omega_1$ be regular, $X\supseteq\gamma$ a set, $A\subseteq \gamma$ stationary and $S\subseteq[X]^\omega$ projective stationary, such that $\gamma\setminus A$ is unbounded in $\gamma$ and $\bbT_A$ is countably distributive. Then the following are equivalent:
\begin{enumerate}[label=\textnormal{(\arabic*)}]
\item
\label{item:SminusAisProjectiveStationary}
$S\setminus\lifting(A,[X]^\omega)$ is projective stationary.
\item
\label{item:ProjectiveStationarityOfSisPreserved}
$\bbT_A$ preserves the projective stationarity of $S$.
\item
\label{item:SomebodyPreservesStationarityOfS}
There is a condition $p\in\bbT_A$ that forces that $\check{S}$ is  projective stationary.
\end{enumerate}
\end{lem}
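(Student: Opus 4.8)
The plan is to reduce the statement about \emph{projective} stationarity to the already-proven Lemma~\ref{lem:WhenT_APreservesGenStationarity} about ordinary stationarity, by passing to the slices indexed by stationary subsets of $\omega_1$. Recall that $S\subseteq[X]^\omega$ being projective stationary means precisely that for every stationary $T\subseteq\omega_1$, the set $S\cap(T\projectup[X]^\omega)=\{x\in S\st x\cap\omega_1\in T\}$ is stationary. So the basic observation I would use repeatedly is that $\bbT_A$ preserves the projective stationarity of $S$ if and only if $\bbT_A$ preserves the stationarity of $S\cap(T\projectup[X]^\omega)$ for every stationary $T\subseteq\omega_1$ — and here it is important that $\bbT_A$, being countably distributive, adds no new subsets of $\omega_1$, so the stationary subsets of $\omega_1$ in the extension are exactly those of the ground model, and similarly $[X]^\omega$ is not changed.

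The implications \ref{item:ProjectiveStationarityOfSisPreserved}$\implies$\ref{item:SomebodyPreservesStationarityOfS} is trivial (take $p$ to be any condition, e.g.\ $\leer$). For \ref{item:SomebodyPreservesStationarityOfS}$\implies$\ref{item:SminusAisProjectiveStationary}: suppose $p\in\bbT_A$ forces $\check S$ to be projective stationary. Fix a stationary $T\subseteq\omega_1$; then $p$ forces $\check S\cap(\check T\projectup[X]^\omega)$ to be stationary, since $T$ remains stationary in the extension and being the $T$-slice of $S$ is upward absolute given that no new reals/countable sets appear. Now apply Lemma~\ref{lem:WhenT_APreservesGenStationarity} to the stationary set $S\cap(T\projectup[X]^\omega)$ in place of $S$ (its hypotheses are met: $\gamma\setminus A$ unbounded, $\bbT_A$ countably distributive — these do not depend on $S$): from the fact that \emph{some} condition $p$ forces $\check S\cap(\check T\projectup[X]^\omega)$ stationary we get condition \ref{item:SminusAisStationary} of that lemma, i.e.\ $\bigl(S\cap(T\projectup[X]^\omega)\bigr)\setminus\lifting(A,[X]^\omega)$ is stationary. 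But that set is exactly $\bigl(S\setminus\lifting(A,[X]^\omega)\bigr)\cap(T\projectup[X]^\omega)$, so it is the $T$-slice of $S\setminus\lifting(A,[X]^\omega)$. Since $T$ was an arbitrary stationary subset of $\omega_1$, this shows $S\setminus\lifting(A,[X]^\omega)$ is projective stationary, which is \ref{item:SminusAisProjectiveStationary}.

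For \ref{item:SminusAisProjectiveStationary}$\implies$\ref{item:ProjectiveStationarityOfSisPreserved}: assume $S\setminus\lifting(A,[X]^\omega)$ is projective stationary. Let $G$ be $\bbT_A$-generic over $\V$; I must show that in $\V[G]$, $S$ is still projective stationary. Fix a stationary $T\subseteq\omega_1$ in $\V[G]$; since $\bbT_A$ is countably distributive, $T\in\V$ and $T$ is stationary in $\V$. By assumption $\bigl(S\setminus\lifting(A,[X]^\omega)\bigr)\cap(T\projectup[X]^\omega)=\bigl(S\cap(T\projectup[X]^\omega)\bigr)\setminus\lifting(A,[X]^\omega)$ is stationary in $\V$ (it is the $T$-slice of a projective stationary set). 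Now invoke the implication \ref{item:SminusAisStationary}$\implies$\ref{item:StationarityOfSisPreserved} of Lemma~\ref{lem:WhenT_APreservesGenStationarity}, again applied with $S\cap(T\projectup[X]^\omega)$ in place of $S$: it gives that $\bbT_A$ preserves the stationarity of $S\cap(T\projectup[X]^\omega)$, so in $\V[G]$ the set $S\cap(T\projectup[X]^\omega)$ is stationary. As $T$ ranged over all stationary subsets of $\omega_1$ in $\V[G]$, $S$ is projective stationary in $\V[G]$, as required.

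The only genuine point needing care — the ``main obstacle'' such as it is — is the bookkeeping around absoluteness of slices: one must be sure that for a fixed $T\in\V$, the set $S\cap(T\projectup[X]^\omega)$ computed in $\V$ agrees with the one computed in $\V[G]$, which is immediate because $\bbT_A$ is countably distributive (so $[X]^\omega$ and the relation $x\cap\omega_1\in T$ are unchanged), and that $S\setminus\lifting(A,[X]^\omega)$ and its slices are computed identically in $\V$ and $\V[G]$ for the same reason. One should also double-check that Lemma~\ref{lem:WhenT_APreservesGenStationarity} is genuinely applicable to $S\cap(T\projectup[X]^\omega)$: it is a stationary subset of $[X]^\omega$ (a slice of a projective stationary set is stationary), and all remaining hypotheses of that lemma concern only $\gamma$, $A$, and $\bbT_A$, not the particular stationary set, so there is nothing to verify. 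With these observations in hand the three equivalences fall out as a formal consequence of Lemma~\ref{lem:WhenT_APreservesGenStationarity} applied slicewise.
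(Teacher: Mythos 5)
Your proposal is correct and is essentially the paper's own argument: both proofs reduce the statement to Lemma~\ref{lem:WhenT_APreservesGenStationarity} applied slicewise to the sets $S\cap(T\projectup[X]^\omega)$ for stationary $T\subseteq\omega_1$, using countable distributivity of $\bbT_A$ to see that stationary subsets of $\omega_1$ in the extension already lie in the ground model. The only cosmetic difference is that the paper phrases \ref{item:SminusAisProjectiveStationary}$\implies$\ref{item:ProjectiveStationarityOfSisPreserved} as a proof by contradiction, whereas you argue it directly.
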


\begin{proof}
\ref{item:SminusAisProjectiveStationary}$\implies$\ref{item:ProjectiveStationarityOfSisPreserved}:
Suppose $G\sub\bbT_A$ is generic and $S$ is not projective stationary in $\V[G]$. Then there is some stationary $B\sub\omega_1$ such that in $\V[G]$, $S\cap\lifting(B,[X]^\omega)$ is not stationary. Since $\bbT_A$ is countably distributive, $B\in\V$ is stationary in $\V$. Since in $\V$, $S\setminus\lifting(A,[X]^\omega)$ is projective stationary, we know that
$(S\setminus\lifting(A,[X]^\omega))\cap\lifting(B,[X]^\omega)$ is stationary. Note that this intersection is the same as $(S\cap\lifting(B,[X]^\omega))\ohne\lifting(A,[X]^\omega)$. So by Lemma \ref{lem:WhenT_APreservesGenStationarity}, $S\cap\lifting(B,[X]^\omega)$ is stationary in $\V[G]$, a contradiction.

\ref{item:ProjectiveStationarityOfSisPreserved}$\implies$
\ref{item:SomebodyPreservesStationarityOfS} is trivial.

\ref{item:SomebodyPreservesStationarityOfS}$\implies$\ref{item:SminusAisProjectiveStationary}: Let $p\in\bbT_A$ force that $\check{S}$ is projective stationary. Then, for every stationary $B\sub\omega_1$, $p$ forces that $S\cap\lifting(B,[X]^\omega)$ is stationary. By Lemma \ref{lem:WhenT_APreservesGenStationarity}, this implies that
$(S\cap\lifting(B,[X]^\omega))\setminus\lifting(A,[X]^\omega)$ is stationary. That is, for every stationary $B\sub\omega_1$, $(S\setminus\lifting(A,[X]^\omega)\cap\lifting(B,[X]^\omega)$ is stationary, which means that $S\setminus\lifting(A,[X]^\omega)$ is projective stationary, as wished.
\end{proof}

To formulate some sample applications of these methods, let me recall
the following principles of stationary reflection. The simulatenous reflection principles are well-known, see \cite{SquaresScalesStationaryReflection}. The first diagonal version was originally introduced in Fuchs \cite{Fuchs:DiagonalReflection}, and its variations come from Fuchs \& Lambie-Hanson \cite{Fuchs-LambieHanson:SeparatingDSR}.

\begin{defn}
\label{defn:DiagonalReflection}
Let $\lambda$ be a regular cardinal, let $S\subseteq\lambda$ be stationary, and let $\kappa<\lambda$.

The \emph{simultaneous reflection principle} $\refl(\kappa,S)$ says that whenever $\vS=\seq{S_i}{i<\kappa}$ is a sequence of stationary subsets of $S$, then $\vS$ has a simultaneous reflection point (see Def.~\ref{def:Trace}).

The \emph{diagonal reflection principle} $\DSR({<}\kappa,S)$ says that whenever $\langle S_{\alpha,i}\st\alpha<\lambda,i<j_\alpha\rangle$ is a sequence of stationary subsets of $S$, where $j_\alpha<\kappa$ for every $\alpha<\lambda$, then there are an ordinal $\gamma<\lambda$ of uncountable cofinality and a \emph{club} $F\subseteq\gamma$ such that for every $\alpha\in F$ and every $i<j_\alpha$, $S_{\alpha,i}\cap\gamma$ is stationary in $\gamma$.

The version of the principle in which $j_\alpha\le\kappa$ is denoted $\DSR(\kappa,S)$.

If $F$ is only required to be \emph{unbounded,} then the resulting principle is called $\uDSR({<}\kappa,S)$, and if it is required to be \emph{stationary,} then it is denoted $\sDSR({<}\kappa,S)$.
\end{defn}

These principles can be viewed as different ways of generalizing the following reflection principle, due to Larson \cite{Larson:SeparatingSRP}.

\begin{defn}
\label{def:OSR}
The principle \OSR{\omega_2} (for ``ordinal stationary reflection'') states that whenever $\vS=\seq{S_i}{i<\omega_2}$ is a sequence of stationary subsets of $S^{\omega_2}_\omega$, there is an ordinal $\rho<\omega_2$ of uncountable cofinality which is a simultaneous reflection point of $\vS\rest\rho$.
\end{defn}

The following fact shows that the principles introduced above indeed generalize $\OSR{\omega_2}$.

\begin{fact}[{special case of \cite[Lemma 2.4]{Fuchs-LambieHanson:SeparatingDSR}}]
\label{fact:DSREquivalences}
The following are equivalent:
\begin{enumerate}[label=\textnormal{(\arabic*)}]
\item $\uDSR(\omega_1,S^{\omega_2}_\omega)$
\item $\DSR(\omega_1,S^{\omega_2}_\omega)$
\item $\OSR{\omega_2}$
\end{enumerate}
\end{fact}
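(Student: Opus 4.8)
The plan is to prove the three-way equivalence by a cycle of implications, treating the sequences as indexed either by $\omega_2$-many stationary sets (for $\OSR{\omega_2}$) or by a triangular array $\langle S_{\alpha,i}\mid\alpha<\omega_2, i<j_\alpha\rangle$ with each $j_\alpha<\omega_1$ (for the $\DSR/\uDSR$ versions). The easy link is that $\DSR(\omega_1,S^{\omega_2}_\omega)\implies\uDSR(\omega_1,S^{\omega_2}_\omega)$, since a club is in particular unbounded; so it remains to close the loop with $\uDSR(\omega_1,S^{\omega_2}_\omega)\implies\OSR{\omega_2}$ and $\OSR{\omega_2}\implies\DSR(\omega_1,S^{\omega_2}_\omega)$.

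For $\uDSR(\omega_1,S^{\omega_2}_\omega)\implies\OSR{\omega_2}$: given $\vS=\seq{S_i}{i<\omega_2}$ of stationary subsets of $S^{\omega_2}_\omega$, I would repackage it as a triangular array by assigning to each $\alpha<\omega_2$ the (countable-length, or length-$1$, or empty) list of those $S_i$ with $i<\alpha$ — but of course $\alpha$ can be $\ge\omega_1$, so this naïve packing violates $j_\alpha<\omega_1$. The fix, which is standard for this kind of statement, is to fix in advance a bijection-type coding or to use the fact that at a reflection point $\gamma$ of uncountable cofinality only the tail below $\gamma$ matters: one instead defines $S_{\alpha,i}$ to be $S_{g(\alpha,i)}$ for a fixed surjection $g$ from pairs $\langle\alpha,i\rangle$ with $i<\omega_1$ onto $\omega_2$, arranged so that $g``(\{\alpha\}\times\omega_1)$ is cofinal in $\alpha$ (possible since $\cf(\omega_2)>\omega_1$ forces us to be more careful — actually one uses that every ordinal below $\omega_2$ has cardinality $\le\omega_1$, so pick for each $\beta<\omega_2$ a surjection $h_\beta:\omega_1\to\beta+1$, and let the array at stage $\alpha$ enumerate $\{S_{h_\alpha(i)}\mid i<\omega_1\}$). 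Then a $\gamma<\omega_2$ of uncountable cofinality together with an unbounded $F\subseteq\gamma$ on which all the $S_{\alpha,i}\cap\gamma$ are stationary yields, because $\bigcup_{\alpha\in F}\{h_\alpha(i)\mid i<\omega_1\}\supseteq\bigcup_{\alpha\in F}(\alpha+1)=\gamma$, that $S_i\cap\gamma$ is stationary for every $i<\gamma$; that is exactly the conclusion of $\OSR{\omega_2}$ at $\gamma$.

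For $\OSR{\omega_2}\implies\DSR(\omega_1,S^{\omega_2}_\omega)$: given a triangular array $\langle S_{\alpha,i}\mid\alpha<\omega_2, i<j_\alpha\rangle$ with $j_\alpha\le\omega_1$ (and each $S_{\alpha,i}\subseteq S^{\omega_2}_\omega$ stationary), flatten it to a single $\omega_2$-sequence $\vT=\seq{T_\xi}{\xi<\omega_2}$ via a bijection $e:\omega_2\to\{\langle\alpha,i\rangle\mid\alpha<\omega_2,i<j_\alpha\}$ chosen so that $e``\rho$, for $\rho$ a suitable closed unbounded set of $\rho<\omega_2$, equals $\{\langle\alpha,i\rangle\mid\alpha<\rho, i<j_\alpha\}$ — this is arranged by taking $e$ to be, on each block $[\![\omega_1\cdot\alpha,\omega_1\cdot(\alpha+1))$, an enumeration of $\{\alpha\}\times j_\alpha$, after first re-indexing $\alpha$'s by multiplying by $\omega_1$. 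Then apply $\OSR{\omega_2}$ to $\vT$ to get $\rho<\omega_2$ of uncountable cofinality that is a simultaneous reflection point of $\vT\rest\rho$; for $\rho$ in the club where $e``(\omega_1\cdot\rho)=\{\langle\alpha,i\rangle\mid\alpha<\rho,i<j_\alpha\}$ and $\cf(\rho)=\omega_1$, this says every $S_{\alpha,i}\cap\rho$ with $\alpha<\rho$, $i<j_\alpha$ is stationary in $\rho$; taking $F=\rho$ itself (which is a club in $\rho$) witnesses $\DSR(\omega_1,S^{\omega_2}_\omega)$. I expect the main obstacle to be purely bookkeeping: arranging the codings $h_\alpha$, $g$, $e$ so that the relevant restrictions line up on a club of $\rho<\omega_2$, and in particular making sure that ``$\cf(\rho)=\omega_1$'' can be secured — this uses that $\{\rho<\omega_2\mid\cf(\rho)=\omega_1\}$ is stationary and that the coding clubs are genuinely clubs, so their intersection with this stationary set is nonempty. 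Since the excerpt states this is a special case of \cite[Lemma 2.4]{Fuchs-LambieHanson:SeparatingDSR}, in the write-up I would either cite that lemma directly or reproduce exactly the coding argument from there.
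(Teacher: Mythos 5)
The paper itself does not prove this fact --- it is quoted verbatim from \cite[Lemma 2.4]{Fuchs-LambieHanson:SeparatingDSR} --- so I can only assess your argument on its own terms. Your cycle $\DSR(\omega_1,S^{\omega_2}_\omega)\Rightarrow\uDSR(\omega_1,S^{\omega_2}_\omega)\Rightarrow\OSR{\omega_2}\Rightarrow\DSR(\omega_1,S^{\omega_2}_\omega)$ is the right shape, the first implication is trivial as you say, and the second is correct: packaging $\vS=\seq{S_i}{i<\omega_2}$ as the array $\seq{S_{h_\alpha(i)}}{\alpha<\omega_2,\,i<\omega_1}$ with $h_\alpha:\omega_1\onto\alpha+1$ works, since an unbounded $F\sub\gamma$ makes $\bigcup_{\alpha\in F}\ran(h_\alpha)=\gamma$, and $j_\alpha=\omega_1$ is permitted by the ``$j_\alpha\le\kappa$'' convention for $\DSR(\omega_1,\cdot)$.

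The gap is in the last implication, and it is exactly at the point you flag as ``bookkeeping.'' The principle $\OSR{\omega_2}$ hands you a \emph{single} ordinal $\rho$ of uncountable cofinality reflecting $\vT\rest\rho$; it does not assert that the set of such $\rho$ is stationary or even that it meets any prescribed club. So your closing remark --- that one secures the desired $\rho$ because the coding club meets the stationary set $\{\rho<\omega_2\st\cf(\rho)=\omega_1\}$ --- does not do any work: you never get to \emph{choose} $\rho$ from that intersection. (The cofinality worry is vacuous anyway, since every $\rho<\omega_2$ of uncountable cofinality has cofinality $\omega_1$; the genuine issue is forcing $\rho$ into the club $D$ of closure points of your coding $e$, without which $\{\xi<\rho\}$ need not decode to $\{\kla{\alpha,i}\st\alpha<\rho,\ i<j_\alpha\}$ and the argument fails for $\rho$ of the form $\omega_1\cdot\alpha_0+\beta$.) The standard repair is to apply $\OSR{\omega_2}$ not to $\vT$ but to $\seq{T_\xi\cap D}{\xi<\omega_2}$ (still stationary subsets of $S^{\omega_2}_\omega$, after padding the unused slots of each block with, say, $S^{\omega_2}_\omega$ itself): any reflection point $\rho$ of the modified sequence has $D\cap\rho$ unbounded in $\rho$, hence $\rho\in D$ by closedness, and then $F=\rho$ witnesses $\DSR(\omega_1,S^{\omega_2}_\omega)$. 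With that one insertion your proof is complete.
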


The result of Larson \cite[Thm.~4.7]{Larson:SeparatingSRP} most relevant here is that if Martin's Maximum is consistent, then it is consistent that $\SRP$ holds but $\OSR{\omega_2}$ fails. On the other hand, $\MM$ implies $\OSR{\omega_2}$. Hence, $\OSR{\omega_2}$ is a consequence of $\MM$ which is not captured by $\SRP$, thus separating $\MM$ from $\SRP$. The following lemma shows that, under the appropriate assumptions, the $\infty$-subcomplete fragment of \SRP does not even imply $\Refl(1,S^{\omega_2}_\omega)$, giving a much stronger failure of reflection.

\begin{lem}
\label{lem:SRP+nonCHisWeak}
Assume that \SRP holds and continues to hold after $\omega_2$-directed closed forcing. Then there is an $\omega_2$-strategically closed forcing of size $\omega_2$ in whose extensions the following hold:
\begin{enumerate}[label=(\alph*)]
  \item
  \label{item:SCSRPstillHolds}
  $\infSC$-\SRP,
  \item $2^\omega=\omega_2$,
  \item there is a nonreflecting stationary subset of $S^{\omega_2}_\omega$.
\end{enumerate}
\end{lem}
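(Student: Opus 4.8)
The plan is to follow the template described above (due to Larson \cite{Larson:SeparatingSRP}, refined in Fuchs \& Lambie-Hanson \cite{Fuchs-LambieHanson:SeparatingDSR}). Let $\P$ be the standard forcing adding a nonreflecting stationary subset of $S^{\omega_2}_\omega$: conditions are functions $p\colon\alpha+1\to 2$ with $\alpha<\omega_2$, $p^{-1}(1)\sub S^{\omega_2}_\omega$, and $p^{-1}(1)\cap\beta$ nonstationary in $\beta$ for every limit $\beta\le\alpha$ of uncountable cofinality, ordered by end-extension. As usual $\P$ is $\omega_2$-strategically closed (in particular $\sigma$-closed, hence adding no reals, and $\omega_2$-distributive, hence preserving cardinals, cofinalities, and stationary subsets of $\omega_1$ and $\omega_2$), and $|\P|=2^{\omega_1}=\omega_2$, using that \SRP implies $2^{\omega_1}=\omega_2$. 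Fix a $\V$-generic $G$ and let $S=\{\alpha<\omega_2\st\exists p\in G\ p(\alpha)=1\}$; then in $\V[G]$, $S$ is a nonreflecting stationary subset of $S^{\omega_2}_\omega$ and $2^\omega=\omega_2$, which gives clauses (b) and (c). Moreover $S^{\omega_2}_\omega\ohne S$ is stationary in $\V[G]$ — otherwise $S$ would reflect at cofinality-$\omega_1$ points — so by Lemma \ref{lem:WhenT_APreservesGenStationarity} the forcing $\bbT_S$ to shoot a club through $\omega_2\ohne S$ is countably distributive in $\V[G]$.

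It remains to verify clause (a), i.e.\ $\infSC$-$\SRP(\kappa)$ in $\V[G]$ for every regular $\kappa\ge\omega_2$. For $\kappa=\omega_2$ this holds by Observation \ref{obs:SCSRPtrivialuptocontinuum}, since $2^\omega=\omega_2$ there; so fix a regular $\kappa\ge\omega_3$ and, by Theorem \ref{thm:SpreadOutIffinfSCProjectiveStationary}, a set $S'\sub[H_\kappa]^\omega$ that is spread out in $\V[G]$, hence in particular projective stationary. The crucial step — and the main obstacle — is to show that $\bbT_S$ preserves the projective stationarity of $S'$. By Lemma \ref{lem:WhenT_APreservesProjectiveStationarity} (applied with $\gamma=\omega_2$, $A=S$, ambient set $H_\kappa$), this reduces to showing $S'\ohne\lifting(S,[H_\kappa]^\omega)$ is projective stationary in $\V[G]$; that is, given a stationary $B\sub\omega_1$ and a club $D\sub[H_\kappa]^\omega$, one must find $x\in S'\cap D$ with $x\cap\omega_1\in B$ and $\sup(x\cap\omega_2)\notin S$. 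Here I would exploit that $\kappa>2^\omega=\omega_2$: by the elementarity argument recalled before Observation \ref{obs:SCSRPtrivialuptocontinuum}, any isomorphism $\pi\colon N|X\To N|Y$ as in the definition of ``spread out'' is the identity on $X\cap H_{\omega_2}$, so $Y\cap\omega_1=X\cap\omega_1$ and $\sup(Y\cap\omega_2)=\sup(X\cap\omega_2)$. Thus it suffices to choose, for a sufficiently large $N=L_\tau^A$, a countable full $X\prec N$ with $D\in X$, $X\cap\omega_1\in B$ and $\sup(X\cap\omega_2)\notin S$: applying spread-out-ness of $S'$ to $X$ yields $Y$ with $Y\cap H_\kappa\in S'$, and $Y$ automatically inherits $Y\cap H_\kappa\in D$, $Y\cap\omega_1\in B$, and $\sup(Y\cap\omega_2)=\sup(X\cap\omega_2)\notin S$. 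Such an $X$ exists because $\langle B\cap S^{\omega_1}_\omega,\ S^{\omega_2}_\omega\ohne S\rangle$ is mutually stationary by the Foreman--Magidor theorem (Fact \ref{fact:MutualStationarity}): there are stationarily many countable $M$, elementary in a rich enough structure coding $N$, $D$, fullness, and the relevant Skolem functions, with $M\cap\omega_1\in B$ and $\sup(M\cap\omega_2)\in S^{\omega_2}_\omega\ohne S$, and $X=M\cap L_\tau[A]$ works.

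Granting this, let $H$ be $\V[G]$-generic for $\bbT_S$. A standard (Baumgartner-style) argument shows that $\P*\dot\bbT_{\dot S}$ has a dense subset which is $\omega_2$-directed closed — in fact a tree in which any two compatible conditions are comparable and every chain of length $\le\omega_1$ has a lower bound, obtained by ``closing up'' the union of the chain and declaring its supremum point to lie outside $\dot S$. Hence $\V[G][H]$ is an extension of $\V$ by $\omega_2$-directed closed forcing, so \SRP holds in $\V[G][H]$ by the indestructibility hypothesis; and since that dense subset is also $<\omega_2$-closed over $\V$, the extension $\V[G][H]$ adds no new $\omega_1$-sequences over $\V$. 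Working in $\V[G][H]$: the upward projection $S'\projectup[H_\kappa]^\omega$, now computed with the possibly larger $H_\kappa^{\V[G][H]}$, is again projective stationary (upward projections preserve projective stationarity, and $S'$ remains projective stationary by the previous paragraph), so by $\SRP(\kappa)$ and Corollary \ref{cor:SRPgivesElementaryChains} there is a continuous elementary $\in$-chain $\seq{\calM_i}{i<\omega_1}$ through it with respect to a structure on $H_\kappa^{\V[G][H]}$ carrying a predicate for $H_\kappa^{\V[G]}$; setting $X_i=|\calM_i|\cap H_\kappa^{\V[G]}$ yields a continuous $\in$-chain $\seq{X_i}{i<\omega_1}$ through $S'$. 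Since every $X_i$ lies in $S'\sub\V[G]$, encoding $\seq{X_i}{i<\omega_1}$ via a bijection in $\V[G]$ of $H_\kappa^{\V[G]}$ with an ordinal gives an $\omega_1$-sequence of ordinals in $\V[G][H]$, which therefore lies in $\V$; decoding, $\seq{X_i}{i<\omega_1}$ lies in $\V[G]$. This establishes $\infSC$-$\SRP(\kappa)$ in $\V[G]$, and hence clause (a).

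The main obstacle is the preservation of projective stationarity of the spread-out set $S'$ in the second paragraph: the point is that, because $2^\omega=\omega_2$, the isomorphisms witnessing ``spread out'' cannot alter $X\cap H_{\omega_2}$ at all, so one must supply the correct model $X$ at the outset — which is exactly what Foreman--Magidor mutual stationarity delivers. The remaining ingredients — the basic properties of $\P$ and $\bbT_S$, the closure of the two-step iteration, and the routine ``lift up, reflect down'' transfer of the chain between $\V[G][H]$ and $\V[G]$ — are standard.
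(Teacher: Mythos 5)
Your proposal is correct and follows essentially the same route as the paper's proof: force with the standard poset adding a nonreflecting stationary $S\sub S^{\omega_2}_\omega$, show that a spread out set $S'$ minus $\lifting(S,[H_\kappa]^\omega)$ remains projective stationary by combining Foreman--Magidor mutual stationarity (to pick the initial countable model with the right trace on $\omega_1$ and $\omega_2$) with the rigidity of spread-out isomorphisms below $2^\omega=\omega_2$, then apply the preservation Lemma \ref{lem:WhenT_APreservesProjectiveStationarity}, pass to the $\omega_2$-directed closed two-step extension where \SRP holds, and pull the chain back by distributivity. The paper packages the key step as a standalone claim ($S\cap\lifting(B,[H_\kappa]^\omega)$ is projective stationary for stationary $B\sub S^\lambda_\omega$ with $\omega_1<\lambda\le 2^\omega$), but the content is the same.
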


\begin{proof}
Let $\P$ be the forcing to add a nonreflecting stationary subset of $S^{\omega_2}_\omega$ by closed initial segments. Since $\SRP$ holds, we have that $\omega_2^{\omega_1}=\omega_2$, and hence, the cardinality of $\P$ is $\omega_2$, and it is well-known that $\P$ is $\omega_2$-strategically closed. A generic filter for $\P$ may be identified with its union, which is a nonreflecting stationary subset of $S^{\omega_2}_\omega$. Let $A$ be such a generic set. $\V[A]$ then has the desired properties, and only property \ref{item:SCSRPstillHolds} needs to be verified. To see this, let me prove a general claim, which may be useful in other contexts as well.

\begin{NumberedClaim}
\label{claim:AlmostPreservingSpreadOutSets}
Suppose $\omega_1<\lambda\le 2^\omega$, $\lambda$ regular, and $B$ is a stationary subset of $S^\lambda_\omega$. Let $\kappa\ge\lambda$ be regular, and let $S\sub[H_\kappa]^\omega$ be spread out. Then $S\cap\lifting(B,[H_\kappa]^\omega)$ is projective stationary.
\end{NumberedClaim}

\begin{proof}[Proof of \ref{claim:AlmostPreservingSpreadOutSets}.]
Let $D\sub\omega_1$ be stationary, and let $f:(H_\kappa)^{{<}\omega}\To H_\kappa$. We have to find an $x\in S$ such that $S\cap\omega_1\in D$, $x$ is closed under $f$ and $\sup(x\cap\lambda)\in B$.
To this end, let $\theta$ witness that $S$ is spread out.
Let $H_\theta\sub N=L_\tau^I$, where $S,\theta,f\in N$ and $\tau$ is a cardinal in $L[I]$. Let $\tau'=(\tau^+)^{L[I]}$, and let $N'=L_{\tau'}^I$. Let $x'$ be countable with $N'|x'\prec N$ and $S,\theta,f,\lambda\in x'$. Moreover, let $x'\cap\omega_1\in D$ and $\sup(x'\cap\lambda)\in B$. This is possible by Fact \ref{fact:MutualStationarity}. Let $x=x'\cap L_\tau[I]$. Then $x$ is full, and hence, since $S$ is spread out, there is a $y$ such that $N|x$ is isomorphic to $N|y$ via an isomorphism $\pi$ which fixes $f$, and such that $\bar{y}=y\cap H_\kappa\in S$.
But since $\lambda\le 2^\omega$, $\pi$ is the identity on $\lambda$ (see the paragraph before Observation \ref{obs:SCSRPtrivialuptocontinuum}), so that $\bar{y}$ is as wished: $\bar{y}\cap\omega_1=x\cap\omega_1\in D$, $\sup(\bar{y}\cap\lambda)=\sup(x\cap\lambda)\in B$, $\bar{y}\in S$, and since $f=\pi(f)\in y$, $\bar{y}$ is closed under $f$.
\end{proof}

Now, working in $\V[A]$, $A$ is a nonreflecting stationary subset of $S^{\omega_2}_\omega$. It follows that $B=S^{\omega_2}_\omega\ohne A$ is stationary. Hence, by Claim \ref{claim:AlmostPreservingSpreadOutSets}, $S\cap\lifting(B,[H_\kappa]^\omega)=S\ohne\lifting(A,[H_\kappa]^\omega)$ is projective stationary. By Lemma \ref{lem:WhenT_APreservesProjectiveStationarity}, it follows that the forcing $\bbT_A$ preserves the projective stationarity of $S$.

Now it is well-known that $\P*\dot{\bbT}_{\dot{A}}$ is forcing equivalent to an $\omega_2$-directed closed forcing of size $\omega_2$, so that if we let $H$ be generic for $\bbT_A$ over $\V[G]$, then $\SRP$ holds in $\V[G][H]$, where $S$ is projective stationary. Working in $\V[G][H]$, let $\seq{y_\alpha}{\alpha<\omega_1}$ be a continuous $\in$-chain through $S\projectup[H_\kappa^{\V[G][H]}]^\omega$. Then letting $x_\alpha=y_\alpha\cap H_\kappa^{\V[G]}$, it follows that $\vec{x}$ is a continuous $\in$-chain through $S$, and $\vx\in\V[G]$, since $\bbT_A$ is $\omega_2$-distributive in $\V[G]$.
\end{proof}

Note that it was crucial in the proof of (1) that $\lambda\le 2^\omega$, and the case $\lambda=\omega_2$ is what was used in the remainder of the proof.
Thus, we are in the funny situation that if we want $\infSC$-\SRP to be \emph{similar} to \SRP, in that it should yield consequences like $\Refl(\omega_1,S^{\omega_2}_\omega)$, for example, then we should add the assumption that \CH holds, which \emph{contradicts} \SRP. In fact, already Observation \ref{obs:SCSRPtrivialuptocontinuum} was a clear indication that one should add \CH to $\infSC$-\SRP in order to make it stronger.

I also have to point out that the previous lemma does \emph{not} separate $\infSC$-\SRP from $\infSC\FA$. In fact, Hiroshi Sakai observed that methods of Sean Cox can be used to show that $\infSC\FA+2^\omega=\omega_2$ is consistent with the failure of $\Refl(1,S^{\omega_2}_\omega)$ as well, and more, for example, $\infty$-\SCFA, if consistent, is consistent with $\square_{\omega_1}$. The details of the argument, and variations of it, have been worked out by Corey Switzer (forthcoming).

Thus, Lemma \ref{lem:SRP+nonCHisWeak} provides a fairly strong \emph{limitation} of $\infSC$-$\SRP$, but fails to \emph{separate} it from $\infty$-\SCFA. However, it is possible to obtain a separating result as follows. The first fact I need is that for a regular cardinal $\kappa$, $\DSR(\omega_1,S^\kappa_\omega)$ follows from Martin's Maximum if $\kappa>\omega_1$, and also from \SCFA, for $\kappa>2^\omega$, see \cite{Fuchs:DiagonalReflection}. Second, the principle does not follow from \SRP. In fact:

\begin{thm}[{Fuchs \& Lambie-Hanson \cite[Thm.~4.4]{Fuchs-LambieHanson:SeparatingDSR}}]
\label{thm:FLH-SeparatingSRPfromuDSRhigherUp}
Let $\kappa>\omega_2$ be regular, and suppose that $\SRP$ holds and continues to hold in any forcing extension obtained by $\kappa$-directed closed forcing. Then there is a $\kappa$-strategically closed forcing notion that produces forcing extensions in which
\begin{enumerate}[label=\textnormal{(\arabic*)}]
  \item \SRP continues to hold, but
  \item $\uDSR(1,S^\kappa_\omega)$ fails.
\end{enumerate}
\end{thm}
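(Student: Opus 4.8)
The plan is to run the template of Subsection~\ref{subsec:GeneralSetting}, in the spirit of Lemma~\ref{lem:SRP+nonCHisWeak} but aiming at \emph{full} $\SRP$ rather than its subcomplete fragment. Working in the given model $\V$ of $\kappa$-directed-closed-indestructible $\SRP$, I would let $\P$ be the natural forcing that adds, by bounded initial segments, a sequence $\langle \dot S_\alpha : \alpha<\kappa\rangle$ of stationary subsets of $S^\kappa_\omega$ designed to fail $\uDSR(1,S^\kappa_\omega)$ diagonally. A condition specifies a length $\ell<\kappa$, closed bounded approximations $t_\alpha\sub S^\kappa_\omega\cap\ell$ for $\alpha<\ell$, and, for each $\gamma\le\ell$ of uncountable cofinality, a bound $b_\gamma<\gamma$ together with witnesses (clubs in $\gamma$) that $t_\alpha\cap\gamma$ is nonstationary in $\gamma$ for every $\alpha\in(b_\gamma,\gamma)$; conditions are ordered by end-extension. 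The non-reflection bookkeeping is what is maintained at limits of uncountable cofinality, and the standard argument shows $\P$ is $\kappa$-strategically closed; in particular $\P$ adds no new ${<}\kappa$-sequences, so $\omega_1$, its stationary subsets, and the value $2^\omega=\omega_2$ are unchanged, and since $\SRP$ gives $\kappa^{\omega_1}=\kappa$ (Theorem~\ref{thm:infSCSRPimpliesFPkappa} and Fact~\ref{fact:SFPandArithmetic}), $\card\P=\kappa$. A density argument shows each $S_\alpha=\bigcup_p t^p_\alpha$ is stationary, while the built-in bounds $b_\gamma$ guarantee that for every $\gamma<\kappa$ of uncountable cofinality the set $\{\alpha<\gamma : S_\alpha\cap\gamma\text{ is stationary in }\gamma\}$ is bounded in $\gamma$. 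Thus in $\V[G]$ the sequence $\langle S_\alpha\rangle$ witnesses the failure of $\uDSR(1,S^\kappa_\omega)$, giving clause~(2).

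For clause~(1), that $\V[G]\models\SRP$, the plan is the pullback argument: fix in $\V[G]$ a regular $\mu\ge\omega_2$ and a projective stationary $S\sub[H_\mu]^\omega$, and produce a continuous $\in$-chain through $S$. I would find, in $\V[G]$, a club-shooting (``correction'') forcing $\bbT$, built from forcings $\bbT_{A}$ that kill the stationarity of an appropriate subfamily of the added sets, such that $\P*\dot\bbT$ is forcing equivalent to a $\kappa$-directed closed poset — the diagonal analogue of the fact, used in Lemma~\ref{lem:SRP+nonCHisWeak}, that $\P*\dot\bbT_{\dot A}$ absorbs into a directed-closed forcing because shooting clubs through the complements of the generically added sets undoes their genericity. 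By the indestructibility hypothesis, $\V[G][H]\models\SRP$ for $H$ generic for $\bbT$, so there is a continuous $\in$-chain through $S\projectup[H_\mu^{\V[G][H]}]^\omega$ in $\V[G][H]$; restricting each model to $H_\mu^{\V[G]}$ yields a continuous $\in$-chain through $S$, and since $\bbT$ is $\omega_2$-distributive over $\V[G]$ it adds no new $\omega_1$-sequence of $\V[G]$-sets, so this chain lies in $\V[G]$. As $S$ was arbitrary, $\V[G]\models\SRP$.

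The decisive step — and, as the excerpt itself flags, the main technical problem — is the \emph{preservation of projective stationarity}: that the projective stationary $S$ of $\V[G]$ remains projective stationary in $\V[G][H]$. Iterating Lemma~\ref{lem:WhenT_APreservesProjectiveStationarity} along the correction forcing reduces this, factor by factor, to showing that $S\setminus\lifting(A,[H_\mu]^\omega)$ is projective stationary for each set $A$ whose complement is being clubbed, which by Lemma~\ref{lem:WhenT_APreservesGenStationarity} is in turn a statement about the stationarity of the traces $\{x\in S : x\cap\omega_1\in B,\ \sup(x\cap\kappa)\notin A\}$ for stationary $B\sub\omega_1$. The subtlety is that one must choose \emph{which} sets to kill: the subfamily must be rich enough that $\P*\dot\bbT$ is $\kappa$-directed closed, yet chosen (depending on $S$) so that the killed sets avoid the $\kappa$-trace of $S$ on a projective stationary set. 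Here one must remember that full $\SRP$ forces each individual $S_\alpha$ to reflect (Theorem~\ref{thm:infSCSRPimpliesFPkappa}), so it is only the \emph{diagonal} pattern, never the individual stationarity needed for an instance of $\SRP$, that may be sacrificed — for instance $\lifting(S_{\alpha_0},[H_\mu]^\omega)$, which is projective stationary by Fact~\ref{fact:OldNews}, must be handled by a correction forcing that spares $S_{\alpha_0}$. Managing this tension between closure of the composite and preservation along the (long) correction forcing, exploiting the genericity and mutual independence of the added sequence $\langle S_\alpha\rangle$ together with the equivalences of Lemmas~\ref{lem:WhenT_APreservesGenStationarity} and~\ref{lem:WhenT_APreservesProjectiveStationarity}, is where I expect the real work to lie; the remaining verifications (strategic closure of $\P$, the composite being directed closed, and the distributivity needed for the pullback) are more routine, following the model of Lemma~\ref{lem:SRP+nonCHisWeak}.
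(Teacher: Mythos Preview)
The paper does not prove this theorem; it is quoted verbatim from \cite[Thm.~4.4]{Fuchs-LambieHanson:SeparatingDSR} and used as a black box. So there is no ``paper's own proof'' to compare against directly. That said, the paper does prove the closely related Theorem~\ref{thm:SRPliftingDoesNotImplyuDSR} using exactly the template you describe, and it explicitly imports the forcing $\P$ and several claims from the cited source, so one can see the intended shape of the argument.

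Your overall architecture is correct and matches what is done in \cite{Fuchs-LambieHanson:SeparatingDSR}: add the diagonally nonreflecting sequence $\seq{A_\alpha}{\alpha<\kappa}$ by a $\kappa$-strategically closed $\P$, and for each projective stationary $S$ in $\V[G]$ find a tail-killing correction $\bbT_{{\ge}\beta}$ so that $\P*\dot\bbT_{{\ge}\beta}$ has a $\kappa$-directed closed dense subset, then pull back a chain from $\V[G][H]$. You have also correctly located the crux in the preservation of projective stationarity.

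Where your sketch drifts is in how that crux is resolved. You frame it as a delicate, $S$-dependent selection of a \emph{subfamily} to kill, worrying about sparing individual $S_{\alpha_0}$'s. In the actual argument the correction is always a single tail $\bbT_{{\ge}\beta}$; the only $S$-dependence is the choice of the threshold $\beta<\kappa$. The point you are missing is precisely \emph{why} $\kappa>\omega_2$ makes a uniform $\beta$ available. Under \SRP one has $2^{\omega_1}=\omega_2<\kappa$, so there are fewer than $\kappa$ stationary subsets $B\sub\omega_1$. For each such $B$ one first shows (this is the analogue of Claim~3.12 in \cite{Fuchs-LambieHanson:SeparatingDSR}) that there is $\beta_B<\kappa$ with $\{x\in S: x\cap\omega_1\in B\}\setminus\lifting(A_{{\ge}\beta_B},[H_\mu]^\omega)$ stationary; then $\beta=\sup_B\beta_B<\kappa$ works for all $B$ simultaneously, so by Lemma~\ref{lem:WhenT_APreservesProjectiveStationarity} the single forcing $\bbT_{{\ge}\beta}$ preserves the projective stationarity of $S$. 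Your worry about $\lifting(S_{\alpha_0},[H_\mu]^\omega)$ dissolves: the threshold $\beta$ one obtains will automatically exceed $\alpha_0$. This counting step is exactly what fails at $\kappa=\omega_2$ and is why Corollary~\ref{cor:SeparatingSRPfromuDSR} needs the extra hypothesis that $\NS_{\omega_1}$ is $\omega_1$-dense. So your proposal is on the right track but stops short of the one idea that actually closes the argument.
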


Thus, a model as in the previous theorem will satisfy $\infSC$-\SRP (since it even satisfies the full \SRP), but not \SCFA, since $\kappa$ is regular and greater than $\omega_2=2^\omega$ in that model, so if \SCFA held, then this would imply $\DSR(\omega_1,S^\kappa_\omega)$ by the abovementioned fact, contradicting that even $\uDSR(1,S^\kappa_\omega)$ fails. Thus:

\begin{fact}
\label{fact:SeparatingSC-SRPfromSCFA}
Assuming the consistency of \MM, $\infSC$-\SRP does not imply \SCFA.
\end{fact}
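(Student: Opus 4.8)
The plan is to combine the separation theorem of Fuchs \& Lambie-Hanson (Theorem \ref{thm:FLH-SeparatingSRPfromuDSRhigherUp}) with the fact, recorded in \cite{Fuchs:DiagonalReflection}, that \SCFA implies $\DSR(\omega_1,S^\kappa_\omega)$ for every regular $\kappa>2^\omega$. Fix once and for all a regular cardinal $\kappa>\omega_2$. Starting from a model of \MM, I would first check that the hypothesis of Theorem \ref{thm:FLH-SeparatingSRPfromuDSRhigherUp} is satisfied there: \MM implies \SRP, and by Larson's theorem \MM is preserved by $\omega_2$-directed closed forcing, hence in particular by any $\kappa$-directed closed forcing (as $\kappa>\omega_2$), so \SRP holds and continues to hold in all $\kappa$-directed closed forcing extensions. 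Theorem \ref{thm:FLH-SeparatingSRPfromuDSRhigherUp} then yields a $\kappa$-strategically closed forcing $\P$ such that in the extension $W=\V[G]$ by $\P$ we have (1) \SRP holds, yet (2) $\uDSR(1,S^\kappa_\omega)$ fails.

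Next I would verify that $W\models\infSC$-\SRP. This is immediate from Observation \ref{obs:SpreadOutImpliesProjectiveStationary}: every spread out subset of $[H_\mu]^\omega$ ($\mu\ge\omega_2$ regular) is projective stationary, so \SRP — which produces a continuous $\in$-chain of length $\omega_1$ through every projective stationary set — in particular produces one through every spread out set, which is exactly $\infSC$-\SRP. (Equivalently, $\infSC\subseteq\SSP$ and \SRP is $\SSP$-\SRP, by point \ref{item:GeneralizesSRP} of the introduction.)

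Then I would show $W\models\neg\SCFA$. Since \MM held in the ground model we had $2^\omega=\omega_2$ there, and since $\P$ is $\kappa$-strategically closed with $\kappa>\omega_2$, it adds no reals and preserves $\omega_1$ and $\omega_2$, so $2^\omega=\omega_2<\kappa$ in $W$ as well. If \SCFA held in $W$, then, as $\kappa>2^\omega$ is regular, \SCFA would imply $\DSR(\omega_1,S^\kappa_\omega)$ by \cite{Fuchs:DiagonalReflection}. But $\DSR(\omega_1,S^\kappa_\omega)$ trivially entails $\DSR(1,S^\kappa_\omega)$ (the instances with all $j_\alpha\le 1$ form a subfamily of the instances with all $j_\alpha\le\omega_1$), which in turn entails $\uDSR(1,S^\kappa_\omega)$ by weakening ``club'' to ``unbounded''; this contradicts (2). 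Hence \SCFA fails in $W$. Since $W\models\infSC\text{-}\SRP+\neg\SCFA$ and $W$ is a set-forcing extension of a model of \MM, we conclude that $\mathrm{Con}(\MM)$ implies $\mathrm{Con}(\infSC\text{-}\SRP+\neg\SCFA)$, i.e., $\infSC$-\SRP does not imply \SCFA.

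I do not expect a genuine obstacle here; the argument is essentially bookkeeping over cited results. The only points demanding care are (a) confirming that the indestructibility assumption in Theorem \ref{thm:FLH-SeparatingSRPfromuDSRhigherUp} really does follow from \MM, via the preservation of \MM under $\omega_2$-directed (a fortiori $\kappa$-directed) closed forcing, and (b) lining up the parameters in the $\DSR$/$\uDSR$ hierarchy of Definition \ref{defn:DiagonalReflection} so that the \SCFA-consequence $\DSR(\omega_1,S^\kappa_\omega)$ genuinely contradicts the failure of $\uDSR(1,S^\kappa_\omega)$ guaranteed by the theorem.
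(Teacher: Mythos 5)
Your proposal is correct and follows essentially the same route as the paper: apply Theorem \ref{thm:FLH-SeparatingSRPfromuDSRhigherUp} to a model of \MM (whose indestructible \SRP supplies the hypothesis), note that \SRP implies $\infSC$-\SRP, and derive $\neg\SCFA$ from the failure of $\uDSR(1,S^\kappa_\omega)$ via the \SCFA-consequence $\DSR(\omega_1,S^\kappa_\omega)$ for regular $\kappa>2^\omega$. The extra care you take with the indestructibility hypothesis and with verifying $2^\omega=\omega_2<\kappa$ in the extension is exactly the bookkeeping the paper leaves implicit.
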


While this is in a sense a satisfying separation result, I view the fact that $\infSC$-$\SRP(\omega_2)$ is trivial if $2^\omega\ge\omega_2$, and that $\infSC$-\SRP, and even $\infty$-\SCFA, do not imply $\Refl(1,S^{\omega_2}_\omega)$ if \CH fails, as strong indications that these axioms should be augmented with the assumption of \CH. But in the model that witnesses the separation fact just stated, we have that $2^\omega=\omega_2$, so I have not separated $\infSC$-$\SRP+\CH$ from $\infty$-$\SCFA$. Since these are the axioms of the main interest, I will focus on them for the remainder of this article. The goal result would be that the answer to the following question is no:

\begin{question}
\label{question:SeparatingSCSRPfromuDSR}
Does $\infSC$-\SRP $+$ $\CH$ imply $\uDSR(1,S^\kappa_\omega)$, for any regular $\kappa>\omega_1$, or any of the diagonal reflection principles?
\end{question}

Note that answering this question in the negative would strengthen the conclusion of Theorem \ref{thm:FLH-SeparatingSRPfromuDSRhigherUp}. The reason why I am hopeful that this might be the case is that the assumption of \CH might replace the assumption that the nonstationary ideal in $\omega_1$-dense in the following.

\begin{cor}[{Fuchs \& Lambie-Hanson \cite[Cor.~4.7]{Fuchs-LambieHanson:SeparatingDSR}}]
\label{cor:SeparatingSRPfromuDSR}
Suppose that
{\sf SRP} holds and continues to hold in any forcing extension obtained by an $\omega_2$-directed closed forcing notion. Assume furthermore that the density of the nonstationary ideal on $\omega_1$ is $\omega_1$. Then
there is an $\omega_2$-strategically closed forcing notion which produces forcing extensions where
\begin{enumerate}[label=\textnormal{(\arabic*)}]
  \item {\sf SRP} continues to hold, but
  \item $\uDSR(1,S^{\omega_2}_\omega)$ fails.
\end{enumerate}
\end{cor}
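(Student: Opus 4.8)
The plan is to follow the template of Larson's method laid out at the start of Subsection~\ref{subsec:GeneralSetting}, carried out at $\kappa=\omega_2$, with the hypothesis that the density of the nonstationary ideal on $\omega_1$ is $\omega_1$ taking over the role that $\kappa>\omega_2$ played in the proof of Theorem~\ref{thm:FLH-SeparatingSRPfromuDSRhigherUp}. First I would let $\P$ be the natural forcing that adds, by bounded approximations together with a nonreflection promise, a sequence $\vec S=\seq{S_\alpha}{\alpha<\omega_2}$ of stationary, co-stationary subsets of $S^{\omega_2}_\omega$ which is a counterexample to $\uDSR(1,S^{\omega_2}_\omega)$ --- the promise being arranged so that for every $\gamma<\omega_2$ of uncountable cofinality only boundedly many $\alpha<\gamma$ have $S_\alpha\cap\gamma$ stationary in $\gamma$ (see \cite{Fuchs-LambieHanson:SeparatingDSR} for the precise poset). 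Since $\SRP$ gives $\omega_2^{\omega_1}=\omega_2$, $\P$ has size $\omega_2$ and is $\omega_2$-strategically closed; in particular it is ${<}\omega_2$-distributive, so $\power(\omega_1)$, the clubs and stationary subsets of $\omega_1$, and $H_{\omega_2}$ are computed the same way in $\V$ and in $\V[G]$, and the density of $\NS_{\omega_1}$ is still $\omega_1$ in $\V[G]$; fix $\seq{T_i}{i<\omega_1}\in\V$, a dense family in $\power(\omega_1)/\NS_{\omega_1}$.

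Working in $\V[G]$, let $\bbT$ be the iteration that shoots, with appropriate support, clubs through the complements $S^{\omega_2}_\omega\setminus S_\alpha$ for all $\alpha<\omega_2$ (or a variant dictated by the design of $\P$), arranged so that $\P*\dot{\bbT}$ is forcing equivalent to an $\omega_2$-directed closed poset of size $\omega_2$. Then, by the indestructibility hypothesis, $\SRP$ holds in $\V[G][H]$ for $H$ that is $\bbT$-generic over $\V[G]$. Moreover, because $\P$ is ${<}\omega_2$-distributive and $\P*\dot{\bbT}$ is equivalent to an $\omega_2$-distributive poset, $\bbT$ --- and every initial segment of $\bbT$ --- adds no new $\omega_1$-sequences over $\V[G]$; hence at every stage of $\bbT$, $\power(\omega_1)$, the clubs and stationary subsets of $\omega_1$, and $H_{\omega_2}$ are those of $\V[G]$, and $\seq{T_i}{i<\omega_1}$ stays dense in $\power(\omega_1)/\NS_{\omega_1}$. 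Consequently a stationary $S'\sub[H_{\omega_2}]^\omega$ at any such stage is projective stationary iff $\{x\in S'\st x\cap\omega_1\in T_i\}$ is stationary for every $i<\omega_1$ --- a condition referring only to the $\omega_1$ ground-model parameters $T_i$.

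The heart of the argument is the \emph{preservation of projective stationarity}: every projective stationary $S\in\V[G]$ should remain projective stationary in $\V[G][H]$. Granting this, one closes exactly as in Subsection~\ref{subsec:GeneralSetting}: $\SRP$ in $\V[G][H]$ gives a continuous $\in$-chain of length $\omega_1$ through $S$, which already lies in $\V[G]$ since $\bbT$ is $\omega_2$-distributive over $\V[G]$, so $\V[G]\models\SRP$, while $\vec S\in\V[G]$ still witnesses $\neg\uDSR(1,S^{\omega_2}_\omega)$ there (as $\bbT$ was not applied). To prove the preservation, note $S\sub[H_{\omega_2}^{\V[G]}]^\omega$ and iterate Lemma~\ref{lem:WhenT_APreservesProjectiveStationarity} along $\bbT$: by the design of $\P$, each $S^{\omega_2}_\omega\setminus S_\alpha$ remains stationary throughout $\bbT$, so at the stage shooting a club through $S^{\omega_2}_\omega\setminus S_\alpha$ the forcing $\bbT_{S_\alpha}$ is countably distributive and Lemma~\ref{lem:WhenT_APreservesProjectiveStationarity} reduces preservation to checking that $S\setminus\lifting(S_\alpha,[H_{\omega_2}]^\omega)$ is projective stationary in the relevant intermediate model, which by the previous paragraph amounts to showing that for every $i<\omega_1$ the set $\{x\in S\st x\cap\omega_1\in T_i,\ \sup(x\cap\omega_2)\notin S_\alpha\}$ is stationary.

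That last point is where I expect the main obstacle to lie, and it is exactly where the density hypothesis is essential. Working with $\P$-names, if some condition forced this set to be nonstationary for some (name for an) $S$, some $i<\omega_1$ and some $\alpha<\omega_2$, a density argument exploiting the genericity of $\vec S$ over $\P$ --- the generic bad sets $S_\alpha$ avoid a club's worth of suprema $\sup(x\cap\omega_2)$ of elements $x$ of any projective stationary set --- should yield a contradiction. The subtle part is that at $\kappa=\omega_2$ there are $2^{\omega_1}$ stationary subsets of $\omega_1$ to worry about, too many to handle by brute force; it is precisely the reduction, afforded by the density hypothesis, to the $\omega_1$ fixed ground-model sets $T_i$ that makes projective stationarity a $\P$-generically manageable property and lets the argument go through at $\omega_2$, replacing the extra closure/room that was available when $\kappa>\omega_2$ in Theorem~\ref{thm:FLH-SeparatingSRPfromuDSRhigherUp}.
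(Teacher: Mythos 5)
Note first that the paper does not prove this corollary; it is quoted verbatim from Fuchs--Lambie-Hanson, and the closest the paper comes to its proof is the general template at the start of Subsection~\ref{subsec:GeneralSetting} together with the analogous supremum step inside the proof of Theorem~\ref{thm:SRPliftingDoesNotImplyuDSR}. Measured against that, your proposal has the right skeleton (Larson's method, plus the reduction of projective stationarity to a dense $\omega_1$-family in $\power(\omega_1)/\NS_{\omega_1}$), but the decisive step is not proved, and the role of the density hypothesis is misplaced. Two concrete issues. First, the architecture: $\bbT$ should not be an $\omega_2$-length iteration shooting a club through the complement of each $S_\alpha$ separately, with Lemma~\ref{lem:WhenT_APreservesProjectiveStationarity} applied at every stage (this forces you to re-verify the characterization at each intermediate model and to handle limit stages, for which you give no argument). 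The actual construction uses, for each projective stationary $S$ to be preserved, a \emph{single} club-shooting $\bbT_{\ge\beta}$ through the complement of the tail union $A_{\ge\beta}=\bigcup_{\beta\le\alpha<\omega_2}A_\alpha$, where $\beta=\beta(S)<\omega_2$ is chosen appropriately; $\P*\dot\bbT_{\ge\beta}$ has a dense $\omega_2$-directed closed subset, and one application of Lemma~\ref{lem:WhenT_APreservesProjectiveStationarity} suffices.

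Second, and more seriously, the heart of the proof is exactly the part you defer with ``should yield a contradiction.'' What must be shown is: for each $i<\omega_1$ there is $\beta_i<\omega_2$ such that $(S\cap\lifting(T_i,[H_{\omega_2}]^\omega))\setminus\lifting(A_{\ge\beta_i},[H_{\omega_2}]^\omega)$ is stationary (this is the genericity argument over $\P$, the analogue of Claim~3.13 cited in the proof of Theorem~\ref{thm:SRPliftingDoesNotImplyuDSR}, and you give no argument for it); and then one sets $\beta=\sup_{i<\omega_1}\beta_i<\omega_2$, which is where the regularity of $\omega_2$ is used. This supremum step, which you never state, is the \emph{entire} point of the density hypothesis: without it one must find a $\beta_T$ for each of the $2^{\omega_1}=\omega_2$ many stationary $T\sub\omega_1$, and the supremum of $\omega_2$ many ordinals below $\omega_2$ need not be below $\omega_2$; with a dense family of size $\omega_1$ one only needs $\omega_1$ many $\beta_i$'s (using that if $T_i\sub T$ modulo $\NS_{\omega_1}$, then stationarity of $S\cap\lifting(T_i,\cdot)$ minus the lifting of $A_{\ge\beta}$ yields the same for $T$). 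Your explanation that the density hypothesis ``makes projective stationarity a $\P$-generically manageable property'' gestures at this but does not identify the mechanism, and as written the proof of the preservation claim --- the only nontrivial content of the corollary --- is absent.
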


However, there are obstacles to obtaining versions of
Theorem \ref{thm:FLH-SeparatingSRPfromuDSRhigherUp} or the corollary just mentioned for $\infSC$-\SRP in the context of \CH. The main problem is to find a version of the Preservation Lemma \ref{lem:WhenT_APreservesProjectiveStationarity} for spread out sets rather than for projective stationarity.

\subsection{The \CH setting: a less canonical separation}
\label{subsec:CHsetting-LessCanonical}

I will now move on to a partial separation result which may be cause for optimism that the answer to Question \ref{question:SeparatingSCSRPfromuDSR} is negative.
It concerns the fragment of $\infSC$-\SRP that says that \SRP holds for all the spread out sets shown to be spread out in this article. These are the ones from Subsection \ref{subsec:MutualStationarity}. Since this fragment of \SRP does not correspond to a forcing class, I don't consider it to be canonical, hence the title of the present subsection.

\begin{defn}
\label{def:MS-correspondences}
Let $\rho$ be regular, and let $K\sub\rho$ be a set of regular cardinals such that $\min(K)>\omega_1$. Let $\vec{S}=\seq{S_{\kappa,i}}{\kappa\in K, i<\omega_1}$ be such that for every $\kappa\in K$ and every $i<\omega_1$, $S_{\kappa,i}\sub S^\kappa_\omega$ is stationary in $\kappa$. Let $\seq{D_i}{i<\omega_1}$ be a partition of $\omega_1$ into stationary sets, and let $\rho\ge\sup(K)$ be regular.
Let's call $\kla{\vec{D},\vec{S}}$ a $K$-correspondence, and
define
\begin{ea*}
\lifting(\vec{D},\vec{S},[H_\rho]^\omega)&=&\{x\in[H_\rho]^\omega\st\forall\kappa\in x\cap K\forall i<\omega_1\\
&&\qquad(x\cap\omega_1\in D_i\implies\sup(x\cap\kappa)\in S_{\kappa,i})\}.
\end{ea*}
Let
\begin{ea*}
\mathcal{S}^+(\rho)&=&\{\lifting(\kla{\vec{D},\vec{S}},[H_\rho]^\omega)\cap C\st\kla{\vD,\vS}\ \text{is a $K$-correspondence}\\
&&\qquad \text{for some $K\sub\rho$, and $C\sub[H_\rho]^\omega$ is club}\}.
\end{ea*}
Let's write $\SRP(\mathcal{S}^+)$ for the statement that $\SRP(\mathcal{S}^+(\rho))$ holds for every regular $\rho>\omega_1$.
\end{defn}

\begin{obs}
It follows from the results of the previous subsections that:
\begin{enumerate}[label=\textnormal{(\arabic*)}]
\item $\text{$\infSC$-\SRP}+\CH$ implies $\SRP(\mathcal{S}^+)$.
\item If $\SRP(\mathcal{S}^+)+\CH$ holds, then for all regular $\kappa>\omega_1$, $\kappa^{\omega_1}=\kappa$, and hence, every set in $\mathcal{S}^+(\rho)$, for $\rho>\omega_1$, is spread out.
\end{enumerate}
\end{obs}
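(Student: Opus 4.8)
Both clauses are meant to be assembled from results of Subsections~\ref{subsec:FPandSCH} and~\ref{subsec:MutualStationarity}; the point is that, under \CH, a set in $\mathcal{S}^+(\rho)$ is the lifting of a $K$-correspondence of precisely the kind to which Corollary~\ref{cor:ASpreadOutSetAssumingSCSRP}, respectively Corollary~\ref{cor:ASpreadOutSet}, applies, intersected with a club.

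For~(1), the plan is: fix a regular $\rho>\omega_1$ and $S\in\mathcal{S}^+(\rho)$, so $S=\lifting(\kla{\vec D,\vec S},[H_\rho]^\omega)\cap C$ for a $K$-correspondence $\kla{\vec D,\vec S}$ with $K\sub\rho$ and a club $C$. The definition of $\mathcal{S}^+(\rho)$ requires $\min(K)>\omega_1$, which under \CH means $\min(K)>2^\omega$, and $\rho\ge\sup(K)$ is regular; so Corollary~\ref{cor:ASpreadOutSetAssumingSCSRP} (which consumes $\infSC$-\SRP) shows $\lifting(\kla{\vec D,\vec S},[H_\rho]^\omega)$ is spread out, and Observation~\ref{obs:SpreadOutSetsClosedUnderIntersectionWithClubs} transfers this to $S$. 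Then Theorem~\ref{thm:SpreadOutIffinfSCProjectiveStationary} gives $\P_S\in\infSC$, and $\infSC$-$\SRP(\rho)$ produces a continuous $\in$-chain of length $\omega_1$ through $S$. This is routine citation; the one thing to check is the step from $\min(K)>\omega_1$ to $\min(K)>2^\omega$, valid because \CH makes $2^\omega=\omega_1$.

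For~(2), the substance is to show that $\SRP(\mathcal{S}^+)$ alone implies $\SFP{\kappa}$ for every regular $\kappa>\omega_1$; once that is in hand, Observation~\ref{obs:CharacterizationOfE-versionOfSFPkappa} and Fact~\ref{fact:SFPandArithmetic} give $\kappa^{\omega_1}=\kappa$, exactly as in Corollary~\ref{cor:SCSRPimpliesCardArithmeticAboveContinuum}. To obtain $\SFP{\kappa}$ I would rerun the argument from the proof of Theorem~\ref{thm:infSCSRPimpliesSFPkappa}, but fed with a member of $\mathcal{S}^+(\kappa^+)$ instead of an arbitrary spread out set: given a partition $\vec D=\seq{D_i}{i<\omega_1}$ of $\omega_1$ into stationary sets and a sequence $\vec S=\seq{S_i}{i<\omega_1}$ of stationary subsets of $S^\kappa_\omega$, regard $\kla{\vec D,\vec S}$ as a $K$-correspondence with the single cardinal $K=\{\kappa\}\sub\kappa^+$ (legitimate, as $\min(K)=\kappa>\omega_1$ and $\kappa^+\ge\sup(K)$ is regular) and $S_{\kappa,i}=S_i$; intersecting its lifting to $[H_{\kappa^+}]^\omega$ with the club of countable $x$ having $\kappa\in x$ and $x\prec\kla{H_{\kappa^+},\in,\vec D,\vec S,\kappa}$ yields a set $T\in\mathcal{S}^+(\kappa^+)$. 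Then $\SRP(\mathcal{S}^+(\kappa^+))$ supplies a continuous $\in$-chain $\seq{y_\alpha}{\alpha<\omega_1}$ through $T$; on the club $\{\alpha<\omega_1\st y_\alpha\cap\omega_1=\alpha\}$ the map $\alpha\mapsto\sup(y_\alpha\cap\kappa)$ is strictly increasing, continuous, and lands in $S_i$ whenever $\alpha\in D_i$, and the gap-filling device at the end of the proof of Theorem~\ref{thm:infSCSRPimpliesSFPkappa} extends it to the normal function witnessing $\SFP{\kappa}$. Note \CH is not used up to this point.

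Finally, for the ``hence'' in~(2): once $\kappa^{\omega_1}=\kappa$ holds for every regular $\kappa>\omega_1$, any $K\sub\rho$ with $\min(K)>\omega_1$ automatically satisfies the arithmetic hypothesis of Corollary~\ref{cor:ASpreadOutSet}, since for $\kappa<\lambda$ in $K$ one has $\kappa^\omega\le\kappa^{\omega_1}=\kappa<\lambda$, and under \CH it also satisfies $\min(K)>2^\omega$; so Corollary~\ref{cor:ASpreadOutSet} makes $\lifting(\kla{\vec D,\vec S},[H_\rho]^\omega)$ spread out, and Observation~\ref{obs:SpreadOutSetsClosedUnderIntersectionWithClubs} finishes. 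I expect the only delicate point of the whole argument to be one of order and non-circularity: the arithmetic $\kappa^{\omega_1}=\kappa$ must be secured \emph{before} Corollary~\ref{cor:ASpreadOutSet} is invoked (its hypothesis is precisely that arithmetic), which is why the $\SFP{\kappa}$ step is routed through $\SRP(\mathcal{S}^+(\kappa^+))$ directly---a step needing no prior cardinal arithmetic---rather than through spread-outness; and one should double-check that the singleton-$K$ lifting really does fall inside $\mathcal{S}^+(\kappa^+)$.
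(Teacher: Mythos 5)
Your proof is correct and is exactly the assembly of cited results that the paper intends (the paper offers no further argument beyond ``it follows from the previous subsections''). In particular, your packaging of a pair $\kla{\vec D,\vec S}$ as a singleton-$K$ correspondence lifted to $[H_{\kappa^+}]^\omega$ is the right way to extract $\SFP{\kappa}$ from $\SRP(\mathcal{S}^+)$ without any prior cardinal arithmetic, and your ordering of the steps in (2) — arithmetic first, then Corollary~\ref{cor:ASpreadOutSet} — correctly avoids the circularity you flag.
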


The assumptions of the following theorem are implied by $\SCFA+\CH$; see Fuchs \& Rinot \cite{FuchsRinot:WeakSquareStationaryReflection}.

\begin{thm}
\label{thm:SRPliftingDoesNotImplyuDSR}
Assume that $\SRP(\mathcal{S}^+)$ holds in $\V$ and in any forcing extension obtained by a $\lambda$-directed closed forcing, and that \CH is true.
Then there is a $\lambda$-strategically closed forcing in whose forcing extensions
\begin{enumerate}[label=\textnormal{(\arabic*)}]
\item $\SRP(\mathcal{S}^+)$ holds, yet
\item $\uDSR(1,S^{\omega_2}_\omega)$ fails.
\end{enumerate}
\end{thm}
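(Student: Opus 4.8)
The plan is to follow the template of Theorem~\ref{thm:FLH-SeparatingSRPfromuDSRhigherUp} (i.e.\ Fuchs \& Lambie-Hanson), replacing \SRP with its fragment $\SRP(\mathcal{S}^+)$ and keeping careful track of the fact that we are only required to reflect sets of the restricted form $\lifting(\kla{\vec D,\vec S},[H_\rho]^\omega)\cap C$. First I would recall the forcing notion $\R$ used in \cite{Fuchs-LambieHanson:SeparatingDSR}: working at $\kappa=\omega_2$ under \CH, one forces with the $\lambda$-strategically closed iteration (of length $\lambda$) that adds an $\omega_1$-indexed sequence $\seq{A_j}{j<\omega_1}$ of stationary subsets of $S^{\omega_2}_\omega$ together with, for each ordinal $\gamma<\omega_2$ of uncountable cofinality, a club through the complement of $\bigcup_{j}A_j\cap\gamma$ (so no single $\gamma$ reflects all of them), arranged so that in the extension $\V[G]$ clause~(2), the failure of $\uDSR(1,S^{\omega_2}_\omega)$, holds. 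The crucial structural feature we exploit, exactly as in \cite{Fuchs-LambieHanson:SeparatingDSR}, is that $\R$ factors: after adding the sequence $\seq{A_j}{j<\omega_1}$, the ``bad'' counterexample $A=A_{j_0}$ for a suitable $j_0$ can be destroyed by $\bbT_A$, and $\R * \dot{\bbT}_{\dot A}$ (or the appropriate tail) is forcing-equivalent to a $\lambda$-directed closed poset, so that after a further generic $H$ for $\bbT_A$ over $\V[G]$, the hypothesis ``$\SRP(\mathcal{S}^+)$ holds after $\lambda$-directed closed forcing'' applies in $\V[G][H]$.

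The heart of the argument is then the \emph{preservation step}: given $S\in\mathcal{S}^+(\rho)^{\V[G]}$, i.e.\ $S=\lifting(\kla{\vec D,\vec S},[H_\rho^{\V[G]}]^\omega)\cap C$ for some $K$-correspondence in $\V[G]$, I must show $S$ is still (the $\V[G][H]$-version of) a member of $\mathcal{S}^+$ in $\V[G][H]$, or at least projective stationary there, so that $\SRP$ in $\V[G][H]$ gives a continuous $\in$-chain through its lift, which by $\omega_2$-distributivity of $\bbT_A$ over $\V[G]$ descends to a continuous $\in$-chain through $S$ in $\V[G]$ (intersecting the lifted chain with $H_\rho^{\V[G]}$). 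For this I would prove the analogue of Claim~\ref{claim:AlmostPreservingSpreadOutSets} of Lemma~\ref{lem:SRP+nonCHisWeak}, but now under \CH and for the correspondence-lifted sets: since $A\subseteq S^{\omega_2}_\omega$ and $\bbT_A$ is countably distributive, and since $B=S^{\omega_2}_\omega\setminus A$ is stationary in $\V[G]$, I want that $S\setminus\lifting(A,[H_\rho]^\omega)$ is projective stationary in $\V[G]$; then Lemma~\ref{lem:WhenT_APreservesProjectiveStationarity} yields that $\bbT_A$ preserves the projective stationarity of $S$. To get that $S\setminus\lifting(A,[H_\rho]^\omega)$ is projective stationary, I would argue directly using Fact~\ref{fact:MutualStationarity} (Foreman--Magidor): given stationary $D\subseteq\omega_1$ and $f:H_\rho^{<\omega}\to H_\rho$, form $K'=K\cup\{\omega_1,\omega_2\}$ and the sequence assigning $D\cap D_{i}$ to $\omega_1$ (for the right $i$), $B=S^{\omega_2}_\omega\setminus A$ to $\omega_2$, and $S_{\kappa,i}$ to $\kappa\in K$, apply Fact~\ref{fact:MutualStationarity} to find a countable $M\prec\kla{H_\rho,\in,\vec D,\vec S,A,f,\ldots}$ witnessing all these suprema conditions simultaneously, which forces $M\cap H_\rho\in (S\cap\lifting(D,\cdot))\setminus\lifting(A,\cdot)$.

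There is a subtlety I should address: $S=\lifting(\kla{\vec D,\vec S},[H_\rho]^\omega)\cap C$ is defined in $\V[G]$, and in $\V[G][H]$ the relevant object is $S\projectup[H_\rho^{\V[G][H]}]^\omega$, which is itself of the form $\lifting(\kla{\vec D,\vec S},[H_\rho^{\V[G][H]}]^\omega)\cap C'$ for the same correspondence (the $D_i$, $S_{\kappa,i}$ remain stationary because $\bbT_A$ is countably distributive and $\omega_2$-distributive) intersected with a club $C'$ coming from $C$; so it genuinely lies in $\mathcal{S}^+(\rho)^{\V[G][H]}$, and $\SRP(\mathcal{S}^+)$ in $\V[G][H]$ applies to it on the nose (rather than only its projective stationarity being available, one has membership in $\mathcal{S}^+$). \textbf{The main obstacle} I anticipate is precisely the preservation argument combined with making the iteration $\R$ and the factorization work while only assuming $\SRP(\mathcal{S}^+)$ rather than full \SRP as indestructible hypothesis: one must check that each stage of the Fuchs--Lambie-Hanson iteration can be run (the counting of densities, using $\kappa^{\omega_1}=\kappa$ which follows from $\SRP(\mathcal{S}^+)+\CH$ by the Observation above), that strategic closure is maintained, and — most delicately — that the only sets one ever needs to reflect in $\V[G][H]$ to recover $\SRP(\mathcal{S}^+)$ in $\V[G]$ are of the restricted correspondence-lifting form, so that the weaker indestructible hypothesis suffices; this is where I expect to have to reproduce, with the restricted class $\mathcal{S}^+$ in place of all projective stationary sets, the bookkeeping in \cite[\S4]{Fuchs-LambieHanson:SeparatingDSR}, and verify that nothing in that bookkeeping secretly required the full strength of \SRP.
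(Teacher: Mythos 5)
Your overall architecture is the right one --- the Fuchs--Lambie-Hanson forcing $\P$, a factorization $\P*\dot{\bbT}$ into something with a $\lambda$-directed closed dense subset, the indestructibility hypothesis applied in $\V[G][H]$, and projecting the chain back down by $\omega_2$-distributivity --- and you correctly flag the main subtlety, namely that the set to be reflected in $\V[G][H]$ must again lie in $\mathcal{S}^+$. But there is a genuine gap at exactly that point. The club-shooting forcing in the FLH factorization is not $\bbT_A$ for a single ``bad'' $A=A_{j_0}$; it is $\bbT_\beta$, shooting a club through the complement of the \emph{tail union} $A_{{\ge}\beta}=\bigcup_{\beta\le\alpha<\omega_2}A_\alpha$, and it is $\P*\dot{\bbT}_\beta$ that has a dense $\omega_2$-directed closed subset. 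This forcing adds a new club in $\omega_2$ and can therefore destroy the stationarity of the sets $S_{\omega_2,i}$ of your correspondence; your assertion that ``the $D_i$, $S_{\kappa,i}$ remain stationary because $\bbT_A$ is countably distributive and $\omega_2$-distributive'' is false for $\kappa=\omega_2$, since distributivity says nothing about new clubs in $\omega_2$ (adding one is the whole point of $\bbT_\beta$). The missing idea is that $\beta$ must be chosen \emph{depending on the correspondence}: for each $i<\omega_1$ there is $\beta_i<\omega_2$ with $S_{\omega_2,i}\ohne A_{{\ge}\beta_i}$ stationary, and by regularity of $\omega_2$ one can take $\beta=\sup_{i<\omega_1}\beta_i<\omega_2$, so that $\bbT_\beta$ preserves the stationarity of all $\omega_1$ many sets $S_{\omega_2,i}$ simultaneously (and one takes $\beta=0$ if $\omega_2\notin K$). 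Only with this choice does the recomputed lifting lie in $\mathcal{S}^+(\rho)^{\V[G][H]}$.

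The same omission infects your Foreman--Magidor step: when $\omega_2\in K$, a countable $M$ witnessing membership in $S\ohne\lifting(A,[H_\rho]^\omega)$ with $M\cap\omega_1\in D_i$ needs $\sup(M\cap\omega_2)\in S_{\omega_2,i}\ohne A$, not merely $\sup(M\cap\omega_2)\in S^{\omega_2}_\omega\ohne A$; you cannot assign both $B$ and $S_{\omega_2,i}$ to the single cardinal $\omega_2$, so the set you must assign there is $S_{\omega_2,i}\ohne A_{{\ge}\beta}$, whose stationarity is exactly what the choice of $\beta$ above buys you. Once that is in place, the detour through Lemma~\ref{lem:WhenT_APreservesProjectiveStationarity} is unnecessary: the paper's argument never preserves the stationarity of $S$ itself, but only checks that the correspondence $\kla{\vD,\vS}$ survives into $\V[G][H]$ (the $D_i$ because $\bbT_\beta$ adds no new subsets of $\omega_1$, the $S_{\kappa,i}$ for $\kappa>\omega_2$ because $\bbT_\beta$ has size $\omega_2<\kappa$ and is hence $\kappa$-c.c., and the $S_{\omega_2,i}$ by the choice of $\beta$), whence $\SRP(\mathcal{S}^+)$ in $\V[G][H]$ applies directly to the recomputed lifting and the resulting chain descends to $\V[G]$.
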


\begin{remark}
I recommend reading the proof with Question \ref{question:SeparatingSCSRPfromuDSR} in mind. The problem is the preservation of an arbitrary spread out set without knowing that it belongs to some $\mathcal{S}^+(\rho)$.
\end{remark}

\begin{proof}
The forcing notion in question is $\P$, the forcing to add a counterexample to $\uDSR(1,S^{\omega_2}_\omega)$ used in the proof of \cite[Theorem 3.10]{Fuchs-LambieHanson:SeparatingDSR}, where it is shown that $\P$ is $\omega_2$-strategically closed (see claim 3.11 in the proof). Let $G$ be generic for $\P$. In $\V[G]$, there is a sequence $\vA=\seq{A_\alpha}{\alpha<\omega_2}$ which witnesses the failure of $\uDSR(1,S^\lambda_\omega)$. That is, for every $\alpha<\omega_2$ of uncountable cofinality, the set $\{\xi<\alpha\st A_\xi\cap\alpha\ \text{is stationary in $\alpha$}\}$ is bounded in $\alpha$. Thus, I will be done if I can show that $\V[G]$ satisfies $\SRP(\mathcal{S}^+)$.

To this end, let $S\sub[H_\lambda]^\omega\in\mathcal{S}^+(\rho)$, for some regular $\rho>\omega_1$. Let $\vD$, $\vS$, $C$ witness this, that is, $\vD=\seq{D_i}{i<\omega_1}$ is a partition of $\omega_1$ into stationary sets, $\vS=\seq{S_{\kappa,i}}{\kappa\in K,\ i<\omega_1}$, where $K\sub\rho$ is a set of regular cardinals greater than $\omega_1$, $C\sub[H_\rho]^\omega$ is club and $S=\lifting(\vD,\vS,[H_\rho]^\omega)\cap C$.

For $\beta<\lambda$, let $A_{{\ge}\beta}=\bigcup_{\beta\le\alpha<\omega_2}A_\alpha$, and let $\bbT_\beta$ be the forcing to shoot a club through the complement of $A_{{\ge}\beta}$. By Claim 3.13 of the proof of \cite[Theorem 3.9]{Fuchs-LambieHanson:SeparatingDSR}, the forcing $\P*\dot{\bbT}_{{\ge}\beta}$ has a dense subset that's $\omega_2$-directed closed and has size $\omega_2$ in \V. It is used here that $\omega_2^{\omega_1}=\omega_2$, which follows from $\SRP(\mathcal{S}^+)+\CH$. Clearly then, $\P*\dot{\bbT}_{{\ge}\beta}$ preserves cofinalities.

Let me assume for a second that $\omega_2\in K$. By Claim 3.13 of the aforementioned proof, for every $i<\omega_1$, there is a $\beta_i<\omega_2$ such that $S_{\omega_2,i}\ohne A_{{\ge}\beta_i}$ is stationary. Let $\beta=\sup_{i<\omega_1}\beta_i$. Then $\beta<\omega_2$, and for every $i<\omega_1$, $S_{\omega_2,i}\ohne A_{{\ge}\beta}$ is stationary, which means that forcing with $\bbT_\beta$ preserves the stationarity of each $S_{\omega_2,i}$, see \cite[Lemma 3.7.(2)]{Fuchs-LambieHanson:SeparatingDSR}.

If $\omega_2\notin K$, then let $\beta=0$.

Now let $H$ be $\bbT_\beta$-generic over $\V[G]$. Since $\P*\dot{\bbT}_\beta$ has a dense subset that's $\omega_2$-directed closed, it follows from our indestructible $\SRP(\mathcal{S}^+)$ assumption in \V that $\SRP(\mathcal{S}^+)$ holds in $\V[G][H]$. Since $\P*\dot{\bbT}_\beta$ has an $\omega_2$-directed closed subset, it adds no sequences of ground model sets of length less than $\omega_2$.
In $\V[G][H]$, it is still the case that $C$ is a club subset of $[H_\rho^{\V[G]}]^\omega$, and it is still the case that $\kla{\vec{D},\vec{S}}$ is a correspondence: $\vD$ obviously is still a partition of $\omega_1$ into stationary sets. Since $\P*\bbT_{{\ge}\beta}$ preserves cofinalities, $K$ is still a set of regular cardinals greater than $\omega_1$. Now, for $\kappa\in K$ and $i<\omega_1$, $S_{\kappa,i}$ is still a stationary subset of $S^\kappa_\omega$ - in the case where $\kappa=\omega_2$, this is by our choice of $\beta$, and if $\kappa>\omega_2$, then $\bbT_\beta$ preserves the stationarity of $S_{\kappa,i}$ because it is $\kappa$-c.c.; it has size $\omega_2<\kappa$.

Let $C^*=C\projectup[H_\rho^{\V[G][H]}]^\omega$ - so $C^*$ contains a club $C'$. In $\V[G][H]$, let $S^*=\lifting(\vec{D},\vec{S},[H_\rho^{\V[G][H]}]^\omega)\cap C'$. Then $S^*\in\mathcal{S}^+(\rho)$ in $\V[G][H]$, so by $\SRP(\mathcal{S}^+)$ in $\V[G][H]$, there is a continuous $\in$-chain $\seq{x'_i}{i<\omega_1}$ through $S^*$. But then, for every $i<\omega_1$, $x_i=x'_i\cap H_\rho^\V\in S$. This is because $x'_i\in C'\sub C\projectup [H_\rho^{\V[G][H]}]^\omega$, so that $x_i\in C$, and because $x'_i\in\lifting(\vec{D},\vec{S},[H_\rho^{\V[G][H]}]^\omega)$, if we let $j=x'_i\cap\omega_1$, then $j=x_i\cap\omega_1$, and further, if $\kappa\in K\cap x_i$, then $\kappa\in K\cap x'_i$, so that $\sup x_i\cap\kappa=\sup x'_i\cap\kappa\in S_{\kappa,j}$. Now the sequence $\seq{x_i}{i<\omega_1}$ already exists in $\V[G]$, since $\bbT_\beta$ is $\omega_2$-distributive in $\V[G]$.

Thus, $\V[G]$ satisfies $\SRP(\mathcal{S}^+)$ but not $\uDSR(1,S^\lambda_\omega)$, as desired.
\end{proof}

In particular, it is not the case that $\SFP{\lambda}+\CH$ implies $\uDSR(1,S^\lambda_\omega)$, for regular $\lambda>\omega_1$. This is because $\SRP(\mathcal{S}^+)$ does imply $\SFP{\lambda}$, and because of the previous theorem.

\subsection{The \CH setting: a canonical separation at $\omega_2$}
\label{subsec:CHsetting:CanonicalButOnlyUpToOmega2}

In the present subsection, I work with subcompleteness rather than $\infty$-subcompleteness. The goal is to separate, to some degree, the subcomplete forcing axiom from the subcomplete fragment of \SRP in the presence of the continuum hypothesis. The idea is to take a consequence of \SCFA that is obtained using a subcomplete forcing that is not countably distributive (as suggested by the discussion after Theorem \ref{thm:JensenOnMutualStationarity}), and to argue that it does not follow from the subcomplete fragment of \SRP. The forcing in question is going to be Namba forcing, changing a regular cardinal to have countable cofinality. The first ingredient I will need is an iteration theorem for the subclass of subcomplete forcing notions consisting of those that \emph{preserve} uncountable cofinalities. Thus, this class excludes Namba forcing.

\begin{defn}
\label{def:UncountableCofinalityPreserving}
A forcing notion $\P$ \emph{is uncountable cofinality preserving} if for every ordinal $\eps$ of uncountable cofinality, $\forces_\P$ ``$\eps$ has uncountable cofinality.'' 
\end{defn}

In the proof of the iteration theorem, I will use the Boolean algebraic approach to forcing iterations and to revised countable support, as described in \cite{Jensen2014:SubcompleteAndLForcingSingapore}. An extensive account of the method is the manuscript \cite{VialeEtAl:BooleanApproachToSPiterations}, which also contains a proof of the following fact, originally due to Baumgartner, see \cite[Theorem 3.13]{VialeEtAl:BooleanApproachToSPiterations}:

\begin{fact}
\label{fact:lambdacc}
Let $\seq{\B_i}{i<\lambda}$ be an iteration such that for every $\alpha<\lambda$, $\B_\alpha$ is ${<}\lambda$-c.c., and such that the set of $\alpha<\lambda$ such that $\B_\alpha$ is the direct limit of $\vec{\B}\rest\alpha$ is stationary. Then the direct limit of $\vec{\B}$ is ${<}\lambda$-c.c.
\end{fact}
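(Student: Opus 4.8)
The plan is to run the classical Baumgartner reflection argument in the Boolean-algebraic framework of \cite{VialeEtAl:BooleanApproachToSPiterations}. Write $\B_\lambda=\dirlim\vec\B$, and recall that being the direct limit means precisely that $\bigcup_{\alpha<\lambda}\B_\alpha$ is dense in $\B_\lambda$; note also that $\lambda$ is tacitly regular and uncountable, since $S=\{\alpha<\lambda:\B_\alpha=\dirlim(\vec\B\rest\alpha)\}$ is assumed stationary in $\lambda$. I would suppose toward a contradiction that $\B_\lambda$ is not ${<}\lambda$-c.c., and fix a maximal antichain $A\sub\B_\lambda^+$ with $|A|\ge\lambda$. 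The first step is a harmless refinement that later makes the use of projections legitimate: for each $a\in A$ pick, using that $\bigcup_{\alpha<\lambda}\B_\alpha$ is dense below $a$, a maximal antichain $E_a$ below $a$ with $E_a\sub\bigcup_{\alpha<\lambda}\B_\alpha$, and replace $A$ by $\bigcup_{a\in A}E_a$. The result is again a maximal antichain of size $\ge\lambda$, and now each $a\in A$ lies in $\B_{\gamma_a}$ for some $\gamma_a<\lambda$; I fix the map $a\mapsto\gamma_a$.

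Second, I would reflect. Fix a large regular $\theta$ with all relevant objects in $H_\theta$, and take a continuous $\sub$-increasing chain $\seq{M_i}{i<\lambda}$ of elementary submodels of $(H_\theta,\in,\dots)$, each of size $<\lambda$, containing $\vec\B$, $A$ and $a\mapsto\gamma_a$, with $M_i\cap\lambda\in\lambda$ and $\seq{M_j}{j\le i}\in M_{i+1}$. Since $\lambda$ is regular, $\{M_i\cap\lambda:i<\lambda\}$ is club in $\lambda$, so as $S$ is stationary I may pick $M:=M_i$ with $\delta:=M\cap\lambda\in S$. For $a\in A\cap M$ we get $\gamma_a\in M$, hence $\gamma_a<\delta$, hence $a\in\B_\delta$; thus $A\cap M\sub\B_\delta$. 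Moreover $|A\cap M|\le|M|<\lambda$, so since $A$ is a maximal antichain it suffices to prove the \textbf{Claim} that $A\cap M$ is predense in $\B_\lambda$: granting this, every $a\in A$ is compatible with some $b\in A\cap M$, whence $a=b\in M$ as $A$ is an antichain, so $A\sub M$ and $|A|<\lambda$, a contradiction.

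To prove the Claim, fix $p\in\B_\lambda^+$; I must produce $b\in A\cap M$ compatible with $p$. For $\gamma<\lambda$ let $\pi_\gamma\colon\B_\lambda\to\B_\gamma$ be the projection associated with the complete subalgebra $\B_\gamma$, and recall the standard fact that for $x\in\B_\lambda$ and $c\in\B_\gamma$ one has $x\cdot c=0$ iff $\pi_\gamma(x)\cdot c=0$; in particular predensity projects down, so since $A$ is predense in $\B_\lambda$, $\{\pi_\gamma(a):a\in A\}$ is predense in $\B_\gamma$ for each $\gamma$. Because $\delta\in S$, $\B_\delta$ is a direct limit, so $\bigcup_{\gamma<\delta}\B_\gamma$ is dense in $\B_\delta$, and I may choose $\gamma<\delta$ and $p^*\in\B_\gamma^+$ with $p^*\le\pi_\delta(p)$; note $\gamma\in M$. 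By ${<}\lambda$-c.c.\ of $\B_\gamma$ there is $A_0\sub A$ of size $<\lambda$ with $\{\pi_\gamma(a):a\in A_0\}$ still predense in $\B_\gamma$; this existential statement holds in $H_\theta$ with parameters $\gamma,A,\vec\B\in M$, so such $A_0$ can be found in $M$, and then $A_0\sub M$ since $|A_0|<\lambda$ and $A_0\in M$. Now $p^*$ is compatible in $\B_\gamma$ with $\pi_\gamma(b)$ for some $b\in A_0\sub A\cap M$; the projection fact gives $p^*\cdot b\ne0$ in $\B_\lambda$, hence $\pi_\delta(p)\cdot b\ne0$, and since $b\in A\cap M\sub\B_\delta$ the projection fact for $\B_\delta$ gives $p\cdot b\ne0$. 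This proves the Claim, and the Fact follows.

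The one genuinely delicate point is the juggling of projections: compatibility with $\pi_\gamma(x)$ does not imply compatibility with $x$, so I must project only the ambient condition $p$ (and only down to an algebra $\B_\gamma\sub\B_\delta$ with $\gamma\in M$), while keeping the antichain members unprojected and confined to the bounded algebra $\B_\delta$ — which is exactly what the preliminary refinement of $A$ into $\bigcup_{\alpha<\lambda}\B_\alpha$ buys. The rest is routine bookkeeping: checking that the chain of models can be arranged so that $M\cap\lambda$ is an ordinal landing in $S$, and keeping track of where regularity of $\lambda$ enters (the club of the $M_i\cap\lambda$ and the final counting $|A|\le|M|<\lambda$).
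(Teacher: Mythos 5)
Your proof is correct and is precisely the classical Baumgartner reflection argument: the paper itself gives no proof of this fact, deferring to \cite[Theorem 3.13]{VialeEtAl:BooleanApproachToSPiterations}, and your write-up (refining $A$ into $\bigcup_{\alpha<\lambda}\B_\alpha$, reflecting to an elementary $M$ with $M\cap\lambda\in S$, and using the projection identity $x\cdot c=0\iff\pi_\gamma(x)\cdot c=0$ for $c\in\B_\gamma$ to show $A\cap M$ is predense) is exactly that standard argument, with the delicate projection bookkeeping handled correctly. One small quibble: stationarity of $S$ only forces $\cf(\lambda)>\omega$, not regularity of $\lambda$; regularity is a tacit hypothesis of the fact as usually stated, and it does hold where the paper applies it in the proof of Theorem \ref{thm:IteratingSubcompleteUncountableCofinalityPreserving}, since $\delta$ is shown to be regular before Fact \ref{fact:lambdacc} is invoked.
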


I will use the following iteration theorem in the separation result I am working towards, but it may be of independent interest as well.

\begin{thm}
\label{thm:IteratingSubcompleteUncountableCofinalityPreserving}
Let $\seq{\B_i}{i\le\delta}$ be an RCS iteration of complete Boolean algebras such that for all $i+1\le\alpha$, the following hold:
\begin{enumerate}[label=\textnormal{(\arabic*)}]
  \item
  \label{item:IterationIsDirect}
  $\B_i\neq\B_{i+1}$,
  \item
  \label{item:EveryIterandIsInGamma}
  $\forces_{\B_i} (\check{\B}_{i+1}/\dot{G}_{\B_i}\ \text{is subcomplete and uncountable cofinality preserving})$,
  \item
  \label{item:IntermediateCollapses}
  $\forces_{\B_{i+1}} (\delta(\check{\B}_i)\ \text{has cardinality at most}\ \omega_1)$.
\end{enumerate}
Then for every $i\le\delta$, $\B_i$ is subcomplete and uncountable cofinality preserving.
\end{thm}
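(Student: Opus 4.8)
The subcompleteness of each $\B_i$ under hypotheses \ref{item:IterationIsDirect}--\ref{item:IntermediateCollapses} is essentially Jensen's RCS iteration theorem for subcomplete forcing (see \cite{Jensen2014:SubcompleteAndLForcingSingapore}; the $\infty$-subcomplete analogue is in \cite{FuchsSwitzer:IterationTheorems}), and its proof — including the fact that the embeddings $\sigma'$ can be taken to respect the suprema clause \ref{item:SupremumCondition} — carries over unchanged, so the only new content is the preservation of uncountable cofinalities, which I would establish by induction on $\delta$, run in tandem with that iteration theorem. A useful preliminary remark is that, since every iterand $\check\B_{i+1}/\dot G_{\B_i}$ is forced to preserve uncountable cofinalities and the same is then true of every tail $\check\B_\delta/\dot G_{\B_i}$ (itself an iteration of the type under consideration), the ``revision'' in RCS plays no role — no intermediate stage collapses the cofinality of an ordinal of cofinality $\omega_1$ to $\omega$ — so the iteration is in effect a countable-support iteration, which is why the result can be phrased as iterability with countable support. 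The base and successor cases are immediate: if $\eps$ has uncountable cofinality in $\V$ then it retains it in $\V^{\B_i}$ and hence in $(\V^{\B_i})^{\B_{i+1}/G_{\B_i}}$, hypothesis \ref{item:IterationIsDirect} serving only to keep the iteration from stabilizing.

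For a limit $\delta$, fix $\eps$ of uncountable cofinality in $\V$; by the usual reductions it suffices to treat $\eps>\delta(\B_\delta)$ regular, and one must show that $\B_\delta$ adds no cofinal $f:\omega\To\eps$. If $\cf^\V(\delta)>\omega$, then, by the standard support- and chain-condition bookkeeping for countable-support limits of forcings that add no reals (with Fact \ref{fact:lambdacc} handling the direct-limit stages), any such $f$ would already belong to $\V^{\B_i}$ for some $i<\delta$, contradicting the inductive hypothesis that $\B_i$ preserves uncountable cofinalities. The central case is $\cf^\V(\delta)=\omega$. Here I would invoke the machinery used to verify subcompleteness of $\B_\delta$: choose $\theta$ large enough to reflect the relevant parameters and the properties of the iterands, $N=L_\tau^A\models\ZFCm$ with $H_\theta\sub N$, and $\sigma:\bN\prec N$ with $\bN$ countable and full and $\B_\delta,\theta,\eps\in\ran(\sigma)$; put $\bar\eps=\sigma^{-1}(\eps)$, a regular uncountable cardinal of $\bN$. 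If some $p\in\B_\delta$ forced a cofinal $\dot f:\omega\To\eps$ — we may take $p\in\ran(\sigma)$ — then $\bar p=\sigma^{-1}(p)$ would force over $\bN$ a cofinal map $\omega\To\bar\eps$ through $\bar\B_\delta=\sigma^{-1}(\B_\delta)$. But $\bN$, seeing via $H_\theta\sub N$ and elementarity that $\bar\B_\delta$ is the limit of an iteration of subcomplete, uncountable-cofinality-preserving forcings satisfying \ref{item:IterationIsDirect}--\ref{item:IntermediateCollapses}, proves — by this very theorem applied internally to that shorter iteration — that $\bar\B_\delta$ preserves uncountable cofinalities, a contradiction. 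Notably, the embedding $\sigma'\in\V[G]$ produced by the subcompleteness argument is not itself needed for this contradiction, only the elementarity of $\sigma$ and $\bN$'s internal instance of the theorem.

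The main obstacle is making the last step rigorous, and it is twofold. First, the appeal to ``this very theorem inside $\bN$'' must be organized as a genuine well-founded recursion: one checks that the length $\bar\delta=\sigma^{-1}(\delta)$ relevant in $\bN$ is an ordinal strictly below $\delta$ whenever $\delta\ge\omega_1$ (as $\bar\delta=\otp(\ran(\sigma)\cap\delta)$ is countable), while the case $\delta<\omega_1$ — iteration of countable length, $\cf(\delta)=\omega$ — must be handled directly, by running the stagewise embedding construction along an $\omega$-cofinal sequence in $\delta$ using only the \emph{given} subcompleteness and cofinality-preservation of the individual iterands, with the suprema clause \ref{item:SupremumCondition} tracked through the finitely-many-coordinates-at-a-time limit steps as in Jensen's argument. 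Second, for the $\cf^\V(\delta)>\omega$ case one must actually verify the ``no new cofinal $\omega$-sequence into a $\V$-regular uncountable ordinal is added at the limit'' assertion for subcomplete (not merely proper) iterands: subcomplete forcing can add new $\omega$-sequences of ordinals in general (Namba forcing does), so one uses here precisely that the iterands preserve uncountable cofinalities, together with the collapsing hypothesis \ref{item:IntermediateCollapses} and the Boolean-algebraic analysis of countable-support limits, to exclude cofinal ones. Once these two points are in place, the remainder is a bookkeeping-heavy but routine fusion of Jensen's iteration argument with the cofinality tracking.
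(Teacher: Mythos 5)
There is a genuine gap, and it sits exactly at what you call the central case. Your plan there is to reflect the putative cofinal $\dot f:\omega\To\eps$ into $\bN$ via $\sigma$ and then derive a contradiction from ``$\bN$'s internal instance of the theorem'' applied to the shorter iteration $\bar{\B}_{\bar\delta}$. This is circular: by elementarity, $\bN\models$ ``$\bar{\B}_{\bar\delta}$ preserves uncountable cofinalities'' if and only if $N\models$ ``$\B_\delta$ preserves uncountable cofinalities'', which is precisely what is to be proved, and the descent $\bar\delta<\delta$ you invoke is only external --- inside $\bN$, the ordinal $\bar\delta=\sigma^{-1}(\delta)$ has all the first-order properties that $\delta$ has in $N$ (in particular $\bN$ thinks it is uncountable whenever $\delta$ is), so no induction on length gets off the ground, and the ``directly handled'' base case of externally countable $\delta$ never applies to $\bN$'s internal iteration. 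Your remark that the embedding produced in $\V[G]$ by the subcompleteness argument is not needed is exactly backwards: the paper's proof of this case consists of recursively building, along an $\omega$-cofinal sequence of stages $\gamma_n$, a thread of conditions $c_n$ and names $\dot\sigma_n$ for embeddings $\bN\prec N$ that agree on more and more of $\bN$, partially lift the generic, and --- using the suprema clause \ref{item:SupremumCondition} of subcompleteness at each step, available because $\kappa>\delta(\B_{\gamma_{n+1}})$ --- all satisfy $\sup\dot\sigma_n``\bkappa=\tkappa$ for one fixed $\tkappa<\kappa$. The diagonal limit $\sigma$ of these embeddings lifts to $\sigma^*:\bN[\bG]\prec N[G]$ with $a_0\in G$, whence $\ran(\dot f^G)=\sigma``\ran(\dot{\barf}^{\bG})\sub\sigma``\bkappa\sub\tkappa<\kappa$, contradicting unboundedness. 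This external construction is the entire content of the proof, and is the reason the theorem is proved for subcompleteness rather than $\infty$-subcompleteness; no reflection argument can substitute for it.

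Two smaller points. First, your reduction to ``$\eps>\delta(\B_\delta)$'' should be to $\cf(\eps)>\delta(\B_i)$ for all $i<\delta$: if $\cf(\eps)\le\delta(\B_i)$ for some $i<\delta$, hypothesis \ref{item:IntermediateCollapses} together with the $\omega_1$-preservation of the subcomplete tail pins the cofinality at $\omega_1$; the density of the limit algebra itself plays no role. Second, your claim that for $\cf^\V(\delta)>\omega$ any new $f:\omega\To\eps$ already belongs to some $\V^{\B_i}$ is justified only when $\cf(\delta)>\delta(\B_i)$ for all $i<\delta$, in which case $\delta$ is forced to be regular and $\B_\delta$ is $\delta$-c.c.\ by Fact \ref{fact:lambdacc}, so that names for $\omega$-sequences into $\kappa<\delta$ are small; when instead $\cf(\delta)\le\delta(\B_i)$ for some $i$, the cofinality of $\delta$ drops to at most $\omega_1$ at an intermediate stage, the direct limit at $\delta$ need not satisfy any useful chain condition, and new $\omega$-sequences added at the limit cannot be excluded by bookkeeping --- the same embedding construction is needed there as well.
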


\begin{proof}
I will have to use some basic facts about iterated forcing with complete Boolean algebras and revised countable support. I will state these facts as I need them. The basic setup is that $\seq{\B_i}{i\le\delta}$ is a tower of complete Boolean algebras, with projection functions $h_{j,i}:\B_j\To\B_i$ for $i\le j\le\delta$ defined by $h_{j,i}(b)=\bigwedge_{\B_i}\{a\in\B_i\st b\le_{\B_j}a\}$. Since for $i\le j_0\le j_1\le\delta$, $h_{j_1,i}\rest\B_{j_0}=h_{j_0,i}$, I'll just write $h_i$ for $h_{\delta,i}$, so that for all $i\le j\le\delta$, $h_{j,i}=h_i\rest\B_j$.

I claim that:
\begin{quote}
for every $h<\delta$, if $G_h$ is $\B_h$-generic over $\V$, then in $\V[G_h]$, for every $i\in[h,\delta]$, $\B_i/G_h$ is subcomplete and uncountable cofinality preserving.
\end{quote}

Jensen's proofs show that in this situation, $\B_i/G_h$ is subcomplete (see \cite[\S2, Thm.~1, pp.~3-24]{Jensen:IterationTheorems}), so that I can focus on the preservation of uncountable cofinality.

If not, let $\delta$ be a minimal such that there is an iteration of length $\delta$ that forms a counterexample.

Since the successor case is trivial, let me focus on the case that $\delta$ is a limit ordinal. Let $\eps$ be an ordinal of uncountable cofinality in $\V[G_h]$. Note that by minimality of $\delta$, this is equivalent to saying that $\eps$ has uncountable cofinality in $\V$.
I will show that in $\V[G_h]$, $\B_\delta/G_h$ still forces that $\check{\eps}$ has uncountable cofinality.

\noindent{\bf Case 1:} there is an $i<\delta$ such that $\cf^{\V[G_h]}(\eps)\le\delta(\B_i)$.

Then by \ref{item:IntermediateCollapses}, in $\V[G_h]$, $\forces_{\B_{i+1}/G_h}\cf(\check{\eps})\le\omega_1$. But by minimality of $\delta$, we also know that $\forces_{\B_{i+1}}\cf(\check{\eps})\ge\omega_1$. Thus, $\B_{i+1}/G_h$ forces over $\V[G_h]$ that the cofinality of $\eps$ is $\omega_1$. But the tail of the iteration is subcomplete, and hence it preserves $\omega_1$. It follows that the cofinality of $\eps$ stays uncountable in $\V[G_h]^{\B_\delta/G_h}$.

\noindent{\bf Case 2:} case 1 fails. So for all $i<\delta$, $\cf^{\V[G_h]}(\eps)>\delta(\B_i)$.

Let $\kappa=\cf^{\V[G_h]}(\eps)$.
Note that for $i<\delta$, $\B_i$ is $\delta(\B_i)^+$-c.c., so since $\kappa>\delta(\B_i)$, it follows that $\kappa$ is a regular uncountable cardinal greater than $\delta(\B_i)$ in $\V^{\B_i}$.
We may also assume that $\kappa>\omega_1$, for if $\kappa=\omega_1$, then it will still have uncountable cofinality in $\V[G_h]^{\B_\delta/G_h}$, since $\B_\delta/G_h$ is subcomplete in $\V[G_h]$ and hence preserves $\omega_1$, as in case 1.

\noindent{\bf Case 2.1:} there is an $i<\delta$ such that $\cf^{\V[G_h]}(\delta)\le\delta(\B_i)$.

Then, since $\B_{i+1}$ collapses $\delta(\B_i)$ to $\omega_1$, it follows that in $\V[G_h]^{\B_{i+1}/G_h}$, $\delta$ has cofinality at most $\omega_1$. For notational simplicity, let me pretend that $h=0$, so that I can write $\V^{\B_{i+1}}$ in place of  $\V[G_h]^{\B_{i+1}/G_h}$, for example. So from what I said above, $\kappa$ is a regular uncountable cardinal in what we call $\V$ now, and it is greater than each $\delta(\B_j)$. So It suffices to show that $\B_\delta$ preserves the fact that $\kappa$ has uncountable cofinality. I'm using here that the tail of the iteration in $\V[G_h]$ is a revised countable support iteration.

Towards a contradiction, let $\dot{f}$ be a $\B_\delta$-name, and let $a_0\in\B_\delta$ be a condition such that $a_0$ forces that $\dot{f}$ is a function from $\omega$ to $\kappa$ whose range is unbounded in $\kappa$. I will find a condition extending $a_0$ that forces that the range of $\dot{f}$ is bounded in $\kappa$, a contradiction.

Note that if $\cf(\delta)=\omega_1$, then for every $i<\delta$, $\forces_{\B_i}\cf(\check{\delta})=\check{\omega}_1$, as $\B_i$ preserves $\omega_1$. Thus, $\B_\delta$ is the direct limit of $\vec{\B}\rest\delta$, that is, $\bigcup_{i<\delta}\B_i$ is dense in $\B_\delta$ in this case. Let me denote this dense set by $X$.

If, on the other hand, $\cf(\delta)=\omega$, then since $\vec{\B}$ is an RCS iteration, the set $\{\bigwedge_{i<\delta}t_i\st\seq{t_i}{i<\delta}\ \text{is a thread in}\ \vec{\B}\rest\delta\}$ is dense in $\B_\delta$ -- here, $\vec{t}$ is a thread in $\vec{\B}\rest\delta$ if for all $i\le j<\delta$, $0\neq t_i=h_i(t_j)$.  In case $\cf(\delta)=\omega$, let $X$ be that dense subset of $\B_\delta$. For more background on threads in RCS iterations, I refer the reader to \cite[p.~124]{Jensen2014:SubcompleteAndLForcingSingapore}.

Let $\pi:\omega_1\To\delta$ be cofinal, with $\pi(0)=0$.

Let $N=L_\tau^A$ with $H_\theta\cup\kappa+1\sub N$, where $\theta$ verifies the subcompleteness of each $\B_i$, for $i\le\delta$ and for each $\B_j/G_i$ whenever $i<j<\delta$ and $G_i$ is $\B_i$-generic.
Let $S=\kla{\kappa,\eps,\delta,\vec{\B},\dot{f},a_0,\pi}$.
Let $\sigma_0:\bN\prec N$, where $\bN$ is transitive, countable and full, and $S\in\ran(\sigma_0)$. Let $\sigma_0(\bS)=S$, and let $\bS=\kla{\bkappa,\bar{\eps},\bdelta,\vec{\bar{\B}},\dot{\barf},\bar{a}_0,\bpi}$.
Let $\tilde{\kappa}=\sup\sigma_0``\kappa<\kappa$, and let us also fix an enumeration $\bN=\{e_n\st n<\omega\}$. Let $\bG\sub\bar{\B}_{\bdelta}$ be generic over $\bN$, with $\bar{a}_0\in\bG$.

Let $\seq{\nu_n}{n<\omega}$ be a sequence of ordinals $\nu_n<\omega_1^\bN$ such that if we let $\bgamma_n=\bpi(\nu_n)$, it follows that $\seq{\bgamma_n}{n<\omega}$ is cofinal in $\bdelta$, and such that $\nu_0=0$, so that $\bgamma_0=0$. Hence, letting $\gamma_n=\sigma_0(\bgamma_n)$, we have that $\sup_{n<\omega}\gamma_n=\sup\ran(\sigma_0\cap\delta)=\tdelta$. Moreover, whenever $\sigma':\bN\prec N$ is such that $\sigma'(\bpi)=\pi$, it follows that for every $n<\omega$, $\sigma'(\bgamma_n)=\gamma_n=\pi(\nu_n)$, since $\sigma'(\bgamma_n)=\sigma'(\bpi(\nu_n)=\sigma'(\bpi)(\nu_n)=\pi(\nu_n)$.

By induction on $n<\omega$, construct sequences $\seq{\dot{\sigma}_n}{n<\omega}$, $\seq{c_n}{n<\omega}$, 
with $c_n\in\B_{\gamma_n}$, $\dot{\sigma}_n\in\V^{\B_{\gamma_n}}$, 
such that for every $n<\omega$, $c_n$ forces the following statements with respect to $\B_{\gamma_n}$:
\begin{enumerate}
\item $\dot{\sigma}_n:\check{\bN}\prec\check{N}$,
\item $\dot{\sigma}_n(\check{\bS})=\check{S}$, and for all $k<n$, 
    $\dot{\sigma}_n(\check{e}_k)=\dot{\sigma}_k(\check{e}_k)$,
\item $\dot{\sigma}_n``\check{\bG}\cap\check{\bar{\B}}_{\bgamma_n}\sub\dot{G}_{\B_{\gamma_n}}$,
\item $\sup\dot{\sigma}_n``\check{\bar{\kappa}}=\check{\tkappa}$,
\item $c_{n-1}=h_{\gamma_{n-1}}(c_{n})$ (for $n>0$).
\end{enumerate}
To start off, in the case $n=0$, we set $c_0=\eins$, $\dot{\sigma}_0=\check{\sigma}_0$. 

Now suppose $\seq{\dot{\sigma}_n}{n\le n}$, $\seq{c_m}{m\le n}$ have been constructed, so that the above conditions are satisfied so far. Let $G_{\gamma_n}$ be $\B_{\gamma_n}$-generic with $c_n\in G_{\gamma_n}$, and work in $\V[G_n]$ temporarily. Let $\sigma_n=\dot{\sigma}_n^{G_{\gamma_n}}$. Then $\sigma_n$ extends to $\sigma^*_n:\bN[\bG_{\bgamma_n}]\prec N[G_{\gamma_n}]$ such that $\sigma^*_n(\bG_{\bgamma_n})=G_n$. Since $\theta$ verifies the subcompleteness of $\B'=\B_{\gamma_{n+1}}/G_{\gamma_n}$, and since $\bN[\bG_{\bgamma_n}]$ is full, there is a condition $c'\in(\B')^+$ such that whenever $G'$ is $\B'$-generic over $\V[G_n]$ with $c'\in G'$, there is a $\sigma':\bN[\bG_{\bgamma_n}]\prec N[G_n]$ such that $\sigma'(\bG_{\bgamma_n})=G_{\gamma_n}$, $\sigma'(\bar{S})=S$ and $\sigma'(e_k)=\sigma^*_n(e_k)=\sigma_n(e_k)$ for $k\le n$, $(\sigma')``\bar{G}\cap\bar{\B}_{\bar{G}}\sub G'$, where $\bG'=\bG_{\bgamma_{n+1}}/\bar{G}_{\bgamma_n}$, and $\sup(\sigma')``\bkappa=\tkappa=\sup(\sigma^*``)\bkappa$. The point is that $\kappa>\delta(\B_{\gamma_{n+1}})$, so that the suprema condition can be employed in order to ensure this last point.

Since the situation described in the previous paragraph arises whenever $G_{\gamma_n}$ is generic with $c_n\in G_{\gamma_n}$, it is forced by $c_n$, and there are $\B_{\gamma_n}$-names $\dot{c}'$, $\dot{\sigma}_{n+1}$ for the condition $c'$ and the restriction of the embedding $\sigma'$ to $\bN$.
We may choose the name $\dot{c}'$ in such a way that $\forces_{\B_{\gamma_n}}\dot{c}'\in\check{\B}_{\gamma_{n+1}}/\dot{G}_{\B_{\gamma_n}}$ and $c_n=\BV{\dot{c}'\neq 0}_{\B_{\gamma_n}}$. Namely, given the original $\dot{c}'$ such that $c_n$ forces that $\dot{c}'\in(\check{\B}_{\gamma_{n+1}}/\dot{G}_{\B_{\gamma_n}})^+$ and all the other statements listed above, there are two cases: if $c_n=\eins_{\B_{\gamma_n}}$, then since $c_n\le\BV{\dot{c}'\neq 0}$, it already follows that $c_n=\BV{\dot{c}'\neq 0}$. If $c_n<\eins_{\B_{\gamma_n}}$, then let $\dot{e}\in\V^{\B_{\gamma_n}}$ be a name such that $\forces_{\B_{\gamma_n}}\dot{e}=0_{\check{\B}_{\gamma_{n+1}}/\dot{G}_{\B_{\gamma_n}}}$, and mix the names $\dot{c}'$ and $\dot{e}$ to get a name $\dot{d}'$ such that $c_n\forces_{\B_{\gamma_n}}\dot{d}'=\dot{c}'$ and $\neg c_n\forces_{\B_{\gamma_n}}\dot{d}'=\dot{e}$. Then $\dot{d}'$ is as desired. Clearly, $\forces_{\B_{\gamma_n}}\dot{d}'\in\check{\B}_{\gamma_{n+1}}/\dot{G}_{\B_{\gamma_n}}$. Since $c_n\forces_{\B_{\gamma_n}}\dot{d}'=\dot{c}'$, it follows that $c_n\le\BV{\dot{d}'\neq 0}$, and since $\neg c_n\forces_{\B_{\gamma_n}}\dot{d}'=\dot{e}$, it follows that $\neg c_n\le\BV{\dot{d}'=0}=\neg\BV{\dot{d}'\neq 0}$, so $\BV{\dot{d}'\neq 0}\le c_n$. So we could replace $\dot{c}'$ with $\dot{d}'$.

So let us assume that $\dot{c}'$ already has this property, that is, $c_n=\BV{\dot{c}'\neq 0}$.
Then, by \cite[\S0, Fact 4]{Jensen2014:SubcompleteAndLForcingSingapore},%
\footnote{That fact should read: ``Let $\mathbb{A}\sub\B$, and let $\forces_{\mathbb{A}}\dot{b}\in\check{\B}/\dot{G}_{\mathbb{A}}$, where $\dot{b}\in\V^{\mathbb{A}}$. There is a unique $b\in\B$ such that $\forces_{\mathbb{A}}\dot{b}=\check{b}/\dot{G}_{\mathbb{A}}$.'' That's what the proof given there shows.} %
there is a unique $c_{n+1}\in\B_{\gamma_{n+1}}$ such that $\forces_{\B_{\gamma_n}}\check{c}_{n+1}/\dot{G}_{\B_{\gamma_n}}=\dot{c}'$, and it follows by \cite[\S0, Fact 3]{Jensen2014:SubcompleteAndLForcingSingapore} that
\[h(c_{n+1})=\BV{\check{c}_{n+1}/\dot{G}_{\B_{\gamma_n}}\neq 0}_{\B_{\gamma_n}}=\BV{\dot{c}'\neq 0}_{\B_{\gamma_n}}=c_n\]
as wished.

This finishes the construction of $\seq{\dot{\sigma}_n}{n<\omega}$ and $\seq{c_n}{n<\omega}$.

Now, the sequence $\seq{c_n}{n<\omega}$ is a thread, and so, $c=\bigwedge_{n<\omega}c_n\in\B_\delta^+$.
Let $G$ be $\B_\delta$-generic with $c\in G$. In $\V[G]$, let
\[\sigma=\bigcup_{n<\omega}\dot{\sigma}_n^{G\cap\B_{i_n}}\rest\{e_0,\ldots,e_n\}.\]
Jensen's arguments then show that $\sigma:\bN\prec N$, and $\sigma``\bG\sub G$. For the latter, we can argue as in \cite[p.~141, proof of (d)]{Jensen2014:SubcompleteAndLForcingSingapore}: clearly, $\sigma``\bG\cap\bar{\B}_{\bar{\gamma}_n}\sub G$, for every $n<\omega$, since if $\bar{a}\in\bG\cap\bar{\B}_{\bar{\gamma}_n}$, then for some $m\ge n$, $\sigma(a)=\sigma_m(a)\in G\cap\B_{\gamma_m}\sub G$ (letting $a=e_k$, this is true whenever $m\ge\max(n,k)$).
If $\cf(\delta)=\omega_1$, then this implies directly that $\sigma``\bG\sub G$, because then, in $\bN$, $\bar{\B}_{\bdelta}$ is the direct limit of $\vec{\B}\rest\bdelta$, so $\bigcup_{i<\bdelta}\bar{\B}_i$ is dense in $\bar{\B}_\delta$. So if $\bar{a}\in\bG$, me may assume that $\bar{a}\in\bG\cap\B_{\bgamma_n}$, for some $n<\omega$, so that $a\in G\cap\B_{\gamma_n}$.

If $\cf(\delta)=\omega$, that is, in $\bN$, $\cf(\bdelta)=\omega$, then $\B_{\bdelta}$ is the inverse limit of $\seq{\bar{\B}_i}{i<\bdelta}$. Hence, letting $\bar{a}\in\bG$, we may assume that $\bar{a}=\bigwedge_{i<\bdelta}\bar{a}_i$, where $\seq{\bar{a}_i}{i<\bdelta}$ is a thread in $\seq{\bar{\B}_i}{i<\bdelta}$, since the set of such conditions is dense in $\B_{\bdelta}^+$. Let $\sigma(\vec{\bar{a}})=\vec{a}=\seq{a_i}{i<\delta}$. Then $\vec{a}$ is a thread in $\seq{\B_i}{i<\delta}$. Moreover, for each $n<\omega$, $\sigma(\bar{a}_{\bgamma_n})=a_{\gamma_n}\in G$. Thus, $\sigma(\bar{a})=\bigwedge_{n<\omega}a_{\gamma_n}\in G$, by the completeness of $G$ (since $\seq{a_{\gamma_n}}{n<\omega}\in\V$, as $\vec{a}\in N\sub\V$ and $\seq{\gamma_n}{n<\omega}\in\V$.)

Thus, $\sigma$ lifts to an elementary embedding $\sigma^*:\bN[\bG]\prec N[G]$. Note that $\bar{a}_0\in\bG$, and so, $a_0=\sigma^*(\bar{a}_0)\in G$. Moreover, $\sigma``\bkappa\sub\tkappa$, because if $\xi<\bkappa$, then for some $n<\omega$, $\xi=e_n$, and so, $\sigma(\xi)=\sigma(e_n)=\sigma_n(e_n)=\sigma_n(\xi)<\tkappa$, since $\sigma_n``\bkappa\sub\tkappa$. But then, it follows that $\ran(\dot{f}^G)\sub\tkappa<\kappa$, because $\ran(\dot{f}^G)=\sigma``\ran(\dot{\barf}^\bG)\sub\sigma``\bkappa\sub\tkappa<\kappa$. This contradicts the fact that $a_0\in G$ and $a_0$ forces that the range of $\dot{f}$ is unbounded in $\kappa$.

\noindent{\bf Case 2.2:} for all $i<\delta$, $\cf(\delta)>\delta(\B_i)$.

We may also assume that $\cf(\delta)>\omega_1$, for otherwise, $\cf(\delta)\le\omega_1$ and the argument of case 2.1 goes through.

It follows as in \cite[p.~143, claim (2)]{Jensen2014:SubcompleteAndLForcingSingapore} that for $i<\delta$, $\card{i}\le\delta(\B_i)$. But then, $\delta$ must be regular, for otherwise, if $i=\cf(\delta)<\delta$, it would follow that $\cf(\delta)=i\le\delta(\B_i)<\cf(\delta)$.

Thus, $\delta$ is a regular cardinal, and $\delta\ge\omega_2$. Hence, $S^\delta_{\omega_1}$, the set of ordinals less than $\delta$ with cofinality $\omega_1$, is stationary in $\delta$. For $\gamma\in S^\delta_{\omega_1}$, since $\B_\gamma$, being subcomplete, preserves $\omega_1$, it follows that for every $i<\gamma$, $\forces_{\B_i}\cf(\check{\gamma})>\omega$. Thus, $\B_\gamma$ is the direct limit of $\vec{\B}\rest\gamma$.
Moreover, since for $i<\delta$, $\delta(\B_i)<\delta=\cf(\delta)$, it follows that $\B_i$ is $\delta(\B_i)^+$-c.c., and hence $\delta$-c.c.

It follows by Fact \ref{fact:lambdacc} that the direct limit of $\vec{\B}\rest\delta$ is $\delta$-c.c.

Again, since for all $i<\delta$, $\B_i$ is $\delta$-c.c., $\B_i$ forces that the cofinality of $\delta$ is uncountable, so that $\B_\delta$ is the direct limit of $\vec{\B}\rest\delta$, which is $\delta$-c.c., as we have just seen. So $\B_\delta$ is $\delta$-c.c.

Let $G$ be $\B_\delta$-generic over $\V$, and suppose that in $\V[G]$, $\kappa$ has countable cofinality. Since $\B_\delta$ is $\delta$-c.c., it must be that $\kappa<\delta$. But letting $f:\omega\To\kappa$ be cofinal, $f\in\V[G]$, it then follows that $f\in\V[G\cap\B_i]$, for some $i<\delta$. This contradicts the minimality of $\delta$ and completes the proof.
\end{proof}

Looking back, it turns out that the theorem shows iterability with countable support.

\begin{cor}
\label{cor:IteratingSubcompleteUncountableCofinalityPreservingWithCS}
Let $\seq{\B_i}{i\le\delta}$ be a countable support iteration of complete Boolean algebras such that for all $i+1\le\alpha$, the following hold:
\begin{enumerate}[label=\textnormal{(\arabic*)}]
  \item
  \label{item:IterationIsDirect}
  $\B_i\neq\B_{i+1}$,
  \item
  \label{item:EveryIterandIsInGamma}
  $\forces_{\B_i} (\check{\B}_{i+1}/\dot{G}_{\B_i}\ \text{is subcomplete and uncountable cofinality preserving})$,
  \item
  \label{item:IntermediateCollapses}
  $\forces_{\B_{i+1}} (\delta(\check{\B}_i)\ \text{has cardinality at most}\ \omega_1)$.
\end{enumerate}
Then for every $i\le\delta$, $\B_i$ is subcomplete and uncountable cofinality preserving.
\end{cor}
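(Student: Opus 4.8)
The strategy is to reduce the corollary to Theorem~\ref{thm:IteratingSubcompleteUncountableCofinalityPreserving} by showing that, under hypotheses \ref{item:IterationIsDirect}--\ref{item:IntermediateCollapses}, the countable support iteration $\seq{\B_i}{i\le\delta}$ is literally the same tower of complete Boolean algebras as the revised countable support iteration built from the same iterands. I would prove this by induction on $\delta$; call $\Phi(\delta)$ the statement that the CS iteration $\seq{\B_i}{i\le\delta}$ coincides with the RCS iteration with these iterands. Granting $\Phi(\delta)$, Theorem~\ref{thm:IteratingSubcompleteUncountableCofinalityPreserving} applies verbatim and yields that every $\B_i$ with $i\le\delta$ is subcomplete and uncountable cofinality preserving, which is exactly the assertion of the corollary. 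Applied to the initial segments, this conclusion also supplies what the inductive step needs, namely that each $\B_i$ with $i<\delta$ is uncountable cofinality preserving---hence, by Definition~\ref{def:UncountableCofinalityPreserving}, does not force the cofinality of any ordinal of uncountable $\V$-cofinality down to $\omega$, and in particular preserves $\omega_1$.

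The successor step of $\Phi$ is immediate, since at a successor stage both the CS and the RCS limit are formed the same way, as the two-step iteration of $\B_i$ with the tail $\check{\B}_{i+1}/\dot{G}_{\B_i}$. Now let $\delta$ be a limit ordinal and assume $\Phi(\beta)$ for all $\beta<\delta$. If $\cf(\delta)=\omega$, the CS and RCS limits at $\delta$ agree on general grounds: both are the inverse limit of $\seq{\B_i}{i<\delta}$ along a cofinal $\omega$-sequence---in the Boolean presentation, the meets $\bigwedge_{i<\delta}t_i$ of threads $\vec t$ in $\seq{\B_i}{i<\delta}$ form a dense set, see \cite[p.~124]{Jensen2014:SubcompleteAndLForcingSingapore}---and the building blocks $\B_i$ coincide by the inductive hypothesis. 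If $\cf(\delta)>\omega$, then $\cf^\V(\delta)$ is uncountable, so by the inductive hypothesis no condition of any $\B_i$ with $i<\delta$ forces $\cf(\check\delta)=\omega$; consequently the revision at stage $\delta$ adds nothing beyond the direct limit $\bigcup_{i<\delta}\B_i$, so the RCS limit at $\delta$ equals that direct limit. But since $\cf(\delta)>\omega$, every countable subset of $\delta$ is bounded in $\delta$, so the CS limit at $\delta$ equals this direct limit as well. Hence $\Phi(\delta)$ holds, completing the induction.

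The hard part will be the case $\cf(\delta)>\omega$ of the limit step: one must verify, inside Jensen's Boolean-algebraic account of revised countable support (\cite[p.~124]{Jensen2014:SubcompleteAndLForcingSingapore}; see also \cite{VialeEtAl:BooleanApproachToSPiterations}), that a ``new'' thread---one not already living in the direct limit of $\seq{\B_i}{i<\delta}$---can appear only below a condition that forces $\cf(\check\delta)=\omega$, and then deduce from the inductively available uncountable-cofinality-preservation of the initial segments that no such condition exists. Everything else is bookkeeping or a direct appeal to Theorem~\ref{thm:IteratingSubcompleteUncountableCofinalityPreserving}; in particular no fresh preservation argument is needed, since subcompleteness and uncountable cofinality preservation of the limits follow from that theorem the instant the two iterations are identified.
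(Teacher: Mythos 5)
Your proposal is correct and follows essentially the same route as the paper: both arguments reduce the corollary to Theorem~\ref{thm:IteratingSubcompleteUncountableCofinalityPreserving} by showing inductively that the countable support limits coincide with the revised countable support limits, using the inductively obtained preservation of uncountable cofinalities to see that the RCS revision is vacuous at limits of uncountable cofinality. Your explicit case split on $\cf(\delta)$ just spells out what the paper's proof leaves implicit in the sentence ``It follows that the rcs limit of $\vec{\B}\rest i$ is the same as the countable support limit.''
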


\begin{proof}
It is easy to see by induction on $i\le\delta$ that $\B_i$ is subcomplete and uncountable cofinality preserving, and that if $i$ is a limit ordinal, then $\B_i$ is (isomorphic to) the rcs limit of $\vec{\B}\rest i$.
The successor case of the induction is trivial, so let $i\le\delta$ be a limit ordinal. If we take $\B'_i$ to be the revised countable support limit of $\vec{\B}\rest i$, then the resulting iteration $\vec{\B}\rest i\verl\B'_i$ is an rcs iteration, because inductively, $\vec{\B}\rest i$ is. Hence, by the theorem, $\B'_i$ is subcomplete and uncountable cofinality preserving. But moreover, by the proof of the theorem, whenever $h\le k<i$ and $G_h$ is $\B_h$-generic, it follows that $\B_k/G_h$ is uncountable cofinality preserving in $\V[G_h]$. It follows that the rcs limit of $\vec{\B}\rest i$ is the same as the countable support limit, and hence that $\B_i=\B'_i$ (modulo isomorphism). So $\B_i$ is subcomplete and uncountable cofinality preserving.
\end{proof}

The second set of ingredients I need is centered around bounded forcing axioms and their consistency strengths. These axioms were introduced in \cite{GoldsternShelah:BPFA} as follows, albeit with a different notation.

\begin{defn}
\label{def:BoundedForcingAxioms}
Let $\Gamma$ be a class of forcings, and let $\kappa,\lambda$ be cardinals. Then $\BFA(\Gamma,{\le}\kappa,{\le}\lambda)$ is the statement that if $\P$ is a forcing in $\Gamma$, $\B$ is its complete Boolean algebra, and $\mathcal{A}$ is a collection of at most $\kappa$ many maximal antichains in $\B$, each of which has size at most $\lambda$, then there is a $\mathcal{A}$-generic filter in $\B$, that is, a filter that intersects each antichain in $\mathcal{A}$. When $\kappa=\omega_1$, then I usually don't mention $\kappa$, that is, $\BFA(\Gamma,{\le}\lambda)$ is short for $\BFA(\Gamma,{\le}\omega_1,{\le}\lambda)$. And when $\kappa=\lambda=\omega_1$, then the resulting principle is abbreviated to $\BFA(\Gamma)$.
If $\Gamma$ is the class of subcomplete forcings, then I write $\BSCFA$ for $\BFA(\SC)$ and $\BSCFA({\le}\lambda)$ for $\BFA(\SC,{\le}\lambda)$. Similarly, if $\Gamma$ is the class of proper forcing, then $\BPFA$ denotes $\BFA(\Gamma)$, and similarly for $\BPFA({\le}\lambda)$.
\end{defn}

Bounded forcing axioms can be expressed as generic absoluteness principles as follows.

\begin{thm}[Bagaria {\cite[Thm.~5]{Bagaria:BFAasPrinciplesOfGenAbsoluteness}}]
\label{thm:BagariaCharacterizationOfBFA}
Let $\kappa$ be a cardinal of uncountable cofinality, and let $\P$ be a poset. Then $\BFA(\{\P\},{\le}\kappa,{\le}\kappa)$ is equivalent to $\Sigma_1(H_{\kappa^+})$-absoluteness for $\P$. The latter means that whenever $g$ is $\P$-generic, $\phi(x)$ is a $\Sigma_1$-formula and $a\in H_{\kappa^+}$, then $\V\models\phi(a)$ iff $\V[g]\models\phi(a)$.
\end{thm}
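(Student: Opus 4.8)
The plan is to prove both directions of the biconditional, keeping things brief since this is Bagaria's theorem. Set $\B=\mathrm{RO}(\P)$. I will use throughout that, for uncountable $\kappa$, $H_{\kappa^+}$ is a $\Sigma_1$-elementary substructure of $\V$ (by a Löwenheim--Skolem collapse that puts all of $\mathrm{TC}(\{a\})$ into a submodel of size $<\kappa^+$), that the same holds inside any generic extension, that $H_{\kappa^+}$ is closed under $\le\kappa$-length sequences of its elements, and that forcing never increases $|\mathrm{TC}(\{x\})|$, so $H_{\kappa^+}^{\V}\subseteq H_{\kappa^+}^{\V[g]}$. For the implication from $\BFA(\{\P\},{\le}\kappa,{\le}\kappa)$ to $\Sigma_1(H_{\kappa^+})$-absoluteness I would fix $g$ $\P$-generic, a $\Sigma_1$ formula $\phi(x)=\exists y\,\psi(y,x)$ with $\psi$ bounded, and $a\in H_{\kappa^+}^{\V}$. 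The direction $\V\models\phi(a)\Rightarrow\V[g]\models\phi(a)$ needs no forcing axiom, since a witness $y\in\V$ lies in $H_{\kappa^+}^{\V[g]}$ and $\psi$ is absolute. For the converse, assuming $\V[g]\models\phi(a)$ with witness $y\in H_{\kappa^+}^{\V[g]}$, I would, in $\V[g]$, fix a surjection of $\kappa$ onto $\mathrm{TC}(\{y,a\})$ and let $E\subseteq\kappa$ code, via pairing, the induced membership relation together with the two points representing $y$ and $a$; then $\V[g]$ satisfies a bounded formula $\rho(E,a,\kappa)$ expressing ``the structure coded by $E$ is extensional and well-founded, and its transitive collapse sends the $a$-point to $a$ and the $y$-point to some $y'$ with $\psi(y',a)$.'' Letting $\dot E_0$ name the $<^*$-least such code and $b^*=\llbracket\rho(\dot E_0,\check a,\check\kappa)\rrbracket$ (so $b^*>0$, as a condition in $g$ forces $\rho$), the name $\dot E_0$ is decided below $b^*$ by $\le\kappa$ maximal antichains of $\B$ of size $\le 3$, namely $A_\alpha=\{b^*\wedge\llbracket\check\alpha\in\dot E_0\rrbracket,\ b^*\wedge\llbracket\check\alpha\notin\dot E_0\rrbracket,\ \neg b^*\}\setminus\{0\}$ for $\alpha<\kappa$, to which one adds $\le\kappa$ further small antichains recording how each ordinal $<\kappa$ compares under $\dot E_0$ to the (at most $\kappa$ many) genuine elements of $\mathrm{TC}(\{a\})$. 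Applying $\BFA(\{\P\},{\le}\kappa,{\le}\kappa)$ to this family yields a filter $F$, from which one reads off $E_F=\{\alpha<\kappa:b^*\wedge\llbracket\check\alpha\in\dot E_0\rrbracket\in F\}\in\V$ and argues $\V\models\rho(E_F,a,\kappa)$, hence $\V\models\phi(a)$.

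For the implication from $\Sigma_1(H_{\kappa^+})$-absoluteness to $\BFA(\{\P\},{\le}\kappa,{\le}\kappa)$, let $\langle A_\xi\mid\xi<\kappa\rangle$ be maximal antichains of $\B$ with $|A_\xi|\le\kappa$. Let $\B_0$ be the Boolean subalgebra of $\B$ generated by $\bigcup_{\xi<\kappa}A_\xi$; it has size $\le\kappa$, each $A_\xi$ remains a maximal antichain of $\B_0$, and any filter on $\B_0$ meeting all the $A_\xi$ generates, by upward closure in $\B$, a filter on $\B$ with the same property — which is exactly what is wanted. Relabelling, I may assume the underlying set of $\B_0$ is a subset of $\kappa$, so $p=\langle\B_0,\langle A_\xi\mid\xi<\kappa\rangle\rangle\in H_{\kappa^+}^{\V}$. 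The statement ``there is a filter $F$ on $\B_0$ meeting every $A_\xi$'' is $\Sigma_1$ over $H_{\kappa^+}$ in the parameter $p$, and it holds in $\V[g]$: the trace $g\cap\B_0$ is such a filter, since $g$ meets each $A_\xi\subseteq\B_0$, and it lies in $H_{\kappa^+}^{\V[g]}$. By $\Sigma_1(H_{\kappa^+})$-absoluteness for $\P$, the statement already holds in $\V$, producing the desired filter.

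The step I expect to be the main obstacle is the middle of the first implication: $\BFA$ as stated delivers a filter meeting the prescribed antichains but imposes no a priori link between it and the condition $b^*$, and a filter need not be closed under the countable infima that would witness ill-foundedness of the structure coded by $E_F$. Both difficulties are resolved by the standard bookkeeping — absorbing $b^*$ by restricting to the appropriate cone, and encoding enough of the $\Delta_0$-diagram of $\dot E_0$ (in particular its comparisons with genuine elements of $\mathrm{TC}(\{a\})$) into the $\le\kappa$ antichains fed to $\BFA$ — so that downward absoluteness of well-foundedness applies to $E_F\in\V$ directly; this is precisely the argument carried out in \cite{Bagaria:BFAasPrinciplesOfGenAbsoluteness}.
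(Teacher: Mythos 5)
The paper gives no proof of this statement --- it is imported verbatim from Bagaria's article --- so there is nothing internal to compare against; I am evaluating your argument on its own terms. Your second implication (absoluteness implies the forcing axiom) is correct and standard: the subalgebra generated by the antichains has size at most $\kappa$, each antichain stays maximal in it, and the existence of a filter meeting them all is a $\Sigma_1$ assertion about a parameter in $H_{\kappa^+}$ which the trace of the generic filter witnesses in $\V[g]$.

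The first implication has a genuine gap, sitting exactly where you flag ``the main obstacle,'' and the bookkeeping you describe does not close it. The relation $E_F$ is read off from a filter that is not generic, so it is not the evaluation of $\dot E_0$ in any extension; hence there is no model in which $(\kappa,E_F)$ is known to be well-founded, and ``downward absoluteness of well-foundedness'' has nothing to transfer. Deciding the comparisons of $\dot E_0$ with the genuine elements of $\mathrm{TC}(\{a\})$ pins down the part of the structure below $a$ but says nothing about the part containing the new witness $y$, whose transitive closure need not meet $\V$ at all; and a rank function for the whole structure cannot be decided by antichains of size $\le\kappa$, since the rank of the coded transitive set is bounded only by $(\kappa^+)^{\V[g]}$, which may exceed $(\kappa^+)^{\V}$. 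The missing idea --- and the only place the hypothesis $\cf(\kappa)>\omega$, which your argument never uses, comes in --- is to certify well-foundedness initial segment by initial segment: for each $\gamma<\kappa$ the restriction $(\gamma,\dot E_0\cap(\gamma\times\gamma))$ is forced to be a well-founded relation on a set of size $<\kappa$ and therefore to have a rank function into $|\gamma|^+\le\kappa$; adding names for these rank functions and deciding their values costs only $\kappa$ many antichains of size $\le\kappa$, and then every initial segment of $(\kappa,E_F)$ of length $<\kappa$ is well-founded in $\V$, which by $\cf(\kappa)>\omega$ gives well-foundedness of $(\kappa,E_F)$ itself. Separately, placing $\neg b^*$ into each antichain does not ``absorb'' $b^*$: a filter may simply select $\neg b^*$ throughout, and $\BFA(\{\P\},{\le}\kappa,{\le}\kappa)$ for the single poset $\P$ does not yield the corresponding axiom for the cone below a condition (a lottery sum of a countably closed poset with $\mathrm{Coll}(\omega,\omega_1)$ shows this, and also shows that the literal ``for every generic $g$'' reading of the absoluteness clause is too strong --- the equivalence is really with the statement that $\Sigma_1$ facts about $H_{\kappa^+}$-parameters forced by $\eins$ hold in $\V$, in which case one may take $b^*=\eins$ and this difficulty disappears).
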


For any class $\Gamma$ of forcings, the principles $\BFA({\le}\kappa)$ give closer and closer approximations to $\FA(\Gamma)$, as $\kappa$ increases; in fact, $\FA(\Gamma)$ is $\BFA(\Gamma,{\le}\infty)$, or, for all $\kappa$, $\BFA({\le}\kappa)$.
The following characterization of these axioms is easily seen to be equivalent to the one given in \cite[Thm.~1.3]{ClaverieSchindler:AxiomStar}, see also \cite{BagariaGitmanSchindler:RemarkableWeakPFA}.

\begin{fact}
\label{fact:CharacterizationOfBFAkappa}
  $\BFA(\{\Q\},{\le}\kappa)$ is equivalent to the following statement:
  if $M=\langle|M|,\in,\langle R_i\st i<\omega_1\rangle\rangle$ is a transitive model for the language of set theory with $\omega_1$ many predicate symbols $\seq{\dot{R}_i}{i<\omega_1}$, of size $\kappa$, and $\phi(x)$ is a $\Sigma_1$-formula, such that $\forces_\Q\phi(\check{M})$, then there are in $\V$ a transitive $\bM=\kla{|\bM|,\in,\seq{\bar{R}_i}{i<\omega_1}}$ and an elementary embedding $j:\bM\prec M$ such that $\phi(\bM)$ holds.
\end{fact}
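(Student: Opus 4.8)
The statement is a routine reformulation of the standard ``generic‑embedding'' characterizations of bounded forcing axioms (Bagaria \cite{Bagaria:BFAasPrinciplesOfGenAbsoluteness}, Claverie--Schindler \cite{ClaverieSchindler:AxiomStar}, Bagaria--Gitman--Schindler \cite{BagariaGitmanSchindler:RemarkableWeakPFA}); the plan is to prove the two implications directly, only the direction from $\BFA(\{\Q\},{\le}\kappa)$ to the model‑theoretic principle being substantial. Throughout, $\B=\mathrm{r.o.}(\Q)$.

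For the direction from the model‑theoretic principle to $\BFA(\{\Q\},{\le}\kappa)$: given maximal antichains $\seq{\mathcal{A}_\xi}{\xi<\omega_1}$ of $\B$, each of size ${\le}\kappa$, let $B^*$ be the subalgebra of $\B$ generated by $\bigcup_{\xi<\omega_1}\mathcal{A}_\xi$, so $|B^*|\le\kappa$. Fix a structure $M=\kla{|M|,\in,\seq{R_i}{i<\omega_1}}$ of size $\kappa$ coding, via a fixed isomorphism $e$, the Boolean algebra $(B^*,\le)$ --- with $R_0$ the copy of $\le$ and the operations definable from it --- and coding $\mathcal{A}_\xi$ by $R_{1+\xi}$, for $\xi<\omega_1$. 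Let $\phi(x)$ be the $\Sigma_1$ assertion ``$x$ codes a Boolean algebra together with a sequence $\seq{A_\alpha}{\alpha<\mu}$ of antichains in it, and there is an ultrafilter on that algebra meeting every $A_\alpha$''. Since for $\Q$‑generic $G$ the trace $G\cap B^*$ is such an ultrafilter, $\forces_\Q\phi(\check M)$. The hypothesis now yields, in $\V$, a transitive $\bar M=\kla{|\bar M|,\in,\seq{\bar R_i}{i<\omega_1}}$ and $j:\bar M\prec M$ with $\phi(\bar M)$; by elementarity $j$ restricted to the algebra $\bar B$ coded in $\bar M$ is a Boolean embedding into $B^*$ preserving finite meets and the least element and taking $\bar R_{1+\xi}$ into $R_{1+\xi}$, and $\phi(\bar M)$ supplies an ultrafilter $\bar U\in\V$ on $\bar B$ meeting every $\bar R_{1+\xi}$. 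Picking $\bar a_\xi\in\bar U\cap\bar R_{1+\xi}$ and letting $F$ be the filter on $\B$ generated by $\{e^{-1}(j(\bar a_\xi))\st\xi<\omega_1\}$, one checks $F$ is a genuine filter --- for finite $s$, $\bigwedge_{\xi\in s}\bar a_\xi\in\bar U$ is nonzero in $\bar B$, and $e^{-1}\circ j$ preserves finite meets and nonzeroness --- with $e^{-1}(j(\bar a_\xi))\in F\cap\mathcal{A}_\xi$, so $F$ witnesses $\BFA(\{\Q\},{\le}\kappa)$.

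For the converse, assume $\BFA(\{\Q\},{\le}\kappa)$ and let $M$, $\phi(x)=\exists y\,\psi(x,y)$ with $\psi\in\Delta_0$, and $\forces_\Q\phi(\check M)$ be given; fix a $\Q$‑name $\dot y$ with $\forces_\Q\psi(\check M,\dot y)$ and a large regular $\lambda$. In a $\Q$‑extension $\V[G]$, form the Skolem hull $Z\prec H_\lambda^{\V[G]}$ generated by $\omega_1\cup\{M,\dot y^G,\seq{R_i}{i<\omega_1},\Q\}$: then $|Z|=\omega_1$, $M\restriction(Z\cap|M|)\prec M$, and, for the transitive collapse $\pi:Z\to\bar Z$, the set $\bar M_G:=\pi(M)$ is transitive of size ${\le}\omega_1$, $j_G:=\pi^{-1}\restriction\bar M_G$ is an elementary embedding $\bar M_G\prec M$, and $\bar Z\models\phi(\bar M_G)$ --- so $\phi(\bar M_G)$ holds in $\V[G]$, witnessed inside $\bar Z$. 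The key point is that the set $T$ of ``term names'' $(t,\vec\xi)$ --- $t$ a Skolem term, $\vec\xi\in\omega_1^{<\omega}$ --- lies in $\V$ and has size $\omega_1$, and each carries a $\Q$‑name $\dot z_{(t,\vec\xi)}$ for the corresponding element of $Z$ (definable from $\dot y$ and the fixed parameters). Deciding, for each $(t,\vec\xi)\in T$, whether $\dot z_{(t,\vec\xi)}\in|M|$ and if so which element of $|M|$ it names, together with the $\in$‑ and $=$‑relations between each pair of these names, calls for only $\omega_1$‑many maximal antichains of $\B$, each of size ${\le}\kappa$. Applying $\BFA(\{\Q\},{\le}\kappa)$ to them gives a filter $F\subseteq\B$, from which one reads off, in $\V$: the set $Y_F\subseteq|M|$ of decided values, the transitive collapse $\bar M$ of $M\restriction Y_F$ and its inverse collapse $j:\bar M\prec M$, and a witness $\bar y$ (the decided value of $\pi$ applied to the canonical Skolem witness of $\exists y\,\psi(M,y)$). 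Since $\psi\in\Delta_0$, all pertinent instances are decided outright by any nonzero condition, which forces both $M\restriction Y_F\prec M$ (the Tarski--Vaught witnesses are themselves among the $\dot z_{(t,\vec\xi)}$, supplied by the Skolem terms) and $\psi(\bar M,\bar y)$; hence $\phi(\bar M)$ holds in $\V$.

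The main obstacle is the final decoding step of the converse: since $F$ is merely a filter, not a generic, the $\in$‑relation it decodes on the term names need not be well‑founded, and one must check that the part of it serving as $\bar M$ and as $\mathrm{TC}(\{\bar y\})$ is well‑founded and coheres with the embedding $j$. For $\bar M$ this is automatic --- it is the collapse of a substructure of the genuinely well‑founded $M$ --- and the care needed for $\bar y$ is handled by observing that the filter's decisions on $\Delta_0$ matters are mutually consistent, using that a $\Delta_0$ statement forced by a nonzero condition is simply true and that $\omega_1$ is preserved. This bookkeeping, together with the verification that the collection of antichains above genuinely suffices, is exactly what is carried out in \cite{ClaverieSchindler:AxiomStar} and \cite{BagariaGitmanSchindler:RemarkableWeakPFA}, from which the present formulation differs only cosmetically, so I would cite those rather than reproduce it.
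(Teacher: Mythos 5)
The paper offers no proof of this fact at all: it simply remarks that the statement is easily seen to be equivalent to Theorem 1.3 of Claverie--Schindler \cite{ClaverieSchindler:AxiomStar}, citing also \cite{BagariaGitmanSchindler:RemarkableWeakPFA}. Your sketch reconstructs the standard argument from those sources and is correct in outline; in particular, the direction from the embedding characterization to $\BFA(\{\Q\},{\le}\kappa)$ (coding the subalgebra generated by the $\omega_1$ antichains into a structure of size $\kappa$ and asserting the existence of an ultrafilter meeting them) is essentially complete as written. The only place where your converse direction is not self-contained is the one you identify yourself: transferring the $\Sigma_1$-witness $\bar{y}$ back to $\V$ requires arranging that the filter's decisions yield a genuinely well-founded transitive closure for $\bar{y}$ --- the decoded $\bM$ is unproblematic, since it embeds into the well-founded $M$ via $j$, but $\mathsf{TC}(\{\bar{y}\})$ does not, and this is exactly the bookkeeping carried out in the cited papers. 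Since the paper defers the entire fact to those same references, your proposal is at least as complete as the paper's own treatment.
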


Miyamoto has analyzed the strength of these principles for proper forcing and introduced the following large cardinal concept, with slightly different terminology.

\begin{defn}[{\cite[Def.~1.1]{Miyamoto:WeakSegmentsOfPFA}}]
\label{def:LocalizedReflectingCardinals}
Let $\kappa$ be a regular cardinal, $\alpha$ an ordinal, and $\lambda=\kappa^{+\alpha}$. Then $\kappa$ is $H_\lambda$-reflecting, or I will say ${+\alpha}$-reflecting, iff for every $a\in H_\lambda$ and any formula $\phi(x)$, the following holds: if there is a cardinal $\theta$ such that $H_\theta\models\phi(a)$, then the set of $N\prec H_\lambda$ such that
\begin{enumerate}
\item $N$ has size less than $\kappa$,
\item $a\in N$,
\item if $\pi_N:N\To H$ is the Mostowski-collapse of $N$, then there is a cardinal $\btheta<\kappa$ such that $H_\btheta\models\phi(\pi_N(a))$
\end{enumerate}
is stationary in $\power_\kappa(H_\lambda)$.
\end{defn}

The concept of a reflecting cardinal was previously introduced in \cite{GoldsternShelah:BPFA}, and is contained in this definition, as it is not hard to see that being reflecting is equivalent to being $+0$-reflecting. The $+1$-reflecting cardinals are also known as strongly unfoldable cardinals, introduced independently in \cite{Villaveces:StrongUnfoldability}. In the context of bounded forcing axioms, it seems to make the most sense to emphasize that they generalize reflecting cardinals, as it was shown in \cite{GoldsternShelah:BPFA} that the consistency strength of $\BPFA$ is precisely a reflecting cardinal, and it was shown in \cite[Def.~1.1]{Miyamoto:WeakSegmentsOfPFA} that the consistency strength of $\BPFA({\le}\omega_2)$ is a ${+}1$-reflecting cardinal. I showed that the same consistency strength results hold for $\BSCFA$ and $\BSCFA({\le}\omega_2)$ as well, in \cite{Fuchs:HierarchiesOfForcingAxioms}. It will be important for the upcoming argument that ${+}1$-reflecting cardinals may exist in $L$; in fact, I showed in \cite{Fuchs:HierarchiesOfForcingAxioms} that if $\BSCFA({\le}\omega_2)$ holds, then the cardinal $\omega_2^\V$ is ${+}1$-reflecting in $L$.

Given this background, it is easy to observe that $\SC$-$\SRP(\omega_2)+\neg\CH$ does not imply $\BSCFA({\le}\omega_2)$, since the consistency strength of $\SC$-$\SRP(\omega_2)+\neg\CH$ is equal to that of $\ZFC$ (in fact, $\neg\CH$ implies $\SC$-$\SRP(\omega_2)$ by Observation \ref{obs:SCSRPtrivialuptocontinuum}), while the consistency strength of $\BSCFA({\le}\omega_2)$ is a ${+}1$-reflecting cardinal. I have now assembled the main tools needed to show that such a separation can also be arranged when \CH holds. This will be achieved by constructing a model of $\CH+\SC\text{-}\SRP(\omega_2)$ in which the consequence of $\BSCFA({\le}\omega_2)$ stated in the following lemma fails (in an extreme way).

\begin{lem}
\label{lem:StationarilyManyL-RegularsHaveCountableCofinality}
Assume $\BSCFA({\le}\omega_2)$. Then the set
\[\{\alpha<\omega_2\st\cf(\alpha)=\omega\ \text{and $\alpha$ is regular in $L$}\}\]
is stationary in $\omega_2$.
\end{lem}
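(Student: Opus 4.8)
The plan is to use the characterization of $\BSCFA({\le}\omega_2)$ in terms of $\Sigma_1(H_{\omega_2})$-absoluteness for subcomplete forcing (Theorem~\ref{thm:BagariaCharacterizationOfBFA}) together with a subcomplete forcing that makes a given $L$-regular cardinal below $\omega_2$ have countable cofinality. The key point is that Namba-type forcing on an $L$-regular cardinal $\kappa<\omega_2$ is subcomplete (this is in Jensen's work, and it is precisely the kind of forcing alluded to in the discussion after Theorem~\ref{thm:JensenOnMutualStationarity}), so $\BSCFA({\le}\omega_2)$ applies to it.

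More concretely, suppose toward a contradiction that the set $E=\{\alpha<\omega_2\st\cf(\alpha)=\omega\text{ and }\alpha\text{ is regular in }L\}$ is nonstationary, as witnessed by a club $C\sub\omega_2$. First I would fix an ordinal $\kappa<\omega_2$ that is regular in $L$ and has uncountable cofinality in $\V$; such a $\kappa$ exists — indeed there are cofinally many below $\omega_2$, since $\omega_2$ is a limit of $L$-cardinals of cofinality $\omega_1$ (or one simply takes any $L$-regular $\kappa$ with $\cf^\V(\kappa)=\omega_1$; if $\omega_1^L<\omega_1$ one can also just take $\kappa=\omega_1$, but the interesting case is $\kappa>\omega_1$). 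Let $\Q$ be the subcomplete forcing that changes the cofinality of $\kappa$ to $\omega$ without collapsing cardinals (Jensen's Namba-like subcomplete forcing from \cite{Jensen:ExtendedNamba}, or the relevant forcing from \cite{Jensen2014:SubcompleteAndLForcingSingapore}); crucially $\Q$ preserves $\omega_1$ and $\omega_2$. In $\V[g]$, where $g$ is $\Q$-generic, $\kappa$ has cofinality $\omega$, and $\kappa$ is still regular in $L$ (being $L$-regular is upward absolute since $L$ is not changed by forcing). Now the statement ``there is an ordinal $\alpha<\omega_2$ with $\cf(\alpha)=\omega$ that is regular in $L$ and lies in $C$'' needs to be expressed, or rather contradicted, via $\Sigma_1$-absoluteness. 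The natural move: code $C$ and enough of $L$ as a parameter in $H_{\omega_2}$, and observe that the assertion ``$\exists \alpha\ (\alpha\in C\wedge\cf(\alpha)=\omega\wedge L_{\alpha+1}\models\text{``}\alpha\text{ is regular''})$'' is $\Sigma_1$ over $H_{\omega_2}$ with parameter $C$ (having countable cofinality is witnessed by a countable cofinal set, which lives in $H_{\omega_2}$). This holds in $\V[g]$ with witness $\alpha=\kappa$ (using that $C$ is still club, hence unbounded, in $\omega_2^{\V[g]}=\omega_2^\V$, so we may take a suitable $\alpha\in C$ above — actually we need $\kappa\in C$; to arrange this, choose $\kappa$ from the outset to be a point of $C$, which is possible since $C$ meets the cofinally-many $L$-regulars of $\V$-cofinality $\omega_1$, or more simply replace $C$ by $C\cap\{L\text{-regulars}\}$-closure considerations). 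By $\Sigma_1(H_{\omega_2})$-absoluteness for $\Q$ (Theorem~\ref{thm:BagariaCharacterizationOfBFA}, using $\BSCFA({\le}\omega_2)$), the same $\Sigma_1$ statement holds in $\V$, giving an $\alpha\in C\cap E$, contradicting that $C$ witnesses $E$ nonstationary.

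The main obstacle — and the step to be careful about — is getting the witnessing ordinal into the club $C$. Absoluteness only transfers $\Sigma_1$ statements with parameters in $H_{\omega_2}$; $C$ itself is such a parameter, and ``unbounded in $\omega_2$'' combined with the forcing preserving $\omega_2$ and adding a countable cofinal sequence to one $L$-regular $\kappa$ does not immediately put $\kappa$ into $C$. The fix is to do the forcing relative to $C$: one should argue that for club-many $\kappa\in C$ that are $L$-regular with $\cf^\V(\kappa)=\omega_1$ (and there are stationarily, hence some, such $\kappa$, since the $L$-regulars of $\V$-cofinality $\omega_1$ below $\omega_2$ form a stationary set — this uses that $\omega_2$ is not the successor of a singular in $L$, or a direct argument that $L$-regular points are unbounded and one can hit $C$ at a point of $\V$-cofinality $\omega_1$ by a pressing-down/reflection argument), Namba forcing at such a $\kappa$ is subcomplete, lands $\kappa$ in $C\cap S^{\omega_2}_\omega$ in the extension, and keeps $\kappa$ $L$-regular. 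Then $\Sigma_1$-absoluteness with parameter $C$ delivers the contradiction. One must also verify that the relevant Namba-like forcing is genuinely subcomplete at an arbitrary $L$-regular $\kappa<\omega_2$ (not just at $\omega_2$ itself) and that it preserves $\omega_2$; both are available in Jensen's treatment, and this is exactly why the argument is carried out for subcompleteness rather than $\infty$-subcompleteness, as flagged in the introduction.
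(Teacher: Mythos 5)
Your plan breaks down at its very first forcing step. You want to fix an $L$-regular $\kappa$ with $\omega_1<\kappa<\omega_2$ and $\cf^{\V}(\kappa)=\omega_1$ and then apply a subcomplete Namba-type forcing to make $\cf(\kappa)=\omega$. No such forcing can exist: since $\cf^{\V}(\kappa)=\omega_1$, there is in $\V$ an increasing cofinal map $f:\omega_1\To\kappa$, and in any extension containing $f$ we have $\cf(\kappa)=\cf(\omega_1^{\V})$; so making $\cf(\kappa)=\omega$ forces $\cf(\omega_1^{\V})=\omega$, i.e., collapses $\omega_1$, which subcomplete forcing never does. The only ordinal in play whose cofinality can be made countable while preserving $\omega_1$ is $\omega_2$ itself, and that is exactly what the paper's proof does: it Namba-forces at $\omega_2$, shoots a club of order type $\omega_1$ through a stationary set $A\sub\theta$ for a suitable $\theta=(\gamma^+)^L$ far above $\omega_2$, collapses a structure $M$ of size $\omega_2$ (with the given club $E$ as a predicate) to $\omega_1$, and then uses the reflection form of $\BSCFA({\le}\omega_2)$ (Fact \ref{fact:CharacterizationOfBFAkappa}) to reflect the whole configuration to a transitive $\bM$ with $\bkappa=\On\cap\bM<\omega_2$. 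The witness below $\omega_2$ is \emph{produced} by the reflection rather than chosen in advance. Relatedly, your auxiliary claim that $L$-regulars of uncountable cofinality are cofinal (let alone stationary) below $\omega_2$ cannot be presupposed: nothing rules out a priori that $\omega_2=(\omega_1^+)^L$, in which case there are no $L$-regulars in the interval $(\omega_1,\omega_2)$ at all; their abundance is part of what the lemma establishes. (Also note that the club $C$ has size $\omega_2$ and so is not an element of $H_{\omega_2}$, so plain $\Sigma_1(H_{\omega_2})$-absoluteness does not accommodate it as a parameter; one needs the model-reflection formulation.)

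Independently, your $\Sigma_1$ certificate for ``$\alpha$ is regular in $L$'' is too weak. The clause $L_{\alpha+1}\models$ ``$\alpha$ is regular'' does not imply that $\alpha$ is regular in $L$: a singularizing function for $\alpha$ lies in $H_{\alpha^+}^L=L_{(\alpha^+)^L}$ but may first appear at any level below $(\alpha^+)^L$, far above $\alpha+1$. Producing a correct, genuinely $\Sigma_1$ certificate of $L$-regularity is the real difficulty of this lemma, and the paper's solution is the global $\square$-sequence trick: the reflected statement asserts that $\bkappa$ is regular in $L_{\theta'}$ for some $\theta'$ carrying a club $F'$ of order type $\omega_1$ on which $\xi\mapsto\otp(C_\xi)$ is constant, where $\vec{C}$ is the canonical global $\square$-sequence of $L$ (which is $\Sigma_1$-definable there); coherence of $\vec{C}$ forces such a $\theta'$ to be regular in $L$, hence an $L$-cardinal above $\bkappa$, so that regularity in $L_{\theta'}$ really does imply regularity in $L$. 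This argument relies on the covering lemma (to know that the relevant ordinals of countable cofinality are singular in $L$ so that $C_\xi$ is defined), so the paper also splits off the case that $0^\#$ exists, where Silver indiscernibles settle the matter immediately. Each of these points would have to be repaired for your argument to go through, and repairing the first essentially forces you into the paper's reflection-based architecture.
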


\begin{proof}
If $0^\#$ exists, then, letting $I$ be the class of Silver indiscernibles, $I\cap\omega_2$ is club in $\omega_2$ and consists of $L$-regular cardinals. Therefore, $I\cap S^{\omega_2}_\omega$ is a stationary subset of the set in question.

So let me assume that $0^\#$ does not exist.
The following argument traces back to \Todorcevic (unpublished), but see \cite[Lemma 2.4]{Aspero:MaximalBFA}. A variant of the argument was used in \cite[Lemma 4.11]{Fuchs:HierarchiesOfForcingAxioms}.

Let $\kappa=\omega_2$, and let $\gamma$ be some singular strong limit cardinal, and let $\theta=\gamma^+=(\gamma^+)^L$, by Jensen's covering lemma. Let $E\sub\kappa$ be some club subset. Let $X\sub H_{\omega_2}$ have cardinality $\omega_2$, $\omega_2\sub X$, and $H_{\omega_2}|X\prec H_{\omega_2}$. Let   $M=\kla{X,\in,E,0,\ldots,\xi,\ldots,\omega_1}_{\xi<\omega_1}$. So the universe of $M$ has size $\omega_2$.

Let $\seq{C_\xi}{\xi\ \text{is a singular ordinal in $L$}}$ be the canonical global $\square$ sequence for $L$ of Jensen \cite{FS}. It is $\Sigma_1$-definable in $L$ and has the properties that for every $L$-singular ordinal $\xi$, the order type of $C_\xi$ is less than $\xi$, and if $\zeta$ is a limit point of $C_\xi$, then $\zeta$ is singular in $L$ and $C_\zeta=C_\xi\cap\zeta$.

Let $B=\{\xi<\theta\st\kappa<\xi<\theta\ \text{and}\ \cf(\xi)=\omega\}$. Note that by covering, every $\xi\in B$ is singular in $L$, since a countable cofinal subset of $\xi$ in $\V$ can be covered by a set in $L$ of cardinality at most $\omega_1$, so that its order type will be less than $\kappa$, and hence less than $\xi$. So $C_\xi$ is defined for every $\xi\in B$, and since the function $\xi\mapsto\otp(C_\xi)$ is regressive, there is a stationary subset $A$ of $B$ on which this function is constant.

Let $g\sub\kappa$ be an $\omega$-sequence cofinal in $\kappa$, added by Namba forcing, which is subcomplete. Since Namba forcing certainly has cardinality less than $\theta$, $A$ remains stationary in $\V[g]$. Working in $\V[g]$ now, since $A$ consists of ordinals of cofinality $\omega$ and is stationary in a regular cardinal greater than $2^\omega$, the forcing $\P_A$, which adds a subset $F$ of $A$ that's closed and unbounded in $\theta$ and has oder type $\omega_1$, by forcing with closed initial segments, is subcomplete -- this follows from Lemma \ref{lem:OneStep}, see also \cite[p.~134ff., Lemma 6.3]{Jensen2014:SubcompleteAndLForcingSingapore}, where the assumption that $\theta>2^\omega$ is omitted. Let $h$ be generic over $\V[g][F]$ for $\Col{\omega_1,M}$. Clearly, the composition of Namba forcing, $\P_A$ and $\Col{\omega_1,M}$ is subcomplete.

In $\V[g][F][h]$, the $\Sigma_1$-statement $\Phi(M)$ saying ``there are an ordinal $\theta'>\On\cap M$, sets $g'$ and $F'$, and
a function $h'$ such that $h'$ is a surjection from $\omega_1^M$ onto the universe of $M$, $F'$ is a club in $\theta'$ of order type $\omega_1^M$ such that for all $\xi,\zeta\in F'$, $\otp(C_{\xi})=\otp(C_{\zeta})$, and $g'$ is a cofinal subset of $\On\cap M$ of order type $\omega$, and in $L_{\theta'}$, $\On\cap M$ is a regular cardinal greater than $\omega_1^M$'' holds, as witnessed by $\theta$, $g$, $F$ and $h$. It is important here that the definition of the canonical global $\square$ sequence is $\Sigma_1$. This does not depend on the particular choice of the generics $g$, $F$ and $h$, which means that it is forced by the trivial condition in the composition of these subcomplete forcings that $\Phi(\check{M})$ holds. So according to the characterization of  $\BSCFA({\le}\omega_2)$ given by Fact \ref{fact:CharacterizationOfBFAkappa}, there are a transitive $\bM=\kla{|\bM|,\in,\bar{E},\seq{\xi}{\xi<\omega_1}}$ such that $\Phi(\bM)$ holds, and an elementary embedding $j:\bM\prec M$. It follows that $j$ is the identity. This is because we have constants for the countable ordinals, so that $\omega_1^\bM=\omega_1^M=\omega_1$, and since $M$ believes that the transitive closure of any set has cardinality at most $\omega_1$.

Let $\bkappa=\On^\bM$, and let $\btheta$, $\bar{g}$, $\bar{F}$, $\bar{h}$ witness that $\Phi(\bM)$ holds. Then $\bar{h}:\omega_1\To|\bM|$ is onto, so $\bkappa<\kappa$.
Moreover, since $\bM\in H_{\omega_2}$, $\btheta$ may be chosen to be less than $\omega_2$.

Note that $\bkappa\in E$. This is because $\bar{E}=\bkappa\cap E$, and by elementarity, $\bar{E}$ is unbounded in $\bkappa$, so since $E$ is closed in $\kappa$, it follows that $\bkappa\in E$. Moreover, $\bkappa$ has countable cofinality, since $\bar{g}$ is a cofinal subset of $\bkappa$ of order type $\omega$.
I claim that $\bkappa$ has uncountable cofinality in $L$, thus completing the proof.

The key point is that $\btheta$ is a regular cardinal in $L$. To see this, assume that $\btheta$ is singular in $L$. Then $C_\btheta$ is defined.
Note that $\cf(\btheta)=\omega_1$, since $\otp(\bar{F})=\omega_1$. So, letting $C_\btheta'$ be the set of limit points of $C_\btheta$, $C_\btheta'\cap\bar{F}$ is club in $\btheta$. Now take $\xi<\zeta$, both in $C_\btheta'\cap\bar{F}$. Then, since $\xi,\zeta\in\bar{F}$, $C_\xi$ and $C_\zeta$ have the same order type, but since both $\xi$ and $\zeta$ are limit points of $C_\btheta$, $C_\xi=C_\btheta\cap\xi$, which is a proper initial segment of $C_\zeta=C_\btheta\cap\zeta$, a contradiction.

So, since by $\Phi(\bM)$, $\bkappa$ is a regular cardinal in $L_\btheta$, it follows that $\bkappa$ is an uncountable regular cardinal in $L$, as wished.
\end{proof}

\begin{lem}
\label{lem:BFAimpliesSRPatOmega_2}
Let $\Gamma$ be a forcing class.
\begin{enumerate}[label=\textnormal{(\arabic*)}]
  \item
  \label{item:InGeneralNeedCardArithmetic}
  $2^{\omega_1}=\omega_2 + \BFA(\Gamma,{\le}\omega_2)$ implies $\Gamma$-$\SRP(\omega_2)$.
  \item
  \label{item:ForSCcapUCPdontNeedIt}
  Let $\Gamma$ be the class of all subcomplete, uncountable cofinality preserving forcing notions. Then $\BFA(\Gamma,{\le}\omega_2)$ implies $\SC$-$\SRP(\omega_2)$.
\end{enumerate}
\end{lem}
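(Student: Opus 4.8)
\textbf{Proof plan for Lemma \ref{lem:BFAimpliesSRPatOmega_2}.} The plan is to prove \ref{item:InGeneralNeedCardArithmetic} first and then extract \ref{item:ForSCcapUCPdontNeedIt} by observing that the one place where $2^{\omega_1}=\omega_2$ was used can be circumvented when the forcing preserves uncountable cofinalities. For \ref{item:InGeneralNeedCardArithmetic}, fix $S\sub[H_{\omega_2}]^\omega$ which is $\Gamma$-projective stationary, i.e.\ $\P_S\in\Gamma$. By Fact \ref{fact:P_SisCountablyDistributiveAndExhaustive}, forcing with $\P_S$ adds a continuous $\in$-chain $\seq{X_i}{i<\omega_1}$ through $S$ with $\bigcup_{i<\omega_1}X_i=H_{\omega_2}$ (note $\delta(\P_S)\le 2^{<\omega_2}=\omega_2$ under the cardinal arithmetic hypothesis, and $\P_S$ is countably distributive, so it adds no new $\omega$-sequences and $\omega_1$ is preserved). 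The idea is to use the characterization of $\BFA(\Gamma,{\le}\omega_2)$ from Fact \ref{fact:CharacterizationOfBFAkappa}: encode into a transitive structure $M$ of size $\omega_2$, with $\omega_1$ predicates, all the relevant data --- essentially $H_{\omega_2}$ itself together with the relation ``$x\in X\in S$'' and a bookkeeping of a surjection of $\omega_2$ onto $H_{\omega_2}$ --- and write down a $\Sigma_1$ statement $\phi(M)$ asserting the existence of a continuous $\in$-chain of length $\omega_1$ through $S^M$ that is cofinal in (the ordinals of) $M$. Then $\forces_{\P_S}\phi(\check M)$ by the density facts just recalled.

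The heart of the argument is then the pullback: by Fact \ref{fact:CharacterizationOfBFAkappa} there are in $\V$ a transitive $\bM$ with $\phi(\bM)$ and an elementary $j:\bM\prec M$. Because $M$ carries constants for all countable ordinals and $M$ thinks every transitive closure has size $\le\omega_1$ (arrange this by putting a suitable surjection into $M$), $j$ must fix $\omega_1$ and indeed act as the identity on the countable ordinals; $\bM$ is transitive of size $\le\omega_1$, so $\bkappa:=\On\cap\bM<\omega_2$ has cofinality $\omega_1$ (as witnessed inside $\bM$ by the chain of length $\omega_1^{\bM}=\omega_1$). The witnessing chain $\seq{\bar X_i}{i<\omega_1}$ inside $\bM$ is a genuine continuous $\in$-chain through $\bar S=S^{\bM}$, and the point is that $j$ pushes it back up: since $\bar S\sub j^{-1}(S)$ (this is part of what $\phi$ asserts, correctly pulled back through $j$), the sets $j``\bar X_i$ --- equivalently $j``(\bar X_i)$, which makes sense because each $\bar X_i$ is countable in $\bM$ hence $j$ restricted to it is just the pointwise image --- are members of $S$, and $\seq{j``\bar X_i}{i<\omega_1}$ is a continuous $\in$-chain through $S$ in $\V$. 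Closing continuity at limits uses that $j$ is continuous at the relevant countable ordinals, which is automatic. This establishes $\SRP(\omega_2)$ for $S$, hence $\Gamma$-$\SRP(\omega_2)$.

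For \ref{item:ForSCcapUCPdontNeedIt} I would note that the only use of $2^{\omega_1}=\omega_2$ above was to guarantee $\delta(\P_S)\le\omega_2$, i.e.\ that $\P_S$ can be coded by a structure of size $\omega_2$ on which the bounded forcing axiom speaks. But when $\P_S$ is subcomplete it is countably distributive, hence $\P_S\in\SC$ entails $\P_S$ is uncountable cofinality preserving \emph{and} adds no new subsets of $\omega$; more to the point, a subcomplete forcing that preserves $\omega_1$ has a dense subset of size at most its density, and one checks directly that $\P_S\sub[H_{\omega_2}]^{<\omega_1}$ has density $\le\omega_2$ regardless of cardinal arithmetic --- the conditions are countable sequences of countable subsets of $H_{\omega_2}$, and two conditions with the same underlying set of models coincide, so $\delta(\P_S)\le|[H_{\omega_2}]^\omega|$; but in fact what we need is only that $\P_S$ has a name-set of maximal antichains of size $\le\omega_2$, which $\BFA(\Gamma,{\le}\omega_2)$ handles as soon as each antichain has size $\le\omega_2$, and this holds because $\P_S$ has the $\omega_2$-c.c.\ (any antichain gives $\omega_2$-many pairwise $\in$-incomparable countable sets mapping injectively into $\omega_2$ via suprema). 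Once this is in place the argument of \ref{item:InGeneralNeedCardArithmetic} goes through verbatim with $\Gamma=\SC\cap(\text{uncountable cofinality preserving})$, giving $\SC\cap\UR$-$\SRP(\omega_2)$; but every $\P_S$ is countably distributive, hence automatically uncountable cofinality preserving, so $\SC$-projective stationarity coincides with $(\SC\cap\text{u.c.p.})$-projective stationarity at $\omega_2$, and we get $\SC$-$\SRP(\omega_2)$.

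\textbf{Main obstacle.} The delicate point is the bound on $\delta(\P_S)$ (or on the sizes of the maximal antichains) needed to legitimately invoke $\BFA(\cdot,{\le}\omega_2)$; in part \ref{item:InGeneralNeedCardArithmetic} this is exactly why $2^{\omega_1}=\omega_2$ is hypothesized, and in part \ref{item:ForSCcapUCPdontNeedIt} the work is to see that the $\omega_2$-chain condition of $\P_S$ suffices in place of that arithmetic. The second subtlety is arranging the structure $M$ and the $\Sigma_1$ formula $\phi$ so that $\phi(\bM)$, pulled back through an arbitrary elementary $j$, still says exactly ``$\seq{\bar X_i}{i<\omega_1}$ is a continuous $\in$-chain through the set coded as $\bar S$'' with $\bar S$ correctly related to $S$ --- i.e.\ making sure the coding of ``$S$'' as one of the $\omega_1$ predicates transfers soundly under $j$. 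Both are routine once set up carefully, but they are where the proof must be written with attention.
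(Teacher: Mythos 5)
Part \ref{item:InGeneralNeedCardArithmetic} of your proposal is essentially the paper's argument: under $2^{\omega_1}=\omega_2$ the structure $\kla{H_{\omega_2},\in,S,\vec\xi}_{\xi<\omega_1}$ has size $\omega_2$, $\P_S$ forces the $\Sigma_1$ statement asserting a continuous $\in$-chain of length $\omega_1$ through the predicate, and Fact \ref{fact:CharacterizationOfBFAkappa} reflects this to a transitive $\bM$ with $j:\bM\prec M$; the constants for countable ordinals plus the fact that $M$ thinks every transitive closure has size $\le\omega_1$ force $j=\id$, so the reflected chain really lies in $S$. That is fine.

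Part \ref{item:ForSCcapUCPdontNeedIt} has a genuine gap. You propose to dispense with $2^{\omega_1}=\omega_2$ by arguing that $\P_S$ is $\omega_2$-c.c.\ (equivalently, that its maximal antichains have size $\le\omega_2$), so that $\BFA(\Gamma,{\le}\omega_2)$ applies to it directly. This is false. Since the ordering on $\P_S$ is end-extension, two conditions are compatible iff one end-extends the other; hence any two distinct length-one conditions $\langle X\rangle,\langle Y\rangle$ are incompatible, and every maximal antichain $A$ must contain, for each $X\in S$, a condition whose first entry is $X$ (those are the only conditions comparable with $\langle X\rangle$). Thus $|A|\ge|S|$. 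Since $S$ is stationary, $\bigcup S=H_{\omega_2}$, so $|S|\ge 2^{\omega_1}$; if $2^{\omega_1}>\omega_2$, then $\P_S$ has \emph{no} maximal antichains of size $\le\omega_2$ and $\BFA(\Gamma,{\le}\omega_2)$ says nothing about it. Your suprema argument fails because $X\mapsto\sup(X\cap\omega_2)$ is far from injective on an antichain, and the density bound you cite is only $\delta(\P_S)\le|[H_{\omega_2}]^\omega|=2^{\omega_1}$, again not $\omega_2$.

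The paper closes this gap by \emph{deriving} the cardinal arithmetic from the hypothesis rather than circumventing it. If $\CH$ fails, $\SC$-$\SRP(\omega_2)$ is vacuous by Observation \ref{obs:SCSRPtrivialuptocontinuum}. Under $\CH$, one first shows $\BFA(\Gamma,{\le}\omega_2)$ implies $\SFP{\omega_2}$: the relevant set $S$ is fully spread out by Lemma \ref{lem:SFPpreliminary} (this is where $\omega_2>2^\omega$ enters), so $\P_S\in\Gamma$, and the witness for $\SFP{\omega_2}$ is a normal function $\omega_1\to\omega_2$, so the $\Sigma_1$ statement can be formulated over an elementary submodel of size $\omega_2$ containing $\omega_2$ --- no need to capture all of $H_{\omega_2}$. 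Then $\SFP{\omega_2}$ gives $2^{\omega_1}=\omega_2$ via Observation \ref{obs:CharacterizationOfE-versionOfSFPkappa} and Fact \ref{fact:SFPandArithmetic}, and part \ref{item:InGeneralNeedCardArithmetic} applies. Your final reduction (countable distributivity of $\P_S$ makes it uncountable-cofinality preserving, so $\Gamma$-$\SRP(\omega_2)$ yields $\SC$-$\SRP(\omega_2)$) is correct and matches the paper.
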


\begin{proof}
To prove \ref{item:InGeneralNeedCardArithmetic}, first note that $H_{\omega_2}$ has cardinality $\omega_2$. To see that $\Delta$-$\SRP(\omega_2)$ holds, let $S\sub[H_{\omega_2}]^\omega$ be $\Delta$-projective stationary. I will use the characterization of $\BFA(\Delta,{\le}\omega_2)$ given by Fact \ref{fact:CharacterizationOfBFAkappa}. Thus, let $M=\kla{H_{\omega_2},\in,S,0,1,\ldots,\xi,\ldots}_{\xi<\omega_1}$. Now the forcing $\P_S$ is in $\Gamma$, since $S$ is $\Delta$-projective stationary, and the size of the universe of $M$ is $\omega_2$, so that Fact \ref{fact:CharacterizationOfBFAkappa} is applicable. Namely, the $\Sigma_1$-statement expressing that there is a continuous $\omega_1^M$-chain of models through $\dot{S}^M$ is forced by $\P_S$. So, by the fact, there are a transitive model $\bM$ of the same language as $M$, and an elementary embedding $j:\bM\prec M$ such that the same $\Sigma_1$-statement is true of $\bM$. As in the proof of Lemma \ref{lem:StationarilyManyL-RegularsHaveCountableCofinality}, $j$ is the identity.

To see \ref{item:ForSCcapUCPdontNeedIt}, let $\Gamma$ be the class of all subcomplete, uncountable cofinality preserving forcing notions, and suppose that $\BFA(\Gamma,{\le}\omega_2)$ holds. If $\CH$ fails, then \SC-$\SRP(\omega_2)$ holds trivially, by Observation \ref{obs:SCSRPtrivialuptocontinuum}. So let me assume \CH. It then follow from $\BFA(\Gamma,{\le}\omega_2)$ that $\SFP{\omega_2}$ holds (see Definition \ref{def:SFPkappa}). To see this, let $\vD=\seq{D_i}{i<\omega_1}$ be a partition of $\omega_1$ into stationary sets, and let $\seq{S_i}{i<\omega_1}$ be a sequence of stationary subsets of $S^{\omega_2}_\omega$. By (the remark after) Lemma \ref{lem:SFPpreliminary}, the set
\[S=\{a\in[H_{\omega_2}]^\omega\st\forall i<\omega_1\ a\cap\omega_1\in D_i\To\sup(a\cap\omega_2)\in S_i\}\]
is fully spread out. This means that the forcing $\P_S$ is subcomplete, and by Fact \ref{fact:P_SisCountablyDistributiveAndExhaustive}, it is countably distributive and hence in $\Gamma$. Now let $N=\kla{H_{\omega_2},\in,\vD,\vec{S},0,1,\vec{\xi}}$, viewed as a model of the language which has a predicate symbol for each $D_i$, $S_i$ and $i$, for $i<\omega_1$. Let $\omega_2\sub X$ be such that $M=N|X\prec N$ and such that $X$ has cardinality $\omega_2$. Let $\vec{M}$ be a continuous $\in$-chain through $S$, added by $\P_S$, and let's assume that each $M_i$ is an elementary submodel of $\kla{H_\kappa,\in,\vec{D},\vec{S}}$. Using the argument of Theorem \ref{thm:infSCSRPimpliesSFPkappa}, it follows that in $\V[\vM]$, there is a normal function $f:\omega_1\To\omega_2^\V$, cofinal in $\On\cap M$, such that for every $i<\omega_1$, $f``D_i\sub S_i$. This can be expressed as a $\Sigma_1$-statement about $M$. By $\BFA(\Gamma,{\le}\omega_2)$, there are in $\V$ a transitive model $\bM$ and an elementary embedding $j:\bM\prec m$ such that this $\Sigma_1$-statement is true of $\bM$. Because of the availability of the constant symbols for the countable ordinals, it follows that $\omega_1\sub\ran(j)$, and hence $j\rest\omega_1=\id$. And since $\omega_1$ is the largest cardinal in $M$, the same is true in $\bM$, and hence, letting $\bkappa=\On\cap\bM$, it follows that $j\rest\bkappa$ is the identity. In fact, since the transitive closure of every set in $\bM$ has size at most $\omega_1$, it follows that $j$ is the identity. In any case, for $i<\omega_1$, we have that $\dot{S}_i^\bM=S_i\cap\bkappa$, and $\dot{D}_i^\bM=D_i$, for $i<\omega_1$. Now let $f$ witness that the above statement is true of $\bM$. Then $f:\omega_1\To\bkappa$ is cofinal, normal, and for all $i<\omega_1$, $f``D_i\sub S_i$. This shows that $\SFP{\omega_2}$ holds.

By Observation \ref{obs:CharacterizationOfE-versionOfSFPkappa} and Fact \ref{fact:SFPandArithmetic}, $\SFP{\omega_2}$ implies that $\omega_2^{\omega_1}=2^{\omega_1}=\omega_2$. Hence, by \ref{item:InGeneralNeedCardArithmetic}, we see that $\Gamma$-$\SRP(\omega_2)$ holds.
But $\Gamma$-$\SRP(\omega_2)$ implies $\SC$-$\SRP(\omega_2)$ (and by the way, the converse is also true, because $\Gamma\sub\SC$), because if $S\sub[H_\kappa]^\omega$ is fully spread out, then $\P_S$ is subcomplete, and $\P_S$ is always countably distributive, hence uncountable cofinality preserving. Thus, $\P_S$ is in $\Gamma$, making $S$ $\Gamma$-projective stationary. Therefore, by $\Gamma$-$\SRP(\omega_2)$, there is a continuous $\in$-chain through $S$.
\end{proof}

Now I'm ready to put the pieces together and  construct the model in which $\SC$-$\SRP(\omega_2)$ holds but $\BSCFA({\le}\omega_2)$ fails.

\begin{thm}
\label{thm:SeparatingSCSRPfromSCFAatOmega2}
Let $\Gamma$ be the class of subcomplete, uncountable cofinality preserving forcing notions. If $\ZFC$ is consistent with $\BFA(\Gamma,{\le}\omega_2)$, then $\ZFC$ is consistent with the conjunction of the following statements:
\begin{enumerate}[label=\textnormal{(\arabic*)}]
\item
\label{item:CH}
\CH,
\item
\label{item:BFAGamma}
$\BFA(\Gamma,{\le}\omega_2)$,
\item
\label{item:SC-SRP(omega2)}
$\SC$-$\SRP(\omega_2)$,
\item
\label{item:LcorrectAboutUncountableCF}
$L$ is correct about uncountable cofinalities, that is, for every ordinal $\alpha$, if $\cf^L(\alpha)>\omega$, then $\cf(\alpha)>\omega$,
\item
\label{item:NotBSCFA(omega_2)}
$\neg\BSCFA({\le}\omega_2)$.
\end{enumerate}
\end{thm}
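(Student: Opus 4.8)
The strategy is the standard ``force a forcing axiom while adding the witnessing object, then absorb the destruction of that object'' argument, tailored to the class $\Gamma$ of subcomplete uncountable cofinality preserving forcings. Start with a model $\V$ of $\ZFC + \BFA(\Gamma,{\le}\omega_2)$ (consistent by hypothesis). The goal is to iterate forcings from $\Gamma$ of length $\omega_2$, collapsing each iterand's density below $\omega_1$ along the way, so that by the Corollary \ref{cor:IteratingSubcompleteUncountableCofinalityPreservingWithCS} (countable support iterability of $\Gamma$) the whole iteration $\B_{\omega_2}$ remains subcomplete and uncountable cofinality preserving. A bookkeeping argument, using that each iterand can be taken to have size $\le\omega_2$ and that $\B_i$ is $\delta(\B_i)^+$-c.c., ensures that in the final model $\BFA(\Gamma,{\le}\omega_2)$ holds: any family of $\omega_1$ maximal antichains (each of size $\le\omega_2$) in a poset of $\Gamma$ appearing in the extension is caught at some stage $i<\omega_2$ and met. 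One must be careful that the iteration does not destroy $\CH$: since subcomplete forcings add no reals, and the collapses used (of the form $\Col{\omega_1}{\cdot}$ of sets of size $\le\omega_2$, which are subcomplete and uncountable cofinality preserving) add no reals either, $\CH$ is preserved provided $\CH$ holds in $\V$ --- and if it fails in $\V$, first force with $\Col{\omega_1}{2^\omega}$ (subcomplete, uncountable cofinality preserving, preserves $\BFA(\Gamma,{\le}\omega_2)$ because it is ${<}\omega_2$-closed after the fact... actually one should just start the iteration with it). So \ref{item:CH} and \ref{item:BFAGamma} hold in $\V[G]$ where $G$ is $\B_{\omega_2}$-generic.

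Next, \ref{item:SC-SRP(omega2)} is immediate from \ref{item:BFAGamma} by Lemma \ref{lem:BFAimpliesSRPatOmega_2}\ref{item:ForSCcapUCPdontNeedIt}. For \ref{item:LcorrectAboutUncountableCF}: since $\B_{\omega_2}$ is uncountable cofinality preserving, it does not change the cofinality of any ordinal of uncountable cofinality to $\omega$; in particular, if $\cf^L(\alpha)>\omega$ then $\cf^\V(\alpha)>\omega$ (as $L$ computes cofinalities at least as small as $\V$ does, and... wait) --- more precisely, I need $L$-correctness about uncountable cofinality \emph{in $\V[G]$}, i.e. $\cf^L(\alpha)>\omega \Rightarrow \cf^{\V[G]}(\alpha)>\omega$. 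For this, first arrange that $\V$ itself is $L$-correct about uncountable cofinalities (e.g. because $\BFA(\Gamma,{\le}\omega_2)$ in the ground model can be obtained by a subcomplete, uncountable cofinality preserving forcing over $L$, or simply by assuming it as part of the setup --- alternatively start the whole construction over $L$), and then note $\B_{\omega_2}$ preserves this, since it preserves uncountable cofinalities. Finally, \ref{item:NotBSCFA(omega_2)} follows from Lemma \ref{lem:StationarilyManyL-RegularsHaveCountableCofinality}: that lemma shows $\BSCFA({\le}\omega_2)$ implies the set of $\alpha<\omega_2$ that are regular in $L$ but have cofinality $\omega$ is stationary --- in particular nonempty --- which directly contradicts \ref{item:LcorrectAboutUncountableCF}. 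So in $\V[G]$, $\BSCFA({\le}\omega_2)$ must fail.

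The main obstacle is the simultaneous maintenance of \emph{three} things through the iteration: (i) that the iteration stays in $\Gamma$ (handled by Corollary \ref{cor:IteratingSubcompleteUncountableCofinalityPreservingWithCS}, but one must verify its hypotheses \ref{item:IterationIsDirect}--\ref{item:IntermediateCollapses} are arranged at each step, which is where inserting the collapses $\Col{\omega_1}{\delta(\B_i)}$ comes in); (ii) that the bookkeeping genuinely yields $\BFA(\Gamma,{\le}\omega_2)$ in the extension --- this requires the iteration to be long enough and the iterands to be chosen via a suitable pairing function enumerating all $(\Q,\mathcal{A})$ with $\Q\in\Gamma$ of size $\le\omega_2$ and $\mathcal{A}$ a size-$\le\omega_2$ family of $\omega_1$ maximal antichains, using that at the final stage $2^{\omega_1}=\omega_2$ by $\SFP{\omega_2}$; and (iii) that $\CH$ survives, which needs every iterand (including the collapses) to add no reals --- true because both $\Gamma$-forcings and ${\le}\omega_1$-sized collapses to $\omega_1$ are countably distributive... actually $\Col{\omega_1}{X}$ is ${<}\omega_1$-closed, hence adds no reals. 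One subtlety: I should double check that an iterand witnessing an instance of $\BFA(\Gamma,{\le}\omega_2)$ can be replaced by a two-step iteration of the original forcing followed by a collapse of its density, both in $\Gamma$, so that hypothesis \ref{item:IntermediateCollapses} of the iteration theorem is met; this is routine since $\Col{\omega_1}{\delta}$ is subcomplete and uncountable cofinality preserving and collapsing $\delta(\B_i)$ to have size $\omega_1$ is exactly what \ref{item:IntermediateCollapses} demands. With these arranged, the five conclusions follow as above.
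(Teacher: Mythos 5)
There is a genuine gap at the heart of your construction: you propose to realize $\BFA(\Gamma,{\le}\omega_2)$ by a bookkeeping iteration of length $\omega_2^\V$ \emph{over $\V$}, and this cannot work. First, the intermediate collapses demanded by hypothesis \ref{item:IntermediateCollapses} of Theorem \ref{thm:IteratingSubcompleteUncountableCofinalityPreserving} keep collapsing the densities of the initial segments to $\omega_1$; since the iterands must be allowed to have size up to the current $\omega_2$, an iteration of length $\omega_2^\V$ would collapse $\omega_2^\V$ itself unless the length of the iteration is an inaccessible cardinal of the model one iterates over. Second, and more fundamentally, $\BFA(\Gamma,{\le}\omega_2)$ quantifies over \emph{all} forcings in $\Gamma$ (a proper class), and there is no known reduction of an instance to a poset of size $\le\omega_2$ that stays inside $\Gamma$; catching all instances requires the length of the iteration to be a ${+}1$-reflecting cardinal. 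This is not an artifact of the method: the consistency strength of $\BSCFA({\le}\omega_2)$, and likewise of $\BFA(\Gamma,{\le}\omega_2)$, is exactly a ${+}1$-reflecting cardinal, so no length-$\omega_2^\V$ iteration over an arbitrary $\V$ can produce it. Your parenthetical ``alternatively start the whole construction over $L$'' is the right instinct, but you never justify that $L$ supports such a construction, and you cannot simply ``assume'' $L$-correctness about uncountable cofinalities of the ground model, since the hypothesis is only $\mathrm{Con}(\ZFC+\BFA(\Gamma,{\le}\omega_2))$.

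The missing idea, which is how the actual proof proceeds, is to extract from $\BFA(\Gamma,{\le}\omega_2)$ in $\V$ that $\kappa=\omega_2^\V$ is ${+}1$-reflecting in $L$ (the argument of \cite[Lemma 3.10]{Fuchs:HierarchiesOfForcingAxioms} uses only forcings in $\Gamma$), and then to run the construction \emph{over $L$}: fast function forcing at $\kappa$ followed by a length-$\kappa$ RCS iteration of $\Gamma$-forcings as in Theorem \ref{thm:IteratingSubcompleteUncountableCofinalityPreserving}, which is $\kappa$-c.c.\ and turns $\kappa$ into $\omega_2$ while forcing $\BFA(\Gamma,{\le}\omega_2)$ (via its equivalent $\wBFA$ form) and \CH. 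Since the entire forcing is in $\Gamma$ as computed over $L$, the extension $L[G]$ is $L$-correct about uncountable cofinalities, and then Lemma \ref{lem:StationarilyManyL-RegularsHaveCountableCofinality} refutes $\BSCFA({\le}\omega_2)$ there, exactly as in the last step of your outline. Your deductions of \ref{item:SC-SRP(omega2)} from Lemma \ref{lem:BFAimpliesSRPatOmega_2} and of \ref{item:NotBSCFA(omega_2)} from \ref{item:LcorrectAboutUncountableCF} are correct; it is the production of the base model satisfying \ref{item:CH}, \ref{item:BFAGamma} and \ref{item:LcorrectAboutUncountableCF} simultaneously that your argument does not deliver.
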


\begin{proof}
The construction starts in a model of $\BFA(\Gamma,{\le}\omega_2)$. The argument of \cite[Lemma 3.10]{Fuchs:HierarchiesOfForcingAxioms} then shows that $\kappa=\omega_2$ is ${+}1$-reflecting in $L$. Indeed, going through the proof shows that only forcing notions in $\Gamma$ are used. So let us work in $L$ of that model, where $\kappa$ is ${+}1$-reflecting. By \cite[Lemma 4.9]{Fuchs:HierarchiesOfForcingAxioms}, $\kappa$ is remarkably ${\le}\kappa$-reflecting in $L$ (I do not want to go in the details here and explain what this means, but rather use the results of \cite{Fuchs:HierarchiesOfForcingAxioms} as a black box as much as possible).
Now the argument of \cite[Lemma 4.13]{Fuchs:HierarchiesOfForcingAxioms} shows that in $L$, there is a $\kappa$-c.c.~forcing notion $\P$ such that if $G$ is generic for $\P$, then in $L[G]$, a principle called $\mathsf{wBFA}(\Gamma,{\le}\kappa)$ holds (I don't want to define here what this principle says, since it will turn out to be equivalent to $\BFA(\Gamma,{\le}\kappa)$ in the present situation). $L[G]$ is the desired model. I will show that it satisfies \ref{item:CH}-\ref{item:NotBSCFA(omega_2)}.

The forcing $\P$ is of the form $\P_0*\dot{\P}_1$, where $\P_0$ is Woodin's fast function forcing at $\kappa$ and $\dot{\P}_1$ is a $\P_0$-name for an iteration of forcings in $\Gamma$ as in Theorem \ref{thm:IteratingSubcompleteUncountableCofinalityPreserving}. The forcing $\P_0$ is $\kappa$-c.c.~and (much more than) countably closed. It follows that the composition $\P=\P_0*\dot{\P}_1$ is in $\Gamma$, and hence, it follows that in $L[G]$, $L$ is correct about uncountable cofinalities, that is, \ref{item:LcorrectAboutUncountableCF} holds in $L[G]$. This implies, by Lemma \ref{lem:StationarilyManyL-RegularsHaveCountableCofinality}, that $\BSCFA({\le}\omega_2)$ fails, since otherwise, stationarily many ordinals below $\omega_2$ would have to be regular in $L$ yet of countable cofinality in $L[G]$ (but there is not a single ordinal like that). So \ref{item:NotBSCFA(omega_2)} holds in $L[G]$.

Let $G=G_0*G_1$, where $G_0$ is $\P_0$-generic over $L$ and $G_1$ is $\P_1=\dot{\P}_1^{G_0}$-generic over $L[G_0]$. By \cite[Lemma 4.13]{Fuchs:HierarchiesOfForcingAxioms}, $\kappa$ is still ${+}1$-reflecting in $L[G_0]$, and in particular inaccessible.

Working in $L[G_0]$ temporarily, let me analyze the iteration giving rise to $\P_1$. It is an iteration of length $\kappa$ such that each initial segment of the iteration is in $\V_\kappa$ (in the sense of $L[G_0]$). 
Due to the intermediate collapses in the iteration, it follows that $\kappa=\omega_2^{L[G]}$. Thus, in $L[G]$, we have $\wBFA(\Gamma,{\le}\omega_2)$, which, by \cite[Obs.~4.7]{Fuchs:HierarchiesOfForcingAxioms}, is equivalent to $\BFA(\Gamma,{\le}\omega_2)$. Thus, we have \ref{item:BFAGamma}. By part \ref{item:ForSCcapUCPdontNeedIt} of Lemma \ref{lem:BFAimpliesSRPatOmega_2}, this implies \SC-$\SRP(\omega_2)$, so that \ref{item:SC-SRP(omega2)} is satisfied.
The collapses in the iteration will force \CH, and once \CH is true, it remains true, since no reals are added, so we have \ref{item:CH}, completing the proof.
\end{proof}

In the follow-up article \cite{CoxFuchs:DSRP}, joint with Sean Cox, we will introduce a diagonal version of \SRP, which strengthens \SRP, and we will consider the canonical fragments of this principle. The principle is designed in such a way that it captures those $\MM$-consequences on diagonal reflection that \SRP fails to capture. Thus, the subcomplete fragment of the diagonal \SRP implies that for regular $\kappa>2^\omega$, $\DSR(\omega_1,S^\kappa_\omega)$ holds, and the full principle (that is, the stationary set preserving fragment) implies this for regular $\kappa>\omega_1$. Thus, neither Larson's result separating \SRP from \MM, nor Theorem \ref{thm:FLH-SeparatingSRPfromuDSRhigherUp} serve to separate the subcomplete fragment of the diagonal \SRP from \SCFA. However, the previous result, Theorem \ref{thm:SeparatingSCSRPfromSCFAatOmega2}, does provide such a separation at the level $\omega_2$ because, using the terminology used in its statement, in the model constructed, we have $\BFA(\Gamma,{\le}\omega_2)+\CH+2^{\omega_1}=\omega_2$, and this implies the subcomplete fragment of the diagonal \SRP. Since $\BSCFA({\le}\omega_2)$ fails in the model, this shows that the subcomplete fragment of the diagonal \SRP at $\omega_2$ does not imply $\BSCFA({\le}\omega_2)$.

\section{Questions}
\label{sec:Questions}

I will list questions by the related section in this article.

Section \ref{sec:Gamma-ProjectiveStationarity}: in subsection \ref{subsec:RelativizingToForcingClass}, the present formulation of the $\Gamma$-fragment of \SRP is given, postulating that if the \emph{natural} forcing $\P_S$ to add a continuous $\in$-chain through a stationary set $S$ is in $\Gamma$, then such a sequence exists. A potentially stronger formulation would ask that if there is \emph{any} forcing in $\Gamma$ that adds such a sequence, then such a sequence should exist. The question is whether this can be a stronger principle. More broadly, can there be forcings in $\Gamma$ that add such a sequence when $\P_S$ is not in $\Gamma$? Subsection \ref{subsec:SCfragment} introduced the subcomplete fragment of \SRP, and an early observation was that $\SC$-$\SRP(\kappa)$ holds trivially if $\kappa\le 2^\omega$. It is then natural to ask whether \SC-\SRP is consistent with $2^\omega>\omega_2$. The same can be asked about \SCFA instead of \SC-\SRP.

Section \ref{sec:SRPandConsequences}: there is a lot of room for questions here. Many consequences of \SRP obviously don't follow from its subcomplete fragment, but many others might. For example, the weak reflection principle $\mathsf{WRP}$, or the strong Chang conjecture, would be candidates. One may ask the same questions about the full forcing axiom \SCFA. In fact, these principles follow from $\SCFA^+$, so assuming the consistency of a supercompact cardinal,  they are consistent with $\SCFA^+$, together with $\diamondsuit$, say. It would also be interesting to explore consequences of Theorem \ref{thm:infSC-SRP+CHimpliesMutuallyStationarySimultaneousReflection} on the mutual stationarity of sequences of sets of exact simultaneous reflection points. Another question is whether \MM implies the full principle $\SFP{\omega_2}$ in which it is not assumed that the sequence $\vD$ is a \emph{maximal} partition of $\omega_1$ into stationary sets (see Definition \ref{def:SFPkappa}. If not, then this would be a consequence of the subcomplete fragment of $\SRP$ with \CH that does not follow from \MM.

Section \ref{sec:SRPLimitations}: the first main question for this section concerns subsections \ref{subsec:GeneralSetting} and \ref{subsec:CHsetting-LessCanonical}, and asks whether the combination of \CH with the subcomplete fragment of \SRP implies $\OSR{\omega_2}$, or $\uDSR(\lambda,S^\kappa_\omega)$, for any regular $\kappa\ge\omega_2$ and any $\lambda$ with $1\le\lambda\le\omega_1$. A negative answer would separate $\text{\SC-\SRP}+\CH$ from $\SCFA$, in fact, it would even separate $\text{\SC-\SRP}+\CH$ from the subcomplete fragment of the diagonal strong reflection principle mentioned at the end of Section \ref{subsec:CHsetting:CanonicalButOnlyUpToOmega2}, to be introduced in \cite{CoxFuchs:DSRP}.
The underlying question here is how to guarantee the preservation of spread out sets. Regarding subsection \ref{subsec:CHsetting:CanonicalButOnlyUpToOmega2}, there is a fundamental problem concerning the difference between $\infty$-subcompleteness and subcompleteness: can a forcing be found that is $\infty$-subcomplete but not subcomplete? Does the Iteration Theorem \ref{thm:IteratingSubcompleteUncountableCofinalityPreserving} go through for $\infty$-subcomplete forcing? Can the separation result be modified to show that $\text{$\infSC$-$\SRP(\omega_2)$}+\CH$ does not imply $\infty$-$\SCFA(\omega_2)$? Finally, is there a global version of the result, separating \SCFA from the combination of the subcomplete fragment of \SRP with \CH? This would most likely also separate the combination of the subcomplete fragment of the diagonal strong reflection principle with \CH from \SCFA.

Questions abound.

\bibliographystyle{abbrv}

\end{document}